\newtheorem{thm}{Theorem}[section]
\newtheorem{cor}[thm]{Corollary}
\newtheorem{lem}[thm]{Lemma}
\newtheorem{prop}[thm]{Proposition}
\newtheorem{defn}[thm]{Definition}
\newtheorem{rmk}[thm]{Remark}
\newtheorem{ex}[thm]{Example}
\newenvironment{proof}[1][Proof]{\begin{trivlist}\item[]{\textsc{#1}.\
}}{\nolinebreak $\Box$ \end{trivlist}} 
\newtheorem{introthm}{Theorem}
\newcommand{\stack}[2]{\begin{subarray}{c} #1 \\ #2 \end{subarray}}
\newcommand{\hide}[1]{{}}
\newcommand{\mb}{\mathbb}
\newcommand{\mc}{\mathcal}
\newcommand{\mr}{\mathrm}
\DeclareMathAlphabet{\mathpzc}{OT1}{pzc}{m}{it}
\DeclareMathAlphabet{\mf}{U}{euf}{m}{n}
\DeclareMathAlphabet{\ms}{U}{rsfso}{m}{n}
\newenvironment{items}{\begin{enumerate}[topsep=3pt, itemsep=3pt, parsep=0pt, label=(\roman*)]}{\end{enumerate}}
\newcommand{\ol}{\overline}
\newcommand{\ul}{\underline}
\newcommand{\muhat}{{{\hat{\mu}}}}
\newcommand{\sfrac}[2]{{\textstyle\frac{#1}{#2}}}
\renewcommand{\setminus}{\mathbin{\mathchoice{\mysetminusD}{\mysetminusT}{\mysetminusS}{\mysetminusSS}}}
	\newcommand{\mysetminusD}{\hbox{\tikz{\draw[line width=0.6pt,line cap=round] (3pt,0) -- (0,6pt);}}}
	\newcommand{\mysetminusT}{\mysetminusD}
	\newcommand{\mysetminusS}{\hbox{\tikz{\draw[line width=0.45pt,line cap=round] (2pt,0) -- (0,4pt);}}}
	\newcommand{\mysetminusSS}{\hbox{\tikz{\draw[line
                width=0.4pt,line cap=round] (1.5pt,0) -- (0,3pt);}}}
\renewcommand{\tilde}{\widetilde}
\newcommand{\cfg}{\mr{cfg}}
\newcommand{\et}{\text{{\'et}}}
\newcommand{\inv}{^{-1}\!}
\newcommand{\DM}{{\mr{DM}}}
\newcommand{\St}{{\mr{St}}}
\newcommand{\Var}{{\mr{Var}}}
\newcommand{\Coh}{\ensuremath{\ms{C}\kern-.245em\mathpzc{oh}}}
\renewcommand{\phi}{\varphi}
\renewcommand{\epsilon}{\varepsilon}
\newcommand{\qq}{\mb Q}
\newcommand{\pp}{\mb P}
\newcommand{\aaa}{\mb A}
\newcommand{\cc}{\mb C}
\newcommand{\zz}{\mb Z}
\newcommand{\YY}{\mf Y}      
\newcommand{\EE}{\mf E} 
\newcommand{\MM}{\mf M}
\newcommand{\NN}{\mf N} 
\newcommand{\A}{\mf A} 
\renewcommand{\P}{\mf P} 
\newcommand{\WW}{\mf W}      
\newcommand{\XX}{\mf X}
\renewcommand{\AA}{\mf A}
\newcommand{\ZZ}{\mf Z}      
\newcommand{\SSS}{\ms S}
\newcommand{\E}{\ms E} 
\newcommand{\Q}{\ms Q}
\newcommand{\F}{\ms F}
\newcommand{\C}{\ms C}
\newcommand{\N}{\ms N}
\newcommand{\K}{\ms K}
\newcommand{\G}{\ms G}
\newcommand{\I}{\ms I}
\renewcommand{\H}{\ms H}
\renewcommand{\O}{{\ms O}}
\newcommand{\dmo}[2]{\DeclareMathOperator{#1}{#2}}
\dmo{\Ext}{Ext}
\dmo{\Id}{Id}
\dmo{\idemp}{idemp}
\dmo{\Ann}{Ann}
\dmo{\id}{id}
\dmo{\cl}{cl}
\dmo{\im}{im}
\dmo{\rank}{rank}
\dmo{\rk}{rk}
\dmo{\res}{res}
\dmo{\ord}{ord}
\dmo{\radd}{rad}
\dmo{\Char}{char}
\dmo{\uAut}{\ul{Aut}}
\dmo{\uEnd}{\ul{End}}
\dmo{\uHom}{\ul{Hom}}
\dmo{\uIsom}{\ul{Isom}}
\dmo{\spec}{Spec}
\dmo{\Aut}{Aut}
\dmo{\cok}{cok}
\dmo{\B}{B}
\dmo{\BGL}{BGL}
\dmo{\DT}{DT}
\dmo{\End}{End}
\dmo{\Gal}{Gal}
\dmo{\GL}{GL}
\dmo{\Hom}{Hom}
\dmo{\Log}{Log}
\dmo{\Epi}{Epi}
\dmo{\Inj}{Inj}
\dmo{\Pic}{Pic}
\dmo{\Spec}{Spec}
\dmo{\supp}{supp}
\dmo{\Sym}{Sym}
\dmo{\Stab}{Stab}
\dmo{\SStable}{{SS}}
\dmo{\Stable}{{St}}
\newcommand{\Gm}{\mb G_{m}}
\newcommand{\sheafEnd}{\ensuremath{\ms{E}\kern-.245em\mathpzc{nd}}}
\newcommand{\sheafOrth}{\ensuremath{\ms{O}\kern-.245em\mathpzc{rth}}}
\newcommand{\shom}{\mathop{\rm {\mit{ \ms H\! om}}}\nolimits}
\newcommand{\fix}{{\text{\it fix}}}
\renewcommand{\ss}{{\text{\it ss}}}
\newcommand{\ad}{{\text{\it ad}}}
\newcommand{\reg}{{\text{\it reg}}}
\newcommand{\vir}{{\text{\it vir}}}
\newcommand{\vi}{{\text{\it vi}}}
\newcommand{\prim}{{\text{\it prim}}}
\newcommand{\strat}{{\text{\it strat}}}
\newcommand{\longiso}{\stackrel{\simeq}{\longrightarrow}}
\newlength{\depthofsumsign}
\newlength{\totalheightofsumsign}
\newlength{\heightanddepthofargument}
\newcommand*{\DivideLengths}[2]{%
  \strip@pt\dimexpr\number\numexpr\number\dimexpr#1\relax*65536/\number\dimexpr#2\relax\relax sp\relax
}
\newcommand{\noprint}[1]{}
\newcommand{\unsure}[1]{{\footnotesize #1}}
\newcommand{\comment}[1]{{$\mbox{}^{\spadesuit}$}{\marginpar{\footnotesize
$\spadesuit$  #1\hfill \scriptsize{-kai}}}}
\def\comment{\noprint}
\def\unsure{\noprint}
\title{The Inertia Operator on  the Motivic Hall Algebra}
\author{Kai Behrend, Pooya Ronagh}
\date{\today}
\begin{document}
\maketitle
\abstract{We study the action of the inertia operator on the motivic Hall
  algebra, and prove that it is diagonalizable.  This leads to a
  filtration of the Hall algebra,
  whose associated graded algebra is commutative. In particular, the
  degree 1 subspace forms a Lie algebra, which we call the Lie algebra
  of {\em virtually indecomposable }elements, following Joyce. We prove that
  the integral of  virtually indecomposable elements admits an Euler
  characteristic specialization. In order to take advantage of the
  fact that our inertia groups are unit groups in algebras, we
  introduce the notion of {\em algebroid}.}

\tableofcontents
\section*{Introduction}
\addcontentsline{toc}{section}{Introduction}

For simplicity, let us work over a field $k$. (Later, $k$ will be
replaced by a noetherian ring $R$.)

Let $\MM$ be an abelian $k$-linear algebraic stack. Roughly, this
means that $\MM$ is at the same time a $k$-linear abelian category
with finite-dimensional hom-spaces, and an algebraic stack, locally of
finite type over $k$.  (The precise definition of linear algebraic
stack is Definition~\ref{defn-lin}.  In the body of the paper we work
with exact, instead of abelian categories, see the beginning of
Section~\ref{sec-filo}.)

Examples  we are interested in include
\begin{items}
  \item $\MM=\mf{Coh}_Y$, the moduli stack of coherent $\O_Y$-modules, for
    a projective $k$-variety $Y$,
  \item $\MM=\mf{Rep}_Q$, the moduli stack of representations of a
    quiver $Q$ on 
    finite-dimensional $k$-vector spaces,
  \item (the case $Y=Q=\spec k$) $\MM=\mf{Vect}$, the stack of
    finite-dimensional $k$-vector spaces. In this case, $\mf{Vect}(S)$,
    for a $k$-scheme $S$, is the exact $\O(S)$-linear category of
    vector bundles over $S$, and $\mf{Vect}(k)$ is the abelian
    $k$-linear category of finite-dimensional $k$-vector spaces. As an
    algebraic stack, $\mf{Vect}$ is
$$\mf{Vect}=\spec k\mathrel{\amalg} \BGL_1\mathrel{\amalg}
\BGL_2\mathrel{\amalg}\ldots$$ 
\end{items}

\subsubsection{Algebroids}

There is a canonical sheaf of algebras
$\A\to\MM$ over $\MM$. The set of sections of $\A$ over the $S$-valued point
$x$ of $\MM$ is the $\O(S)$-algebra $\A_x=\End(x)$. For
$\MM=\mf{Vect}$, the point
$x$ is a vector bundle over $S$, and $\End(x)$ is the
$\O(S)$-module of endomorphisms of $x$.

There is also a canonical isomorphism of group sheaves $
\A^\times\to I_\MM$ over $\MM$, where $I_\MM$ is the {\em inertia stack }of
$\MM$. (Recall that the sections of $I_\MM$ over the $S$-valued point
$x$ of $\MM$ are the automorphisms of $x$, in other words the units in
the algebra of endomorphisms.)

We call a triple $(X,A_X,\iota)$ an \emph{algebroid} (see
Definition~\ref{defnalgebroid} and 
Remark~\ref{explainterm}), if $X$ is an algebraic stack,
$A_X\to X$ is a representable sheaf of finite $\O$-algebras over $X$
(or {\em finite type algebras}, as we call them, see
Definition~\ref{ftalg}), and $\iota: A^\times_X\to I_X$ is an
open immersion 
of relative group schemes over $X$, making the canonical diagram 
$$\xymatrix{ A^\times_X\rto^\iota \drto & I_X\dto\\ & \uAut_X(A_X)}$$
commute.

So $\MM$ with its canonical sheaf of algebras $\A$ is an example of
an algebroid. In this case, $\iota$ is an isomorphism, yielding what
we call a {\em strict }algebroid. 

Algebroids are generalizations of linear algebraic stacks (they are
linear over their coarse moduli spaces, if they are strict).  They are slightly more
flexible, for example, schemes can be considered as algebroids in a
canonical way.  If the algebraic stack  $X$ is the base of an
algebroid, then the connected component $I^\circ_X$ of its inertia
stack $I_X$ is the group of units in an 
algebra.  This is the main significance of algebroids for us. 

Just like algebraic stacks, algebroids form a 2-category, in which
2-fibered products exist. Whenever $(X,A_X)$ is an algebroid, and
$Y\to X$ is an {\em inert }morphism of algebraic stacks,
i.e. $I_Y^\circ=I_X^\circ|_Y$ (see Definition~\ref{define}), the stack
$Y$ is endowed with a natural 
structure of an algebroid via $A_X|_Y$. Examples of inert
morphisms include monomorphisms, and projections $Z\times X\to X$, for
schemes $Z$. A {\em locally closed immersion }of algebroids
$(Y,A_Y)\to(X,A_X)$ is a morphism
where $Y\to X$ is a locally closed immersion of algebraic stacks, such that
$A_Y=A_X|_Y$.   Every scheme $Z$ is an algebroid via the
definition $A_Z=0_Z$. 

A key observation is that if $(X,A)$ is an algebroid, then
$(I_X,I_A)$ is another algebroid. In fact, $I_A$, the inertia stack of
the stack $A$ (the total `space' of the sheaf of algebras $A$), is
equal to the subalgebra of $A|_{I_X}$ fixed under its tautological
automorphism.  We call $(I_X,I_A)$ the {\em inertia algebroid }of
$(X,A)$. It comes with a canonical morphism to $(X,A)$. There is also a
semi-simple connected version of the algebroid inertia, denoted by 
$I^{\circ,\ss}$.

\subsubsection{The motivic Hall algebra}

We define {\em stack functions }to be representable morphisms of
algebroids 
$(X,A_X)\to(\MM,\A)$, where $X$ is of finite type.  
The {\em Hall algebra }$K(\MM)$ of $\MM$ is the $\qq$-vector
space 
on the isomorphism classes of
stack functions modulo the {\em scissor relations relative $\MM$}: 
$$[X\to\MM]=[Z\to X\to \MM]+[X\setminus Z\to X\to\MM]\,,$$
and the {\em bundle relations relative $\MM$}:
$$[Y\to X\to\MM]=[F\times X\to X\to \MM]\,.$$
Here $[X\to\MM]$ denotes the Hall algebra element defined by a stack
function with base $X$. Also, $Z\to X$ is a closed immersion of
algebroids, with open complement $X\setminus Z$, and $Y\to X$ is an
{\em inert }fibre bundle (Definition~\ref{bundledef}) with special
structure group and fibre $F$, all 
endowed with their canonical algebroid structure. (For strict
algebroids, the bundle relations follow from the scissor relations.)

We have the following structures on the Hall algebra:
\begin{enumerate}
\item {\bf Module structure. } Let $K(\text{Var})$ denote the
  Grothendieck ring of varieties over $k$. We denote the motivic weight
  of the affine line by $q=[\aaa^1]\in K(\text{Var})$.  By $[Z]\cdot
  [X\to\MM]=[Z\times X\to\MM]$ we define a $K(\text{Var})$-module
  structure on $K(\MM)$.
\item {\bf Multiplication. } $[X\to\MM]\cdot[Y\to\MM]=[X\times
  Y\to\MM\times\MM\stackrel{\oplus}{\longrightarrow}\MM]$ defines a
  commutative multiplication on $K(\MM)$, and $K(\MM)$ is a
  $K(\text{Var})$-algebra with this multiplication.
\item {\bf Hall product. } Using the stack of short exact sequences in
  $\MM$, we 
  can define a Hall algebra 
  product $[X\to\MM]\ast[Y\to\MM]$ on $K(\MM)$.  For details, see
  Section~\ref{sec-filo}.  The module $K(\MM)$ is a
  $K(\text{Var})$-algebra also with respect to the Hall product.
\item {\bf Unit. } The multiplicative unit with respect to both
  products is represented 
by $1=[\Spec k \stackrel{0}{\longrightarrow}\MM]$. Via this unit, we
get an inclusion $K(\Var)\subset K(\MM)$. 
\item {\bf Inertia endomorphism. } The algebroid inertia defines an
  operator $I:K(\MM)\to K(\MM)$ via $I[X\to\MM]=[I_X\to
    X\to\MM]$. This inertia operator is linear over $K(\text{Var})$,
  and multiplicative $I(x\cdot y)=I(x)\cdot I(y)$, with respect to the
  commutative multiplication.  The same facts hold for the connected semi-simple
  inertia operators $I^{\circ,\ss}:K(\MM)\to K(\MM)$.
\end{enumerate} 

There is also a `non-representable' version of the Hall algebra, where
we drop the representability requirement for stack functions, and
simply define a stack function to be a morphism of algebroids
$X\to\MM$, with $X$ of finite type.  The representable Hall algebra is
a subalgebra (with respect to both products) of the non-representable
one.  Our results on the diagonalizability of the various operators
$I$, $I^{\circ,\ss}$, $E_n$ hold true also in the non-representable Hall
algebra, but the algebraic results on the structure of the Hall
algebra need representability.
For simplicity, we restrict ourselves therefore to the representable
case.

Usually, when defining the Hall algebra of $\MM$, one  requires the
bundle relations also for non-inert morphisms.  The connected inertia
operator does not respect such relations, and 
we therefore do not include them.

\subsubsection{Example}

A stack function $X\to \mf{Vect}$ is the same thing as an algebroid
$(X,A)$, together with a faithful representation. 

Examples of stack functions with values in $\mf{Vect}$ include
subalgebras $A\subset M_{n\times    n}$. The elements of
$K(\mf{Vect})$ defined by $A\subset M_{n\times n}$ 
and $B\subset M_{n\times n}$ are equal if and only if $A$ and $B$ are
conjugate in $M_{n\times n}$. 

The subalgebra $U$ of $K(\mf{Vect})$ with respect to the Hall
product,  generated by the $[n]=[B\GL_n\to
  \mf{Vect}]$ is free on these elements $[n]$, for $n>0$, as a unitary
$\qq$-algebra. In the literature, $U$ is known as the  Hopf algebra of
{\em non-commutative
symmetric functions}, see \cite[Example~4.1~(F)]{CartierHopf}.

(If we add the (non-inert) vector bundle relations relative $\mf{Vect}$, see,
e.g. \cite{Bridgeland-Hall-Algebras}, we get 
$$[\lambda_1]\ast\ldots\ast[\lambda_r]
=\frac{[\GL_n]}{[P(\lambda)]}[n]
=\binom{n}{\lambda_1\ldots\lambda_n}_q[n]\,.$$
Here $n=\sum \lambda_i$, and $\binom{n}{\lambda_1\ldots\lambda_n}_q$
denotes the $q$-deformed multinomial coefficient, which  gives the
motivic weight of the flag variety of type $\lambda$. We have also
denoted the parabolic subgroup of $\GL_n$ of type $\lambda$ by
$P(\lambda)$. 
Hence the $\qq$-algebra obtained by dividing $U$ by the vector bundle
relations is the  commutative polynomial algebra over $\qq$, on the
symbols $[1],[2],[3],\ldots$. This is the Hopf algebra of symmetric
functions.)

\subsubsection{The spectrum of semi-simple inertia}

The main point of this work is to study the spectral theory of the
semi-simple inertia operator $I^{\circ,\ss}$ on $K(\MM)$.

Before announcing our results, let us do a few sample
calculations. They contain some of the central ideas of this paper.
Only strict algebroids will occur, so we write $I^{\ss}$ instead of
$I^{\circ,\ss}$.

We consider $\MM=\mf{Vect}$.  The linear stack of  line bundles defines the
stack function $[\BGL_1\to\mf{Vect}]\in K(\mf{Vect})$. We have
\begin{align*}
  I^\ss[\BGL_1\to\mf{Vect}]
  &=[\GL_1^\ss\times \BGL_1\to\mf{Vect}]\\
  &=[\GL_1\times \BGL_1\to\mf{Vect}]\\
  &=(q-1)[\BGL_1\to\mf{Vect}]\,.
\end{align*}
This proves that $[\BGL_1\to\mf{Vect}]$ is an eigenvector of $I^\ss$,
with corresponding eigenvalue $(q-1)\in K(\Var)$.

Because $I^\ss$ is an algebra morphism with respect to the commutative
product, it immediately follows that every $(q-1)^r$, for $r\geq0$ is
and eigenvalue of $I^\ss$, with corresponding eigenvector
$[\BGL_1^r\to \BGL_n\to\mf{Vect}]\in K(\mf{Vect})$. 

These are not the only eigenvalues of $I^\ss$.  In fact, let us
consider the stack function of all rank 2 vector bundles
$[\BGL_2\to\mf{Vect}]$. Recall that the inertia stack of $\BGL_2$ is
the quotient stack $\GL_2/_\ad \GL_2$, where $\GL_2$ acts on itself by
the adjoint action. The semi-simple part of $\GL_2$ decomposes as
$\GL_2^{\text{ \it eq}}\sqcup \GL_2^{\text{\it neq}}$, according
to whether the 
two eigenvalues of an element of $\GL_2$  are equal or not equal. By the
scissor relations we have,
\begin{align*}
  I^\ss[\BGL_2]
  &=[\GL_2^{\text{eq}}/_\ad\GL_2]+[\GL_2^{\text{neq}}/_\ad\GL_2]\\
  &=[\Delta\times \BGL_2]+[T^\ast/_\ad N]\\
  &=(q-1)[\BGL_2]+x\,.
\end{align*}
Here $\Delta$ is the one-parameter subgroup of scalar matrices, and
$T$ is the maximal torus of diagonal matrices in $\GL_2$. Further
notation: $T^\ast=T\setminus \Delta$,  $N$ is the normalizer of $T$
in $\GL_2$, and $x=[T^\ast/_\ad N]$.

Next, we calculate $I^\ss x$.  In fact, we have
$I^\ss_{T^\ast/N}=I_{T^\ast/N}= (T^\ast\times T)/N$, by the `stabilizer
formula' for the inertia stack of a quotient stack 
$$I_{Y/G}=\{(y,g)\in
Y\times G\mid yg=y\}/G\,.$$ 

We note that $N=T\rtimes \zz_2$ acts on $T^\ast\times T$ diagonally,
via its quotient $\zz_2$ by swapping the entries of $T$.  We embed $T$
into $\aaa^2$ equivariantly with respect to $\zz_2$, and then
decompose $\aaa^2$ as 
$T \sqcup (\GL_1\times 0) \sqcup (0\times
\GL_1)\sqcup (0,0)$.  This gives
\begin{multline}\label{first}
[(T^\ast\times\aaa^2)/N]\\=[(T^\ast\times T)/N]+
[T^\ast\times (\GL_1\times 0\sqcup 0\times
  \GL_1)/N]+[T^\ast\times (0,0)/N]\,.
\end{multline}
We have a pullback diagram of algebroids
$$\xymatrix{
  (T^\ast\times \aaa^2)/N\rto\dto & T^\ast/N\dto\\
  (T^\ast\times\aaa^2)/\zz_2\rto & T^\ast/\zz_2\rlap{\,.}}$$
It shows that the vector bundle $(T^\ast\times \aaa^2)/N\to
T^\ast/N$ is a pullback of the vector bundle
$(T^\ast\times\aaa^2)/\zz_2\to T^\ast/\zz_2$.  The latter is a vector
bundle over a scheme, and is therefore Zariski-locally trivial by Hilbert's Theorem 90. The
same is then true for any pullback bundle.  Hence, we conclude that
$$[(T^\ast\times\aaa^2)/N]=q^2[T^\ast/N]\,,$$
using only the scissor relations.  So from (\ref{first}) we conclude that
$$q^2x=I^\ss x +(q-1)[T^\ast/T]+x\,,$$
which we solve for $I^\ss x$ to get:
$$I^\ss x=(q^2-1)x-(q-1)^2(q-2)[BT]\,.$$
We already know that $I^\ss[BT]=(q-1)^2[BT]$, and so we conclude that
$[\BGL_2]$, $x$ and $[BT]$ generate an $I^\ss$-invariant subspace of
$K(\mf{Vect})$, and the matrix of $I^\ss$ on this subspace is
$$\begin{pmatrix}
q-1 & 0 & 0\\
1 & q^2-1 & 0\\
0 & -(q-1)^2(q-2) & (q-1)^2
\end{pmatrix}$$
This matrix is lower triangular, with distinct scalars on the
diagonal, and is therefore diagonalizable over the field $\qq(q)$.
So on this subspace, $I^\ss$ is diagonalizable, with eigenvalues
$(q-1)$, $(q^2-1)$, and $(q-1)^2$. If we decompose $[\BGL_2]$ as a sum
of eigenvectors, we get the eigenvectors 
\begin{align*}
  &[\BGL_2]-\sfrac{1}{q(q-1)}x-\sfrac{1}{q}[BT]
  &&\text{with eigenvalue} &&(q-1)\,,\\
  &\sfrac{1}{q(q-1)}x-\sfrac{q-2}{2q}[BT]
  &&\text{with eigenvalue} &&(q^2-1)\,,\\
  &\sfrac{1}{2}[BT]
  &&\text{with eigenvalue} &&(q-1)^2\,.
\end{align*}
A very important observation is that when we add together the
eigencomponents whose eigenvalues have the same order of vanishing at
$q=1$, we get coefficients in $\qq$, instead of $\qq(q)$.
In the above example, we add together the components of $[\BGL_2]$
with eigenvalues $(q-1)$ and $(q^2-1)$ to obtain
$[\BGL_2]-\frac{1}{2}[BT]$. 

Another important observation is that diagonalizing $I^\ss$ does not,
despite appearances, require us to invert $(q-1)$. In fact, the
algebroid $x$ appearing in the above argument is divisible by $(q-1)$,
although the quotient is not a strict algebroid any longer.  This is,
in fact, the reason for considering non-strict algebroids at all. 

(In the above calculations, we have suppressed the algebra part $A$ of
the various algebroids $(X,A)$. We leave it to the reader to supply
the natural algebra for each algebroid mentioned.)

\subsubsection{Results}

We now summarize the main results of this paper.

\begin{introthm}[Diagonalizability of $I^{\circ,\ss}$]
The operator $I^{\circ,\ss}$ on $$K(\MM)(q)=K(\MM)\otimes_{\qq[u]} \qq(q)$$ is
diagonalizable, the eigenvalues are indexed by partitions $\lambda$,
and the eigenvalue corresponding to the partition $\lambda$ is the
cyclotomic polynomial  $$\Q(\lambda)=\prod (q^{\lambda_i}-1)\,.$$
In other words, we have a direct sum decomposition
$$K(\MM)(q)=\bigoplus_{\lambda} K^\lambda(\MM)\,,$$
into subspaces invariant under $I^\ss$, and $I^\ss|_{K^\lambda(\MM)}$
is multiplication by $\Q(\lambda)$. 
\end{introthm}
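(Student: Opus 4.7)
The plan is to stratify the connected semisimple algebroid inertia by the étale type of the commutative subalgebra generated by a semisimple element, and exploit the resulting triangular structure of $I^{\circ,\ss}$ to conclude diagonalizability over $\qq(q)$.

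For an algebroid $(X, A)$ and a partition $\lambda = (\lambda_1, \ldots, \lambda_r)$, I would introduce the locally closed substack $I^\lambda(X, A) \subset I^{\circ,\ss}(X, A)$ parametrizing pairs $(x, u)$ with $u \in A_x^\times$ semisimple, such that the étale subalgebra $k[u] \subset A_x$ is a product $\prod_i K_i$ of fields with $[K_i : k] = \lambda_i$. These strata partition $I^{\circ,\ss}(X, A)$, and each stratum inherits an algebroid structure with algebra the centralizer $C_A(u)$. Summing over types yields a decomposition of operators $I^{\circ,\ss} = \sum_\lambda E_\lambda$ on $K(\MM)$, where $E_\lambda[X \to \MM] = [I^\lambda(X, A) \to \MM]$.

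The key geometric input is the analysis of $E_\lambda$. The stratum $I^\lambda(X, A)$ fibers over the substack of pairs $(x, K \subset A_x)$ of étale subalgebras of type $\lambda$, with fiber the open locus of generators of $K$ inside $K^\times$. The total motivic class of $K^\times$ is exactly $\Q(\lambda) = \prod(q^{\lambda_i} - 1)$, and the complement of the generator locus consists of elements lying in proper étale subalgebras, i.e., strata of strictly smaller partition type in the refinement ordering on partitions of $|\lambda|$. A Möbius inversion over the lattice of étale subalgebras then shows that, modulo contributions from strata of smaller type, $E_\lambda$ acts as the scalar $\Q(\lambda)$ on the "leading" content of the $\lambda$-stratum.

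Assembling these strata into a filtration of $K(\MM)$ indexed by partitions (with respect to a total refinement of the refinement partial order), the operator $I^{\circ,\ss}$ becomes upper triangular, with diagonal entries the polynomials $\Q(\lambda)$. Since these polynomials are pairwise distinct in $\qq[q]$ — the multiset of roots, read off from $\lambda$, determines $\lambda$ — they remain distinct in $\qq(q)$, and a triangular operator with distinct diagonal entries is diagonalizable over this field by the standard iterative procedure. The main obstacle will be making the triangular structure precise: identifying the correct partition ordering so that iterated application of $I^{\circ,\ss}$ to a $\lambda$-stratum produces only strata of smaller or equal type, and carrying out the Möbius inversion carefully enough to isolate $\Q(\lambda)$ as the leading coefficient. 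This requires a close analysis of how the lattice of étale subalgebras of the fiber algebras $A_x$ interacts with the algebroid inertia under iteration.
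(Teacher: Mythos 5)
Your overall strategy -- stratify the semi-simple inertia by a partition-valued invariant of the \'etale algebra generated by the semi-simple element, obtain a triangular matrix with the pairwise distinct diagonal entries $\Q(\lambda)=\prod(q^{\lambda_i}-1)$, and diagonalize -- is recognizably the strategy of the paper. But the step where you compute the diagonal entry contains a genuine gap. You assert that the total motivic class of the family of tori $K^\times$, for $K=k[u]$ of type $\lambda$, is $\Q(\lambda)$ times the class of the base. Fibrewise this is true, but in families the primitive idempotents of $k[u]$ are permuted by monodromy, so $K^\times$ is a \emph{form} of $\Gm^{|\lambda|}$, i.e.\ a torsor under a generally non-split torus. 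The Hall algebra $K(\MM)$ only imposes bundle relations for inert fibre bundles with \emph{special} structure group, and non-split tori are not special; so $[K^\times]=\Q(\lambda)[\text{base}]$ is not a relation you are entitled to. Worse, the decomposition of $K^\times$ into generator loci of the various subalgebras is not a stratum-by-stratum decomposition over the base: the monodromy group permutes the subalgebras, and the pieces that actually appear are quotients by proper stabilizer subgroups. These quotients are new algebroids with strictly \emph{more} global central idempotents, not "strata of strictly smaller partition type in the refinement ordering on partitions of $|\lambda|$" -- indeed proper \'etale subalgebras have strictly smaller $k$-dimension, so that poset is not even the relevant one. A single partition-indexed filtration therefore does not close up under $I^{\circ,\ss}$, and the claimed triangularity fails as stated.

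The paper's proof is engineered precisely around this obstruction, in two layers. First it introduces the idempotent operators $E_n$ (stacks of complete sets of $n$ orthogonal idempotents), proves they preserve the filtration by split central rank with integer diagonal entries $r!\,S(k,r)$, and concludes they are simultaneously diagonalizable over $\qq$, yielding the grading $K(\MM)=\bigoplus_k K^k(\MM)$; all the "monodromy quotient" pieces above land in strictly higher split central rank and are absorbed here. Only then, within a fixed split central rank $k$, does it filter by the \emph{divisibility} order on central types ($\lambda\mid\mu$ iff $\lambda_i\mid\mu_{\sigma(i)}$ for some permutation $\sigma$), passes to a connected Galois cover $\tilde X$ trivializing $PZE(A)$ so that the relevant torus bundle becomes a product of projective bundles with special (general linear) structure groups, and extracts the diagonal entry $\tilde\Q_\lambda$ after dividing $I^{\circ,\ss}\circ E_k$ by $(q-1)^k$; the pairwise differences $\tilde\Q_\mu-\tilde\Q_\lambda$ are then units in $\qq[q]_{(q-1)}$, which is what makes the triangular operator diagonalizable without inverting $q-1$. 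You would also need to verify local finiteness (each class lies in a finite-dimensional invariant subspace), which the paper does by bounding the central rank by the rank of the algebra bundle. To repair your argument you would essentially have to reintroduce both of these layers.
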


The same theorem holds for the operator $I^{\ss}$ in the context of
strict algebroids. We also prove a stronger version avoiding
denominators divisible by $(q-1)$, but this version only works for
algebroids. 

The proof of this theorem is a generalization of the above sample
calculation for the stack of rank 2 vector bundles.  One goal of
Section~1 of the paper is to set up the necessary notation.

\begin{introthm}[Graded structure of $K(\MM)$]
There is a direct sum decomposition
\begin{equation}\label{dsdejk}
  K(\MM)=\bigoplus_{r\geq0} K^r(\MM)\,,
\end{equation}
such that
$$K^r(\MM)(q)=\bigoplus_{\ord_{q=1}\Q(\lambda)=r}K^\lambda(\MM)\,.$$
Moreover, the commutative product is graded with respect
to~(\ref{dsdejk}).
\end{introthm}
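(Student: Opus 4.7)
The plan is to take Theorem~1 as input and descend the $\qq(q)$-level eigenspace decomposition to an integral direct-sum decomposition of $K(\MM)$. Defining $K^r(\MM)(q) := \bigoplus_{\ord_{q=1}\Q(\lambda)=r} K^\lambda(\MM)$ with corresponding projection $\pi_r : K(\MM)(q) \to K^r(\MM)(q)$, the theorem reduces to two tasks: $\pi_r$ must restrict to a $\qq$-linear endomorphism of $K(\MM)$, and the commutative product must respect the resulting grading.

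The product assertion is essentially formal. Because $I^{\circ,\ss}$ is a ring homomorphism for the commutative product, the product of eigenvectors for $\Q(\lambda)$ and $\Q(\mu)$ is an eigenvector for $\Q(\lambda)\Q(\mu) = \Q(\lambda\cup\mu)$, where $\lambda\cup\mu$ denotes the multiset union of parts. Since $\ord_{q=1}\Q(\lambda\cup\mu) = \ord_{q=1}\Q(\lambda) + \ord_{q=1}\Q(\mu)$, multiplication satisfies $K^r(\MM)(q)\cdot K^s(\MM)(q) \subset K^{r+s}(\MM)(q)$; once integrality of $\pi_r$ is established, this descends to $K(\MM)$.

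The main obstacle is integrality of $\pi_r$. A priori, $\pi_r$ is a polynomial in $I^{\circ,\ss}$ with coefficients in $\qq(q)$ obtained by Lagrange interpolation, and its denominators involve differences $\Q(\lambda)-\Q(\mu)$ that vanish at $q=1$ whenever $\lambda$ and $\mu$ have the same number of parts. The sample calculation for $[\BGL_2]$ exhibits both the issue and its resolution: individual eigenvectors involve $(q-1)$ in their denominators, yet the sum of those sharing a common order of vanishing is integral, e.g.\ $[\BGL_2] - \tfrac{1}{2}[BT]$. The key input, foreshadowed in the paper's remark that the intermediate element $[T^\ast/_\ad N]$ is divisible by $(q-1)$ in the non-strict algebroid sense, is that the apparent denominators of $\pi_r$ are always cancelled by genuine $(q-1)$-divisibilities arising from the stratification of inertia. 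To prove this I would induct on the finite-dimensional $I^{\circ,\ss}$-invariant subspace generated by a stack function, isolating at each step the contribution from partitions of length $r$ via an explicit integral combination of $I^{\circ,\ss}$-operators applied to algebroid strata whose genuine $(q-1)$-divisibility cancels the Lagrange denominators. This is precisely where the non-strict algebroid framework developed in the paper is essential.

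With integrality in hand, the assembly is immediate: setting $K^r(\MM):=\pi_r(K(\MM))$, directness of $\bigoplus_r K^r(\MM)$ is inherited from the decomposition over $\qq(q)$, and $\sum_r \pi_r = \id$ on $K(\MM)(q)$ gives $x = \sum_r \pi_r(x)\in \bigoplus_r K^r(\MM)$ for every $x \in K(\MM)$. Combined with the multiplicativity of the grading, this establishes the graded commutative algebra structure.
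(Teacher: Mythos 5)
The gap is the integrality of $\pi_r$, which you correctly flag as ``the main obstacle'' but do not actually close. The paragraph meant to address it --- induct on a finite-dimensional invariant subspace and isolate the length-$r$ contribution ``via an explicit integral combination of $I^{\circ,\ss}$-operators applied to algebroid strata whose genuine $(q-1)$-divisibility cancels the Lagrange denominators'' --- restates the desired conclusion rather than proving it. No such integral combination is exhibited, and it is not clear one can exist purely in terms of $I^{\circ,\ss}$: two distinct partitions $\lambda\neq\mu$ of the same length give eigenvalues with $\Q(\lambda)-\Q(\mu)$ vanishing at $q=1$, so any polynomial in $I^{\circ,\ss}$ that separates the individual $\lambda$-blocks inside a fixed order-$r$ block genuinely requires inverting $(q-1)$; the $[\BGL_2]$ computation shows the individual eigenvectors are not integral and only their sum is. That cancellation in the sum is exactly the theorem, and your sketch assumes it.

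The paper sidesteps this entirely by not building the integral grading out of $I^{\circ,\ss}$ at all. The decomposition $K(\MM)=\bigoplus_r K^r(\MM)$ comes from the commuting family of idempotent-counting operators $E_n$, which preserve the filtration by split central rank and act on the subquotients $K^{\geq k}/K^{>k}(\MM)$ by the \emph{integers} $n!\,S(k,n)$ (Theorem~\ref{Ethm}); a triangularity-plus-local-finiteness argument then diagonalizes them simultaneously over $\qq$, with projectors $\pi_k=\sum_n\frac{s(n,k)}{n!}E_n$ in which no denominator in $q$ ever appears. Only afterwards is $K^r(\MM)(q)$ identified with the sum of $I^{\circ,\ss}$-eigenspaces of order $r$, via the relation $I^{\circ,\ss}\circ E_k=(q-1)^k\,\tilde I^{\circ,\ss}_k$ and diagonalization over the local ring $\qq[q]_{(q-1)}$. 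The gradedness of the commutative product is likewise proved integrally from $E_p(x\cdot y)=\sum_{n,m}{p\brack n,m}E_n(x)\cdot E_m(y)$ and the resulting identity $\pi_t(x\cdot y)=\pi_t(x)\cdot\pi_t(y)$; your eigenvalue argument yields the graded statement only over $\qq(q)$ and still leans on the unproven integrality. To repair your proof you would essentially have to introduce the $E_n$ (or an equivalent integral structure) anyway.
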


Again, the same theorem holds in the context of strict algebroids. 

The fact about the gradedness of the commutative product is 
expected from the fact that the semi-simple inertia  respects
the commutative product (it follows from this fact over $\qq(q)$, but
is true over $\qq$). 

Geometrically, the descending filtration $K^{\geq r}(\MM)$ induced by
the grading (\ref{dsdejk}) can be described as follows: $K^{\geq 
  r}(\MM)$  is the $\qq$-span of all stack functions $[X\to \MM]$, for  
which the algebra of global sections $\Gamma(X,A_X)$ admits at least
$r$  
orthogonal non-zero central idempotents, where $A_X$ is the algebra of the
algebroid  $(X,A_X)$. 

The direct summands $K^r(\MM)$, are the common eigenspaces of the
family of commuting
operators $(E_n)_{n\geq0}$, where $E_n(X)$ is the stack of decompositions
of $1\in A_X$ into a sum of $n$ orthogonal labelled idempotents. The
eigenvalues of the operators $E_n$ are integers, and the whole family
of operators $(E_n)$ is diagonalizable over $\qq$. The proof of this
fact proceeds by proving that the $(E_n)$ preserve the descending
filtration described geometrically above, and have distinct integer
diagonal entries.

It turns out that the ascending filtration $K^{\leq n}(\MM)$
associated to the grading in the above theorem can be described as
$$K^{\leq n}(\MM)=\ker E_{n+1}\,.$$
Let us also point out that
$$K^0(\MM)=K(\DM)\,,$$
and 
$$K^0(\MM)=K(\Var)\,,$$
in the context of strict algebroids. 

If we denote by $\pi_r:K(\MM)\to K(\MM)$ the projection operator onto
the summand $K^r(\MM)$, and form the generating series
$\pi_t=\sum_{r\geq0} \pi_r t^r$,
then we have
\begin{equation}\label{projform}
  \pi_t=\sum_{n\geq0} \binom{t}{n} E_n\,.
\end{equation}

All the above results could be proved for pairs $(X,A)$ of algebraic
stacks $X$ endowed with finite type algebras $A$, instead of
algebroids or strict algebroids. One simply replaces $I^{\circ,\ss}$
by $A^{\times,\ss}$. 

\begin{introthm}[Filtered nature of the Hall algebra]\label{introthm3}
The Hall product is filtered with
respect to the filtration $K^{\leq r}(\MM)$, induced by the
grading~(\ref{dsdejk}).  Moreover, for the associated
graded algebra we have
$$\mathop{\rm gr}\big(K(\MM),\ast\big)=\big(K(\MM),\cdot\,\big)\,.$$
In other words, if $x\in K^{\leq r}(\MM)$ and $y\in K^{\leq s}(\MM)$,
then $x\ast y\in K^{\leq r+s}(\MM)$, and
$$x\ast y\equiv x\cdot y \mod K^{\leq r+s-1}(\MM)\,.$$
\end{introthm}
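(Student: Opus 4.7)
The plan is to reformulate both claims using the characterization $K^{\leq n}(\MM) = \ker E_{n+1}$ and the projection formula $\pi_t = \sum_{n \geq 0} \binom{t}{n} E_n$ from the previous theorems, and then establish them by a stratification argument on the stack of extensions. Fix $x \in K^{\leq r}(\MM)$ and $y \in K^{\leq s}(\MM)$. Since $x \cdot y \in K^{\leq r+s}(\MM)$ by the graded structure theorem for the commutative product, both the filteredness $x \ast y \in K^{\leq r+s}$ and the leading-term identity $\pi_{r+s}(x \ast y) = x \cdot y$ will reduce, via the projection formula applied inductively on $n$, to the single comparison
\[
E_n(x \ast y) \equiv E_n(x \cdot y) \pmod{K^{\leq n-1}(\MM)} \quad \text{for all } n.
\]

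To make this comparison, I analyze the stack $\mc{E}(X, Y)_n$ parametrizing triples (short exact sequence $0 \to y \to z \to x \to 0$ with $x\in X$, $y \in Y$; $n$-tuple of orthogonal labelled idempotents $(e_1, \ldots, e_n)$ in $\End(z)$ summing to $\id_z$), which represents $E_n(x \ast y)$. The key step is a stratification of $\mc{E}(X,Y)_n$ by whether each $e_i$ preserves the subobject $y \subset z$. On the open stratum, where every $e_i$ is filtration-preserving, each $e_i$ restricts to compatible idempotents on $x$ and $y$, inducing a partition $\{1, \ldots, n\} = I \sqcup J$ together with decompositions $\id_x = \sum_{i \in I} f_i$ and $\id_y = \sum_{j \in J} g_j$. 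Conversely, the space of lifts of such a pair of decompositions to a filtration-preserving tuple on $z$ is an inert affine bundle with fiber a subspace of $\Hom(x, y)$, which the bundle relations collapse to its base. The open stratum therefore contributes $\sum_{i+j=n} E_i(x) \cdot E_j(y)$, and a parallel (easier) analysis for the commutative product --- where the split extension $z = x \oplus y$ is already given and the same affine-bundle structure from $\Hom(x,y)$ and $\Hom(y,x)$ trivializes --- identifies this sum with $E_n(x \cdot y)$.

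The remaining strata consist of tuples for which some $e_k$ fails to preserve $y$; the image and kernel of such an $e_k$ produce a direct summand decomposition of $z$ transverse to the given filtration, hence equip the underlying stack function with an extra orthogonal central idempotent in the global sections of its sheaf of algebras, placing it in $K^{\leq n-1}(\MM)$ via the geometric description of the filtration. The main obstacle will be making this last step rigorous: translating the local datum of a ``filtration-breaking'' idempotent into actual membership in $\ker E_n$ requires a Zariski stratification of the algebroid base adapted to the decomposition of the sheaf of algebras, together with a careful use of the inert bundle relations developed in the algebroid formalism of Section~1. I expect this geometric bookkeeping --- and in particular the verification that the affine-bundle reductions in the open stratum are compatible with the algebroid structure on both sides --- to be the heart of the proof.
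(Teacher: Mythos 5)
There is a genuine gap, and it sits at the very first step: the statement you reduce everything to, namely $E_n(x\ast y)\equiv E_n(x\cdot y)\bmod K^{\leq n-1}(\MM)$ \emph{for all} $n$, is false. Take $\MM=\mf{Vect}$ and $x=y=[B\GL_1]\in K^1(\mf{Vect})$. For $n=1$ your claim says $x\ast y-x\cdot y\in K^{\leq 0}(\MM)$, but one computes $E_2(x\ast y)=2\,x\cdot y=E_2(x\cdot y)$ and hence $x\ast y-x\cdot y=\pi_1(x\ast y)$, a nonzero element of $K^1(\mf{Vect})$ (it is $[B\mathrm{Borel}]-[BT]$ with their respective algebroids, and the Hall algebra does not impose the non-inert bundle relation that would identify them). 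Only the cases $n\geq r+s$ of your comparison are consistent with the theorem, and even those do not follow from your stratification, as explained next.

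The stratification itself misidentifies where the difficulty lies. Because $X\ast Y$ is formed from the \emph{representable} morphism $b:\MM^{(2)}\to\MM$, the algebra $A_{X\ast Y}$ consists of endomorphisms of the short exact sequence; every idempotent seen by $E_n(x\ast y)$ automatically preserves the subobject, so your ``filtration-breaking'' strata are empty and your closing paragraph has nothing to act on. The real problem is the filtration-preserving strata in which some $e_i$ restricts to a \emph{nonzero} idempotent on both the sub and the quotient, i.e.\ the covering $\ul n=S\cup T$ (with $S,T$ the supports of the restrictions) is not a partition. Your ``open stratum'' accounts only for the partition case $I\sqcup J$. The mixed strata are not negligible in your sense: each such stratum carries $n$ orthogonal central idempotents in its algebroid algebra, so $E_n$ does not annihilate it and it does not lie in $K^{\leq n-1}(\MM)=\ker E_n$. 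The paper's proof exists precisely to deal with these terms: it applies $E_p$ not to $x\ast y$ but to $\pi_t(x)\ast\pi_t(y)$, decomposes $E_p\big(E_n(X)\ast E_m(Y)\big)$ into pieces $(\xi\ast\chi)_{\phi,\psi}$ indexed by how each sequence-idempotent splits into sub- and quotient-idempotents, and then resums using $\binom{t}{\lambda_1}\cdots\binom{t}{\lambda_p}=\sum_n{n\brack\lambda}\binom{t}{n}$; a piece with both $\phi_\rho>0$ and $\psi_\rho>0$ acquires an extra factor of $t$ from $\binom{t}{\phi_\rho}\binom{t}{\psi_\rho}$ and is discarded modulo $t^{p+1}$, while the surviving pieces split canonically (Lemma~\ref{exact} --- an isomorphism, with no affine-bundle collapse needed) and yield the commutative product. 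This $t$-adic suppression, not membership in a lower filtration step, is the mechanism your proposal is missing, and without it the argument does not close.
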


The proof of this theorem uses not
much more than some simple combinatorics involving relabelling of
direct sum decompositions, and compatibilities between direct sum
decompositions of short exact sequences and splittings of short exact
sequences. 

The theorem implies that the one parameter family of algebras
$\big(\K(\MM),\ast\big)$ given by
the Rees construction
$$\K(\MM)=\bigoplus_{n\geq0} t^n K^{\leq n}(\MM)\,,$$
is a deformation quantization of (i.e., a one-parameter flat family of
algebras with special fibre)  the commutative algebra
$\big(K(\MM),\cdot\,\big)$. 
Hence the graded algebra $\big(K(\MM),\cdot\,\big)$ inherits a Poisson
bracket $\{\,,\}$ of degree $-1$. In particular,  $K^1(\MM)$ is a Lie
algebra, and it turns out that the Lie bracket on $K^1(\MM)$ is equal to the
commutator bracket associated to $\ast$.

Following Joyce~\cite{JoyceII}, we call $K^1(\MM)$ the Lie algebra of
{\em virtually indecomposable }elements of $K(\MM)$, with the notation
$K^\vir(\MM)=K^1(\MM)$.

We denote the projection onto $K^\vir(\MM)$ by $\pi^\vir$.  With this
notation, we have, as a special case of~(\ref{projform}),
$$\pi^\vir=\sum_{n>0}\frac{(-1)^{n+1}}{n} E_n\,.$$
In terms of eigenspaces of semi-simple inertia, we have
$$K^\vir(\MM)(q)=K^{(q-1)}(\MM)\oplus K^{(q^2-1)}(\MM)\oplus
K^{(q^3-1)}(\MM)\oplus\ldots $$

\begin{introthm}[Hall algebra logarithms]\label{introthm4}
Let $\NN\subset\MM$ be a `small enough' substack, closed under
extensions and direct summands, and not intersecting $\spec
k\stackrel{0}{\longrightarrow}\MM$. Then
$$\epsilon_t[\NN]=\sum_{n\geq0} \binom{t}{n}[\NN]^{\ast n}\in
\hat\K(\MM)_+\,.$$
In particular, the $\ast$-logarithm
$$\epsilon[\NN]=\sum_{n\geq1}\frac{(-1)^{n+1}}{n}[\NN]^{\ast n}\in
\hat K^\vir(\MM)_+\,,$$
is virtually indecomposable.
\end{introthm}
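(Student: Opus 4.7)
The plan is to exploit the striking formal parallel between the series $\epsilon_t[\NN]=\sum_n\binom{t}{n}[\NN]^{\ast n}$ and the projection generating series $\pi_t=\sum_n\binom{t}{n}E_n$ of~(\ref{projform}), together with the filtered structure of the Hall product established in Theorem~\ref{introthm3}. Unwinding the Rees construction, membership of $\epsilon_t[\NN]$ in $\hat\K(\MM)_+$ is equivalent to the statement that the coefficient $A_r=\sum_{n\geq r}\frac{s(n,r)}{n!}[\NN]^{\ast n}$ of $t^r$ lies in $K^{\leq r}(\MM)=\ker E_{r+1}$, so the central task is to compute $E_m[\NN]^{\ast n}$ for all $m,n$.

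First I would describe $E_m[\NN]^{\ast n}$ geometrically as the stack of $n$-step filtrations with subquotients in $\NN$, equipped with an ordered orthogonal idempotent decomposition $1=e_1+\cdots+e_m$ of the identity on the top object. The closure of $\NN$ under direct summands combined with the exclusion of $0$ guarantees that the nonzero summands $e_iN$ all lie in $\NN$; closure under extensions ensures that the induced filtrations on each summand also have subquotients in $\NN$. Combining this with Theorem~\ref{introthm3}, which identifies the associated graded of $(K(\MM),\ast)$ with $(K(\MM),\cdot)$, this analysis should yield a closed formula expressing $E_m[\NN]^{\ast n}$, modulo lower filtration, as a sum over ways to distribute the $n$ filtration steps among the $m$ summands, with coefficients computable through the commutative product.

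Once such a formula is in hand, the passage to the generating series becomes a purely combinatorial Stirling-number manipulation: substituting the formula into $E_{s+1}A_r$ and exploiting the inversion between the bases $\{t^n\}$ and $\{\binom{t}{n}\}$ should make $E_{s+1}A_r$ vanish for $s\geq r$, yielding the first containment. For the second statement, extracting the coefficient of $t$ places $\epsilon[\NN]$ in $K^{\leq 1}(\MM)$, and the hypothesis that $\NN$ does not intersect $\spec k\stackrel{0}{\to}\MM$ rules out a $K^0$-component (since $K^0(\MM)=K(\DM)$ is concentrated on algebroids whose algebra is the zero algebra), giving $\epsilon[\NN]\in K^1(\MM)=K^\vir(\MM)$.

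The main obstacle will be the geometric analysis of $E_m[\NN]^{\ast n}$: one must carefully stratify the stack of filtered objects according to the splitting induced by the idempotent decomposition on the top, keep track of the algebroid structures via the inert-morphism formalism of Section~1, and show that the resulting expression is governed by the commutative product modulo lower filtration in a form clean enough to feed into the Stirling-number identities. The closure hypotheses on $\NN$ are used precisely here, to ensure that all the intermediate stack functions arising in the stratification remain inside $K(\NN)\subset K(\MM)$ and can be expressed back in terms of $\ast$-powers of $[\NN]$.
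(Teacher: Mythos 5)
Your reduction to showing that $A_r=\sum_{n\geq r}\frac{s(n,r)}{n!}[\NN]^{\ast n}$ lies in $K^{\leq r}(\MM)=\ker E_{r+1}$ is correct, your geometric description of $E_m[\NN]^{\ast n}$ is the right one, and your disposal of the $K^0$-component at the end is fine. The gap is in the middle step: you propose to establish the formula for $E_m[\NN]^{\ast n}$ only \emph{modulo lower filtration}, by invoking Theorem~\ref{introthm3}. That cannot deliver the theorem. Membership in $\ker E_{r+1}$ is an exact vanishing statement, and a congruence $E_{r+1}A_r\equiv 0$ modulo some filtration step only places $E_{r+1}A_r$ \emph{in} that step. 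Worse, the error terms are not controlled: $[\NN]$ is closed under direct sums, so $[\NN]$ and its $\ast$-powers have components in $K^k(\MM)$ for arbitrarily large $k$, and Theorem~\ref{introthm3} only compares $\ast$ with $\cdot$ for elements already confined to bounded filtration levels. So ``modulo lower filtration'' here means modulo something far above $K^{\leq r}$, which is useless for concluding $E_{r+1}A_r=0$.

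What rescues the argument — and is what the paper actually does in Section~\ref{seceps} — is that the decomposition you describe holds \emph{exactly}, with no appeal to Theorem~\ref{introthm3}. If a complete set of $m$ non-zero orthogonal idempotents $(e_\kappa)$ of the top object preserves an $n$-step flag $F_\bullet$ with subquotients in $\NN$, then each $F_\nu$ splits as $\bigoplus_\kappa e_\kappa F_\nu$, so the flag splits canonically into flags on the $m$ summands; deleting repeated steps leaves, for each $\kappa$, a flag of some length $\alpha_\kappa>0$ whose non-zero subquotients are direct summands of objects of $\NN$, hence in $\NN$. Stratifying by which pieces $e_\kappa(F_\nu/F_{\nu-1})$ vanish gives the exact identity
$$E_m\big([\NN]^{\ast n}\big)=\sum_{\alpha\in\zz_{>0}^m}{n\brack\alpha}\,[\NN]^{\ast\alpha_1}\cdots[\NN]^{\ast\alpha_m}$$
(commutative product on the right, using that $\MM$ is Karoubian to identify a summand-decomposed flagged object with an $m$-tuple of flagged objects; this is the paper's decomposition $E_m(F_n\xi)=\sum_\alpha{n\brack\alpha}F_\alpha(\xi)$ specialized to $\xi=[\NN]$). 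Proposition~\ref{combinatorics} then gives $E_m\epsilon_t[\NN]=\bigl(\sum_{a>0}\binom{t}{a}[\NN]^{\ast a}\bigr)^{m}$, which is divisible by $t^m$ because each factor is divisible by $t$; this is precisely the exact vanishing you need, and the Stirling inversion you had in mind finishes from there.
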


For the precise definition of `small enough', see
Section~\ref{seceps}. For example, if $\MM=\mf{Coh}_Y$ for a curve
$Y$, we could take $\NN$ to consists of all non-zero semi-stable
vector bundles of a fixed slope.  Since $\NN$ is typically not of
finite type, to make sense of $[\NN]$, we have to pass to a certain
completion $\hat K(\MM)_+$ of $K(\MM)$. See Section~\ref{seceps} for
details.

\begin{introthm}[No poles theorem]\label{introthm5}
Let $K(\St)$ be the $K$-ring of algebraic stacks, modulo all bundle
relations  with special structure group (inert or not). Consider the map
\begin{align*}
\int:K(\MM)&\longrightarrow
K(\St)\\
[(X,A)\to(\MM,\A)]&\longmapsto [X]\,,
\end{align*}
which forgets the structure map to $(\MM,\A)$, and the algebroid
structure over the stack $X$. If $x\in K^{\leq r}(\MM)$, then
$(q-1)^r\int x$
is a regular element of $K(\St)$, i.e., under the identification
$$K(\St)=
K(\Var)[\sfrac{1}{q},\sfrac{1}{q-1},\sfrac{1}{q^2-1},\ldots]\,,$$ 
it can be written with a denominator that does not vanish at $q=1$. 
\end{introthm}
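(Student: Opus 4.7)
The plan is to combine the spectral decomposition of $I^{\circ,\ss}$ with a direct geometric presentation in $K(\St)$. After tensoring with $\qq(q)$, the Diagonalizability and Graded Structure Theorems identify
\[
K^{\leq r}(\MM)(q) = \bigoplus_{\ord_{q=1}\Q(\lambda)\leq r} K^\lambda(\MM),
\]
so every $x \in K^{\leq r}(\MM)$ decomposes as $x = \sum_\lambda x^\lambda$ over $\qq(q)$, with $\ord_{q=1}\Q(\lambda)\leq r$ in each summand. The reduction step will be to prove the sharper statement that for each eigencomponent $\Q(\lambda)\cdot\int x^\lambda$ lies in the subring of $K(\St)$ regular at $q=1$. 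Granting this, $(q-1)^r\int x^\lambda=\bigl[(q-1)^r/\Q(\lambda)\bigr]\cdot\Q(\lambda)\int x^\lambda$ is regular at $q=1$, because $(q-1)^r/\Q(\lambda)$ equals a unit at $q=1$ times $(q-1)^{r-\ord_{q=1}\Q(\lambda)}$ with nonnegative exponent; summing over $\lambda$ preserves regularity.

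The heart of the argument is the eigencomponent claim. On a stack function that represents an eigenvector with eigenvalue $\Q(\lambda)$, the generic semi-simple connected inertia $A_X^{\times,\circ,\ss}$ is a twisted form of a product $\Gm^{\lambda_1}\times\cdots\times\Gm^{\lambda_\ell}$, whose total motivic class is $\Q(\lambda)$. I would use the scissor relations to stratify $X$ so that this generic structure becomes isotrivial along each stratum, and then apply Hilbert's Theorem~90 to the special groups that arise to trivialize the torsor structure. This presents each stratum Zariski-locally as the product of a scheme with the classifying stack of a group of class $\Q(\lambda)\cdot q^N$, where the extra $q^N$ comes from the unipotent part of inertia. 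Consequently $\Q(\lambda)\cdot[X]$ lies in $K(\Var)[\sfrac{1}{q}]$, which is manifestly regular at $q=1$.

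To descend the conclusion from $\qq(q)$ back to $\qq$, I would organize the eigencomponent decomposition into the $\qq$-rational projections $\pi_k(x) = \sum_n\binom{k}{n}E_n(x)\in K^k(\MM)$ and rerun the stratification argument on each $K^k$-summand separately. This avoids inverting $(q-1)$ in the final expression, since the $\qq(q)$-valued eigenvector decomposition is an intermediate device, whereas the $\pi_k$ are $\qq$-linear operators.

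The main obstacle will be the geometric step: passing from the algebraic condition ``eigenvector with eigenvalue $\Q(\lambda)$'' to an explicit isotrivial presentation of $X$. The subtle point is controlling the component group of $A^\times_X$ acting on $A^{\times,\circ,\ss}_X$, which manifests as the Weyl-group-like symmetry already visible in the introduction's rank-two calculation (where $N/T = \zz_2$ for $\GL_2$). This symmetry is precisely what distinguishes the eigencomponents $K^\lambda$ for different partitions $\lambda$ of the same total size, and disentangling it via an equivariant stratification, before reading off $\Q(\lambda)$ as a factor of the $K(\St)$-class, is where most of the work should concentrate.
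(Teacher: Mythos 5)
Your opening reduction is the right one and matches the paper's: decompose into $I^{\circ,\ss}$-eigencomponents, observe that $\Q(\lambda)\int x^\lambda=\int I^{\circ,\ss}x^\lambda$ and that $(q-1)^r/\Q(\lambda)$ is regular when $\ord_{q=1}\Q(\lambda)\leq r$, and recover $\qq$-rationality at the end from the fact that the projections are defined over $\qq$ (the paper does this via the cartesian square $K(\St)_\reg=K(\St)\times_{K(\St)(q)}K(\Var)_{(q-1)}$, which is the cleanest way to phrase your last paragraph; note also that $\pi_k$ is the coefficient of $t^k$ in $\pi_t=\sum_n\binom{t}{n}E_n$, not $\sum_n\binom{k}{n}E_n$). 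But the statement everything hinges on is that $\int\circ I^{\circ,\ss}$ lands in $K(\St)_\reg$ for an \emph{arbitrary} stack function, and your proposed proof of this is where the gap lies. First, an eigenvector $x^\lambda$ is a $\qq(q)$-linear combination of stack functions, not a single $[X\to\MM]$ with inertia of a prescribed shape, so "a stack function that represents an eigenvector" is not a well-posed reduction; the claim has to be proved for every $[X\to\MM]$. Second, and more seriously, the plan "isotrivialize, apply Hilbert~90, read off $\Q(\lambda)\cdot q^N$ times a scheme" cannot succeed as stated: it would prove $\Q(\lambda)\cdot[X]\in K(\Var)[\sfrac{1}{q}]$, and the paper explicitly warns that the analogous integrality of $[I_X]$ is most likely false --- only \emph{regularity} is available. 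The concrete obstruction is already visible in the rank-two example you cite: the piece $T^\ast/N$ of $I^{\ss}_{\BGL_2}$ is a quotient by the \emph{conjugation} action of $N=T\rtimes\zz_2$, whose stabilizer at every point contains all of $T$. No stratification makes this a torsor under a special group, so Hilbert~90 gives you nothing, and $[T^\ast/N]$ is not of the form $(q-1)^2q^N[\text{scheme}]$ on the nose.

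What actually closes this gap in the paper is an extra torus action that your proposal does not contain. After presenting $(X,A)$ as $(\GL_n\backslash Y,\GL_n\backslash B)$ via a faithful representation and stratifying $B$ by the subalgebra $D_I\subset D_n$ of diagonal matrices commuting with the tautological section, one lets $T(I)=T_I\times T/T_I$ act on $B_I^\times$ by \emph{left multiplication} by the commuting torus $T_I$ combined with conjugation by $T/T_I$. The point of adding the left-multiplication factor is precisely to kill the infinite conjugation stabilizers: on the semi-simple locus this combined action has finite stabilizers (two commuting semi-simple matrices with the same eigenvalues differ by one of at most $n!$ permutations). The quotient therefore has quasi-finite inertia, and a separate lemma --- a motivic Burnside-type statement --- shows that any finite-type stack with quasi-finite stabilizer has regular class in $K(\St)$. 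The bundle relation then yields $[B_I^{\times,\ss}]=(q-1)^n[\tilde Z_I]$ with $[\tilde Z_I]$ regular, and dividing by $[\GL_n]$ leaves a factor regular at $q=1$. You correctly identified the component-group/Weyl symmetry as the main obstacle, but the resolution is not an equivariant isotrivialization; it is this auxiliary free-up-to-finite action together with the quasi-finite-stabilizer regularity lemma, neither of which appears in your outline.
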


Moreover, suppose we have a grading monoid $\Gamma$ for $\MM$:
$$\MM=\coprod_{\gamma\in \Gamma} \MM_\gamma\,.$$
We say that $x$ has `degree' $\gamma$ if $x\in \MM_\gamma$.
We need $\Gamma$ to be  endowed with a $\zz$-valued bilinear form
$\chi$, such that 
for every object $x$ in $\MM_\gamma$, and $y$ in
  $\MM_\beta$,
\begin{items}
\item  every extension of $y$ by $x$ is in $\MM_{\beta+\gamma}$,
\item the stack of extensions of $y$ by $x$ is the quotient of a
  vector space $E_1$ by another vector space $E_0$, acting trivially,
  such that $\dim E_0-\dim   E_1=\chi(\beta,\gamma)$.
\end{items}
For the precise assumptions, see Section~\ref{hereditary}. They  are
satisfied if $\MM=\mf{Rep}_Q$, or if 
$\MM=\mf{Coh}_Y$ and $Y$ is a smooth curve, or more generally, if
$\MM$ is {\em hereditary}.

Then we have a commutative diagram
$$\xymatrix{
  \big(\K(\MM),\ast\big)\rrto^{t\longmapsto 0}\dto_{\int}^{t\mapsto (q-1)} &&
  \big(K(\MM),\cdot\,\big)\dto_{\int}^{q\mapsto 1}\\
  K(\Var)_\reg[\Gamma]\rrto^{q\longmapsto1} &&
  K(\Var)/(q-1)[\Gamma]\rlap{\,.}}$$
Here we use the $\Gamma$-graded integral, which is essentially a
generating function, indexed by $\Gamma$, of the integrals of the
components of degree $\gamma\in \Gamma$ of a given stack function.

The upper horizontal arrow in this diagram is the specialization map,
which exists because of Theorem~\ref{introthm3}.  The left vertical arrow
exists by the  `No Poles Theorem'~\ref{introthm5}. It is a standard fact,
that this map is an algebra morphism, i.e., respects the
$\ast$-product, if the target $K(\Var)_\reg[\Gamma]$ is endowed with
its q-deformed product twisted by $\chi$. It is a formal consequence of the
commutativity of this diagram that the right vertical map is a
morphism of Poisson algebras, if the target $K(\Var)/(q-1)[\Gamma]$ is
endowed with its bracket induced by $\chi$.

In particular, we deduce that
\begin{align}
  K^\vir(\MM)&\longrightarrow K(\Var)/(q-1)[\Gamma]\nonumber\\
  x_\gamma&\longmapsto \big((q-1)\textstyle{\int}\label{intgam}
  x_\gamma\big)\big|_{q=1} u^\gamma =\res_{q=1}(\int  x_\gamma) u^\gamma
\end{align}
is a morphism of Lie algebras.

The proof of the No Poles Theorem combines the above results about the
diagonalizability of $I^{\circ,\ss}$, especially in its form avoiding
denominators divisible by $(q-1)$, with the result that for an
algebroid $(X,A)$, the stack $I^{\circ,\ss}_X$ has
regular motivic weight, i.e., 
$$[I^{\circ,\ss}_X]\in K(\St)_\reg\,.$$
We think of this as a motivic version of Burnside's lemma.  The more
natural looking conjecture that for an algebraic stack $X$, the
motivic weight of $I_X$ is contained in $K(\Var)\subset K(\St)$ is
most likely false. 
  
\subsubsection{Discussion}

To produce counting invariants for $\MM$, we need to look for
subcategories $\NN\subset\MM$, to which we can apply
Theorem~\ref{introthm4}, giving us virtually indecomposable elements
$\epsilon[\NN]$, to which we can apply the integral~(\ref{intgam}),
yielding generating functions with coefficients in $K(\Var)/(q-1)$.
We can apply the Euler characteristic to these elements of
$K(\Var)/(q-1)$ to obtain rational numbers.
In the hereditary case, the fact 
that (\ref{intgam}) is a morphism of Lie algebras, gives the relations
among generating functions one is interested in.  This leads to
wall-crossing formulas, and other results. For details we refer to the
works of Joyce,  Joyce-Song, and many others.

To deal with the Calabi-Yau 3 case one needs to insert the correct
motivic vanishing cycle weights, to define the integral. This is done
by Joyce and Song in~\cite{JoyceSong}.

The work of Joyce on configurations in abelian categories contains
results which correspond to ours, but his definitions are more ad-hoc.
In fact, one reason for writing the present article is to give a more
conceptual treatment of Joyce's results. We do not prove that our
notion of `virtual indecomposable object' coincides with Joyce's
(except for in the case of $\MM=\mf{Vect}$, see the appendix), but
instead prove that our notion has the same properties as Joyce's and
is just as useful.  (Of course, the counting invariants we obtain are
the same as the ones obtained by Joyce, as they do not depend on the
definition of virtual indecomposable object.)

We think of the Lie algebra $K^\vir(\MM)$ as an analogue of the Lie
algebra of primitive elements in a cocommutative Hopf algebra. In fact,
one may ask whether $\big(K(\MM),\ast\big)$  is
equal to the 
universal enveloping algebra of the Lie algebra $K^\vir(\MM)$. To
deduce such a statement from structure theorems for Hopf algebras, one
would need to enhance $K(\MM)$ to a 
cocommutative Hopf algebra.  We have not been able to construct the
necessary coproduct. We view the family of operators $(E_n)$ as
somewhat of a replacement. It lets us prove at least some of the
results expected of a cocommutative Hopf algebra, in particular
Theorems~\ref{introthm3} and~\ref{introthm4}.

\subsubsection{Acknowledgements}

The idea to consider the inertia stack as an operator on $K$-groups of
stacks and motivic Hall algebras, and to study its eigenspace
decomposition to understand Joyce's work in a more conceptual fashion
is due to Tom Bridgeland.  In particular, the conjecture that the
semi-simple inertia operator is diagonalizable is due to him.  We
would like to thank Tom Bridgeland for sharing his ideas with us. We
would also like to thank Dominic Joyce and Arend Bayer for fruitful
discussions.

\section{Linear algebraic stacks and algebroids}

\subsection{Algebraic stacks}

Let us briefly summarize our conventions about algebraic stacks. 

We choose a noetherian base ring  $R$ (commutative and with
unit), and we fix our base 
category $\SSS$ to be the category of $R$-schemes, endowed with the
\'etale topology.\comment{make sure we don't inadvertently assume $R=\cc$}
Over $\SSS$ we have a canonical sheaf of $R$-algebras  $\O_\SSS$, it is represented
by $\aaa^1=\aaa^1_{\Spec R}$, and called the {\em structure sheaf}. 

We will assume our algebraic stacks to be locally of finite type.
\comment{this is to make coherent sheaves well-behaved, maybe locally
  noetherian  would be more natural?}
Thus, an {\em algebraic stack} is a stack over the site $\SSS$, which
admits a presentation by a smooth groupoid $X_1\rightrightarrows X_0$, where $X_0$
and $X_1$ are algebraic spaces, {\em locally of finite type }over $R$, the
source and target morphisms $s,t:X_1\to 
X_0$ are smooth, and the diagonal $X_1\to X_0\times X_0$ is of finite
type. In fact, all algebraic stacks we encounter will have affine
diagonal. 

By a {\bf stratification }of an algebraic stack $X$, we mean a
morphism of algebraic stacks $X'\to X$, which is a surjective
monomorphism, and which admits a finite decomposition $X'=\coprod_i
X_i$, such that every $X_i\to X$ is a locally closed embedding of
algebraic stacks.

If $G$ is an algebraic group acting on the algebraic space $X$, we will
denote the quotient stack by $X/G$, because we fear the more common
notation $[X/G]$ would lead to confusion with the notation for
elements of various $K$-groups of schemes and stacks. 

Suppose $G\to X$ is a relative group scheme over the stack $X$.  The
{\em connected component }of $G$, notation $G^\circ$, is the subsheaf
of $G$, defined by requiring a section $g\in G(S)$ to factor through
$G^\circ(S)$, if and only if for all points (equivalently geometric
points) $s$ of $S$, we have $g(s)\in G^\circ_s$. If $G\to X$ is
smooth, the connected component $G^\circ\subset G$ is represented by an
open substack of $G$, which is a smooth group scheme with
geometrically connected fibers over $X$. (See \cite{SGA3},
Expos\'e~VI$_{\text{B}}$, Th\'eoreme~3.10.)

If the inertia stack $I_X$ of an algebraic stack $X$ is smooth over
$X$, the connected component $I^\circ_X$ exists. We can apply the {\em
  rigidification construction }(see for example \cite{TameStacks},
Appendix) to $I_X^\circ\subset I_X$, and obtain a (uniquely
determined) Deligne-Mumford stack $\ol X$, together with a morphism
$X\to \ol X$, making $X$ a {\em connected gerbe }over $\ol X$,
(which means that the relative inertia of $X$ over $\ol X$ has
connected fibres). The structure morphism $X\to \ol X$ is smooth. 

A gerbe $X\to \ol X$ is an {\em isotrivial gerbe}, if it admits a
section over a finite \'etale $\ol X$-stack. If $X\to \ol X$ is
a smooth gerbe over a Deligne-Mumford stack, there exists a
stratification  $\ol X'\to \ol X$, such that 
the restriction of the gerbe $X$ to each piece of $\ol X'$ is
isotrivial.  (This follows from the fact that a quasi-finite morphism
of Deligne-Mumford stacks is generically finite. This, in turn,
follows from Zariski's main theorem \cite[Section~16]{LMB}.)

Let us also remark that every Deligne-Mumford stack admits a
stratification by {\em integral normal }Deligne-Mumford stacks,
although we do not use this fact.

\subsubsection{Sheaves on algebraic stacks}

We need to clarify the notions of vector bundle, coherent sheaf, and representable
sheaf of $\O_X$-modules, and how they relate to each
other.

In particular, an algebraic stack $X$ is a fibered category 
$X\to \SSS$. The category $X$ inherits a topology from $\SSS$, called
the \'etale topology, and $X$ endowed with this topology is the {\em
  big \'etale site }of $X$.  Sheaves over $X$ are by definition
sheaves on this big \'etale site. For example, $\O_\SSS$ induces a
sheaf of $R$-algebras on $X$, which is denoted by $\O_X$, and
called the {\em structure sheaf }of $X$.  It is represented by
$\aaa^1_X$.

A
sheaf $\F$ over $X$ induces for every object $x$ of $X$ lying over
the object $U$ of $\SSS$  a sheaf on the usual
(small) \'etale site $U_{\text{\'et}}$ of the scheme $U$, denoted $\F_U$. Moreover, for
every  morphism $\alpha:y\to x$ lying over $f:V\to U$, we obtain a
morphism of sheaves $\alpha^\ast:f^{-1}(\F_U)\to \F_V$. (The
$\alpha^\ast$ satisfy an obvious cocycle condition, and the condition
that they are isomorphisms if $f$ is \'etale.) For example, 
the structure sheaf $\O_X$ induces the structure sheaf on $U_{\text{\'et}}$, for
every such $x/U$. The data of the
small \'etale sheaves $\F_U$, together with the compatibility morphisms
$\alpha^\ast$, satisfying the two parenthetical conditions, is equivalent to the
data defining $\F$ (see \cite[Exp.\ IV, 4.10]{sga4}).   The functor
$\F\mapsto \F_U$ is the sheaf pullback 
morphism of a morphism of sites $U_{\text{\'et}}\to X$, from the small \'etale
site of $U$ to the big \'etale site of $X$. In particular,
$\F\mapsto\F_U$ is exact. Both $U_{\text{\'et}}$ and $X$ are ringed
sites, and $\F\mapsto\F_U$ is also the sheaf of modules pullback of
the morphism of ringed sites $U_{\text{\'et}}\to X$. Therefore, the
functor $\F\to \F_U$ is also exact when considered as a functor from
the category of big sheaves of $\O_X$-modules to the category of small
sheaves of $\O_{U_{\text{\'et}}}$-modules.

If $\F$ and $\G$ are sheaves of $\O_X$-modules, then $\shom(\F,\G)$ is
again a sheaf of $\O_X$-modules.  In particular, for a sheaf of
$\O_X$-modules, we have the dual $\F^\vee=\shom(\F,\O_X)$. 

Note that,
in general, the natural 
homomorphism $\shom(\F,\G)_U\to\shom(\F_U,\G_U)$ is not an
isomorphism, see below~(\ref{ex1}).

\subsubsection{Coherent sheaves}

A sheaf $\F$ of $\O_X$-modules is {\em locally coherent}, if for every
$x/U$ the sheaf $\F_U$ is a coherent sheaf of
$\O_{U_\et}$-modules. (This terminology is inspired by  
\cite[\href{http://stacks.math.columbia.edu/tag/06WJ}{Tag~06WJ}]{SP}.)
It is {\em cartesian}, if all compatibility morphisms
$\alpha^\ast:f^\ast \F_U\to \F_V$ are isomorphisms of sheaves of
$\O_{V_{\text{\'et}}}$-modules. A sheaf which is both locally coherent
and cartesian is {\em coherent}.

For example, a groupoid presentation $X_1\rightrightarrows X_0$
of $X$, and a coherent sheaf $\F_0$ on $X_0$, together with an
isomorphism $s^\ast\F_0\to t^\ast \F_0$, satisfying the usual cocycle
condition on $X_2=X_1\times_{X_0}X_1$, give rise to a coherent sheaf
on $X$. 

The sheaf of sections of a vector bundle over $X$ is coherent. In
fact, the notion of {\em vector bundle }and {\em locally free coherent
  sheaf }are equivalent, and we will use them interchangeable, even
though the two categories are {\em anti-equivalent}.  The cokernel of
a homomorphism of vector bundles is coherent.  In fact, every cokernel
of a homomorphism of coherent sheaves is coherent.

If the cokernel of a homomorphism of vector bundles is locally free,
we call the homomorphism a {\bf strict }homomorphism of vector
bundles. For a strict homomorphism of vector bundles, the image and
the kernel, as well as the cokernel are locally free.

A {\em strict monomorphism }of vector bundles is a strict homomorphism
whose kernel is zero. A homomorphism of vector bundles is a strict
monomorphism/an epimorphism,  if and only if over every
geometric point of $X$, the induced linear map is
injective/surjective. A homomorphism of vector bundles, which is a
monomorphism of sheaves, is a strict monomorphism of bundles.

Let $\phi:E\to F$ be a homomorphism of vector bundles over the
algebraic stack $X$. The flattening stratification $X'\to X$ of $\cok\phi$
serves also as {\em strictening stratification }for $\phi$.  This means
that an object of $X(S)$ lifts to $X'(S)$, if and only if $\phi_S$
is strict.

In general, the kernel (in the category of big sheaves of
$\O_X$-modules) of a homomorphism of vector bundles is locally
coherent, but not coherent.

By
\cite[\href{http://stacks.math.columbia.edu/tag/06WK}{Tag~06WK}]{SP},
a sheaf of $\O_X$-modules $\F$ is coherent if and only 
if there exists a smooth covering $X_i\to X$ of $X$ by finite type
affine schemes $X_i$, such that for every $i$, the restriction $\F_i$
of $\F$ to the big \'etale site of $X_i$ is isomorphic to the cokernel
of a homomorphism of vector bundles.

\begin{prop}\label{cohprop}
Suppose that $\F$ is a coherent sheaf on the algebraic stack $X$. Then 
\begin{items}
\item for every $x/U$, we have $(\F^\vee)_U=(\F_U)^\vee$,
\item $\F^\vee$ is locally coherent,
\item $\F^\vee$ is represented by a an algebraic stack, which is of
  finite type and affine over $X$, namely $\spec_X\Sym_{\O_X}\F$,
\item the canonical homomorphism $\F\to \F^{\vee\vee}$ is an
  isomorphism of sheaves of $\O_X$-modules,
\item if $\F$ is locally free coherent, then  $\F^\vee$ is a vector bundle.
\end{items}

Moreover, the functor $\F\mapsto \F^\vee$ is a fully faithful functor from
  the category of coherent sheaves to the category of locally coherent
  sheaves of $\O_X$-modules. It maps right exact sequences of coherent
  sheaves to left exact sequences of locally coherent sheaves. 
\end{prop}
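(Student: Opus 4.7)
The plan is to reduce everything to small-\'etale-local computations on schemes, or to a Yoneda-style argument for the contravariant functor $\F\mapsto \spec_X\Sym_{\O_X}\F$. Assertion (i) is the cornerstone: once it is established, (ii), (iii), (v) and the exactness statement follow routinely, while (iv) together with full faithfulness forms the conceptual heart of the proposition.

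For (i), unwinding definitions, a section of $(\F^\vee)_U$ over an \'etale $V\to U$ is an element of the big-site $\Hom$ group $\Hom_{V\text{-big}}(\F|_V,\O_V)$. Since $\F$ is cartesian, $\F|_V$ is determined by its small-\'etale avatar $\F_V=f^\ast\F_U$, and both $\F|_V$ and $\O_V$ are cartesian, so this $\Hom$ agrees with $\Hom_{V_\et}(\F_V,\O_V)$. For coherent $\F_U$, dualization commutes with the flat pullback $\F_U\mapsto \F_V$, giving $\Hom_{V_\et}(\F_V,\O_V)=(\F_U)^\vee(V)$, which is (i). Assertion (ii) is then immediate, since the dual of a coherent sheaf on a scheme is coherent; (v) follows the same way, as local freeness is detected on small \'etale sites. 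For (iii) I would apply the universal property of relative spec: for $T\to X$, morphisms $T\to \spec_X\Sym_{\O_X}\F$ over $X$ correspond to $\O_X$-algebra homomorphisms $\Sym_{\O_X}\F\to (T\to X)_\ast\O_T$, hence to $\O_X$-linear maps $\F\to (T\to X)_\ast\O_T$, and by adjunction to $\O_T$-linear maps $f^\ast\F\to\O_T$, i.e., to sections of $\F^\vee$ over $T$. Coherence of $\F$ forces $\Sym_{\O_X}\F$ to be locally of finite presentation as an $\O_X$-algebra, so the representing object is affine and of finite type over $X$.

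The main obstacle is (iv), which would simply be false if $\F^{\vee\vee}$ were computed inside coherent sheaves (torsion coherent sheaves have vanishing double dual). The point is that $\F^\vee$ is not coherent but is representable by the affine $X$-scheme $\spec_X\Sym_{\O_X}\F$ via (iii), so $\F^{\vee\vee}$ must be computed as the sheaf of $\O$-linear morphisms $\spec_X\Sym_{\O_X}\F\to \aaa^1_X$ of sheaves of $\O_X$-modules in the big \'etale topos. By the universal property of relative spec, such $\O$-linear morphisms correspond to $\O_X$-linear maps $\O_X\to \F$, i.e., to elements of $\F$. Carried out in families over varying $T\to X$, this identification is exactly the canonical evaluation map $\F\to \F^{\vee\vee}$, which is therefore an isomorphism.

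The remaining claims are formal consequences. Full faithfulness of $\F\mapsto\F^\vee$ amounts to bijectivity of $\phi\mapsto \shom(\phi,\O_X): \Hom(\F,\G)\to \Hom(\G^\vee,\F^\vee)$, with inverse obtained by dualizing once more and invoking (iv) for both $\F$ and $\G$. The exactness claim is just that $\shom(-,\O_X)$ is left exact in its first variable and turns cokernels into kernels, so any right-exact sequence $\F\to \G\to \H\to 0$ of coherent sheaves becomes the left-exact sequence $0\to \H^\vee\to \G^\vee\to \F^\vee$ of locally coherent sheaves.
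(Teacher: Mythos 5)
Your proof is correct and, for the substantive parts, follows essentially the same route as the paper: part (i) via the adjunction/flat-base-change identity $\Hom_{\O_U}(\F_U,f_\ast\O_V)=\Hom_{\O_V}(\F_V,\O_V)$, and part (iv) by computing $\F^{\vee\vee}$ as morphisms of the representing affine stack $\spec_X\Sym_{\O_X}\F$ to $\aaa^1_X$ — including the key conceptual point that the double dual must be taken in big sheaves, where representability saves it from the torsion counterexample. Two small differences in execution: in (iv) you assert that $\O$-linear morphisms $\spec_X\Sym_{\O_X}\F\to\aaa^1_X$ correspond to sections of $\F$, whereas the universal property of relative $\spec$ a priori only gives algebra maps $\O_X[t]\to\Sym_{\O_X}\F$, i.e.\ arbitrary sections of $\Sym_{\O_X}\F$; the paper closes this by observing that linear maps are $\Gm$-equivariant, hence correspond to \emph{graded} algebra maps, which land in the degree-one part $\F$. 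You should make that step explicit. For the ``moreover'' you argue formally by double dualization (which works, granting the standard triple-dual compatibility), while the paper instead reconstructs $\F$ from the stack $\F^\vee$ as the degree-one part of $\pi_\ast\O_{\F^\vee}$ — a more geometric inverse construction; either is fine.
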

\begin{proof}
The first claim follows directly from the definitions, the main fact
being the adjunction $$\Hom_{\O_U}(\F_U,f_\ast\O_V)=\Hom_{\O_V}(f^\ast
\F_U,\O_V)=\Hom_{\O_V}(\F_V,\O_V)\,,$$ for every morphism of
$X$-schemes $f:V\to U$.  

The second claim follows from the first (see
also \cite[\href{http://stacks.math.columbia.edu/tag/06WM}{Tag
    06WM}]{SP}).

For the third claim,  see \cite[(14.2.6)]{LMB}. 

For the fourth claim, consider the sheaf of $\Gm$-equivariant
$X$-morphisms from $\Spec_X \Sym_{\O_X}\F$ to $\aaa^1_X$, denoted by
$\uHom_{\Gm}(\F^\vee,\aaa^1)$. It is equal to the sheaf of
homomorphisms of graded $\O_X$-algebras from $\O_X[t]$ to
$\Sym_{\O_X}\F$, hence equal to $\F$. But sections of $\F$ give rise
to $\O_X$-linear homomorphisms $\F^\vee\to \O_X$, not just
$\Gm$-equivariant ones.  Hence 
$$\F=\uHom_{\Gm}(\F^\vee,\aaa^1)=\shom_{\O_X}(\F^\vee,\O_X)=\F^{\vee\vee}\,.$$

The fifth claim is clear.

The `moreover' 
follows from the fact that we can reconstruct $\F$ from
$\F^\vee=\Spec_X\Sym_{\O_X}\F$, as the degree one part of the the
graded sheaf of $\O_X$-modules $\pi_\ast(\O_{\F})$, where $\pi:\F\to X$
is the projection morphism.
\end{proof}

\subsubsection{Representable  sheaves of modules}

If  $\phi:E\to F$ is a homomorphism of vector bundles
over $X$, then $\ker\phi$, constructed in the category of big sheaves,
is a representable sheaf of $\O_X$-modules.
In fact, $\ker\phi$ is equal to the fibered product of stacks
$$\xymatrix{
\ker\phi\rto\dto & X\dto^0\\
E\rto^\phi & F\rlap{\,.}}$$
Sheaves such as $\ker\phi$ belong to a class of $\O_X$-modules {\em
  dual } to coherent sheaves.

\begin{prop}\label{eqcon}
Let $\F$ be a sheaf of $\O_X$-modules.  The following are equivalent:
\begin{items}
\item there exists a coherent sheaf $\N$, such that $\F$ is isomorphic
  to $\N^\vee$,
\item there exists a smooth cover $X_i\to X$ of $X$ by finite type
  affine schemes $X_i$, such that, for every $i$,  the restriction $\F_i$ of $\F$ to
  the big \'etale site of $X_i$ is isomorphic to the kernel of a
  homomorphism of vector bundles over $X_i$.
\end{items}
\end{prop}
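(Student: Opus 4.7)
The plan is to exploit the three structural properties of the dual functor $(\cdot)^\vee$ established in Proposition~\ref{cohprop}: it is left exact, reflexive on vector bundles, and fully faithful on coherent sheaves. The easy direction will come directly from left exactness, while the hard direction will require reflexivity to set up local inverses and full faithfulness to glue them.

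For the direction (i)$\Rightarrow$(ii), suppose $\F=\N^\vee$ for a coherent sheaf $\N$. On a smooth cover $\{X_i\to X\}$ by finite type affine schemes, present $\N|_{X_i}$ as the cokernel of a homomorphism of vector bundles $V_i\to W_i$, using the local description of coherent sheaves recalled just before Proposition~\ref{cohprop}. Since dualization sends right exact sequences of coherent sheaves to left exact sequences of locally coherent sheaves, the presentation $V_i\to W_i\to \N|_{X_i}\to 0$ dualizes to $0\to \F|_{X_i}\to W_i^\vee\to V_i^\vee$. As $W_i^\vee$ and $V_i^\vee$ are vector bundles by item~(v) of Proposition~\ref{cohprop}, this exhibits $\F|_{X_i}$ as the kernel of a homomorphism of vector bundles.

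For the harder direction (ii)$\Rightarrow$(i), pick a smooth cover $\{X_i\to X\}$ on which $\F|_{X_i}=\ker(E_i\to F_i)$ for vector bundles $E_i,F_i$. Define a coherent sheaf on $X_i$ by $\N_i=\cok(F_i^\vee\to E_i^\vee)$. Dualizing and using the reflexivity $E_i^{\vee\vee}\cong E_i$, $F_i^{\vee\vee}\cong F_i$ (item~(iv) of Proposition~\ref{cohprop}), we obtain $\N_i^\vee=\ker(E_i\to F_i)=\F|_{X_i}$. To glue, put $X_{ij}=X_i\times_X X_j$; both $\N_i|_{X_{ij}}$ and $\N_j|_{X_{ij}}$ have dual canonically equal to $\F|_{X_{ij}}$, so the full faithfulness of $(\cdot)^\vee$ lifts this tautological identification of duals to a unique isomorphism $\phi_{ij}\colon \N_i|_{X_{ij}}\to\N_j|_{X_{ij}}$. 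Uniqueness automatically forces the cocycle condition $\phi_{jk}\circ\phi_{ij}=\phi_{ik}$, and smooth descent of coherent sheaves on algebraic stacks then assembles $(\N_i,\phi_{ij})$ into a coherent sheaf $\N$ on $X$ with $\N^\vee\cong \F$.

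The main obstacle is the glueing step, but what makes it go through painlessly is that we never need to compute $\shom$ explicitly on overlaps: full faithfulness converts the tautological equality of duals into canonical gluing data, and uniqueness delivers the cocycle condition for free. One still has to confirm that restriction along morphisms in the big \'etale site commutes with the dual functor on coherent sheaves (this is straightforward because $(\cdot)^\vee$ is defined via $\shom$ on the big site, and the cautionary remark in the paper concerns small versus big \'etale sites rather than pullback along a cover) and that coherent sheaves satisfy smooth descent on algebraic stacks, which is standard.
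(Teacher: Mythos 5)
Your proof is correct, and your (i)$\Rightarrow$(ii) direction is exactly what the paper intends by ``follows from the results proved above'': dualize a local presentation of $\N$ as a cokernel of vector bundles, using left exactness of $(\cdot)^\vee$ and the fact that duals of vector bundles are vector bundles. Where you genuinely diverge is in (ii)$\Rightarrow$(i). You construct local coherent models $\N_i=\cok(F_i^\vee\to E_i^\vee)$ and assemble them by smooth descent, with full faithfulness of $(\cdot)^\vee$ supplying both the transition isomorphisms $\phi_{ij}$ and (via uniqueness) the cocycle condition. The paper instead notes that there is already a globally defined candidate, namely $\N=\F^\vee$, and reduces the whole statement to two assertions that are local on $X$ --- that $\F^\vee$ is coherent, and that $\F\to\F^{\vee\vee}$ is an isomorphism --- both of which hold for a global kernel of a homomorphism of vector bundles because such a kernel is represented by $\Spec_X\Sym_{\O_X}\cok(E_1^\vee\to E_0^\vee)$ and one can then invoke Proposition~\ref{cohprop}. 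The two arguments consume the same local input; the paper's version buys economy by sidestepping descent entirely (no overlaps, no cocycles), at the price of having to recognize that coherence and reflexivity are local properties, while yours is more hands-on but must additionally verify that $(\cdot)^\vee$ commutes with restriction along the cover and that coherent sheaves satisfy smooth descent on algebraic stacks. Both of these are true (restriction to the big \'etale site of an $X$-scheme is a localization, so it commutes with internal hom --- the paper's cautionary Example~\ref{ex1} concerns the small sites only), and you flag them explicitly, so there is no gap; your route is simply longer.
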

\begin{proof}
The fact that (i) implies (ii), follows from the results proved
above. So let us indicate how to prove that (ii) implies (i).

Let us first assume that $\F$ is isomorphic to the kernel of a
homomorphism of vector bundles $E_0\to E_1$. One checks that $\F$ is
then represented by $\Spec_X\Sym_{\O_X}\cok(E_1^\vee\to
E_0^\vee)$. Thus $\F$ is isomorphic to the dual of the coherent sheaf
$\cok(E_1^\vee\to E_0^\vee)$. 

Now suppose that $\F$ is locally isomorphic to the kernel of a
homomorphism of vector bundles. It suffices to prove that $\F^\vee$ is
coherent, and that  $\F\to
\F^{\vee\vee}$ is an isomorphism. Both claims can be checked locally,
and are true for duals of coherent sheaves.
\end{proof}
 
\begin{defn}
We call a sheaf of $\O_X$-modules {\bf locally coherent
  representable}, if any of the two equivalent conditions of
Proposition~\ref{eqcon} is satisfied. The terminology is justified by
Proposition~\ref{lgken}, below. 
\end{defn}

In other words, the category of locally coherent representable sheaves
over $X$ is the essential image of the fully faithful functor
mentioned in Proposition~\ref{cohprop}.  We therefore have an
equivalence of categories
\begin{align}\label{dualfun}
(\text{coh.\ sheaves over $X$})&\longrightarrow(\text{loc.\ 
    coh.\ repr.\ sheaves over $X$})\\
\F&\longmapsto \F^\vee\,.\nonumber
\end{align}

The following proposition summarizes facts about locally coherent
representable sheaves, which all follow easily from facts mentioned
above.  

\begin{prop}\label{lgken}
Let $\F$ be a locally coherent representable sheaf over the algebraic
stack $X$.  Then 
\begin{items}
\item the sheaf $\F$ is locally coherent,
\item the sheaf $\F^\vee$ is coherent,
\item the canonical homomorphism $\F\to\F^{\vee\vee}$ is an
  isomorphism of sheaves of $\O_X$-modules,
\item the sheaf $\F$ is representable by an algebraic stack $F\to X$,
  which is of finite type and affine over $X$,
\item in fact, $\F=\spec_X\Sym_{\O_X}\F^\vee$.
\end{items}

Moreover, the functor $\F\mapsto \F^\vee$ is an essential inverse to
the functor (\ref{dualfun}). It maps left exact sequences of locally
coherent representable sheaves to right exact sequences of coherent
sheaves. \nolinebreak$\Box$
\end{prop}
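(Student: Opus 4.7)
The strategy is to reduce Proposition \ref{lgken} entirely to Proposition \ref{cohprop}, using the equivalent characterization provided by Proposition \ref{eqcon}. I would invoke Proposition \ref{eqcon}(i) to choose a coherent sheaf $\N$ together with an isomorphism $\F \cong \N^\vee$; every one of the five bulleted claims then follows directly from the corresponding item of Proposition \ref{cohprop} applied to $\N$.

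Going through them in order: claim (i) is immediate from Proposition \ref{cohprop}(ii), which asserts that duals of coherent sheaves are locally coherent; claim (ii) follows from Proposition \ref{cohprop}(iv), since $\F^\vee = \N^{\vee\vee} \cong \N$ is coherent; claim (iii) is then the composition $\F^{\vee\vee} \cong \N^\vee \cong \F$, again obtained by invoking biduality on $\N$; finally, claims (iv) and (v) are a restatement of Proposition \ref{cohprop}(iii), after substituting $\N = \F^\vee$ in the formula $\F = \N^\vee = \Spec_X \Sym_{\O_X} \N$.

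For the moreover, the functor (\ref{dualfun}) is $\N \mapsto \N^\vee$; by claims (ii) and (iii) the functor $\F \mapsto \F^\vee$ goes in the opposite direction, and the two compositions are naturally identified with the identity via the canonical biduality maps, making it an essential inverse. Exactness is then formal: Proposition \ref{cohprop} states that the forward functor sends right exact sequences of coherent sheaves to left exact sequences of locally coherent representable sheaves, so running the equivalence backwards must send left exact sequences to right exact ones. I do not foresee any real obstacle, since Propositions \ref{eqcon} and \ref{cohprop} have already done the work; the whole argument is a bookkeeping exercise, and the only point requiring minor care is the naturality of the biduality isomorphisms, which is guaranteed by their canonical construction.
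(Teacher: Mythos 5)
Your reduction is correct and is exactly what the paper intends: Proposition~\ref{lgken} is stated with an empty proof precisely because, after writing $\F\cong\N^\vee$ via Proposition~\ref{eqcon}(i), each item is the corresponding item of Proposition~\ref{cohprop} applied to $\N$, with biduality supplying (ii), (iii) and the identification $\N\cong\F^\vee$ used in (v). The only step worth spelling out is the ``moreover'': deducing that the inverse functor sends left exact sequences to right exact ones uses full faithfulness (identify the kernel $\ker(\N_2^\vee\to\N_3^\vee)$ with $\cok(\N_3\to\N_2)^\vee$ and transport back), but this is the routine bookkeeping you acknowledge.
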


\begin{prop}\label{bdlstr}
Let $\F$ be a locally coherent representable sheaf over the finite
type algebraic stack 
$X$. There is a unique stratification $X'\to X$, with the property
that an $X$-scheme $S$ factors through $X'$, if and only if $\F|_S$
is a vector bundle. More precisely, 
$X'=\coprod_{n\geq0} X_n$, and  $X_n\to X$ is a locally closed
immersion of algebraic stacks, with the property that $S\to X$ factors
through $X_n$ if and only if $\F|_S$ is a vector bundle of rank $n$.
\end{prop}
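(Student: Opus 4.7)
The plan is to translate the problem into a flattening stratification for the coherent dual sheaf. By Proposition~\ref{lgken}(ii), the sheaf $\G := \F^\vee$ is coherent on $X$, and by part (v) of the same proposition, $\F$ is represented by $\spec_X \Sym_{\O_X}\G$. Since the $\Spec \Sym$ construction is compatible with arbitrary base change, for any morphism $S \to X$, the restriction $\F|_S$ is represented by $\spec_S \Sym_{\O_S}(\G|_S)$. Using Proposition~\ref{cohprop}(v) (and the fact that a coherent sheaf is locally free of rank $n$ if and only if its dual is a vector bundle of rank $n$), we conclude that $\F|_S$ is a vector bundle of rank $n$ if and only if $\G|_S$ is locally free of rank $n$. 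Thus the problem reduces to producing the analogous stratification for the coherent sheaf $\G$.

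Next, I would invoke the flattening stratification for $\G$. On a noetherian scheme this is classical: given a coherent sheaf, the locus where it is locally free of rank $n$ is a locally closed subscheme, and these loci, ranging over $n \geq 0$, define a stratification. To lift this to the algebraic stack $X$, choose a smooth presentation $X_1 \rightrightarrows X_0$ by finite type affine schemes, apply the classical flattening stratification to $\G_0$ on $X_0$, and use the uniqueness of the pieces to check that the resulting stratification is $X_1$-equivariant; by smooth descent it then descends to a stratification $X' = \coprod_{n\geq 0} X_n \to X$ by locally closed substacks, characterized by the property that an $X$-scheme $S$ factors through $X_n$ if and only if $\G|_S$ is locally free of rank $n$. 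Since $X$ is of finite type over the noetherian base $R$, the ranks of $\G$ are bounded, so the decomposition is in fact finite.

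Uniqueness of the stratification is automatic: the characterization by $S$-points determines each $X_n$ as a subfunctor, so any two such stratifications have the same pieces. Combining with the translation in the first paragraph, this is the desired stratification for $\F$.

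The only technical step is verifying that the classical flattening stratification for coherent sheaves descends along smooth covers of algebraic stacks; but this is straightforward because the condition ``locally free of rank $n$'' is smooth-local on the base, and the uniqueness of the classical stratification forces compatibility with the groupoid structure $X_1 \rightrightarrows X_0$. All remaining work is bookkeeping.
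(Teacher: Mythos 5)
Your proposal is correct and follows exactly the paper's route: the paper's entire proof is the single sentence that the desired stratification is the flattening stratification of the coherent sheaf $\F^\vee$. You have simply supplied the supporting details (base-change compatibility of $\spec_X\Sym_{\O_X}\F^\vee$, the duality between ``$\F|_S$ is a vector bundle of rank $n$'' and ``$\F^\vee|_S$ is locally free of rank $n$'', and smooth descent of the classical flattening stratification), all of which are sound.
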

\begin{proof}
  The sought after  stratification is the flattening stratification of
  the coherent sheaf $\F^\vee$.
\end{proof}

\begin{ex}\label{ex1}
Consider $X=\aaa^1$, with coordinate $t$, and let
$\C$ be the cokernel of the homomorphism of vector bundles
$t:\aaa^1_X\to\aaa^1_X$. It is the skyscraper sheaf of the origin,
considered as a coherent sheaf on $X$, and extended to a big sheaf
over $X$ in
the usual way. The sheaf $\C$ is an example of a coherent sheaf which
is not representable.

Let $\K$ be the kernel of
$t:\aaa^1_X\to\aaa^1_X$.  This is locally coherent representable, but
not cartesian, hence not coherent. 

Note that $\C^\vee=\K$.  This shows that $\F^\vee$ may not be
coherent, even if $\F$ is. 

Note also, that $\K^\vee=\C$, which shows that $\F^\vee$ may not
be representable, even if $\F$ is. 

Finally, note that  $(\K_X)^\vee=0^\vee=0$, but
$(\K^\vee)_X=\C_X$  is the structure sheaf of the origin in $X$, considered
as a skyscraper sheaf on $X_{\text{\'et}}$, which is not zero. This
gives an example  where $(\F^\vee)_U\not=(\F_U)^\vee$. 
\end{ex}

\begin{rmk}
Of course the category of coherent sheaves on an algebraic stack $X$
has kernels and internal homs, but they do not agree with those
in the category of  big sheaves, which we considered above. It is
therefore important to specify the 
context, when dealing with kernels or duals in the category of
coherent sheaves.  Unless specified otherwise, we will always consider sheaves
of $\O_X$-modules as big sheaves.\comment{is this actually true?}
\end{rmk}

\subsection{Linear algebraic stacks}

We will review the definition of linear algebraic stacks, and some
basic constructions.  For definitions and basic properties of fibered
categories we refer the reader to \cite[Expos\'e VI]{SGA1}. The
material here is presumably known, but we could not find a suitable
reference. 

Suppose\comment{Algebraic stacks have roman letters $X$, etc,
  but I'll let linear stacks have fraktur letters $\XX$, etc}
 $\XX\to\SSS$ is a category  over $\SSS$.  We write $\XX(S)$
for the fiber of $\XX$ over the object $S$ of $\SSS$. If $f:S'\to S$ is
a morphism in $\SSS$, and $x'\in \XX(S')$ and $x\in \XX(S)$ are
$\XX$-objects lying over $S'$ and $S$, respectively, we write
$\Hom_f(x',x)$ for the set of morphisms from $x'$ to $x$ in $\XX$,
lying over $f$.  For $S'=S$ and $f=\id_S$, we write
$\Hom_S(x',x)$. 

Recall that a morphism $\alpha:x'\to x$ lying over $f:S'\to S$ is {\em
  cartesian}, if for every object $x''$ of $\XX(S)$, composition with
$\alpha$ induces a bijection
$\Hom_S(x'',x')\longiso\Hom_f(x'',x)$. Recall further that
$\XX\to\SSS$ is a {\em fibered category}, if every composition of cartesian
morphisms is cartesian, and if for every $f:S'\to S$ in $\SSS$, and
every $x$ over $S$, there exists a cartesian morphism over $f$ with
target $x$. A {\em cartesian functor }between categories over $\SSS$
is one that preserves cartesian morphisms. 

If $\XX$ is a fibered category over $\SSS$, the subcategory of $\XX$, consisting
of the same objects and all cartesian morphisms is a category fibered
in groupoids over $\SSS$.  We denote it by $\XX_\cfg$, and call it the
{\em underlying category fibered in groupoids}.

\begin{defn} 
A category $\XX$ over $\SSS$ is an {\bf $\O$-linear category over }$\SSS$,
if for every $f:S'\to S$ in $\SSS$ and all $x'\in\XX(S')$,
$x\in \XX(S)$, the set $\Hom_f(x',x)$ is endowed with the structure of
an $\O(S')$-module,  in such a way that for every pair of morphisms
$g:S''\to S'$, $f:S'\to S$, and every triple of objects
$x''\in \XX(S'')$, $x'\in\XX(S')$, $x\in \XX(S)$, the composition
$$\Hom_f(x',x)\times\Hom_g(x'',x')\longrightarrow\Hom_{f\circ
  g}(x'',x)$$ 
is $\O(S')$-bilinear.

An {\bf $\O$-linear functor }$F:\XX\to \YY$ between $\O$-linear categories is a
functor of categories over $\SSS$, such that for every $f:S'\to S$,
and all $x'\in \XX(S')$, $x\in \XX(S)$ the map $\Hom_f(x',x)\to
\Hom_f\big(F(x'),F(x)\big)$ is $\O(S')$-linear. 
\end{defn}

Assume given an $\O$-linear fibered category $\XX$ over $\SSS$. 
Pullback in $\XX$ is $\O$-linear,
i.e., if $f:S'\to S$ is a morphism in $\SSS$, and $x,y\in \XX(S)$ are
objects with pullbacks $x',y'\in \XX(S')$, the pullback map
$f^\ast:\Hom_S(x,y)\to\Hom_{S'}(x',y')$ is $\O(S)$-linear. 

So if we fix objects $x,y\in \XX(S)$, the
presheaf $\uHom_S(x,y)$ over 
the usual small \'etale site of $S$, defined by
$\uHom_S(x,y)(T)=\Hom_T(x|_T,y|_T)$, for every \'etale $T\to S$, is a presheaf of
$\O_{S_{\text{\'et}}}$-modules. Moreover, for {\em any }morphism $f:S'\to S$ in
$\SSS$, 
we have a natural homomorphism of presheaves of $\O_{S}$-modules 
$\uHom_S(x,y)\to f_\ast\uHom_{S'}(x',y')$. Put together, the small
presheaves $\uHom_{T}(x,y)$, as $T\to S$ varies over the big \'etale site of the
scheme $S$, form a big presheaf, which we denote by $\uHom(x,y)$. 

\begin{defn}\label{defn-lin}
A {\bf linear algebraic stack }is an $\O$-linear
fibered category $\XX$ over $\SSS$, such that 
\begin{items}
\item for every object $S\in\SSS$, and every pair $x,y\in \XX(S)$, the
 presheaf $\uHom(x,y)$ on the big \'etale site of the scheme $S$ is a
 locally coherent
 representable sheaf of $\O_S$-modules,
\item the underlying category fibered in groupoids 
  $\XX_\cfg\to\SSS$  is an algebraic stack over $R$ (locally of finite
  type).
\end{items}
A {\bf morphism }of linear algebraic stacks is an $\O$-linear cartesian
functor over $\SSS$.
\end{defn}

\begin{rmk}\label{hrmk}
If $\XX$ is a linear algebraic stack, with underling algebraic stack
$X=\XX_\cfg$, there exists a locally coherent representable
sheaf $\H$ over $X\times X$, which represents the sheaf over
$X\times X$, whose set of sections over the pair $x,y\in X(S)$ is
the $\O(S)$-module $\Hom_S(x,y)$. The sheaf $\H$ is the {\em universal
sheaf of homomorphisms}.  The subsheaf $\I\subset \H$ representing
isomorphisms is naturally identified with $X$, and the projection to
$X\times X$ with the diagonal. 

Pulling back $\H$ via the diagonal to $X$, we obtain the {\em
  universal sheaf of endomorphisms }$\E\to X$, which represents the sheaf whose
set of sections over  $x\in X(S)$ is the  $\O(S)$-algebra
$\End_S(x)$. Let us emphasize that $\E\to X$ is a representable
morphism of algebraic stacks, which is at the same time a sheaf of
algebras, and a locally coherent
representable sheaf of $\O_X$-modules. 

The linear algebraic stack $\XX$ can be reconstructed from its
underlying algebraic stack $X$, and  the representable sheaf of $\O_{X\times
  X}$-algebras $\H$.  We leave it to the reader to write down axioms
for the pair $(X,\H)$, which assure that $(X,\H)$ comes from a linear
algebraic stack. 
\end{rmk}

\subsubsection{Examples}
  
\begin{ex}\label{coh}\comment{do we need any assumptions on our
    schemes $S\in\SSS$?  It seems a bit suspicious to talk about
    coherent sheaves on $X\times S$ for completely arbitrary schemes
    $S$, for example, don't we want that flat coherent sheaves are
    locally free? I looked up in the stack project: a coherent sheaf
    which is flat is locally free on {\em any }scheme}
Let $X$ be a projective $R$-scheme. The linear stack $\mf{Coh}_X$ has
as objects lying over the $R$-scheme $S$, the coherent sheaves on
$X\times S$, which are flat over $S$. For a morphism of $R$-schemes
$f:S'\to S$, and  $\F'\in\mf{Coh}_X(S')$, and
$\F\in\mf{Coh}_X(S)$, we set $\Hom_f(\F',\F)=\Hom_{\O_{X\times
    S'}}(\F',f^\ast\F)$.  A morphism $\F'
\to \F$ in $\mf{Coh}_X$ over $f$ in $\SSS$ is cartesian, if it
induces an isomorphism   $\F'\cong f^\ast\F$. 

The linear stack $\mf{Coh}_X$ is algebraic. 

To see this, suppose $\F$ and $\G$ are
coherent sheaves on $X\times S$, flat over $S$.  The fact that
$\uHom(\F,\G)$ is a locally coherent representable sheaf of
$\O_X$-modules, 
follows from the fact that 
there exists a coherent sheaf $\N$ on the big \'etale site of $S$,
such that $\uHom(\F,\G)=\N^\vee$  (see \cite[EGA III 7.7.8,
  7.7.9]{EGA}). In fact, for a morphism of schemes $T\to S$, we have
$\uHom_T(\F,\G)={\pi_T}_\ast\shom(\F_{X\times T},\G_{X\times
  T})$. The fact that pushforward does not commute with arbitrary
pullbacks means that $\uHom(\F,\G)$ is not in general cartesian, and
hence not in general coherent. On the other hand, by [ibid.], we 
have 
${\pi_T}_\ast\shom(\F_{X\times T},\G_{X\times T})=(\N_T)^\vee$, which
proves that, indeed, $\uHom(\F,\G)=\N^\vee$. 

The fact that $(\mf{Coh}_X)_\cfg$ is algebraic and locally of finite
type is proved in \cite[4.6.2.1.]{LMB}.
\end{ex}

\begin{ex}\label{vect}
As a special case of the previous example, consider the case
$X=\Spec R$. Then the linear algebraic stack $\mf{Coh}_{\Spec R}$ is
the {\em linear stack of vector bundles}, notation $\mf{Vect}$.  The
underlying algebraic 
stack $\mf{Vect}_\cfg$ 
is the disjoint union $\coprod_{n\geq 0}BGL_n$. The sheaf $\H$ over 
$$\coprod_{n\geq 0}BGL_n\times\coprod_{n\geq 0}BGL_n=\coprod_{n,m\geq0}
B(GL_n\times GL_m)$$ is given by the natural representation $M(m\times
n)$ of $GL_n\times GL_m$ over the component $B(GL_n\times GL_m)$. 
\end{ex} 

\begin{ex}\label{repQ}
A  generalization of $\mf{Vect}$ in a different direction is given by 
quiver representations. 

Let $Q$ be a quiver. The stack of representations of $Q$, notation
$\mf{Rep}_Q$, has as $\mf{Rep}_Q(S)$ the set of diagrams $(\F)$ in the shape
of $Q$ of  locally free finite rank $\O_S$-modules. For a morphism
$f:S'\to S$ of $R$-schemes we have that
$\Hom_f(\F',\F)$ is the $\O(S')$-module of homomorphisms $\F'\to f^\ast
\F$ of diagrams of locally free $\O_{S'}$-modules.
\end{ex}

\begin{ex}\label{toy}
As a toy example, let $A$ be a smooth  $R$-algebra scheme of finite type,
with smooth group scheme of units $A^\times$, also of finite type,
such that the underlying $R$-module is locally free. Then we
define the linear stack of $A^\times$-torsors to have as objects over
the $R$-scheme $S$ the right $A^\times$-torsors over $S$, and for
$f:S'\to S$ and $A^\times$-torsors $P'$ over $S'$ and $P$ over $S$, we
set $\Hom_f(P',P)= \Hom_{S'}(P',f^\ast
P)=P'\times_{A^\times}A\times_{A^\times}f^\ast P$. In this example,
the underlying algebraic stack is $BA^\times$ and we
have $\H=A^\times\backslash A/A^\times$.

The case $A=0$ is not excluded. The associated linear stack is
$\id:\SSS\to\SSS$.  All $\Hom_f(x,y)$ are singletons, endowed with
their unique module structure.  This  stack is represented by $\Spec
R$. It can also be thought of as the stack of zero-dimensional vector
bundles. 
\end{ex}

\subsubsection{Substacks}

Let $\XX$ be a linear algebraic stack with underlying algebraic
stack $X=\XX_\cfg$. If $Y\subset X$ is a locally closed algebraic
substack, there is a canonical linear algebraic stack $\YY$, with
underlying algebraic stack $\YY_\cfg= Y$.  In fact,
we can define $\YY$ to be the full subcategory of $\XX$ consisting
of objects which are in  $X$. 

In this situation, we call $\YY\to\XX$ a locally closed linear
substack of $\XX$.

\subsubsection{Fibered products}

Let $F:\XX\to\ZZ$ and $G:\YY\to \ZZ$ be cartesian morphisms of
$\O$-linear fibered categories. We define a new $\O$-linear fibered
category  $\WW$ as follows:
objects of $\WW$ over the object $T$ of $\SSS$ are triples $(x,\alpha,y)$, where
$x$ is an $\XX$-object over $T$, $y$ is a $\YY$-object over $T$, and
$\alpha$ is an isomorphism $\alpha:F(x)\to G(y)$, over $T$.  A
morphism from $(x',\alpha',y')$ to $(x,\alpha,y)$ over $T'\to T$ is a
pair of morphisms $f:x'\to x$ over $T'\to T$ and $g:y'\to y$ over
$T'\to T$, such that $\alpha\circ F(f)=G(g)\circ\alpha'$.

In other words, we can write the set of morphisms from
$(x',\alpha',y')$ to $(x,\alpha,y)$ over $\phi:T'\to T$ as the fibered
product
$$\Hom_\phi(x',x)\times_{\Hom_\phi(F(x'),G(y))}\Hom_\phi(y',y)\,,$$
and as each of the sets in this fibered product is an $\O(T')$-module,
and the maps are linear, this fibered product is also an $\O(T')$-module. 
We leave it to the reader to verify that composition is bilinear. 

Let us verify that $\WW$ is a fibered category. Suppose that
$(x,\alpha,y)$ is a triple over $T$, and $\phi:T'\to T$ a morphism in
$\SSS$. We construct a triple $(x',\alpha',y')$ over $T'$ by taking as
$x'$ a pullback of $x$ via $\phi$, and for $y'$ a pullback of $y$ via
$\phi$. Then, as $G$ is cartesian, $G(y')$ is a pullback of  $G(y)$
via $\phi$.  Hence there exists a unique morphism $\alpha':F(x')\to
G(y')$ covering $T'$, such that $\alpha\circ F(x'\to x)=G(y'\to
y)\circ \alpha'$. Then $\alpha'$ is cartesian, because cartesian
morphisms satisfy the necessary two out of three property.  Then
$\alpha'$ is invertible, because cartesian morphisms covering an
identity are invertible. The triple $(x',\alpha',y')$ comes with a
given morphism to $(x,\alpha,y)$ which covers $\phi$.  It is easily
verified that this morphism is cartesian. 

Therefore, $\WW$ is an $\O$-linear fibered category.  By construction, the two
projections $\WW\to\XX$ and $\WW\to\YY$ are cartesian.  We call $\WW$
the {\em fibered product }of $\XX$ and $\YY$ over $\ZZ$.  

Suppose $\XX$, $\YY$ and $\ZZ$ are algebraic, with underlying
algebraic stacks $X$, $Y$ and $Z$, respectively. For triples
$(x',\alpha',y')$ and $(x,\alpha,y)$ over $S$, the presheaf
$\uHom\big((x',\alpha',y'),(x,\alpha,y)\big)$ is equal to the
fibered product
$$\uHom(x',x)\times_{\uHom(Fx',Gy)}\uHom(y',y)\,,$$
and is therefore a locally coherent representable sheaf of $\O_S$-modules. We
see that $\WW$ is again a linear algebraic stack.  Moreover, the
underlying algebraic stack of $W$ is the fibered product
$X\times_Z Y$.

\subsubsection{Lack of locality} 

\begin{rmk}\label{lack}
Suppose $\XX$ and $\YY$ are linear
algebraic stacks, with underlying algebraic stacks $X$ and $Y$. We can
construct a disjoint union linear algebraic stack $\XX\amalg\YY$ whose
underlying algebraic stack is $X\amalg Y$, by declaring all
homomorphisms between objects of $\XX$ and objects of $\YY$ to be
zero. This concept of disjoint union is not useful for our purposes.
For the linear algebraic stacks we are interested in, the
underlying algebraic stack  often decomposes into a disjoint
union, even though the linear algebraic stack does not. 
An example is given by the linear stack of vector bundles $\mf{Vect}$,
Example~\ref{vect}.  

Thus linear algebraic stacks exhibit less local behaviour than
algebraic stacks, and are therefore less geometrical.  This is one of
the reasons we prefer to work with {\em algebroids}, rather than
linear algebraic stacks. 
\end{rmk}

\subsubsection{Special linear stacks}

For a linear algebraic stack $\MM$, every fiber category $\MM(S)$ is
an $R$-linear category. By putting special requirements on these
linear categories, we get stronger notions of linear algebraic stack. 

For a linear algebraic stack $\MM$, we denote the universal sheaf of
endomorphisms by $\AA\to \MM$.

\begin{defn}\label{nul}
A linear algebraic stack $\MM$ has a {\bf zero object}, if  the
$R$-linear category $\MM(R)$ admits a zero object. 
\end{defn}

If $\MM$ admits a zero object, then for every 
$R$-scheme $S$, the $R$-linear category $\MM(S)$ admits a zero object,
namely the pullback of the zero object in $\MM(\spec R)$ via the
unique morphism $S\to\spec R$. 

A zero object for $\MM$ defines a section $\spec
R\stackrel{0}{\longrightarrow} \MM$,
which is an isomorphism onto the closed substack of $\MM$
defined by the condition  $1=0$ inside $\AA$. 

If $\MM$ admits a zero object, we denote the complement of the zero
object in $\MM$ by $\MM_\ast$. It is a linear open substack of $\MM$.

\begin{defn}
The linear algebraic stack $\MM$ {\bf admits direct sums}, if for ever
$R$-scheme $S$, the $R$-linear category $\MM(S)$ admits all (finite)
direct sums. 
\end{defn}

The pullback functor $\MM(S)\to \MM(S')$, for a morphism of
$R$-schemes $S'\to S$ commutes with direct sums. 
Hence, if $\MM$ admits direct sums, there is a canonical morphism of
linear stacks
\begin{align*}
\MM\times\MM&\longrightarrow\MM\\
(x,y)&\longmapsto x\oplus y\,.
\end{align*}
See also Remark~\ref{karoubi}.


\subsection{Finite type algebras}

\begin{defn}\label{ftalg}
Let  $X$ be an algebraic stack.  By an {\bf algebra }over $X$, we mean
a sheaf of $\O_X$-algebras over $X$. If the algebra $A$ over the 
algebraic stack $X$ is an algebraic stack itself, i.e., if the
structure morphism $A\to X$ is a representable morphism of stacks,
then we say that $A$ is {\bf representable}. If $A$ is represented by a finite type
affine stack of the form $\spec_X\Sym_{\O_X}\F$, for a coherent sheaf
$\F$ over $X$, we call $A$ a {\bf   finite type algebra }over $X$.

For an automorphism $\phi$ of an algebra $A$, we denote the subalgebra
of fixed sections by $A^\phi$.  For  a section $a$ of $A$ we denote by
$A^a$ the subalgebra of sections commuting with $a$. 
\end{defn}

So the sheaf of $\O_X$-modules underlying a finite type algebra is
locally coherent representable. 

If $\XX$ is a linear algebraic stack with underlying algebraic
stack $X$, then the universal sheaf of endomorphisms  $\E\to X$ is a finite
type algebra.\comment{I'm a bit torn about what the universal stack of
  endomorphisms is: is it a linear algebraic stack? or an algebraic
  stack? But in the end, algebroids are more important, so WHO CARES?}

Note that finite type algebras need not have a coherent underlying
sheaf of $\O_X$-modules.  For example, let $X=\aaa^1$, with coordinate
$t$, and let $A\to X$ be the centralizer in $(M_{2\times 2})_X$ of the
matrix $\bigl( \begin{smallmatrix}
  t&0\\ 0&0
\end{smallmatrix} \bigr)$. In this case the big sheaf underlying $A$
is not cartesian.

\begin{rmk}
Dually, a finite type algebra $A$ over $X$ corresponds to a coherent
sheaf $M$ over $X$, which is endowed with a coalgebra structure
$(\Delta,\epsilon)$, where $\Delta:M\to M\otimes_{\O_X}M$ is an
associative comultiplication with counit $\epsilon:M\to \O_X$. For any
$X$-scheme $U$, we have $A_U=M_U^\vee$. 
\end{rmk}

\subsubsection{Inertia representation}

Whenever $A\to X$ is an algebra over the algebraic stack $X$, we have
a tautological  morphism of sheaves of groups over
$X$
\begin{equation}\label{rep}
I_X\longrightarrow \uAut(A)\,.
\end{equation}
Here $I_X$ is the inertia stack of $X$, i.e., the stack of pairs
$(x,\phi)$, where $x$ is an object of $X$, and $\phi$ an automorphism
of $x$, and $\uAut(A)$ is the sheaf of automorphisms of the sheaf of
algebras $A$ over $X$. 
To construct (\ref{rep}), consider the stack of sheaves of algebras $\mf{Alg}$
over $\SSS$, which has as objects over the scheme $S$, the sheaves of
$\O_S$-algebras on the usual (small) \'etale site of $S$.  A morphism
from the sheaf of $\O_{S'}$-algebras $A'$ over $S'$, covering the morphism of
schemes $f:S'\to S$, to the sheaf of $\O_S$-algebras $A$ over $S$, is, by
definition, an isomorphism of sheaves of $\O_{S'}$-algebras $A'\to
f^\ast A$.  The sheaf of algebras $A\to X$ gives rise to a morphism of
$\SSS$-stacks $a:X\to\mf{Alg}$. We get an induced morphism on inertia
stacks $I_X\to I_{\mf{Alg}}$, and notice that $a^\ast
I_{\mf{Alg}}=\uAut(A)$. 

With this definition, an automorphism $\phi$ of the object $x$ of the
stack $X$ is mapped to the {\em inverse }of the restriction morphism
$\phi^\ast:A(x)\to A(x)$.

\begin{lem}\label{inertiadescent}
Suppose $X$ is a gerbe over the algebraic space $Y$, and  $A\to X$
is an algebra.  Then there exists a sheaf of $\O_Y$-algebras $B$, and
an isomorphism $A\cong B|_X$ if and only if the inertia representation
$I_X\to \uAut_X(A)$ is trivial. 

Similarly, if $X$ is a connected gerbe over the Deligne-Mumford stack
$Y$, then an algebra $A$ over $X$ descends to $B$ over  $Y$, if and only if the
inertia representation restricts to a trivial homomorphism
$I_X^\circ\to\uAut_X(A)$. 

In either case,  $A$ is representable or of finite type if
and only if $B$ is. \nolinebreak $\Box$
\end{lem}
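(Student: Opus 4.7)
The plan is to prove the equivalence in the first assertion by separately handling each direction, then to adapt the argument to connected gerbes and deduce preservation of the representability and finite type properties at the end. For the \emph{only if} direction: if $A\cong f^\ast B$ with $f:X\to Y$, then for any $(x,\phi)\in I_X(S)$ the automorphism $\phi$ covers $\id_y$ in $Y$, where $y=f(x)$, because $Y$ is an algebraic space and so has trivial inertia. Hence $\phi^\ast:A(x)\to A(x)$ is identified with $\id_y^\ast:B(y)\to B(y)$, which is the identity. In the connected gerbe case the rigidification construction identifies $I_X^\circ$ with the relative inertia of $X\to \ol X$, so the same argument applies to every $\phi\in I_X^\circ$.

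For the \emph{if} direction in the first assertion, I would use that a gerbe over an algebraic space admits sections étale-locally on the base. Pick an étale cover $u:U\to Y$ together with a section $\sigma:U\to X$ and set $B_U:=\sigma^\ast A$, a sheaf of $\O_U$-algebras. Descent of $B_U$ along $u$ amounts to constructing a canonical isomorphism $p_1^\ast B_U\cong p_2^\ast B_U$ on $U':=U\times_Y U$ satisfying the cocycle condition. The two maps $\sigma\circ p_1,\sigma\circ p_2:U'\to X$ are sections of the trivial gerbe $X\times_Y U'\to U'$, hence become isomorphic after further étale refinement via some $\phi\in I_X$. Any two such choices of $\phi$ differ by the inertia action on $A$, which is trivial by assumption; thus the induced isomorphism $(\sigma p_1)^\ast A\cong (\sigma p_2)^\ast A$ is independent of the choice of $\phi$ and defines unambiguous descent data. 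The cocycle condition holds automatically by the same rigidity, producing an algebra $B$ on $Y$ with $B|_X\cong A$.

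The connected gerbe case proceeds analogously, but since $X\to\ol X$ need not admit sections étale-locally on $\ol X$, I would first apply the stratification discussed in the preliminaries (smooth gerbes over DM stacks are generically isotrivial) to reduce to the isotrivial case, where a section exists over a finite étale cover $U\to\ol X$. The same descent argument then works with $I_X^\circ=I_{X/\ol X}$ in place of $I_X$, using the assumed triviality of the $I_X^\circ$-action on $A$ to produce canonical descent data along $U\to\ol X$. For the final clause, representability and the finite type condition are fpqc local on the base: coherent sheaves descend, and the formation of $\spec_X\Sym_{\O_X}$ commutes with arbitrary pullback, so $B$ is representable (resp.\ of finite type) if and only if $A=B|_X$ is. The main obstacle is the descent step in the connected case, where the lack of étale-local sections forces one to pass through the isotrivialization; once that reduction is made the argument is parallel to the gerbe-over-space case.
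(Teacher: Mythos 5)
The paper states this lemma without proof, so I am judging your argument purely on its own merits. Your ``only if'' direction is correct, and your descent argument over an algebraic space $Y$ is the right one: choose an \'etale cover $U\to Y$ with a section $\sigma:U\to X$, note that $\sigma p_1$ and $\sigma p_2$ are locally isomorphic over $U\times_YU$ with ambiguity measured by $I_{X/Y}=I_X$ (here $I_Y$ is trivial), and use the assumed triviality of the inertia action on $A$ to make the induced isomorphisms of pullbacks of $A$ canonical, so that the descent data and the cocycle condition come for free.

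The gap is in the connected-gerbe case, where you propose to ``first apply the stratification \dots\ to reduce to the isotrivial case.'' Stratifying the base does not reduce the problem: the lemma asserts the existence of a single algebra $B$ on $Y$ with $A\cong B|_X$, and algebras descended separately over the pieces of a stratification of $Y$ do not reassemble into a sheaf on $Y$ --- a stratification is a surjective monomorphism, not a cover in any topology for which these sheaves satisfy descent. This matters for how the lemma is actually used later (e.g.\ in the definition of clear algebroids, where $Z(A)$ must descend to the genuine coarse stack $\ol X$, not to its strata). Fortunately the detour is unnecessary. In the paper's conventions $I_X^\circ$ exists when $I_X$ is smooth over $X$, and for a connected gerbe $X\to Y$ over a Deligne--Mumford stack one has $I_{X/Y}=I_X^\circ$, smooth with connected fibres; hence $X\to Y$ is a smooth morphism and therefore admits sections \'etale-locally on any \'etale atlas $V\to Y$. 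Composing gives an \'etale cover $U\to Y$ of the Deligne--Mumford stack together with a section $U\to X$, and the argument of the first part goes through verbatim, the ambiguity in a local isomorphism between two sections of the connected gerbe now being a section of $I_X^\circ$, which acts trivially on $A$ by hypothesis. With that replacement your proof is complete; the final clause is correctly handled, since coherent sheaves satisfy \'etale descent and $\spec_X\Sym_{\O_X}(-)$ commutes with pullback.
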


We can pull back the sheaf of algebras $A$ over $X$, via the structure
morphism $I_X\to X$, to obtain the sheaf of algebras $A|_{I_X}$. This
sheaf of algebras is endowed with a tautological automorphism, induced
from (\ref{rep}). The algebra of invariants for this automorphism we
shall denote by $A_{I_X}^\fix$. 

The following statement is somewhat tautological, and holds more
generally than for algebras.

\begin{prop}\label{tut}\comment{should be about finite type algebras,
    or algebra bundles?}
Suppose that $A$ is a representable algebra over the algebraic stack
$X$. Then the inertia stack of $A$ is naturally identified with
$A_{I_X}^\fix$. In particular, $I_A$ is a representable algebra over $I_X$. 
\end{prop}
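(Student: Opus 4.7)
The plan is to identify the two sides via their functors of points on $\SSS$, using the fact that $A\to X$ is representable. First I would unpack $I_A$: a section of $A$ over an $R$-scheme $S$ is by definition a pair $(x,a)$ with $x\in X(S)$ and $a$ a section of the pulled-back algebra $A_x$ over $S$. An automorphism of $(x,a)$ in $A$ covering the identity of $S$ is then an automorphism $\phi$ of $x$ such that the induced action of $\phi$ on $A_x$ coming from the inertia representation (\ref{rep}) fixes $a$. Thus an $S$-point of $I_A$ is canonically the datum of a triple $(x,\phi,a)$ with $(x,\phi)\in I_X(S)$, $a\in A_x(S)$, and $\phi\cdot a=a$.

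Next I would unpack $A^\fix_{I_X}$ in parallel. The pullback $A|_{I_X}$ has as $S$-points the triples $(x,\phi,a)$ with $(x,\phi)\in I_X(S)$ and $a\in A_x(S)$ (with no constraint on $a$). The tautological automorphism of $A|_{I_X}$ is by construction obtained by applying (\ref{rep}) to the universal automorphism over $I_X$, so the subalgebra of fixed sections $A^\fix_{I_X}$ consists precisely of those triples with $\phi\cdot a=a$. This matches the description of $I_A$ term for term, and the identification is visibly functorial in $S$, giving the asserted natural isomorphism $I_A\cong A^\fix_{I_X}$.

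For the ``in particular'' clause, representability of $A\to X$ is preserved by base change along $I_X\to X$, so $A|_{I_X}\to I_X$ is a representable algebra. Its subalgebra $A^\fix_{I_X}$ is the equalizer over $I_X$ of the tautological automorphism with the identity of $A|_{I_X}$; this is a closed subalgebra of a representable algebra, hence itself a representable algebra over $I_X$. The isomorphism established above then transfers this algebra structure to $I_A$.

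I do not expect a serious obstacle, as the statement is essentially a repackaging of definitions. The only point requiring a moment of care is the sign convention in (\ref{rep}), where $\phi$ is sent to the \emph{inverse} of $\phi^\ast$; but since $\phi$ is invertible, $\phi^\ast a = a$ and its inverse analogue cut out the same closed subfunctor, so this convention does not obstruct the identification.
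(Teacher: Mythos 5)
Your proposal is correct and follows essentially the same route as the paper: both identify $I_A$ with the substack of $A|_{I_X}$ of triples $(x,\phi,a)$ satisfying the fixed-point condition, the only difference being that you spell out the functor-of-points bookkeeping (and the representability of the equalizer) a bit more explicitly. Your remark about the inverse in the inertia representation is well taken and harmless, exactly as you say.
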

\begin{proof}
We have a commutative diagram of algebraic stacks
$$\xymatrix{I_A\rto\dto& A\dto\\
I_X\rto & X}$$
which identifies $I_A$ with a substack of $A|_{I_X}$. The algebra
$A|_{I_X}$ is the stack of triples $(x,\phi,a)$, where $x$ is an
object of $X$, $\phi$ is an automorphism of $x$, and $a\in A(x)$ is an object
of $A$ lying over $x$. Such a triple is in $I_A$, if and only if
$\phi\in \Aut(x)$ is in the subgroup $\Aut(a)\subset\Aut(x)$. This is
equivalent to $\phi$ fixing $a$ under the action of $\Aut(x)$ on
$A(x)$. This is the claim.
\end{proof}

In fact, the fibre of $I_A$ over the object $x$ of $X$ is equal to 
$$I_A(x)=\{(\phi,a)\in \Aut(x)\times A(x)\mid
\phi^\ast(a)=a\}\,.$$
The fibre of $I_A(x)$ over $\phi\in \Aut(x)$ is the subalgebra $A(x)^\phi\subset
A(x)$, and the fibre of $I_A(x)$ over $a\in A(x)$ is the subgroup 
$\Stab_{\Aut(x)}(a)\subset \Aut(x)$.

\subsubsection{Algebra bundles}

\begin{defn}
We call a finite type algebra $A\to X$ an {\bf algebra bundle}, if the
underlying $\O_X$-module is locally free (necessarily of finite rank).
\end{defn}

When studying finite type algebras over finite type
stacks $X$, we may, after passing to a locally closed stratification
of $X$, assume that the finite type algebra is an algebra bundle.

\begin{defn}\label{rankstrat}
Let $A\to X$ be a finite type algebra over the algebraic stack
$X$. The stratification $X'\subset X$ of Proposition~\ref{bdlstr} is
characterized by the property that an $X$-scheme $S$ factors through
$X'$ if and only if $A|_S$ is an algebra bundle.  The pullback
$A'=A|_{X'}$ is an algebra bundle, and the induced stratification
$A'\to A$ of $A$ is called the {\bf rank stratification }of $A$.
\end{defn}

\begin{rmk}
By considering the representation of $A$ on itself by left
multiplication, we see that every algebra bundle is a sheaf of
subalgebras of the algebra $\End(V)$ of endomorphisms of a vector
bundle $V$ over the stack $X$. 
\end{rmk}

\subsubsection{Central idempotents}

\begin{lem}\comment{what we had before is simply wrong.  I don't think
    the center of a finite type algebra is finite type.}
The centre of an algebra bundle is a finite type algebra.
\end{lem}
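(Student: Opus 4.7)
The plan is to exhibit the centre $Z(A)$ as the kernel of a homomorphism of vector bundles, and then appeal to the characterization of finite-type algebras via duality developed earlier in this section.

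Since $A$ is an algebra bundle, the underlying $\O_X$-module is locally free of finite rank. Consequently, the sheaf of $\O_X$-linear endomorphisms
$$\uEnd_{\O_X}(A) \cong A^\vee \otimes_{\O_X} A$$
is again a vector bundle over $X$. The commutator in $A$ is $\O_X$-bilinear, so the adjoint representation
$$\ad\colon A \longrightarrow \uEnd_{\O_X}(A), \qquad a \longmapsto [a,\,\cdot\,],$$
is an $\O_X$-linear morphism, i.e.\ a homomorphism of vector bundles over $X$. By construction
$$Z(A) = \ker(\ad),$$
where the kernel is taken in the category of big sheaves of $\O_X$-modules.

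By Proposition~\ref{eqcon}, the kernel of a homomorphism of vector bundles is a locally coherent representable sheaf of $\O_X$-modules, and by Proposition~\ref{lgken} it is represented by $\spec_X \Sym_{\O_X}\big(Z(A)^\vee\big)$ with $Z(A)^\vee$ coherent. The subsheaf $Z(A) \subset A$ inherits its $\O_X$-algebra structure from $A$ (the product of two central sections is central), and so it meets Definition~\ref{ftalg} of a finite type algebra over $X$.

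There is no serious obstacle: the whole argument rests on the simple observation that $\ad$ is a morphism of vector bundles, combined with the general principle that such kernels live in the class of locally coherent representable sheaves, which are exactly the finite-type-algebra-supporting sheaves. The only subtlety worth remarking on is that one must work consistently in the category of big sheaves, since—as Example~\ref{ex1} shows—kernels computed in the category of coherent sheaves would in general differ from the representable object we want.
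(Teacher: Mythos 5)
Your proof is correct and follows exactly the paper's argument: the centre is realized as the kernel of the vector-bundle homomorphism $a\mapsto[a,\cdot\,]$ into $\uEnd_{\O_X}(A)$, hence is locally coherent representable by Proposition~\ref{eqcon}. The extra remarks on the inherited algebra structure and on working with big sheaves are fine but not needed beyond what the paper already records.
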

\begin{proof}
The  centre of $A$ is the kernel of the $\O_X$-linear homomorphism of
vector bundles 
$A\to \uEnd_{\O_X}(A)$, given by $a\mapsto [a,\cdot\,]$. As such, it is a
locally coherent representable  sheaf.
\end{proof}

Thus, if $A\to X$ is a finite type algebra over a finite type stack,
after passing to a locally closed stratification of $X$, we may assume
that $A$ is an algebra bundle, whose centre is an algebra bundle.

If $A\to X$ is a commutative algebra bundle, then $\pi:Y=\spec_X A\to X$ is a
finite flat representable morphism, and $A=\pi_\ast\O_Y$. In fact, the
category  of commutative algebra bundles over $X$ is equivalent to the
category of finite flat representable stacks over $X$.

For a commutative finite type algebra  $A\to X$, we denote the stack of
idempotents in $A$ by $E(A)$. 

\begin{lem}\label{Estrat}
Suppose $A$ is a commutative algebra bundle over the algebraic stack
$X$. Then the structure morphism $E(A)\to X$ is affine, of finite
type, and \'etale. In particular, there exists a non-empty open
substack $U\subset X$, such that $E(A)|_U=E(A|_U)$ is
finite \'etale over $U$. 
\end{lem}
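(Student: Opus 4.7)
The plan is to realize $E(A)$ as a closed subscheme of the total space $A$, from which the first three properties follow directly, and then to invoke Zariski's Main Theorem to upgrade \'etaleness to finite \'etaleness on a suitable open.

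First, $E(A)\subset A$ is the closed subscheme cut out by the equation $e^2-e=0$. Since $A\to X$ is an affine smooth morphism of finite type (the total space of a vector bundle carrying extra algebra structure), the composition $E(A)\to A\to X$ is immediately affine and of finite type.

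For \'etaleness I would apply the Jacobian criterion to the $X$-morphism $\sigma:A\to A$, $a\mapsto a^2-a$, whose fiber over the zero section is exactly $E(A)$. Fiberwise over $x\in X$, the differential of $\sigma_x$ at an element $e$ is multiplication by $2e-1$ in the commutative algebra $A_x$. When $e^2=e$, one computes $(2e-1)^2=4(e^2-e)+1=1$, so $2e-1$ is a unit and $d\sigma$ is an isomorphism along $E(A)$. Hence $\sigma$ is \'etale near $E(A)$, and base change along the zero section $0:X\to A$ shows that $E(A)\to X$ is \'etale. Equivalently, the same algebraic identity is what lets idempotents lift uniquely along nilpotent thickenings, which is the infinitesimal lifting criterion for \'etaleness.

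To produce the open substack $U$, note that on any connected component of $X$ the rank of $A$ is constant, say $n$, and each geometric fiber $E(A)_{\bar x}$ has at most $2^n$ points (the maximum being attained precisely when $A_{\bar x}$ is \'etale). So $E(A)\to X$ is quasi-finite, and, being affine, separated. Zariski's Main Theorem for algebraic stacks (cf.\ \cite[Section~16]{LMB}) then factors $E(A)$ as an open immersion $E(A)\hookrightarrow \bar E$ followed by a finite morphism $\bar E\to X$. Since $\bar E\to X$ is finite, the image $Z\subset X$ of the closed complement $\bar E\setminus E(A)$ is closed, and over the open complement $U=X\setminus Z$ we have $E(A)|_U=\bar E|_U$, which is therefore finite \'etale over $U$. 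The identification $E(A)|_U=E(A|_U)$ is automatic, since the defining equation $e^2=e$ is preserved under base change. The main difficulty lies precisely in this last step: the fiberwise calculation is routine, but upgrading quasi-finite \'etale to finite \'etale requires the stacky form of Zariski's Main Theorem and uses that $X$ is locally noetherian, which holds here thanks to our standing assumption that algebraic stacks are locally of finite type over $R$.
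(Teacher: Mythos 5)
Your argument is correct, and it is genuinely different in character from the paper's proof, which is a two-line citation: the paper reduces to the case where $X$ is a scheme and quotes Lemme~(18.5.3) of EGA~IV, which is precisely the statement that the functor of idempotents of a finite locally free algebra is representable by an affine, finite type, \'etale scheme over the base. What you have done is reprove that lemma from scratch: the closed embedding $E(A)\subset A$ cut out by $e^2-e=0$ gives affineness and finite type, and the identity $(2e-1)^2=4(e^2-e)+1=1$ --- equivalently, unique lifting of idempotents along square-zero extensions --- gives \'etaleness. Your route buys a self-contained verification at the cost of some care with the Jacobian criterion for morphisms of smooth $X$-schemes (which you handle correctly by noting that $E(A)$ is the base change of $\sigma$ along the zero section); the paper's route buys brevity. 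The first sentence of the lemma is established either way.

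For the second sentence there is one loose end. As written, nothing prevents $Z=f(\bar E\setminus E(A))$ from being all of $X$ (for instance if the ZMT factorization $\bar E$ carries a superfluous component surjecting onto $X$), in which case your $U$ would be empty. Replace $\bar E$ by the schematic closure of $E(A)$ inside it, so that $E(A)$ is a dense open substack of $\bar E$. Then over a generic point $\eta$ of (a component of) $X$, the fibre $E(A)_\eta$ is quasi-finite over a field, hence a finite discrete scheme, and a dense open subscheme of the finite scheme $\bar E_\eta$; a surjective open immersion is an isomorphism, so $\bar E_\eta\setminus E(A)_\eta=\varnothing$ and $\eta\notin Z$. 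Thus $U$ is non-empty. This is exactly the ``quasi-finite implies generically finite'' argument the paper itself invokes, via the same reference to \cite[Section~16]{LMB}, in its discussion of isotrivial gerbes; with that adjustment your proof is complete.
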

\begin{proof}
We reduce to the case where $X$ is a scheme, and then quote
Lemme~(18.5.3) of EGA~IV \cite{EGA}.
\end{proof}

By this lemma, when studying the centre of finite type algebras
over the finite type stack $X$, we may, after passing to a 
stratification
of $X$, assume that the stack of central idempotents is finite \'etale
over $X$. 

\subsubsection{Primitive idempotents}

Recall that a non-zero  idempotent $e$ is called {\em primitive}, if
whenever $e=e_1+e_2$,
for orthogonal idempotents $e_1$, $e_2$,  then necessarily 
$e_1=0$ or $e_2=0$.  

In a finite-dimensional  commutative algebra over a field, the
following is true:
\begin{items}
\item every idempotent is in a unique way (up to order of the
  summands) a sum of orthogonal primitive idempotents, this is the {\em primitive
    decomposition}, 
\item orthogonal idempotents have disjoint primitive decompositions,
\item distinct primitive idempotents are orthogonal to each other,
\item the primitive idempotents add up to 1.
\end{items}
Thus, the idempotents are in bijection with the subsets of the
(finite) set of primitive idempotents. 

Let $A\to X$ be a finite type algebra.

\begin{defn}
An idempotent local section $e:U\to A$ of  
$A\to X$ is  {\bf primitive}, if it gives rise to a primitive
idempotent in the fiber of $A$ over every  geometric point of $U$.
\end{defn}

Suppose $A=\pi_\ast\O_Y$ is a commutative algebra bundle, and $e$ an
idempotent global section.  Let $Y_1\subset Y$ be the open and closed
substack defined by the equation $e=1$. Then $e$ is primitive, if and
only if the
geometric fibres of $Y_1\to X$ are connected. As the function counting
the number of connected components of the fibres is lower
semi-continuous, the subset of $X$ where $e$ is primitive, is closed.
In general, this subset is not open.  Therefore, when studying
primitive idempotents, we assume that $E(A)\to X$ is finite \'etale.

\begin{lem}
Let $A\to X$ be a commutative algebra bundle, with finite \'etale
stack of idempotents $E(A)\to X$. There is an open and closed substack
$PE(A)\subset E(A)$, such that an idempotent local section factors through $PE(A)$
if and only if it is primitive.
\end{lem}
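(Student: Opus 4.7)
The plan is to realize $PE(A)$ as $E(A)$ minus (i) the zero section and (ii) the image, under the addition map, of the stack of ordered pairs of nonzero orthogonal idempotents. Both of these loci will turn out to be open-closed substacks of $E(A)$, so their union is, and hence so is its complement; by design it captures primitivity on geometric points.

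First, the zero idempotent defines a section $0\colon X\to E(A)$ of the separated \'etale morphism $E(A)\to X$, hence an open-closed immersion. Let $E^\ast(A)\subset E(A)$ be its open-closed complement. Next, since $E(A)\to X$ is finite \'etale, so is the fibered product $E(A)\times_X E(A)\to X$. The multiplication morphism $\mu\colon E(A)\times_X E(A)\to A$, $(e_1,e_2)\mapsto e_1e_2$, in fact factors through the closed substack $E(A)\subset A$, because the product of two commuting idempotents is again an idempotent. The orthogonality locus, defined as the preimage under $\mu\colon E(A)\times_X E(A)\to E(A)$ of the zero section, is therefore open-closed in $E(A)\times_X E(A)$, and hence itself finite \'etale over $X$. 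Call it $\mathrm{Orth}_2(A)$, and set $\mathrm{Orth}_2^\ast(A)=\mathrm{Orth}_2(A)\cap\big(E^\ast(A)\times_X E^\ast(A)\big)$, again open-closed.

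Now consider the addition map $\sigma\colon \mathrm{Orth}_2^\ast(A)\to E(A)$, $(e_1,e_2)\mapsto e_1+e_2$, which is well-defined since the sum of two orthogonal idempotents is again an idempotent. This is a morphism of finite \'etale $X$-stacks, so its image is open (by \'etaleness of $\sigma$) and closed (by finiteness, hence properness of $\sigma$). Set
$$PE(A)=E^\ast(A)\setminus\sigma\big(\mathrm{Orth}_2^\ast(A)\big)\,,$$
an open-closed substack of $E(A)$.

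For the characterization, a nonzero idempotent $e$ in a geometric fibre is primitive exactly when it admits no decomposition $e=e_1+e_2$ into nonzero orthogonal idempotents, which is precisely the condition $e\in E^\ast(A)$ and $e\notin\sigma(\mathrm{Orth}_2^\ast(A))$. The only real point to verify carefully is that the orthogonality locus is open-closed; this is the step I view as the main obstacle, and it is handled cleanly by the observation that a product of commuting idempotents is an idempotent, so that the multiplication map lands in $E(A)$ and the orthogonality locus is a preimage of the already-open-closed zero section.
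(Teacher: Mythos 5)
Your proof is correct, but it is organized differently from the paper's. The paper disposes of the lemma in one line by working \'etale-locally on $X$: since $E(A)\to X$ is finite \'etale, one may assume it is a constant cover, whereupon the multiplication and the partially defined addition on $E(A)$ are constant as well, so the primitivity locus is a union of components. You instead give a global construction, exhibiting $PE(A)$ as the complement in $E(A)$ of the zero section together with the image of the addition map $\sigma\colon \mathrm{Orth}_2^\ast(A)\to E(A)$, and you verify directly that each of these loci is open and closed (the zero section because it is a section of a separated \'etale morphism; the orthogonality locus as a preimage of the zero section under the multiplication map, which lands in $E(A)$ since products of commuting idempotents are idempotents; the image of $\sigma$ because a morphism between finite \'etale $X$-stacks is itself finite \'etale, hence open and proper). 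Both arguments rest on the same structural input --- the finite \'etaleness of $E(A)\to X$ and the algebraic operations on idempotents --- but yours supplies explicitly the verification that the paper's ``The claim follows'' leaves implicit, and it avoids trivializing the cover; the paper's reduction is shorter and makes the locally constant nature of the whole situation transparent. The pointwise characterization you give at the end matches the paper's definition of a primitive idempotent local section (primitivity in every geometric fibre), so the construction does what the lemma asserts.
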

\begin{proof}
We may assume that $E(A)\to X$ is constant.  Then the multiplication
operation and the partially
defined addition operation on $E(A)$ are also constant. The claim
follows.
\end{proof}

\begin{defn}
Let $A\to X$ be an algebra bundle, with centre $Z\to X$. Let
$ZE(A)$ be the stack of idempotents in $Z$, in other words the stack
of central idempotents in $A$.  Assume that $ZE(A)\to X$ is finite
\'etale.
The substack of primitive idempotents in $ZE(A)$ is denoted by
$PZE(A)$, and called the stack of {\bf primitive central idempotents
}of $A$. It is finite \'etale over $X$.  The degree of $PZE(A)\to X$
is called the {\bf central rank }of $A$. 

If $X$ is connected,\comment{I got rid of normal, because we only seem
  to need connected for Galois theory to work.}  
the number of connected components of $PZE(A)$ is the {\bf split
  central rank }of $A$. More precisely, the partition of the central
rank given by the degrees of the connected components of $PZE(A)$ is
called the {\bf central type }of $A$.  (So the split central rank is
the length of the type.)
\end{defn}

\begin{rmk}
Let $X$ be connected, and let $A\to X$ be a commutative
finite type algebra, with finite \'etale stack of idempotents $E(A)\to
X$. Then there is a one-to-one correspondence between the connected
components of $PE(A)$ and the primitive idempotents in the algebra of
global sections $\Gamma(X,A)$. 
\end{rmk}

\subsubsection{The degree stratification}

Let $k$ be a field and $A$ a finite-dimensional $k$-algebra. The {\em
  rank }$r$ of $A$ is the dimension of $A$ as a $k$-vector space.  For an
element $a\in A$, we define its {\bf degree }to be the dimension of
the commutative subalgebra $k[a]\subset A$. It is equal to the degree
of the minimal polynomial of $a$, i.e., the monic generator of the
kernel of the algebra map $k[x]\to A$, defined by $x\mapsto a$. 

Now let $A$ be an algebra bundle of rank $r$ over the algebraic stack
$X$, and $a\in A(S)$ a local section of $A$ over an $X$-scheme
$S$.

\begin{defn}
If the cokernel (as a homomorphism of $\O_S$-modules) of the
morphism of $\O_S$-algebras $\O_S[x]\to A_S$, defined by $x\mapsto a$ is
flat over $\O_S$, we say that $a$ is {\bf strict}, and we call the
rank of the image of $\O_S[x]\to A$ the {\bf degree } of $a$. 
\end{defn}

If $f(x)\in\O_S[x]$ is the characteristic polynomial of $a$, the
morphism $\O_S[x]\to A$ factors through $\O_S[x]/(f)$, by the theorem
of Caley-Hamilton. Hence the cokernel of $\O_S[x]\to A$ is actually a
cokernel of a homomorphism of vector bundles, and hence coherent. The
condition that this cokernel be flat is equivalent to it being locally
free.  It implies that forming the image of $\O_S[x]\to A$ commutes
with base change, and that this image, denoted $\O_S[a]$, is also
locally free.

\begin{prop}
For every $n=1,\ldots,r$, there exists a locally closed substack
$A_n\subset A$ with the property that a
local section $a\in A(S)$ factors through $A_n(S)$ if and only if
$a$ is strict of degree $n$. The $A_n$ are pairwise disjoint and
their disjoint union 
$$A^\strat=\coprod_{n=1}^r A_n$$
maps surjectively to $A$. The section $a\in A(S)$ factors through
$A^\strat \to A$ if and only if it is strict.
\end{prop}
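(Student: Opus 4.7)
The plan is to obtain the whole stratification at once as the flattening stratification of a single coherent sheaf on the total space of $A\to X$. Let $\pi\colon A\to X$ denote the structure morphism; the total space $A$ carries the tautological section $\tilde a$ of $\pi^{\ast}A$, and applying the Caley-Hamilton discussion preceding the statement to $\tilde a$ produces a homomorphism of rank-$r$ vector bundles over $A$,
$$\phi\colon \O_A[x]/\bigl(f(x)\bigr)\longrightarrow \pi^{\ast}A,$$
with $f\in\O_A[x]$ the characteristic polynomial of $\tilde a$. I would then set $\C=\cok(\phi)$, a coherent sheaf on $A$.

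First I would apply a flattening stratification to $\C$, concretely by dualizing and invoking Proposition~\ref{bdlstr} on the locally coherent representable sheaf $\C^\vee$. This produces pairwise disjoint locally closed substacks $A_{(m)}\subset A$, for $m=0,1,\ldots,r$, whose disjoint union surjects onto $A$, characterized by the property that a morphism $S\to A$ factors through $A_{(m)}$ if and only if $\C|_S$ is locally free of rank $m$.

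Next I would translate this back into a statement about sections of $A\to X$. By the universal property of the total space, a section $a\colon S\to A$ over $X$ is the same as an $A$-scheme $S\to A$; under this identification, the pullback $\C|_S$ is exactly $\cok(\O_S[x]\to A_S)$ with $x\mapsto a$. Hence $a$ is strict precisely when $S\to A$ factors through some $A_{(m)}$, and in that case the image $\O_S[a]\subset A_S$ is locally free of rank $r-m$. Setting $A_n := A_{(r-n)}$ for $n=1,\ldots,r$ gives the required substacks, and their disjoint union is $A^{\strat}$. Surjectivity of $A^{\strat}\to A$ is checked on geometric points, where every relevant cokernel is automatically locally free and the minimal polynomial of any element of a nonzero finite-dimensional algebra has degree between $1$ and $r$ (so the stratum $A_{(r)}$ is empty when $r\ge 1$).

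The main obstacle is verifying that the flattening stratification behaves correctly over the algebraic stack $X$ and that its pullback characterization agrees pointwise with the strictness and degree conditions. Both ingredients are already packaged in Proposition~\ref{bdlstr} once one dualizes, so no substantive new geometric work is required beyond the bookkeeping sketched above.
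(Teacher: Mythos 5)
Your proof is correct and is essentially the paper's own argument: the paper also forms the morphism $\O_A[x]\to A_A$ from the tautological section over the total space of $A$, takes the flattening stratification of its cokernel, and identifies $A_n$ as the stratum where that cokernel has rank $r-n$. The extra details you supply (dualizing to invoke Proposition~\ref{bdlstr}, the base-change compatibility of the cokernel, and the pointwise surjectivity check) are correct elaborations of the same construction.
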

\begin{proof}
  Consider the tautological section $\Delta$ of the pullback of $A$ via
the structure map $A\to X$. It gives rise to a morphism of
$\O_A$-algebras $\O_A[x]\to A_A$. Then $A^\strat$ is given by the
flattening stratification of its cokernel, and $A_n\subset A^\strat$
is the component where the cokernel has rank $r-n$.
\end{proof}

We call the stratification $A^\strat\to A$ the {\bf degree
  stratification }of $A$. 

\subsubsection{Semi-simple elements}

Let $k$ be an algebraically closed field, and $A$ a finite-dimensional
$k$-algebra. Recall that an element $a\in A$ is {\em semi-simple}, if
the following equivalent conditions are satisfied:
\begin{items}
\item the map $A\to A$ given by left multiplication by $a$ is
  diagonalizable,
\item the minimal polynomial $f\in k[x]$ of $a$ is separable, i.e.,
  satisfies $(f,f')=1$,
\item the commutative subalgebra $k[a]\subset A$ is reduced, or,
  equivalently, \'etale over $k$.
\end{items}

\begin{defn}\label{ssdef}
Let $A\to X$ be an algebra bundle. A local section $a\in A(S)$, for an
$X$-scheme $S$, is called {\bf semi-simple}, if it is strict, and for
every geometric point $s\in S$, the element induced by $a$ in $A(s)$
is semi-simple.
\end{defn}

For example,  an idempotent section $e\in A(S)$ is semi-simple over
the open subset of $S$, where $e$ is neither 0 nor 1 (in a commutative
algebra bundle, this subset is also closed).\comment{maybe include a proof?} 

Assuming  $a\in A(S)$ is strict, $a$ is
semi-simple if and only if the geometric fibres of the finite flat
$S$-scheme  $\spec_S\O_S[a]$ are unramified. This
condition is equivalent to $\spec_S\O_S[a]$ being unramified, hence 
\'etale over $S$.

The semi-simple sections of $A$ form a subsheaf $A^\ss\subset A$. 

\begin{prop}\label{sost}
Let $A\to X$ be an algebra bundle over the algebraic stack $X$.  Then
$A^\ss$ is an algebraic stack with a representable  structure
morphism of finite type $A^\ss\to X$.
\end{prop}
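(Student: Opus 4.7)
The plan is to exhibit $A^\ss$ as a locally closed substack of the degree stratification $A^\strat$ of $A$, using that étaleness is an open condition for finite flat morphisms.

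Since every semi-simple section is strict by Definition~\ref{ssdef}, we have $A^\ss\subset A^\strat=\coprod_{n=1}^r A_n$, and it suffices to prove that each $A^\ss\cap A_n$ is an open substack of $A_n$. Fix $n$ and consider the tautological strict section $\Delta\in A(A_n)$ of degree $n$. By the discussion following the definition of strictness, the image $\O_{A_n}[\Delta]\subset A|_{A_n}$ is a locally free $\O_{A_n}$-module of rank $n$, and its formation commutes with base change. It is a commutative subalgebra, so $\pi:Y_n=\spec_{A_n}\O_{A_n}[\Delta]\to A_n$ is a finite flat representable morphism of degree $n$.

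By the characterization given just before Proposition~\ref{sost}, a strict section $a\in A(S)$ is semi-simple if and only if $\spec_S\O_S[a]\to S$ is étale; equivalently, a morphism $S\to A_n$ factors through $A^\ss\cap A_n$ if and only if the pullback $Y_n\times_{A_n}S\to S$ is étale. The locus of points of $A_n$ over which $\pi$ is étale is the complement of the closed substack cut out by the vanishing of the discriminant (equivalently, the support of the coherent sheaf of relative differentials $\Omega^1_{Y_n/A_n}$), and it is preserved under arbitrary base change. Denote this open substack by $A_n^\ss\subset A_n$. Then $A^\ss=\coprod_{n=1}^r A_n^\ss$ as subsheaves of $A$.

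Each composition $A_n^\ss\hookrightarrow A_n\hookrightarrow A\to X$ is a locally closed immersion followed by an affine finite type morphism (since $A\to X$ is an algebra bundle, hence of the form $\spec_X\Sym_{\O_X}\F^\vee$), so $A^\ss\to X$ is a representable morphism of finite type of algebraic stacks. The main point of the argument is simply to reduce from the pointwise condition in Definition~\ref{ssdef} to the geometric condition that $\pi$ be étale, once we have the degree stratification in hand; no further difficulty arises.
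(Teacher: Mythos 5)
Your proof is correct and follows essentially the same route as the paper: both realize $A^\ss$ as the open substack of the degree stratification $A^\strat$ where the finite flat cover $\spec\O[\Delta]$ associated to the tautological section is unramified (equivalently étale), and then conclude representability and finite type from the factorization through $A^\strat\to A\to X$. Your stratum-by-stratum phrasing and the explicit mention of the discriminant/$\Omega^1$ locus are just a more detailed spelling-out of the paper's one-line argument.
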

\begin{proof}
In fact, $A^\ss\subset
A^\strat$ is the open substack defined by the condition that the
finite flat representable morphism 
$\spec_A \O_A[\Delta]\to A$ is unramified. Thus, we have a
factorization of the monomorphism $A^\ss\to A$ as
$$\xymatrix{
  A^\ss\rrto^-{\text{open immersion}}&&
  A^\strat\rrto^-{\text{stratification}}&&
  A}\,.$$
Thus, $A^\ss\subset A$ is a constructible substack. 
\end{proof}

\subsubsection{The semi-simple centre}

For a commutative finite dimensional algebra over an algebraically
closed field, we have
\begin{items}
\item the primitive idempotents are linearly independent,
\item an element is semi-simple if and only if it is a linear
  combination of primitive idempotents.
\end{items}
We need a version of this statement for algebra bundles. 

\begin{prop}\label{ssc}
Let $A\to X$ be a commutative algebra bundle whose stack of
idempotents is finite \'etale, and let $Y\to X$ be the finite
flat cover corresponding to $A$. There is a canonical finite flat
morphism of $X$-stacks $Y\to PE(A)$. Over every geometric point $x$ of
$X$, this morphism maps each point in the fibre $Y_x$ to the
characteristic function of its connected component in $Y_x$ (which is
a primitive idempotent in $A|_x$). Dually, we obtain a strict
monomorphism of algebra bundles
$$\pi_\ast \O_{PE(A)}\longrightarrow A\,,$$
where $\pi:PE(A)\to X$ is the structure map.

The induced morphism
$$(\pi_\ast\O_{PE(A)})^\strat\longrightarrow A^\strat$$ factors
through the open substack $A^\ss\subset A^\strat$, and induces a
surjective closed immersion of algebraic stacks
$(\pi_\ast\O_{PE(A)})^\strat \to A^\ss$.
\end{prop}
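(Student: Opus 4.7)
The plan is to construct the map $Y\to PE(A)$ \'etale-locally on $X$, dualize to the inclusion $\pi_\ast\O_{PE(A)}\hra A$, verify it is a strict monomorphism of algebra bundles, and finally identify the strict locus of $\pi_\ast\O_{PE(A)}$ with $A^\ss$ via a fibred-product computation.

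First, since $PE(A)\to X$ is finite \'etale, I would pass to an \'etale cover of $X$ trivializing it; on such a cover, primitive idempotents $e_1,\ldots,e_n\in\Gamma(X,A)$ are available with $\sum e_i=1$ and $e_ie_j=\delta_{ij}e_i$. Correspondingly $Y=\coprod Y_i$ with $Y_i=V(1-e_i)$ finite flat over $X$ and geometrically connected on fibres by primitivity of $e_i$. Sending each $Y_i$ to the section of $PE(A)$ corresponding to $e_i$ yields a finite flat morphism $Y\to PE(A)$ which, on geometric fibres, is the tautological ``connected component'' map; this fibrewise uniqueness makes the construction canonical and independent of the trivialization, hence descends to $X$. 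Dualizing gives an algebra homomorphism $\pi_\ast\O_{PE(A)}\to A$, locally reading $(a_1,\ldots,a_n)\mapsto\sum a_ie_i$. Orthogonality of the $e_i$ gives $A=\bigoplus Ae_i$ as a direct sum of locally free subbundles, each of which contains $\O_Xe_i$ as a direct summand, so the map is a strict monomorphism of algebra bundles.

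Next I would exploit that $\pi_\ast\O_{PE(A)}$ is a finite \'etale commutative algebra over $X$. For a strict section $b$ of $\pi_\ast\O_{PE(A)}$, the subalgebra $\O[b]$ is locally free and sits inside this \'etale algebra, so $\spec_X\O[b]\to X$ is finite flat with reduced geometric fibres, hence \'etale; by Definition~\ref{ssdef} the section $b$ is semi-simple in $\pi_\ast\O_{PE(A)}$. Because the cokernel of $\pi_\ast\O_{PE(A)}\hra A$ is locally free, strictness of $b$ in the smaller algebra is equivalent to strictness in $A$, and the \'etaleness of $\spec_X\O[b]\to X$ transfers unchanged, so $b$ is semi-simple in $A$. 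This proves that $(\pi_\ast\O_{PE(A)})^\strat\to A^\strat$ factors through the open substack $A^\ss$.

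To promote this to a surjective closed immersion, I would form $C=A^\ss\times_A\pi_\ast\O_{PE(A)}$. The base change of the closed immersion $\pi_\ast\O_{PE(A)}\hra A$ along the monomorphism $A^\ss\to A$ shows $C\to A^\ss$ is itself a closed immersion. A local section of $\pi_\ast\O_{PE(A)}$ factors through $C$ exactly when its image in $A$ is semi-simple; combined with the observation that semi-simplicity in $A$ includes strictness by definition, the preceding paragraph identifies $C$ with $(\pi_\ast\O_{PE(A)})^\strat$. Surjectivity onto $A^\ss$ then reduces to the classical geometric-fibre statement recalled in the bullets preceding the proposition: semi-simple elements of a finite-dimensional commutative algebra over an algebraically closed field are precisely linear combinations of primitive idempotents. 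The main subtlety I expect is matching the two notions of strictness on sections of $\pi_\ast\O_{PE(A)}$; this is handled by the local freeness of $A/\pi_\ast\O_{PE(A)}$, after which the rest of the argument is essentially formal.
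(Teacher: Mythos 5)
Your proof is correct and follows essentially the same route as the paper: the same reduction to the two fibrewise facts about primitive idempotents, the same strict monomorphism $\pi_\ast\O_{PE(A)}\hookrightarrow A$, and the same identification of strict loci via a cartesian square. The only difference is packaging — you build $Y\to PE(A)$ \'etale-locally and descend, where the paper constructs the dual algebra map globally from the tautological section of $A|_{PE(A)}$ and the trace — and your explicit comparison of strictness in $\pi_\ast\O_{PE(A)}$ versus in $A$ via the locally free cokernel is exactly what the paper's phrase ``passing to the degree stratification commutes with strict monomorphisms'' encodes.
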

\begin{proof}
Consider the finite \'etale cover of primitive idempotents
$\pi:PE(A)\to X$.  We have a tautological global section $e$ of
$A|_{PE(A)}$, and $a\mapsto ae$ defines a homomorphism of
$\O_{PE(A)}$-modules $\O_{PE(A)}\to A|_{PE(A)}$. Pushing forward with
$\pi$ and composing with the trace map $\pi_\ast( A|_{PE(A)})\to A$
defines the morphism of algebra bundles over $X$
\begin{equation*}\label{ejfjqq}
\pi_\ast \O_{PE(A)}\longrightarrow A\,.
\end{equation*}
It is a strict monomorphism of vector bundles, because it is injective
over every geometric point, by Fact~(i), above. 
Dually, we obtain a morphism of $X$-stacks $Y\to PE(A)$, which is the
morphism described in the statement of the proposition. It is flat,
because $PE(A)$ is \'etale over $X$, and flatness can be checked
\'etale locally.

Passing to the degree stratification commutes with strict
monomorphisms of algebra bundles, so we have a cartesian diagram of
$X$-stacks
$$\xymatrix{
  (\pi_\ast\O_{PE(A)})^\strat\rto\dto & A^\strat\dto\\
  \pi_\ast\O_{PE(A)}\rto & A\rlap{\,,}}$$
which shows that $(\pi_\ast \O_{PE(A)})^\strat\to A^\strat$ is a
closed immersion. The facts that this closed immersion factors through
$A^\ss\subset A^\strat$, and is surjective onto $A^\ss$ can be checked
over the  geometric points of $X$, where it follows from Fact~(ii),
above.
\end{proof}

\subsubsection{Permanence of rank and split rank}

\begin{prop}\label{rankineq}
Let $A\hookrightarrow A'$ be a monomorphism of commutative finite type
algebras with finite \'etale stacks of idempotents over the 
connected stack $X$. Denote the ranks of $A$ and $A'$ by $n$ and 
$n'$, and the split ranks by
$k$ and $k'$, respectively. Then $n\leq n'$ and $k\leq k'$. Moreover,
\begin{items}
\item if $A'$ admits a semi-simple global section which does not factor
  through the fiber $A|_x$ for {\em all }points of $x$, then $n<n'$,
\item if $A'$ admits an idempotent global section, which is not in
  $A$, then $k<k'$. 
\end{items}
\end{prop}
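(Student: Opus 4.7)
The plan is to treat the two inequalities separately: $n\le n'$ and its strict version~(i) will be checked fibrewise at geometric points of $X$, while $k\le k'$ and~(ii) are global statements about the finite Boolean algebra of idempotents of $\Gamma(X,A')$. The first step is to identify $n$ with the cardinality of the set of primitive idempotents in any geometric fibre $A|_{\bar x}$ (this is the degree of the finite \'etale cover $PE(A)\to X$) and $k$ with the number of primitive idempotents in $\Gamma(X,A)$ (equivalently the number of connected components of $PE(A)$); similarly for $A'$.

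For $n\le n'$, I would fix a geometric point $\bar x$ and set $B=A|_{\bar x}\hookrightarrow A'|_{\bar x}=B'$. Over the algebraically closed residue field, the number of primitive idempotents of a finite-dimensional commutative algebra equals the dimension of its reduction---because idempotents lift uniquely modulo the nilradical---and the induced map $B_{\mr{red}}\to B'_{\mr{red}}$ is injective, since the preimage of $\mr{nil}(B')$ in $B$ equals $\mr{nil}(B)$. For the strict version~(i), I would take the hypothesised semi-simple $a\in\Gamma(X,A')$ and restrict to $\bar x$: by Definition~\ref{ssdef} the element $a_0=a|_{\bar x}$ is semi-simple, so one writes $a_0=\sum_i\lambda_i e_i$ with distinct scalars $\lambda_i$ and orthogonal primitive idempotents $e_i\in\bar\kappa(\bar x)[a_0]$, each given as a polynomial $e_i=p_i(a_0)$ by Lagrange interpolation. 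Assuming $B_{\mr{red}}=B'_{\mr{red}}$, we have $\bar a_0\in B_{\mr{red}}$, hence $\bar e_i\in B_{\mr{red}}$; uniqueness of idempotent lifting, applied compatibly to $B\hookrightarrow B'$, forces each $e_i\in B$, so $a_0\in B$, contradicting $a|_x\notin A|_x$ at $x=\bar x$.

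The inequality $k\le k'$ is purely combinatorial. Let $e_1,\ldots,e_k$ and $f_1,\ldots,f_{k'}$ denote the primitive idempotents of $\Gamma(X,A)$ and $\Gamma(X,A')$ respectively; in the finite Boolean algebra $\Gamma(X,E(A'))$ each $e_i$ decomposes uniquely as $e_i=\sum_{j\in S_i}f_j$ for some $S_i\subset\{1,\ldots,k'\}$, and $\sum_i e_i=1=\sum_j f_j$ together with orthogonality forces $\{S_i\}$ to partition $\{1,\ldots,k'\}$, giving $k'=\sum|S_i|\ge k$. For~(ii), since every global idempotent of $A'$ is a $\{0,1\}$-combination of the atoms $f_j$, equality $k=k'$ would force each $|S_i|=1$, identifying the $f_j$'s with the $e_i$'s, and then every global idempotent of $A'$ would lie in $\Gamma(X,A)$, contradicting the existence of $e\notin\Gamma(X,A)$. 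The main obstacle will be~(i): upgrading ``$a\notin A|_x$'' to ``some primitive idempotent of $\bar\kappa(\bar x)[a_0]$ is not in $A|_x$'' is where semi-simplicity is essential, via the polynomial expressions $e_i=p_i(a)$---without it, $a$ could differ from an element of $A|_x$ by a nilpotent and no new primitive idempotent need appear.
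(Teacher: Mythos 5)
Your argument is correct, and for the global half ($k\leq k'$ together with part~(ii)) it is essentially the paper's own: there, too, one passes to $\Gamma(X,A)\hookrightarrow\Gamma(X,A')$ and observes that each primitive idempotent of the subalgebra decomposes as a sum over a block of a partition of the primitive idempotents of the larger algebra, so that equality of split ranks forces the two finite Boolean algebras of global idempotents to coincide. For the fibrewise half you take a genuinely different, more elementary route. The paper stays relative over $X$: it forms the correspondence $Q\subset PE(A)\times_X PE(A')$ cut out by $ee'=e'$, checks that $Q$ is finite \'etale with $Q\to PE(A)$ surjective and $Q\to PE(A')$ injective, and reads off $n\leq\deg Q\leq n'$; for~(i) it then quotes Proposition~\ref{ssc}, by which $PE(A)=PE(A')$ forces the closed immersion $A^\ss\to(A')^\ss$ to be surjective, so every semi-simple section of $A'$ lies pointwise in $A$. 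You instead argue at a single geometric point, counting primitive idempotents by $\dim B_{\mathrm{red}}$ (using that $B_{\mathrm{red}}\to B'_{\mathrm{red}}$ is injective because nilpotence is detected in the subalgebra) and re-deriving the pointwise content of Proposition~\ref{ssc} by hand, via unique idempotent lifting along the nilradical and Lagrange interpolation. Your version is more self-contained and avoids verifying that $Q$ is \'etale; the price is that it only sees one fibre at a time, so it leans on the fact that the fibrewise count is the degree of the finite \'etale cover $PE(A)\to X$ over the connected base --- which is the same input the paper's relative argument is organized around. Both proofs are complete; yours is a legitimate repackaging rather than a gap.
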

\begin{proof}
The monomorphism $A\hookrightarrow A'$ induces an open and closed
embedding of finite \'etale 
$X$-stacks $E(A)\hookrightarrow E(A')$. Every idempotent $e$ in $A$ can be
decomposed uniquely into a sum of orthogonal primitive idempotents in
$A'$. Let us call this the {\em primitive decomposition }of $e$ in
$A'$.  Consider the correspondence $Q\subset PE(A)\times_X PE(A')$
defined by
\begin{align*}
(e,e')\in Q\quad&\Longleftrightarrow\quad\text{$e'$ partakes in the
    primitive decomposition of $e$  in $A'$}\\
  &\Longleftrightarrow\quad ee'=e'\,.
\end{align*}
One shows that $Q$ is a finite \'etale cover of $X$ locally in the
\'etale topology of $X$, reducing to the case where both $E(A)$ and
$E(A')$ are trivial covers. By properties of the primitive
decomposition, the projection $Q\to PE(A)$ is surjective, and the
projection $Q\to PE(A')$ is injective. Thus we have
$$n=\deg PE(A)\leq \deg Q\leq \deg PE(A')=n'\,.$$
If $n=n'$, then both $Q\to PE(A)$ and $Q\to PE(A')$ are isomorphisms,
showing that $PE(A)=PE(A')$.  By Proposition~\ref{ssc}, this implies
that the morphism $A^\ss\to (A')^\ss$ is surjective. This
proves~(i).

We can repeat the argument for the algebras of global sections
$\Gamma(X,A)\hookrightarrow \Gamma(X,A')$. We deduce that $k\leq k'$,
and if $k=k'$, every primitive idempotent in $\Gamma(X,A)$ remains
primitive in $\Gamma(X,A')$, and every primitive idempotent of
$\Gamma(X,A')$ is in $\Gamma(X,A)$.  We deduce that $\Gamma(X,A)$ and
$\Gamma(X,A')$ have the same idempotents, which proves~(ii).
\end{proof}

\subsubsection{Families of idempotents}

\begin{defn}\label{famid}
For a finite type algebra $A\to X$, we denote by $E_n(A)\to X$ the
stack of $n$-tuples of non-zero idempotents in $A$, which are pairwise
orthogonal, and add up to unity. We call sections of $E_n(A)$ also
{\em complete sets of orthogonal idempotents}. 
\end{defn}

Note that the family members of the sections of $E_n(A)$ need not be
central.

The stack $E_n(A)$ is algebraic, and of finite type over $X$. 

For $n=0$, the stack $E_0(A)$ is empty, unless $A=0$, in which case it
is identified with $X$.  For $n=1$, the stack $E_1(A)$ contains exactly the
unit in $A$ (so is identified with $X$), unless $A=0$, in which case
$E_1(A)$ is empty. 

\subsubsection{Group of units}

Let $A\to X$ be a finite type algebra over the algebraic stack
$X$. The  subsheaf of units $A^\times\subset A$ is defined by 
$$A^\times(x)=\{a\in A(x)\mid\exists b\in A(x)\colon ab=ba=1\}\,,$$
for every object $x$ of $X$. We can see that $A^\times$ is a
(relative) affine group scheme over $X$, by writing it as the fibered
product
$$\xymatrix{
A^\times\rrto\dto_{u\mapsto(u,u^{-1})} && X\dto^{(1,1)}\\
A\times A\rrto^{(a,b)\mapsto (ab,ba)}&& A\times A\rlap{\,.}}$$ 

if $A$ is an algebra bundle, the subsheaf $A^\times\subset A$ is
represented by an open substack, because in this case, a local section
$a$ is invertible if and only if the determinants of left and right
multiplication by $a$ on $A$ do not vanish. 
We conclude that if $A$ is an algebra bundle, $A^\times$ is smooth
over $X$ with geometrically connected fibers. For the general case,
this implies that $A^\times$ is an affine group scheme with smooth and
geometrically connected fibers over $X$. 

A similar argument using the determinant proves that if $A\to B$ is a
strict monomorphism of algebra bundles, we have $A^\times=A\cap
B^\times$. Also, if $A$ is an algebra bundle, any morphism $A\to B$ to
another algebra bundle is determined by its restriction to
$A^\times$. 

\subsection{Algebroids}

\begin{defn}\label{defnalgebroid}
A {\bf algebroid }is a triple $(X,A,\iota)$, where $X$ is an
algebraic stack, $A$ is a finite type algebra  over $X$,
and $\iota:A^\times\to I_X$ is a homomorphism of sheaves of groups over
$X$, which identifies $A^\times$ with an open substack of
$I_X$. Moreover, we require that the diagram of groups over $X$
\begin{equation}\label{algebroidproerpty}
\vcenter{\xymatrix{
A^\times\rto^\iota\drto& I_X\dto\\
&\uAut(A)}}
\end{equation}
commutes.  
Here, the  map $A^\times\to \uAut_X(A)$ associates to a
unit $u$ of $A$ the inner automorphism $x\mapsto uxu^{-1}$.  The
vertical 
map $I_X\to\uAut_X(A)$ is the inertia
representation~(\ref{rep}).
If $\iota$ is an isomorphism, we call $(X,A,\iota)$ a {\bf
  strict algebroid}. 
\end{defn}

We will usually  abbreviate the triple $(X,A,\iota)$ to $X$, and write
$A_X$ for $A$, if we need to specify the algebra. We call the
commutativity of (\ref{algebroidproerpty}) the {\bf algebroid
  property}. 

For an explanation of the terminology, see Remark~\ref{explainterm}.

The condition that  $\iota$ is an open immersion implies that
$A^\times$ represents the subsheaf
$I^\circ_X\subset I_X$ of connected components of the
identity. (Over a field, $A^\times$ is connected, and 
$A^\times\to I_X$ being an open immersion implies that $I_X/A^\times$ is
\'etale. These conditions characterize the connected component over a field.)
In particular, if $X$ is a connected gerbe over a Deligne-Mumford
stack $S$, then the relative inertia $I_{X/S}$ is necessarily equal to
$I_X^\circ$, and hence $A^\times$ is identified with $I_{X/S}$.

If $A$ is an algebra bundle, $A^\times$, and therefore also
$I_X^\circ$, is an affine smooth
(relative) group scheme over $X$. Hence $X$ admits a {\em coarse
Deligne-Mumford stack}  $X\to \ol X$, which  is uniquely determined by
being a Deligne-Mumford stack, and $X\to \ol X$ being a connected
gerbe.
Moreover,  $A^\times$ is then identified with the relative inertia
group $I_{X/\ol X}\subset I_X$, and we have a short exact sequence of
relative group schemes
$$\xymatrix{
1\rto & A^\times\rto^\iota & I_X\rto & I_{\ol X}|_X\rto & 1}$$
over $X$. 
In case $(X,A)$ is a strict  algebroid, $I^\circ_X=I_X$, and $\ol X$ is an
algebraic space, in fact the {\em coarse moduli space }of $X$.

In many cases, the algebroid property is automatic:

\begin{prop}\label{algebroidproperty}
Consider a triple $(X,A,\iota)$, where $X$ is an
algebraic stack, $A$ is an algebra bundle  over $X$,
and $\iota:A^\times\to I_X$ is a homomorphism of sheaves of groups over
$X$, which identifies $A^\times$ with an open substack of
$I_X$. Then (\ref{algebroidproerpty}) commutes, so that $(X,A,\iota)$
is an algebroid. 
\end{prop}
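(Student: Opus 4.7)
The plan is to verify the algebroid identity first on the subsheaf $A^\times \subset A$ using naturality of the pullback action of $I_X$ on sheaves, and then extend to all of $A$ via schematic density of $A^\times$ in $A$. The algebra bundle hypothesis is needed essentially only for the density step; the first step uses only that $\iota$ is a morphism of sheaves. The key preliminary observation is that the pullback action of $I_X$ on the sheaf $I_X$ itself is inner conjugation: for $\phi \in I_X(x) = \Aut(x)$ and $\psi \in I_X(x)$, pulling back $\psi$ along the isomorphism $\phi$ gives $\phi^*(\psi) = \phi^{-1}\psi\phi$, so $(\phi^*)^{-1}(\psi) = \phi\psi\phi^{-1}$.

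Since $\iota: A^\times \to I_X$ is a morphism of sheaves on $X$, it intertwines the $I_X$-pullback actions on source and target. Specializing $\phi$ to $\iota(u)$ for $u \in A^\times(x)$ and applying to $v \in A^\times(x)$, I compute
\[
\iota\bigl((\iota(u)^*)^{-1}(v)\bigr) = (\iota(u)^*)^{-1}(\iota(v)) = \iota(u)\,\iota(v)\,\iota(u)^{-1} = \iota(uvu^{-1}),
\]
where the last equality uses that $\iota$ is a group homomorphism. The injectivity of $\iota$ then yields $(\iota(u)^*)^{-1}(v) = uvu^{-1}$, which is exactly the algebroid property restricted to the subsheaf $A^\times \subset A$.

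To extend to arbitrary $a \in A$, I would consider the two $X$-scheme morphisms $f_1, f_2 : A^\times \times_X A \to A$ defined by $f_1(u,a) = uau^{-1}$ and $f_2(u,a) = (\iota(u)^*)^{-1}(a)$. Since $A$ is representable and affine over $X$, hence separated, the equalizer of $f_1$ and $f_2$ is a closed subscheme of $A^\times \times_X A$, and by the previous step it contains the open subscheme $A^\times \times_X A^\times$. Because $A$ is an algebra bundle, every geometric fiber of $A \to X$ is an affine space in which $A^\times$ is a nonempty open, so $A^\times \times_X A^\times$ is schematically dense in $A^\times \times_X A$, forcing $f_1 = f_2$ globally. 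The main obstacle is really the bookkeeping of the first step: pinning down the pullback action of $I_X$ on itself as conjugation with the correct sign convention. Once that is settled, the density extension is routine, and the algebra bundle hypothesis enters essentially only to ensure $A^\times$ is fiberwise dense in $A$ — the argument genuinely fails for general finite type algebras.
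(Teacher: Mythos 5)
Your proof is correct and follows essentially the same route as the paper's: use equivariance of $\iota$ under the inertia action together with the fact that the inertia action on $I_X$ is inner conjugation and that $\iota$ is a monomorphism to get the identity on $A^\times$, then extend to all of $A$. The only cosmetic difference is that the paper concludes the extension step by citing its earlier observation that a morphism of algebra bundles is determined by its restriction to units, whereas you re-prove that observation inline via schematic density of $A^\times\times_X A^\times$ in $A^\times\times_X A$ — the same underlying density-of-units argument.
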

\begin{proof}
 As
$\iota:A^\times\to I_X$ is defined over $X$, the homomorphism $\iota$
is equivariant with respect to the inertia action.
The proof now combines the facts that $\iota$ is a monomorphism, that
the inertia action on $I_X$ is the inner action, and 
the fact that a morphism of algebra bundles is determined by its
restriction to units.

In fact, let $a\in A^\times$.  To show that
${}^a(\cdot)=\iota(a)\cdot$, as automorphism of $A$, it suffices to
show that for all $b\in A^\times$, we have ${}^ab=\iota(a)\cdot b$.
We can check this after applying $\iota$, so it suffices that
$\iota({}^ab)=\iota(\iota(a)\cdot b)$, or
${}^{\iota(a)}\iota(b)=\iota(a)\cdot\iota(b)$, which is true.
\end{proof}

\begin{ex}{\bf (algebroid underlying a linear stack)} \label{linex}
Let $\XX$ be a linear
algebraic stack with underlying algebraic stack 
$X$, and let $A\to X$ be the universal sheaf of endomorphisms of 
Remark~\ref{hrmk}. Then automorphisms are invertible endomorphisms, so
we use for $\iota$ the tautological identification $A^\times=I_X$.

The inertia representation being the inverse of the pullback
action, it is, indeed, given by (left) inner automorphisms. 

We call $(X,A)$ the {\em algebroid underlying }the linear algebraic
stack $\XX$. It is a strict algebroid. 
\end{ex}

\begin{ex}\label{vectalg}
Consider the linear stack of vector bundles $\mf{Vect}$,
Example~\ref{vect}. The underlying algebroid consists of the disjoint
union of the quotient stacks $\GL_n\backslash M_{n\times n}$, given by
the adjoint representations, for $n\geq0$. Thus, in
passing from the linear stack to the underlying algebroid, we discard
all $M_{m\times n}$, for $m\not=n$, and for $m=n$, restrict the
left-right bi-action of $\GL_n$ on $M_{n\times n}$ to the (left only) adjoint
action. 
Thus we remove exactly the information which we consider non-local,
see Remark~\ref{lack}.
\end{ex}

\begin{ex}{\bf (classifying algebroid)}\label{trivialgerbe}
Let $A\to X$ be an algebra bundle over a
Deligne-Mumford stack $X$. Let $A^\times$ act on $A$ from the
left by inner automorphisms. Then $A^\times\backslash A$ is an algebra
bundle over the relative classifying stack $Y=B_XA^\times$. We have an exact
sequence of group schemes over $Y$
$$\xymatrix{
1\rto & I_{Y/X}\rto & I_Y\rto & I_X|_Y\rto & 1\,,}$$
where $I_{Y/X}=A^\times\backslash A^\times$. As $I_X\to X$ is unramified,
$I_{Y/X}\to I_Y$ is an open embedding, and so $(Y,A^\times\backslash
A)$ is an algebroid. If $X$ is a space, $(Y,A^\times\backslash A)$ is
a strict algebroid.
\end{ex}
 
\begin{rmk}{\bf (algebroids which are trivial gerbes are classifying
    algebroids) }\label{gerberemark}
Let $(X,A)$ be an algebroid, such that $A$ is an algebra bundle over
$X$, and therefore  $X$ is a connected gerbe over the Deligne-Mumford
stack $S$,  with $A^\times=I_{X/S}$.
Suppose the gerbe $X\to S$ admits a section $x:S\to X$.  Via
$x$, we pull back $A$ to a bundle of algebras $C$ over $S$. We 
claim that $(X,A)$ is canonically isomorphic to $(B
C^\times,C^\times\backslash C)$.

In fact, because $X$ is a gerbe over $S$, the section $x:S\to X$ is a
universal principal $x^\ast I_{X/S}$-bundle. 
The pullback diagram
$$\xymatrix{ C\dto\rto & S\dto^x\\ A\rto & X\rlap{}}$$ shows that $C$ is
an $x^\ast I_{X/S}$-bundle over $A$. Hence, $A= C/x^\ast I_{X/S}$. Via the
isomorphism $\iota:C^\times\to x^\ast I_{X/S}$, the action of $x^\ast I_{X/S}$ on
$C$ is identified with the action by left inner automorphisms.  This
follows from the algebroid property of $(X,A)$, and proves the claim.
\end{rmk}

\begin{rmk}\label{explainterm}\comment{think again}
If $X\to\Spec R$ is a gerbe, any strict algebroid over $X$ can be promoted to
a linear algebraic stack, whose underlying algebraic stack is
$X$.

More generally, there exists a notion of {\em relative }linear
algebraic stack, where the base $R$ is replaced by an arbitrary scheme
(or algebraic space). Then every strict algebroid $(X,A)$ where $X$ is a
gerbe over a space $S$ becomes naturally a linear algebraic stack {\em
  over }$S$. These types of linear algebraic stacks occur naturally in
the theory of deformation quantization, where they were introduced by
Kontsevich under the name of `stack of algebroids', see
\cite{DefQuant}. In [loc.\ cit.], one can also find a description
of these stacks of algebroids in terms of cocycles (compare also
\cite{AgnPol}).

Thus our notion of algebroid is a natural generalization of
Kontsevich's notion of stack of algebroids.  This justifies our
terminology.

Loosely speaking, algebroids are stacks which are linear over their
coarse Deligne-Mumford stack.
\end{rmk}

\begin{ex}[schemes as algebroids]
Every Deligne-Mumford stack $Z$ is an algebroid via the definition $A_Z=0_Z$. There
is no natural way to enhance the algebroid $(Z,0_Z)$ to a linear
algebraic stack, unless $Z=\spec R$ is the final scheme.  This
exhibits one way in which algebroids are more 
flexible than linear algebraic stacks.
\end{ex}

\begin{ex}[algebroids over quotient stacks]
Let $X=G\backslash Y$ be a quotient stack. A finite type algebra $A$ over $X$ is
given by a finite type algebra $B$ over $Y$, together with a lift of the
$G$-action on $X$ to an action on $B$ by algebra automorphisms. The inertia
stack of $X$ is naturally identified with
$G\backslash\Stab_GY$.
Thus, $(X,A)$ becomes an
algebroid, if we specify a $G$-equivariant open embedding of $Y$-group
schemes $\iota:B^\times\to\Stab_GY$.  If $B$ is not an algebra bundle, we
also need to require that ${}^ub=\iota(u)\cdot b$, for all $u\in
B^\times$, $b\in B$. 
\end{ex}

\subsubsection{Morphisms of algebroids}

\begin{defn}\label{define}
We call a morphism of algebraic stacks $f:X\to Y$ {\bf inert}, if the
diagram of  stacks
$$\xymatrix{
I_X^\circ\rto\dto & I_Y^\circ\dto\\
X\rto & Y}$$
is cartesian. 
If $I_X\to I_Y|_X$ is an isomorphism,  we call $f$ {\bf strictly inert}.
\end{defn}

The connected component $I_Y^\circ$ is not necessarily an algebraic
stack, but if it is, then so is $I_X^\circ$, if $X\to Y$ is inert. We
will only apply this concept when $Y$ is an algebroid, so that
$I_Y^\circ$ is representable over $Y$. 

The basic facts about inert morphisms are:
\begin{items}
\item Every inert morphism of algebraic stacks is Deligne-Mumford
  representable, because its relative inertia group scheme is
  unramified.
\item Every base change of an inert morphism of algebraic stacks is
inert.
\item  Every monomorphism of algebraic stacks (in particular every locally
  closed immersion and every stratification) is (strictly) inert. 
\item Being inert is
local in the \'etale topology of the target.
\item Every morphism of Deligne-Mumford stacks is inert. 
\end{items}

Base changes of morphisms of Deligne-Mumford stacks are, in fact,  the only
inert morphisms, at least up to stratifications:

\begin{prop}\label{inertprop}
Suppose $X\to Y$ is an inert morphism of algebraic stacks, and suppose
$I_Y^\circ$ (and hence also $I_X^\circ$) is smooth and representable
over $X$, so that we have
coarse Deligne-Mumford stacks  $\ol
X$, $\ol Y$, and an induced morphism  $\ol X\to \ol
Y$. Then the diagram 
$$\xymatrix{
X\rto \dto & Y\dto\\
\ol X\rto & \ol Y}$$
is cartesian.
\end{prop}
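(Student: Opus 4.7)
The plan is to form the fibered product $Z = \ol X \times_{\ol Y} Y$ and show that the natural morphism $\phi : X \to Z$ over $\ol X$, supplied by the universal property, is an isomorphism. First I would note that since $Y \to \ol Y$ is a connected smooth gerbe with band $I_Y^\circ$, its base change $Z \to \ol X$ is likewise a connected smooth gerbe, with relative inertia $I_{Z/\ol X} = I_Y^\circ|_Z$.

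Next I would compare relative inertias over $\ol X$. From the rigidification construction we have $I_{X/\ol X} = I_X^\circ$, while the inert hypothesis together with the base-change description of $I_{Z/\ol X}$ gives $I_{Z/\ol X}|_X = I_Y^\circ|_X = I_X^\circ$. So $\phi$ is a morphism of connected gerbes over $\ol X$ which induces the identity on the common relative inertia group scheme $I_X^\circ$. Equivalently, in the map of short exact sequences $1 \to I_{X/\ol X} \to I_X \to I_{\ol X}|_X \to 1$ and $1 \to I_{Z/\ol X}|_X \to I_Z|_X \to I_{\ol X}|_X \to 1$ induced by $\phi$, the outer vertical arrows are isomorphisms, forcing the middle to be an isomorphism as well; in particular $\phi$ is representable.

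To conclude that $\phi$ is an isomorphism I would verify this smooth-locally on $\ol X$ and then descend. Pick a smooth cover $U \to \ol X$ over which $X \to \ol X$ admits a section $s : U \to X$; the composite $\phi \circ s : U \to Z$ is then a section of $Z|_U \to U$. With these two sections trivializing the respective gerbes as $B_U(I_X^\circ|_U)$, the morphism $\phi|_U$ becomes the morphism of classifying stacks induced by the identity on $I_X^\circ|_U$, and is therefore an isomorphism. Descending along the smooth surjection $U \to \ol X$ gives that $\phi$ itself is an isomorphism.

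The main subtlety lies in this last step: one must carry out the bookkeeping ensuring that the two trivializations of the relative inertia band (one arising from $X \to \ol X$ via $s$, the other from $Z \to \ol X$ via $\phi \circ s$ together with the base-change identification $I_{Z/\ol X} = I_Y^\circ|_Z$) are genuinely identified by the isomorphism $I_Y^\circ|_X = I_X^\circ$ supplied by the inert hypothesis. Once this compatibility is in hand, the conclusion follows from standard descent for isomorphisms along the smooth covering $U \to \ol X$.
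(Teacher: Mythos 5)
Your proposal is correct and follows essentially the same route as the paper: form the base change $Z=\ol X\times_{\ol Y}Y$, pass to a cover of $\ol X$ trivializing the gerbe $X$, identify both $X$ and $Z$ with classifying stacks of their relative inertia via the same section, and conclude from the inert hypothesis $I_X^\circ\longiso I_Y^\circ|_X$ that the induced map of bands, hence of gerbes, is an isomorphism. The only cosmetic difference is that the paper works with an \'etale trivializing cover where you use a smooth one; the descent step is the same.
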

\begin{proof}
To prove that the morphism of gerbes $X\to Y|_{\ol X}$ over $\ol X$ is
an isomorphism, we may pass to an \'etale cover  $\ol X'$ of $X$ and assume
that the gerbe $X$ is trivial.  Then  $X=B_{\ol X}G$,
where $G$ is the pullback of $I_X^\circ$ to $\ol X$ via a trivializing
section. Moreover, $Y|_{\ol X}=B_{\ol X} H$, where $H$ is the pullback
of $I_Y^\circ|_X$ to $\ol X$ via the same trivializing section.  Since
$I_X^\circ\to I_Y^\circ|_X$ is an isomorphism, so is $G\to H$, and hence $X\to
Y|_{\ol X}$. 
\end{proof}

\begin{defn}
A {\bf morphism of algebroids }$X\to Y$ is a pair $(f,\phi)$, where
$f:X\to Y$ is a morphism of algebraic stacks, and $\phi:A_X\to A_Y$ is
a morphism of algebras over $f$, such that the diagram
\begin{equation}\label{quasidiga}
\vcenter{\xymatrix{
A_X^\times\rto^\phi\dto_{\iota} & A_Y^\times\dto^\iota\\
I_X\rto^{I_f} & I_Y}}
\end{equation}
commutes. 

The morphism $(f,\phi)$ is a {\bf representable }morphism of
algebroids if $\phi:A_X\to A_Y|_X$ is a monomorphism of sheaves of
algebras over $X$. (This implies that $f$ is Deligne-Mumford representable.)

The morphism $(f,\phi)$ is {\bf inert}, if $\phi:A_X\to A_Y|_X$ is an
isomorphism of finite type algebras. (This implies that $f$ is inert.)
\end{defn} 

There is a natural notion of 2-morphism of algebroid, which makes
algebroids into a 2-category. 

\begin{rmk}[inert morphisms in the case of algebra bundles]\label{imitabc}
Suppose $(X,A_X)\to (Y,A_Y)$ is a morphism of algebroids, where
$A_X$ and $A_Y$ are algebra bundles. If 
$X\to Y$ is an inert morphism of algebraic stacks, then we
automatically have $A_X^\times=A_Y^\times|_X$ and hence $A_X=A_Y|_X$,
and so $(X,A_X)\to (Y,A_Y)$ is an inert morphism of algebroids. 
\end{rmk}

\begin{rmk}[strict algebroids and representable
    morphisms]\label{repsimple}
Suppose $(X,A)$ is a strict algebroid, and $f:Y\to X$  a representable
morphism of algebraic stacks. If $B\subset A|_Y$ is a finite type
subalgebra,  such that $\iota(B^\times)= I_Y\cap
\iota(A^\times|_Y)$ inside $I_X|_Y$, then $(Y,B)$ is
a strict algebroid with a 
representable morphism $(Y,B)\to (X,A)$. (The algebroid condition for $(Y,B)$ is
automatic.)

Every strict algebroid over $f:Y\to X$ and $(X,A)$ comes about in this way.
\end{rmk}

\begin{rmk}[pullbacks]
Suppose $(X,A_X)$ is an algebroid, and $Y\to X$ an inert
morphism of algebraic stacks.  In this case, $Y$ admits a unique
algebroid $A_Y$, endowed with an inert morphism of
algebroids $(Y,A_Y)\to (X,A_X)$. In fact, $A_Y=A_X|_Y$. 

If $(X,A_X)$ is a strict algebroid, then the morphism $Y\to X$ is
necessarily strictly inert, and $(Y,A_Y)$ is necessarily a strict
algebroid.  We call $(Y,A_Y)\to (X,A_X)$ is a {\em strictly inert
  morphism }of strict algebroids. 
\end{rmk}

\begin{defn}\label{bundledef}
We call a morphism of algebroids $(X,A_X)\to (Y,A_Y)$ 
\begin{items}
\item a  vector bundle, 
\item a principal homogeneous $G$-bundle, for an algebraic group
$G$, 
\item a fibre bundle with group $G$ and fibre $F$,
\item a locally closed immersion,
\item a stratification,
\end{items}
 if it is inert, and the underlying morphism of algebraic
stacks $X\to Y$ has the indicated property.
\end{defn}

\begin{rmk}[fibered products]
Fibered products of algebroids exist, and commute with fibered
products of underlying stacks and underlying algebras. Fibered
products of strict algebroids are strict algebroids. The algebroid underlying a
fibered product of linear algebraic stacks is equal to the fibered
product of the underlying algebroids. 
\end{rmk}

\begin{prop}\label{strem}
Suppose  $(X,A)\to (Y,B)$ is a morphism of algebroids
  where $X\to Y$ is a stratification of algebraic stacks, then there exists a
  stratification of algebroids $(X',A')\to (X,A)$, such that the
  composition $(X',A')\to (Y,B)$ is also a stratification of
  algebroids. 

If $(Y,B)$ is strict, then $(X',A')\to (X,A)$ and
$(X',A')\to(Y,B)$ are stratifications of strict algebroids.
\end{prop}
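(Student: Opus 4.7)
The plan is to further stratify $X$ so that both $A$ and the pulled-back algebra $B|_X$ become algebra bundles, and then to argue that the algebra component $\phi\colon A\to B|_X$ of the given morphism of algebroids is forced to be an isomorphism on such a stratum. That makes the composition $(X',A')\to(Y,B)$ inert, hence a stratification of algebroids in the sense of Definition~\ref{bundledef}.

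First, I note that since $X\to Y$ is a stratification, it is in particular a monomorphism, and therefore strictly inert by the basic facts about inert morphisms; so $I_X=I_Y|_X$, and consequently also $I_X^\circ=I_Y^\circ|_X$. The algebroid property identifies $A^\times\subset I_X$ with $I_X^\circ$ and $B^\times|_X\subset I_Y|_X=I_X$ with $I_Y^\circ|_X$, and the commutativity of~(\ref{quasidiga}) then forces $\phi|_{A^\times}\colon A^\times\to B^\times|_X$ to be the resulting tautological equality $I_X^\circ=I_X^\circ$. In particular, $\phi$ induces an isomorphism of unit group schemes.

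Next, I apply Proposition~\ref{bdlstr} to the locally coherent representable sheaves underlying $A$ and $B|_X$, and intersect the two resulting stratifications of $X$, to obtain a stratification $X'\to X$ on every piece of which both algebras become algebra bundles. Setting $A'=A|_{X'}$, the morphism $(X',A')\to (X,A)$ is inert essentially by construction, hence a stratification of algebroids; the remaining task is to verify that the induced algebra map $\phi'\colon A'\to B|_{X'}$ is an isomorphism of algebra bundles.

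This last step is the main (though ultimately routine) obstacle. By the identification of units from the second paragraph, $\phi'$ is already an isomorphism on unit groups, and the two unit groups are smooth of the same relative dimension over $X'$, so $A'$ and $B|_{X'}$ have the same rank as vector bundles. Since units are Zariski dense in an algebra bundle over any infinite (in particular algebraically closed) field, a fibrewise argument then shows that the linear map $\phi'$ has vanishing kernel on every geometric fibre, so it is a fibrewise injective linear map between vector bundles of equal rank, hence an isomorphism. Finally, for the strict case: if $B^\times=I_Y$, then the equality of subschemes of $I_X$ in the second paragraph becomes $I_X^\circ=I_X$, so $(X,A)$ is strict; the same reasoning on $X'$ gives $A'^\times=B^\times|_{X'}=I_Y|_{X'}=I_{X'}$, so $(X',A')$ is strict as well, and the resulting stratifications are of strict algebroids.
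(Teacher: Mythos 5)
Your proof is correct and follows essentially the same route as the paper: stratify so that the algebras become algebra bundles, and then observe that a morphism of algebroids between algebra bundles lying over an inert (here: mono, hence strictly inert) morphism of stacks is automatically an isomorphism on algebras. The paper simply cites Remark~\ref{imitabc} for this last step (and phrases the stratification via the rank stratifications of $A$ and of $B$ over $Y$, whose pullback to $X$ agrees with your stratification by where $B|_X$ is a bundle), whereas you re-derive that remark inline with the density-of-units argument.
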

\begin{proof}
Passing to the rank
stratifications of $A$ and $B$, we obtain a commutative diagram of
algebroids
$$\xymatrix{
(X',A')\rto\dto & (Y',B')\dto\\
(X,A)\rto  & (Y,B)\rlap{\,.}}$$
The upper horizontal morphism is an inert morphism of
algebroids, because $A'$ and $B'$ are algebra bundles, by
Remark~\ref{imitabc}. The claim follows.
\end{proof}

\subsubsection{Algebroid Inertia}

\begin{rmk}[Inertia]
Let $(X,A)$ be an algebroid.  Then $(I_X,I_A)$ is another
algebroid, which we call the {\em algebroid inertia }of
$(X,A)$. In fact, $I_A=(A|_{I_X})^\fix$, the subalgebra of $A|_{I_X}$
of elements invariant under the tautological automorphism induced by
the inertia action of $I_X$ on $A$.  The subgroup of units is
$(A^\times|_{I_X})^\fix$, and we have a cartesian diagram
$$\xymatrix{
(A^\times|_{I_X})^\fix\rto\dto& A^\times\dto^\iota\\
I_{I_X}\rto & I_X\rlap{\,.}}$$
If $(X,A)$ is a strict algebroid, then so is $(I_X,I_A)$.
\end{rmk}

\begin{rmk}[Induced algebroid structure on the algebra]\label{lg}
Let $(X,A)$ be an algebroid. Let $(A|_A)^\fix$ be the subalgebra
of the pullback $A|_A$ of elements commuting with the tautological
section of $A|_A$. (This is equal to the space of commuting pairs in
$A\times_XA$). The subgroup of units is $(A^\times|_A)^\fix$, and we
have a cartesian diagram
$$\xymatrix{
(A^\times|_A)^\fix\rto \dto & A^\times\dto^\iota\\
I_A\rto & I_X\,.}$$
This proves that $\big(A,(A|_A)^\fix\big)$ is an algebroid over $A$.

In fact, we have a commutative diagram of algebroids
$$\xymatrix{
(A^\times,(A|_{A^\times})^\fix)\rto^-\iota\dto & (I_X,I_A)\dto\\
(A,(A|_A)^\fix)\rto& (X,A)\rlap{\,.}}$$

If $(X,A)$ is a strict algebroid,  this is a diagram of strict algebroids.
\end{rmk}

\begin{rmk}[semi-simple algebroid inertia]\label{ssai}
There is also a semi-simple version of the algebroid inertia.
To define it, let $(X,A)$ be an algebroid, and consider the
diagram of algebroids 
\begin{equation}\label{dobulk}
\xymatrix{
A^{\times,\ss}\rto\dto & A^\times\rto^\iota\dto & I_X\dto\\
A^\ss\rto &A\rto & X\rlap{\,.}}
\end{equation}
The square on the right was constructed in Remark~\ref{lg}. 
The morphism $A^\ss\to A$ is the composition of a stratification and
an open immersion (see Proposition~\ref{sost}), in particular it is
inert. Hence we get an induced algebroid structure on $A^\ss$.
Requiring the square on the left to be a cartesian diagram of
algebroids defines the algebraic stack $A^{\times,\ss}$ and the
algebroid structure over it. 

If $(X,A)$ is a strict algebroid, all objects in Diagram (\ref{dobulk}) are
strict, and $\iota$ is an isomorphism. We are then justified in
defining $I_X^\ss=A^{\times,\ss}$, and calling it the {\em semi-simple
  algebroid inertia }of $X$. In the general case we define
$I_X^{\circ,\ss}=A^{\times,\ss}$, and also call it the {\em
  semi-simple algebroid inertia}, by a slight abuse of language. 
\end{rmk}

\subsubsection{Idempotents and algebroids}

\begin{lem}\label{equaler}
Let $a\in A^{\times,\ss}$ be a semi-simple invertible global section of an algebra
bundle $A\to X$. Let $\tilde A^{\times,\ss}=A^{\times,\ss}/\Gm$ be the quotient of
$A^{\times,\ss}$ by the subgroup of scalars. Consider the action of
the group $A^\times$ on $A^{\times,\ss}\to \tilde
A^{\times,\ss}$ by conjugation.  The induced group homomorphism
$$\Stab_{A^\times}(a)\longrightarrow \Stab_{A^\times}[a]\,,$$
where $[a]$ is the class of $a$ in $\tilde A^{\times,\ss}$, is an open
immersion.
\end{lem}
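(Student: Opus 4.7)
The plan is to exhibit $\Stab_{A^\times}(a) \hookrightarrow \Stab_{A^\times}[a]$ as the pullback of an open immersion, namely the identity section of an \'etale sub-group scheme $M \subset \Gm_X$, under a character $\chi$ on $\Stab_{A^\times}[a]$. The semi-simplicity of $a$ enters precisely to make $M$ \'etale.

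First I would define $\chi \colon \Stab_{A^\times}[a] \to \Gm_X$ by the rule $gag^{-1}=\chi(g)\,a$. The scalar $\chi(g)$ is unique because $a$ is nowhere zero, and $\chi$ is a morphism of $X$-group schemes with $\Stab_{A^\times}(a)=\ker\chi=\chi^{-1}(1_X)$ a closed subgroup scheme; the content of the lemma is that this closed immersion is also open.

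Next I would cut down the image of $\chi$ using the minimal polynomial of $a$. Stratifying $X$ so that $a$ has constant degree $n$, the monic minimal polynomial $p_a(y)=y^n+c_{n-1}y^{n-1}+\cdots+c_0 \in \O_X[y]$ is well-defined, and comparison with the characteristic polynomial of left multiplication by $a$ together with invertibility of $a$ forces $c_0 \in \O_X^\times$. Conjugating $p_a(a)=0$ by $g$ with $\chi(g)=\lambda$ gives $\sum_i c_i\lambda^i a^i=0$; by minimality of $p_a$ and equality of degrees this polynomial in $a$ must equal $\lambda^n p_a(a)$, i.e.\ $c_i(\lambda^{n-i}-1)=0$ for all $i$. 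Taking $i=0$ and using $c_0\in\O_X^\times$ yields $\lambda^n=1$, so $\chi$ factors through the closed subscheme $M\subset\mu_{n,X}\subset\Gm_X$ cut out by these equations.

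The main obstacle is then establishing that $M\to X$ is \'etale: once this is done, the identity section $1_X\to M$ is an open immersion (being a section of an \'etale morphism), and its pullback under $\chi$ gives the lemma. Fibrewise at a geometric point $x$ of $X$, $M_x \subset \Gm_{k(x)}$ is the subgroup of scalars permuting by multiplication the eigenvalue multiset of $a|_x$. Since $a$ is semi-simple the eigenvalues are distinct, so this action is free on its orbits, forcing $|M_x|$ to divide $n$; and $k(x)^\times$ has no $p$-torsion in characteristic $p$, so $|M_x|$ is coprime to $\mathrm{char}\,k(x)$, whence $M_x$ is \'etale. Passing to a further stratification of $X$ according to the vanishing pattern of the $c_i$ makes $|M_x|$ locally constant, so $M$ becomes finite \'etale on each stratum. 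Openness of the inclusion being a local property on the target, this finishes the argument.
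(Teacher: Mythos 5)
Your overall strategy --- realizing $\Stab_{A^\times}(a)\hookrightarrow\Stab_{A^\times}[a]$ as the pullback of an open immersion along the character $\chi$, with semi-simplicity supplying the unramifiedness --- is sound and close in spirit to the paper's argument (which works with $Y_a=\spec_X\O_X[a]$, finite \'etale over $X$ by semi-simplicity, and uses properness of the off-diagonal part of $Y_a\times_XY_a\to(\Gm)_X$, $(\lambda,\mu)\mapsto\phi(\lambda)/\phi(\mu)$, to produce an open neighbourhood $U$ of the identity in $(\Gm)_X$ whose preimage under your $\chi$ is exactly $\Stab(a)$). Your construction of $\chi$ and of the closed subscheme $M\subset\Gm_X$ through which it factors is correct, and the first ``stratification'' by degree is harmless: a semi-simple section is strict by definition, so $\O_X[a]$ is locally free and the degree is locally constant, giving an open and closed decomposition of $X$. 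But two steps as written do not hold up.

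First, the fibrewise \'etaleness argument is vacuous: $M_x$ is the group scheme $\mu_d$ with $d=\gcd\{\,n-i: c_i(x)\neq0\,\}$, and the observation that $k(x)^\times$ has no $p$-torsion only tells you that the group of geometric \emph{points} of $M_x$ has order prime to $p$ --- which is true of every finite subgroup scheme of $\Gm$ and says nothing about whether the scheme $M_x$ is reduced (compare $\mu_{2p}$ in characteristic $p$: two points, not \'etale). What you need is $p\nmid d$, and this is precisely where semi-simplicity must enter: if $p\mid d$ then $p_a(y)=g(y^d)$ has vanishing derivative, contradicting separability of the minimal polynomial; equivalently, the group scheme $M_x$ acts freely on the reduced finite scheme $Y_{a,x}$ of eigenvalues, so an orbit map is a monomorphism of $M_x$ into a reduced scheme, forcing $M_x$ reduced. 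Second, the closing step fails as stated: a stratification by the vanishing pattern of the $c_i$ is not an open cover of $X$, and an inclusion that is open over each stratum need not be open (the locus where the fibre of $M$ jumps is closed, not open, so the would-be open set can fail to be a neighbourhood of points over the deeper stratum). Fortunately you do not need \'etaleness, hence neither flatness nor this stratification: having all fibres \'etale already makes $M\to X$ unramified, and a section of an unramified separated morphism is an open immersion, being a pullback of the diagonal $M\to M\times_XM$, which is an open immersion. With those two repairs your proof goes through.
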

\begin{proof}
Let $Y_a$ be the relative spectrum of $\O_X[a]$ over $X$. 
The epimorphism of commutative algebras $\O_X[x]\to \O_X[a]$ gives rise
to a closed immersion $\phi:Y_a\to (\Gm)_X$, because $a$ is
invertible. We get an induced proper morphism
\begin{align}
Y_a\times_XY_a&\longrightarrow (\Gm)_X\label{yty}\\
(\lambda,\mu)&\longmapsto \phi(\lambda)/\phi(\mu)\,.\nonumber
\end{align}
As $Y_a\to X$ is unramified, the diagonal $Y_a\to Y_a\times_X Y_a$ is an
open immersion, so the complement, denoted $(Y_a\times Y_a)^{\not=}$,
is closed in $Y_a\times Y_a$, and hence proper over $X$. Hence the
image of $(Y_a\times Y_a)^{\not=}$ in $(\Gm)_X$, denoted by $Z$, is
closed. The complement of $Z$ in $(\Gm)_X$ is hence an open
neighborhood $U$ of the identity section. 

We have a cartesian diagram
$$\xymatrix{
\Stab_{A^\times}(a)\rto\dto & U\dto\\
\Stab_{A^\times}[a]\rto\dto & (\Gm)_X\dto\\
A^\times\rto_{b\mapsto bab^{-1}a^{-1}} & A^\times\rlap{\,.}}$$
The lower square is cartesian by the definition of
$\Stab_{A^\times}[a]$. The fact that the upper square is cartesian
follows from the fact that the image of $\Stab_{A^\times}[a]$ in
  $(\Gm)_X$ is contained in the image of (\ref{yty}). The latter claim
follows from the fact that if ${}^{b}a=\lambda a$, for a scalar
$\lambda\in(\Gm)_X$, then $\lambda$ is necessarily a quotient of
eigenvalues of $a$. 
\end{proof}

\begin{rmk}\label{era}
Let $(X,A)$ be an algebroid, and let $Y=E_k(A)$ be the stack of
complete labelled sets of $k$ orthogonal idempotents in $A$. Let us
write $B=(A|_Y)^{e_1,\ldots,e_k}$, for the subalgebra of $A|_Y$, consisting of
elements commuting with each of the $k$ tautological idempotents in
$A|_Y$. The homomorphism $\iota:A^\times\to I_X$ restricts to a homomorphism
$B^\times\to I_Y$, and defines an algebroid structure over $Y$.
The algebra $B$ is  endowed with a canonical complete set of
orthogonal {\em central }idempotents, and hence decomposes as
a product $B=B_1\times\ldots\times B_k$. 
\end{rmk}

Let $(X,A)$ be an algebroid, where $A$ is an algebra bundle, and
let $e_1,\ldots,e_k$ be a complete set 
of orthogonal central idempotents in $A$, decomposing $A$ into a
product of algebra bundles
$A=A_1\times\ldots\times A_k$. 
We get an induced decomposition 
$$A^{\times,\ss}=A_1^{\times,\ss}\times\ldots\times
A_k^{\times,\ss}\subset A\,,$$
and  the algebroid structure on
$A^{\times,\ss}$ is the fibered product over $X$ of the
algebroid structures on the $A_i^{\times,\ss}$, $i=1,\ldots,k$. 

We obtain an 
embedding of algebras $\alpha:\aaa_Y^k\to A$.  Via $\alpha$, the torus
$\Gm^k$ acts on $A$ by left multiplication.  
The action of $\Gm^k$ on $A$ preserves the semi-simple
units
 and the restricted action of $\Gm^k$ on
$A^{\times,\ss}$ is fiberwise free, so the quotient
$$\tilde
A^{\times,\ss}=A^{\times,\ss}/\Gm^k=A_1^{\times,\ss}/\Gm\times
\ldots\times A_k^{\times,\ss}/\Gm$$ 
is representable over $X$. 

\begin{prop}\label{bundlequack}
We claim that $\tilde A^{\times,\ss}$ has a canonical algebroid
structure, and we get an induced commutative diagram
$$\xymatrix{
A^{\times,\ss}\rto\drto  & \tilde A^{\times,\ss}\dto\\
&X}$$ 
of algebroids, where the horizontal map is inert, hence a
principal $\Gm^k$-bundle of algebroids.
\end{prop}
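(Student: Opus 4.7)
The plan is to construct the algebroid structure on $\tilde A^{\times,\ss}$ by faithfully flat descent along the $\Gm^k$-torsor $A^{\times,\ss}\to\tilde A^{\times,\ss}$, starting from the algebroid structure on $A^{\times,\ss}$ established in Remark~\ref{ssai}. Recall that the algebra there is the centralizer $C=(A|_{A^{\times,\ss}})^a$ of the tautological semi-simple unit $a$, viewed as a subsheaf of the pullback algebra $A|_{A^{\times,\ss}}$.

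First I would verify that $C$ is $\Gm^k$-invariant as a subsheaf of $A|_{A^{\times,\ss}}$. Using the central decomposition $A=A_1\times\cdots\times A_k$, the tautological section splits as $a=(a_1,\ldots,a_k)$, and $\Gm^k$ acts by $\lambda\cdot a=(\lambda_1 a_1,\ldots,\lambda_k a_k)$. Since each $\lambda_i$ is a central scalar in $A_i$, we have $C_{A_i}(\lambda_i a_i)=C_{A_i}(a_i)$, so $C(\lambda\cdot a)=C(a)$ as subalgebras of $A$. Because $A|_{A^{\times,\ss}}$ descends trivially (it is pulled back from $X$ through $\tilde A^{\times,\ss}$), the $\Gm^k$-invariant subsheaf $C$ descends to a finite type subalgebra $\tilde C\subset A|_{\tilde A^{\times,\ss}}$ with $\tilde C|_{A^{\times,\ss}}=C$.

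Next I would construct $\iota:\tilde C^\times\to I_{\tilde A^{\times,\ss}}$ and show it is an open immersion. A local unit $u\in\tilde C^\times$ commutes with a representative $a$, hence stabilizes $[a]$ under inner conjugation and lands in $\Stab_{A^\times}([a])\subset\Stab_{I_X}([a])=I_{\tilde A^{\times,\ss}}$, using that $\tilde A^{\times,\ss}\to X$ is representable. To see $\iota$ is open, I would check after faithfully flat pullback to $A^{\times,\ss}$, where it becomes the composition $C^\times=\Stab_{A^\times}(a)\hookrightarrow\Stab_{A^\times}([a])\hookrightarrow\Stab_{I_X}([a])$. The key input is Lemma~\ref{equaler}: applied separately to each semi-simple unit $a_i\in A_i^{\times,\ss}$, it provides componentwise open immersions $\Stab_{A_i^\times}(a_i)\hookrightarrow\Stab_{A_i^\times}([a_i])$ which assemble into the first inclusion, while the second is inherited from the open immersion $A^\times\hookrightarrow I_X$ of the ambient algebroid.

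The algebroid property~(\ref{algebroidproerpty}) for $(\tilde A^{\times,\ss},\tilde C)$ descends from that for $(A^{\times,\ss},C)$, since inner conjugation by $u\in\tilde C^\times$ on $\tilde C\subset A|_{\tilde A^{\times,\ss}}$ is the restriction of inner conjugation by $u\in A^\times$ on $A$, matching the inertia action. The morphism $(A^{\times,\ss},C)\to(\tilde A^{\times,\ss},\tilde C)$ is inert by construction, and combined with the underlying principal $\Gm^k$-bundle structure this yields a principal $\Gm^k$-bundle of algebroids, while the triangle over $X$ commutes because both algebra maps are inclusions into the pullback of $A$. The main subtle step is verifying the open immersion in the third paragraph, where the passage from $\Gm$ to $\Gm^k$ crucially exploits the product decomposition coming from the central idempotents to reduce to Lemma~\ref{equaler}.
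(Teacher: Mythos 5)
Your proof is correct and follows essentially the same route as the paper: descend the centralizer algebra $(A|_{A^{\times,\ss}})^a$ to the quotient (since the centralizer depends only on $[a]$), obtain one open immersion from Lemma~\ref{equaler} and the other from the identification of $I_{\tilde A^{\times,\ss}}$ with $\Stab_{I_X}[a]$ together with $\iota:A^\times\hookrightarrow I_X$, and compose. The only cosmetic difference is that the paper first reduces to $k=1$ using stability of inert morphisms under products and pullbacks, whereas you apply Lemma~\ref{equaler} componentwise for general $k$; the two reductions are equivalent.
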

\begin{proof}
Because inert morphisms of algebroids are stable under
composition and pullback, this claim reduces to the case $k=1$, and
$e_1=1$, which we will now consider. 

Let us denote the tautological section of $A^\times$ over
$A^{\times,\ss}$ by $a$.  Then the algebra over $A^{\times,\ss}$ is
given by the centralizer algebra $(A|_{A^{\times,\ss}})^a$. This
algebra descends to the quotient $\tilde A^{\times,\ss}$, because
the centralizer of an algebra element does not depend on its
equivalence class. Let us denote this descended algebra by 
$(A|_{\tilde   A^{\times,\ss}})^{[a]}$. The units in this algebra are
identified with 
$$(A^\times|_{\tilde
  A^{\times,\ss}})^{[a]}=\Stab_{(A^\times|_{\tilde
    A^{\times,\ss}})}(a)\,,$$
which is an open subgroup of 
$$\Stab_{(A^\times|_{\tilde
    A^{\times,\ss}})}[a]\,,$$
by Lemma~\ref{equaler}.  We also have a cartesian diagram
$$\xymatrix{
\Stab_{(A^\times|_{\tilde A^{\times,\ss}})}[a]\rto \dto &
A^{\times}|_{\tilde A^{\times,\ss}}\dto^\iota\\
I_{\tilde A^{\times,\ss}}\rto & I_X|_{\tilde
  A^{\times,\ss}}\rlap{\,,}}$$
because the inertia stack $I_{\tilde A^{\times,\ss}}$ can be identified as 
$$I_{\tilde A^{\times,\ss}}=\{([a],\phi)\in \tilde A^{\times,\ss}\times_X I_X\mid
{}^\phi[a]=[a]\}\,.$$
This proves that $\Stab_{(A^\times|_{\tilde A^{\times,\ss}})}[a]$ is
an open subgroup of $I_{\tilde A^{\times,\ss}}$. Composing our two
open immersions, we obtain an open immersion of groups over $\tilde
A^{\times,\ss}$
from $(A^\times|_{\tilde A^{\times,\ss}})^{[a]}$ to $I_{\tilde
  A^{\times,\ss}}$, endowing $\tilde A^{\times,\ss}$ with the
structure of an algebroid, as required. 

We get an induced morphism of algebroids $A^{\times,\ss}\to
\tilde A^{\times,\ss}$, which is inert, by construction.
\end{proof}

\begin{rmk}
Even if $(X,A)$ is strict, $\tilde A^{\times,\ss}$ is not
necessarily strict. It is this construction, in fact, which
makes it impossible for us to restrict attention to strict
algebroids. 
\end{rmk}

\begin{rmk}\label{karoubi}
Let $(\MM,\AA)$ be the algebroid underlying a linear algebraic stack.
Suppose that $\MM$ admits direct sums.  Let $\MM_\ast$ be the
complement of the zero object in $\MM$.  We obtain a canonical 
morphism of algebroids
\begin{align}\label{surprize}
\underbrace{\MM_\ast\times\ldots\times \MM_\ast}
_{\text{$n$ times}}&\longrightarrow E_n(\AA)\\\nonumber
(x_1,\ldots,x_n)&\longmapsto (x_1\oplus\ldots\oplus x_n;\pi_1\ldots,\pi_n)\,,
\end{align}
where $\pi_1,\ldots,\pi_n$ are the projectors corresponding to the
factors $x_1,\ldots,x_n$ of $x_1\oplus\ldots\oplus x_n$. Over every
$R$-scheme $S$, this morphism is fully faithful.  The underlying
morphism of algebraic stacks (\ref{surprize}) is a monomorphism. 

If we require all fibers $\MM(S)$ to be Karoubian, i.e., we require
all idempotents to admit the corresponding direct summands,
(\ref{surprize}) is an isomorphism of algebroids.

We call a linear algebraic stack $\MM$ {\bf Karoubian}, if it admits
direct sums and all fibers are Karoubian. 
\end{rmk}

\subsubsection{Algebroid representations}

\begin{defn}
Let $X$ be an algebroid.  A {\bf representation }of $X$ is a
morphism of algebroids $\rho:X\to \mf{Vect}$, to the algebroid underlying the
linear stack $\mf{Vect}$ of vector bundles. If $\rho$ factors through
vector bundles of rank $n$, i.e., defines a morphism $X\to
B\GL_n$, with its natural algebroid structure (see
Example~\ref{vectalg}), we say that $\rho$ has rank $n$. 

If the algebroid morphism $X\to B\GL_n$ is representable, we call the
representation $\rho$ {\bf faithful}. 
\end{defn}

To give a representation of the algebroid $(X,A)$ is equivalent to specifying a
vector bundle $V$ over $X$, together with a morphism of algebras $A\to
\End(V)$, such that the induced morphism on unit groups $A^\times\to
GL(V)$ makes the diagram
$$\xymatrix{
A_X^\times\rto^{\iota}\drto& I_X\dto\\
&GL(V)}$$
commute, where $I_X\to GL(V)$ is the inertia representation given by
the vector bundle $V/X$.

The representation $V$ of rank $n$ is faithful if and only if $A\to \End(V)$ is a
monomorphism of algebras over $X$. If this is the case, the underlying
morphism of stacks $X\to B\GL_n$ is Deligne-Mumford representable, and the
$\GL_n$-bundle of frames $Y=\uIsom(V,\O^n)$ is represented by a
Deligne-Mumford stack.

So a faithful representation identifies $X$ as  a quotient stack
$X=\GL_n\backslash Y$, where $Y$ is a Deligne-Mumford stack. The
algebroid structure on $X$ is then given by 
a $\GL_n$-invariant subalgebra $B\hookrightarrow M_{n\times n}\times Y$, such
that the subgroup $B^\times \hookrightarrow \GL_n\times Y$ is equal to the
subgroup $\Stab_{\GL_n}Y\hookrightarrow {\GL_n}\times Y$.  

\begin{rmk}\label{tautex}
Every algebroid $(X,A)$, with $A$ an algebra bundle, admits  the
tautological adjoint representation, given 
by the adjoint representation of the algebra $A$ on itself. By
contrast, the representation of $A$ on itself by left multiplication
is not an algebroid representation, unless $A=0$.
\end{rmk}

\begin{prop}\label{faith}
Every algebroid $(X,A)$ admits a stratification  $X'\subset X$,
such that the restricted algebroid $(X',A|_{X'})$ admits a faithful
representation.
\end{prop}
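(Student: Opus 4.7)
The plan is to stratify $X$ so that $(X,A)$ becomes, after pullback along a finite étale cover of its coarse Deligne--Mumford stack, a classifying algebroid in the sense of Example~\ref{trivialgerbe}; on a classifying algebroid the left-multiplication representation is faithful, and one then descends it back to $X$ by pushforward along the cover. First I would apply the rank stratification (Definition~\ref{rankstrat}, via Proposition~\ref{bdlstr}) to reduce to the case where $A$ is an algebra bundle. Then $A^\times = I_X^\circ$ is smooth, so $X$ admits a coarse Deligne--Mumford stack $\ol X$ and $X \to \ol X$ is a smooth connected gerbe; by the structural fact on smooth gerbes recalled in the preliminaries, a further stratification of $\ol X$ makes this gerbe isotrivial, so I can pick a finite étale morphism $T \to \ol X$ together with a section $\sigma\colon T \to X_T := X \times_{\ol X} T$ of the pullback gerbe. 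Denote by $f\colon X_T \to X$ the resulting finite étale morphism.

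Setting $C := \sigma^*(A|_{X_T})$, Remark~\ref{gerberemark} identifies $(X_T, A|_{X_T})$ canonically with the classifying algebroid $(B_T C^\times,\, C^\times \backslash C)$. The $C^\times$-representation on $C$ by left multiplication is a $C^\times$-equivariant vector bundle on $T$, hence defines a vector bundle $V$ on $X_T$, and the algebra map $C \to \End_{\O_T}(C)$ given by left multiplication yields a monomorphism of algebras $A|_{X_T} \hookrightarrow \End_{\O_{X_T}}(V)$. Algebroid compatibility is automatic here: both the inertia action of $I_{X_T}^\circ = C^\times$ on $V$ and the composite $A|_{X_T}^\times \to \End(V)^\times$ are, by construction, left multiplication by $C^\times$ on $C$, so this provides a faithful algebroid representation of $(X_T, A|_{X_T})$.

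Finally I would descend to $X$ by setting $W := f_*V$, which is a vector bundle on $X$ since finite étale pushforward preserves local freeness. The composite
\[
A \;\hookrightarrow\; f_*f^*A \;=\; f_*(A|_{X_T}) \;\longrightarrow\; f_*\End_{\O_{X_T}}(V) \;\hookrightarrow\; \End_{\O_X}(W)
\]
is a monomorphism of finite-type algebras on $X$: the first arrow is the unit of adjunction and is injective because $f$ is faithfully flat; the second is $f_*$ applied to a monomorphism and hence injective, since $f_*$ is exact on the finite étale cover $f$; the third identifies a sheaf endomorphism of $V$ with the corresponding diagonal endomorphism of $f_*V$, and is injective because distinct sheets of $f$ contribute independently. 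The step requiring the most care, and the one I expect to be the main obstacle, is verifying that this composite genuinely defines a morphism of algebroids on $X$ --- i.e.\ that the induced $A^\times \to \GL(W)$ coincides with the inertia action of $I_X^\circ = A^\times$ on $W$. After pullback along $f$ this should reduce to the algebroid property already established on $X_T$, but the compatibility of $f_*$ with the inertia of $X$ across the sheets of the cover needs an explicit check to ensure the diagram~(\ref{quasidiga}) commutes.
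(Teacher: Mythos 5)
Your proposal is correct and follows essentially the same route as the paper's proof: stratify to the algebra-bundle/isotrivial-gerbe case, trivialize over a finite \'etale cover, use Remark~\ref{gerberemark} and the left-multiplication representation there, and descend by pushforward along the cover using adjunction. The compatibility check you flag at the end (that $A^\times \to \GL(f_\ast V)$ agrees with the inertia action so that diagram~(\ref{quasidiga}) commutes) is likewise left implicit in the paper, which simply asserts the adjunction embedding $A\to\pi_\ast\End(V)\to\End(\pi_\ast V)$ gives the faithful representation.
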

\begin{proof}
Without loss of generality, assume that $A$ is an algebra
bundle with smooth unit group $A^\times$, and that $X$ is a connected
isotrivial  gerbe over the Deligne-Mumford stack $S$.  Thus $X\to S$
admits a section over  a finite \'etale
cover $S'\to S$.

To begin with, we consider the case where $S'=S$, i.e., the case where
$X$ is a trivial gerbe over $S$. By Remark~\ref{gerberemark}, we can
assume that $A$ descends to $S$, and that we are dealing with the
algebroid $(BA^\times,A^\times\backslash A)$. Then we can consider the
representation of $A$ on itself by left multiplication
$\ell:A\to \End(A)$. It restricts to a representation of $S$-group
schemes $\ell:A^\times\to GL(A)$. We get an induced morphism of
algebraic stacks $BA^\times\to BGL(A)$, which is covered by the
morphism of algebras $A^\times\backslash A\to
BGL(A)\backslash \End(A)$. Since $\ell:A\to \End(A)$ is injective,
this gives the required faithful representation of
$(BA^\times,A^\times\backslash A)$. 

(Note that this construction does not contradict Remark~\ref{tautex}.
The vector bundle over $X$ defined by the left representation of $A^\times$
on itself is different from the vector bundle underlying the algebroid
$A$ over $X$, which is
given by the adjoint representation of $A^\times$ on itself.)

Now consider the general case. The pullback $(X',A')$ of $(X,A)$ to
$S'$ is again an algebroid, as $X'\to X$ is inert. Since
$X'\to S'$ is a trivial gerbe, $(X',A')$ admits a faithful
representation $A'\to \End(V)$, on a vector bundle $V$ over
$X'$. Let $\pi:X'\to X$ be the projection.  Then $\pi_\ast V$ is a
faithful representation of $A$. In fact, by adjunction, the embedding
$\pi^\ast A\to \End (V)$ gives rise to an embedding
$A\to\pi_\ast\End(V)\to\End(\pi_\ast V)$. 
\end{proof}

\subsubsection{Clear algebroids}

Suppose that $(X,A)$ is an algebroid, and that $X$ is a
connected
gerbe over the Deligne-Mumford stack $\ol X$. Then the centre $Z(A)$
descends to a commutative finite 
type algebra over $\ol X$, by Lemma~\ref{inertiadescent}.

\begin{defn}\label{clear}
We call an algebroid $(X,A)$ {\bf clear}, if 
\begin{items}
\item $A$ and $Z(A)$ are algebra bundles over $X$,
\item $X$ is a connected isotrivial gerbe over $\ol X$,
\item the Deligne-Mumford stack $\ol X$ is connected,
\item $ZE(A)\to X$ is finite \'etale.
\end{items}
\end{defn}

For a clear algebroid, $ZE(A)$ and $PZE(A)$ descend to  finite \'etale $\ol
X$-schemes. The definitions of {\em central rank}, {\em split central
  rank}, and {\em central  type }apply to clear algebroids. 

For every algebroid $(X,A)$, over  a finite type algebraic
stack $X$, there exists a stratification of $X$, such that the restricted
algebroids over the pieces of the stratification are all clear. This
follows from Proposition~\ref{bdlstr} and Lemma~\ref{Estrat}. 

\section{The spectrum of semi-simple inertia}\label{spectrum}

Let $K(\DM)$ be the $\qq$-vector space on (isomorphism classes of)
finite type Deligne-Mumford stacks, modulo scissor relations and
bundle relations, i.e., equations of the form $[Y]=[F\times X]$,
whenever $Y\to X$ is a fibre bundle with special structure group and
fibre $F$. The product over $\spec R$ makes $K(\DM)$ a $\qq$-algebra.
We write $q$ for the class of the affine line in $K(\DM)$.

Let $\MM$ be a linear algebraic stack, and $\AA\to\MM$  its universal
endomorphism algebra.  Recall that $(\MM, \AA)$ is an algebroid (c.f.
Example~\ref{linex}).

\subsubsection{Stack functions}

\begin{defn}
A {\bf stack function }is a representable  morphism of algebroids $(X,A)\to
(\MM,\AA)$, such that $X$ is of finite type.

The $K$-module of $\MM$, notation $K(\MM)$, is the free
$\qq$-vector space on (isomorphism classes of) stack functions,
modulo the scissor and bundle relations  relative to $(\MM,\AA)$. 
The class in $K(\MM)$
defined by a stack function $X\to\MM$ will be denoted $[X\to\MM]$. 

A {\bf scissor relation relative $\MM$ }is
$$[X\to\MM]=[Z\to X\to\MM]+[X\setminus Z\to X\to\MM]\,,$$
for any closed immersion of algebroids  $Z\hookrightarrow X$, and any
stack function $X\to \MM$. The substacks $Z$ and $X\setminus Z$ are
endowed with their respective pullback algebroids. 

A {\bf bundle relation relative $\MM$ }is 
$$[Y\to X\to \MM]=[F\times X\to X\to\MM]\,,$$
for any fibre bundle $Y\to X$ of algebroids with special structure group and fibre
$F$, see Definition~\ref{bundledef}.
\end{defn}

 There is  an action of $K(\DM)$ on
  $K(\MM)$, given by
$$[Z]\cdot[X\to\MM]=[Z\times X\to X\to\MM]\,.$$ This action makes
 $K(\MM)$ into a $K(\DM)$-module.

The additive zero in $K(\MM)$ is given by the empty algebroid
$$0=[\varnothing\to\MM]\,.$$

If $\MM$ admits a zero object (Definition~\ref{nul}), we denote the
corresponding stack 
function by $1=[\spec R\stackrel{0}{\longrightarrow} \MM]$. We can use
it to embed $K(\DM)$ into $K(\MM)$ 
via $[X]\mapsto [X]\cdot 1=[X\to\spec
  R\stackrel{0}{\longrightarrow}\MM]$.
We will always assume that $\MM$ admits a zero object. 

\hide{

For $\MM=\Spec R$, we write $K(\mf{Alg})$ for $K(\Spec R)$, to
forestall confusion.  Thus $K(\mf{Alg})$ is the $K(\Var)$-module of
finite type algebroids modulo scissor relations. In fact,
$K(\mf{Alg})$ is a $K(\Var)$-algebra in a canonical way.\comment{This
  doesn't make sense with the representable definition}
}

\unsure{
Recall Remark~\ref{repsimple}, which says that to prove that a given
$(X,A)\to(\MM,\AA)$ is a stack function, we do not need to check the
algebroid condition on $(X,A)$.

}

\subsubsection{The filtration by split central rank}

We call a stack function $X\to\MM$  {\bf clear}, if $X$ is a clear
algebroid (Definition~\ref{clear}). 

\begin{defn}
  We introduce the 
{\bf filtration by split central rank }$K^{\geq k}(\MM)$ on $K(\MM)$,
by declaring $K^{\geq 
  k}(\MM)$ to be generated as a $\qq$-vector space by clear stack functions
$[X\to\MM]$, 
such that $A_X$ admits $k$ orthogonal central non-zero idempotents (globally).
\end{defn}

Alternatively, $K^{\geq k}(\MM)$  is generated by $[X\to\MM]$, where
$X$ is a clear algebroid such that 
$PZE(A_X)$ has at least $k$ components. 

Each filtered piece $K^{\geq k}(\MM)$ is stable under scalar
multiplication by $K(\DM)$. 
Let us introduce the abbreviation
$$K^{\geq k}/K^{>k}(\MM)=K^{\geq k}(\MM)/K^{>k}(\MM)\,.$$

\begin{rmk}
Trying to define a direct sum decomposition of $K(\MM)$ by split
central rank would not work, because a clear algebroid  $X$ of split central
rank $k$ may very well admit a closed substack $Z\subset X$ whose
restricted algebroid is again clear, but of
split central rank larger than $k$. Similarly, the bundle relations
do not respect split central rank. 
\end{rmk}

The zero ring has no non-zero central idempotents, but any non-zero ring has at
least one.  Therefore, $K(\DM)\subset K(\MM)$ is a complement for
$K^{>0}(\MM)$ in $K(\MM)=K^{\geq0}(\MM)$, i.e., $K(\MM)=K(\DM)\oplus
K^{>0}(\MM)$. In particular, we have
$$K^{\geq0}(\MM)/K^{>0}(\MM)=K(\DM)\,.$$

\unsure{
\subsubsection{Refinement by central type}\comment{I think this is doubtful, at least
    I can't prove it, because I don't know that the intersection of
    the scissor relations and the stuff generated by clear algebroids
    of central rank at least $k$, is generated by scissor relations
    among clear algebroids of central rank at least $k$}

\begin{prop}
The subquotient $K^{\geq k}/K^{>k}(\MM)$ decomposes into a direct
sum 
\begin{equation}
K^{\geq
  k}/K^{>k}(\MM)=\bigoplus_{|\lambda|=k}K^{\lambda}(\MM)'\,,
\end{equation}
where the sum is over all partitions with $k$ parts, and
$K^\lambda(\MM)'$ is generated by all clear   
stack functions of central type  $\lambda$. (The prime symbol is added
to the notation to distinguish these spaces from others considered below.)
\end{prop}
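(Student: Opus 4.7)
The plan is to establish the decomposition by constructing, for each partition $\lambda$ with $|\lambda|=k$ parts, a projection $\pi^\lambda$ on $K^{\geq k}(\MM)/K^{>k}(\MM)$ with image $K^\lambda(\MM)'$, such that $\sum_{|\lambda|=k}\pi^\lambda=\id$. By Proposition~\ref{bdlstr}, Lemma~\ref{Estrat}, and the remark after Definition~\ref{clear}, every finite type algebroid admits a clear stratification, so $K^{\geq k}(\MM)$ is spanned by clear stack functions of split central rank at least $k$. Those of split central rank strictly greater than $k$ already lie in $K^{>k}(\MM)$, so the subquotient is spanned by classes of clear stack functions of central type $\lambda$ with $|\lambda|=k$. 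First I would define $\pi^\lambda$ on the free $\qq$-vector space on such generators by $\pi^\lambda[X\to\MM]=[X\to\MM]$ if the central type of $X$ is $\lambda$, and $0$ otherwise; the main task is then to show $\pi^\lambda$ descends to $K^{\geq k}(\MM)/K^{>k}(\MM)$.

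The key geometric input is that if $W\subset X$ is a clear locally closed substack of a clear algebroid $X$, then $PZE(A_W)=PZE(A_X)|_W$, a finite \'etale cover of $W$ of the same total degree as $PZE(A_X)\to X$, so the number of connected components can only increase on restriction. In particular, when $X$ has central type $\lambda$ of length $k$, every clear substack $W$ has split central rank at least $k$, and those of split central rank exactly $k$ have central type precisely $\lambda$ (no connected component of $PZE$ is allowed to split). Applied to a scissor relation $[X]=[Z]+[X\setminus Z]$ with $X$ clear of central type $\lambda$, refining $Z$ and $X\setminus Z$ into clear pieces therefore yields only strata of split central rank at least $k$, and modulo $K^{>k}(\MM)$ the relation couples only pieces of central type exactly $\lambda$, which each $\pi^\lambda$ preserves. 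Bundle relations $[Y]=[F\times X]$ are handled by passing to clear stratifications on both sides, using that the Zariski-locally trivial pullback of $PZE(A_X)$ does not decrease the number of connected components.

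The main obstacle, as flagged in the author's comment, is that relations cutting out $K(\MM)$ are defined using arbitrary (not necessarily clear) stack functions, so an equality between elements of $K^{\geq k}(\MM)$ may be witnessed by intermediate non-clear pieces. To overcome this, I would show that any such relation admits a refinement along a common clear stratification of all participating stack functions, expressing the original relation as a $\qq$-linear combination of scissor and bundle relations among clear stack functions alone. The geometric input above then guarantees that all clear pieces of split central rank greater than $k$ lie in $K^{>k}(\MM)$, while those of rank exactly $k$ carry well-defined central types respected by each $\pi^\lambda$. Granted this refinement step, $\pi^\lambda$ is well-defined on the subquotient, the identity $\sum_{|\lambda|=k}\pi^\lambda=\id$ holds tautologically on clear generators, and the direct sum decomposition follows. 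The technical heart of the proof is therefore this common-refinement argument, which is the step requiring the most care.
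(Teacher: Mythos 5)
There is a genuine gap, and it sits exactly where you place it — but the step you defer is not a routine technicality, and your proposed "common refinement" does not close it. Write $V$ for the free $\qq$-vector space on stack functions, $R\subset V$ for the span of the scissor and bundle relations, and $C_{\geq k}\subset V$ for the span of clear stack functions of split central rank $\geq k$, so that $K^{\geq k}(\MM)\cong C_{\geq k}/(C_{\geq k}\cap R)$. Your map $\pi^\lambda$ is defined on the generators of $C_{\geq k}$, and well-definedness on the subquotient amounts to showing that $\pi^\lambda$ annihilates $C_{\geq k}\cap R$ modulo $K^{>k}(\MM)$. The common-refinement argument, even granted in full, only shows that $R$ is spanned by elementary relations among \emph{clear} stack functions. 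It does not show that an element of $C_{\geq k}\cap R$ can be written as a combination of relations each of which individually lies in $C_{\geq k}$: a vector in $C_{\geq k}$ may equal a sum of elementary clear relations in which clear pieces of split central rank $<k$ (or, before refinement, non-clear pieces) appear and cancel across different relations. Your geometric input — that a clear locally closed substack of a clear algebroid of type $\lambda$ again has split central rank $\geq k$, with type exactly $\lambda$ if the rank stays at $k$ — is correct, but it only controls relations whose \emph{ambient} stack function already has split central rank $\geq k$; it says nothing about relations that start lower and whose high-rank constituents conspire to produce an element of $C_{\geq k}$.

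This is precisely the obstruction the authors themselves could not overcome: the proposition is suppressed from the compiled paper (it is wrapped in a macro that prints nothing), with a marginal note that they cannot prove it because they do not know that $C_{\geq k}\cap R$ is generated by relations among clear algebroids of split central rank at least $k$. The three-line "proof" in the source only observes that stratifying a clear algebroid of type $\lambda$ produces pieces whose types refine $\lambda$, with proper refinements dying in the quotient — i.e., it establishes your geometric input and nothing more. In the paper this decomposition is never used; instead the authors work with the coarser \emph{filtration} $K^{\geq\lambda}(\MM)$ by divisibility of central types, which only requires the one-sided statement that clear substacks have central type divisible by that of the ambient algebroid, and for which the intersection problem does not arise. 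So your plan reproduces the authors' intended argument together with its unresolved hole; to make it a proof you would need either an explicit presentation of $C_{\geq k}\cap R$ by relations internal to $C_{\geq k}$, or a different construction of $\pi^\lambda$ (say, as a geometrically defined operator on all of $K(\MM)$, in the spirit of the $E_r$) that is manifestly well defined.
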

\begin{proof}
Consider a clear algebroid $X$ of central type $\lambda$, with
$|\lambda|=k$.  For every 
stratification of $X$, all pieces of the stratification have central types which are
refinements of $\lambda$.  All pieces whose central types are proper
refinements of $\lambda$ are zero in $K^{\geq 
  k}(\MM)/K^{>k}(\MM)$. 
\end{proof}

}

\subsection{The idempotent operators $E_r$}

Let $E_r$ denote the operator on $K(\MM)$ which maps a stack function
$[X\to\MM]$ to $[E_r(X)\to X\to\MM]$, where $E_r(X)=E_r(A_X)$ is the
stack of $r$-tuples of non-zero orthogonal idempotents adding to unity in
$A_X$, see Definition~\ref{famid}. The algebroid structure on $E_r(X)$
is described in Remark~\ref{era}. 

The operators $E_r$ are well-defined, because applying $E_r$ to a
stratification or a fiber bundle of algebroids gives rise to another
inert morphism of algebroids of the same type. 
The operators $E_r:K(\MM)\to K(\MM)$ are $K(\DM)$-linear.

This definition applies also to $r=0$. The stack $E_0(X)$ is
empty if $A_X\not=0$, and $E_0(X)=X$, if $X$ is a Deligne-Mumford stack. Hence $E_0$
is diagonalizable, and has eigenvalues $0$ and $1$.  The kernel
($0$-eigenspaces) is $K^{>0}(\MM)\subset K(\MM)$, the image
($1$-eigenspaces) is denoted by $K^0(\MM)\subset K(\MM)$, and is
generated by all stack functions $[X\to\MM]$, where $X$ is a
Deligne-Mumford stack. In fact, $K^0(\MM)=K(\DM)\subset K(\MM)$. 

For $r=1$, the operator $E_1$ vanishes on stack functions $[X\to\MM]$,
where $X$ is a Deligne-Mumford stack, and acts as
identity on stack functions for which $A_X\not=0$. Hence, $E_1$ is also
diagonalizable with eigenvalues~$0$ and~$1$.  The kernel of $E_1$ is
$K^0(\MM)$, and the image is $K^{>0}(\MM)$.  Hence $E_0$ and $E_1$ are
complementary idempotent operators on $K(\MM)$, i.e., they are
orthogonal to each other and add up to the identity.

Recall the Stirling number of the second kind, $S(k,r)$, which is
defined in such a way that $r!\,S(k,r)$ is the number of
surjections from $\ul k$ to $\ul r$. Here, and elsewhere, We write
$\ul n=\{1,\ldots,n\}$.  

\begin{thm}\label{Ethm}
The operators $E_r$, for all $r\geq0$, preserve the
filtration $K^{\geq k}(\MM)$ by split central rank.  On the subquotient
$K^{\geq k}/K^{>k}(\MM)$, 
the operator $E_r$ acts as multiplication by $r!\,S(k,r)$.
\end{thm}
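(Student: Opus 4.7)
My plan is to establish both claims—preservation of the filtration and the eigenvalue formula—simultaneously by stratifying $E_r(X)$ for a clear generator $[X\to\MM]$ of $K^{\geq k}(\MM)$. Using scissor relations I may reduce (modulo $K^{>k}(\MM)$) to a clear stack function of split central rank exactly $k$, so that $A_X=A_1\times\cdots\times A_k$ globally via a complete system of primitive central idempotents $f_1,\ldots,f_k\in\Gamma(X,Z(A_X))$.

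On $E_r(X)$, with tautological idempotents $e_1,\ldots,e_r$, I will introduce the matrix $e_{i,j}:=e_i f_j\in A_j|_{E_r(X)}$, whose column sums are $f_j$ and row sums are $e_i$. Since sections of an algebra bundle have closed vanishing loci, the support $S=\{(i,j):e_{i,j}\neq 0\}\subseteq\ul r\times\ul k$ defines a stratification $E_r(X)=\coprod_S E_r(X)_S$. The constraints $e_i\neq 0$ and $f_j\neq 0$ force $S$ to meet every row and every column, so in particular $|S|\geq k$. The key geometric input is that on each stratum the algebroid algebra $B=(A_X|_{E_r(X)_S})^{e_1,\ldots,e_r}$ contains the $|S|$ non-zero orthogonal idempotents $e_{i,j}$, each of which is central in $B$ (being the product of $f_j\in Z(A_X)$ with the tautologically central $e_i\in Z(B)$). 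Hence $E_r(X)_S$ has split central rank $\geq|S|$, which yields both that $E_r$ preserves $K^{\geq k}(\MM)$ and that every stratum with $|S|>k$ lies in $K^{>k}(\MM)$.

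For the main contribution, $|S|=k$ forces each column of $S$ to contain exactly one entry, i.e., $S$ is the graph of a function $\phi\colon\ul k\to\ul r$, which the row condition forces to be surjective. The column-sum identity $\sum_i e_{i,j}=f_j$ then forces $e_{\phi(j),j}=f_j$, so $e_i=\sum_{j\in\phi^{-1}(i)}f_j$ is globally determined by $\phi$; the stratum $E_r(X)_{S_\phi}$ is thus the image of a canonical section $X\to E_r(X)$, whose algebroid algebra $B$ equals $A_X$ (since the $e_i$ become central), and it contributes precisely $[X\to\MM]$. Summing over the $r!\,S(k,r)$ surjections $\phi\colon\ul k\to\ul r$ gives the claimed eigenvalue.

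The main technical obstacle I anticipate is the bookkeeping required to ensure that the auxiliary stratifications needed to put each stratum in clear form preserve the count of globally defined orthogonal central idempotents—so that $|S|>k$ strata genuinely stay in $K^{>k}(\MM)$—and that the single-section strata truly contribute $[X\to\MM]$ and not a gerbe-twisted variant. Once these points are settled, the combinatorics counting surjections is immediate.
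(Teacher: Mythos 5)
Your proof is correct, but it takes a genuinely different route from the paper's. The paper first splits $E_r(X)$ into the substack $ZE_r(A)$ of \emph{central} complete systems and its complement: the central part is analyzed via a connected Galois cover $\tilde X\to X$ trivializing $PZE(A)$, yielding $ZE_r(A)\cong\tilde X\times_\Gamma\Epi(\ul n,\ul r)$ with $n$ the central rank, where the $\Gamma$-fixed surjections contribute $r!\,S(k,r)\,[X]$ and the non-trivial orbits produce intermediate covers of strictly larger split central rank; the non-central complement is then pushed into $K^{>k}(\MM)$ by appealing to Proposition~\ref{rankineq}~(ii). You bypass the Galois cover entirely by working with the $k$ \emph{global} primitive central idempotents $f_j$ and stratifying $E_r(X)$ by the support $S$ of the incidence matrix $e_if_j$; this treats the ``non-fixed central'' and ``non-central'' error terms uniformly, replacing the appeal to Proposition~\ref{rankineq} by the explicit exhibition of $|S|$ orthogonal non-zero central idempotents in the commutant algebra $B=(A|_{E_r(X)_S})^{e_1,\ldots,e_r}$ (your verification is sound: $e_i\in Z(B)$ tautologically, $f_j\in Z(A)\subset Z(B)$, and the $f_j$ are inertia-invariant because they are global sections of $A$ over the stack $X$, so the $|S|=k$ strata are honest untwisted copies of $X$ with algebra $A^{e_1,\ldots,e_r}=A$). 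The combinatorial core---counting surjections $\ul k\to\ul r$---is identical. Two small points to tidy: the filtration-preservation claim should not be argued ``modulo $K^{>k}(\MM)$''; instead run the same stratification using the $m\geq k$ global primitive central idempotents that a generator of $K^{\geq k}(\MM)$ actually carries, so every stratum satisfies $|S|\geq m$ and lands in $K^{\geq m}\subseteq K^{\geq k}(\MM)$. And you should record that the vanishing locus of an idempotent section of an algebra bundle is open as well as closed (left multiplication by it is an idempotent endomorphism of the underlying vector bundle, hence of locally constant rank), so your decomposition by support really is a decomposition into open and closed substacks to which the scissor relations apply, and the $e_{i,j}$ with $(i,j)\in S$ remain non-zero on every further clear stratum. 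What your approach buys is a more elementary, self-contained proof of this theorem; what the paper's buys is the Galois-cover description of $ZE_r(A)$, which is reused later in the proof of the diagonalizability of $\tilde I^{\circ,\ss}_k$.
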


\begin{proof}\comment{A good thing to check: does this proof work in
    the context of linear algebraic stacks?}
Consider a clear algebroid $(X,A)$ with a morphism $X\to\MM$
defining the stack function $[X\to\MM]$ in $K(\MM)$. Let $n$ be the
central rank of $X$, and $k$  the split
central rank of $X$.
The filtered piece $K^{\geq k}(\MM)$ is generated by such $[X\to\MM]$. 

Denote by $X\to \ol X$ the coarse Deligne-Mumford stack of $X$. By
assumption, both $X$ and $\tilde X$ are connected and hence 
admit Grothendieck style Galois theories (see
\cite[Section~4]{Noohi}). 

Let $\tilde  X\to  X$ be a connected Galois cover with Galois group
$\Gamma$, which  trivializes $PZE(A)\to  X$. As $PZE(A)$ descends
to $\ol X$, this Galois cover can be constructed as a pullback from
$\ol X$. Therefore, the morphism $\tilde X\to
X$ is inert and hence $\tilde X$ inherits, via pullback,  the structure
of an algebroid, and hence $[\tilde X\to X\to \MM]$ is a stack
function. 

Recall that the degree of the cover $PZE(A)\to X$ is $n$, and the
number of components of $PZE(A)$ is $k$. 

By
labelling the components of the 
pullback of $PZE(A)$ to $\tilde X$, we obtain an action of $\Gamma$
on the set $\ul n=\{1,\ldots,n\}$ and an isomorphism of finite \'etale
covers of $X$
\begin{align*}
\tilde X\times_\Gamma\ul n&\longiso PZE(A)\\
[x,\nu]&\longmapsto e_{[x,\nu]}\,.
\end{align*}
Both source and target of this isomorphism support natural algebroids
and the isomorphism preserves them.
The number of orbits of $\Gamma$ on $\ul n$ is $k$. 

Then we also have an isomorphism
\begin{align*}
\tilde X\times_\Gamma\Epi(\ul n, \ul r)&\longiso ZE_r(A)\\
[x,\phi]&\longmapsto
\Big(\sum_{\phi(\nu)=\rho}e_{[x,\nu]}\Big)_{\rho=1,\ldots,r}\,,
\end{align*}
where  $ZE_r$ denotes the stack of labelled complete sets of
$r$ orthogonal central idempotents.  Again, both stacks involved are
in fact algebroids, and this isomorphism is an isomorphism of
algebroids.

Hence, we may calculate as follows (all stacks involved are endowed
with their natural algebroid structures):
\begin{align*}
ZE_r[X\to\MM]
&=[\tilde X\times_\Gamma\Epi(\ul n,\ul r)\to\MM]\\
&=[\tilde X\times_\Gamma\coprod_{\phi\in\Epi(\ul n,\ul
    r)/\Gamma}\Gamma/\Stab_{\Gamma}\phi\to\MM]\\ 
&=\sum_{\phi\in\Epi(\ul n,\ul r)/\Gamma}[\tilde
  X/\Stab_\Gamma\phi\to\MM]\\
&=\sum_{\phi\in\Epi(\ul n,\ul r)^\Gamma}[X\to\MM]+ 
\sum_{\stack{\phi\in\Epi(\ul n,\ul
    r)/\Gamma}{\Stab_\Gamma\phi\not=\Gamma}}[\tilde
  X/\Stab_\Gamma\phi\to\MM] \,.
\end{align*}
Now, we have $\Epi(\ul n,\ul r)^\Gamma=\Epi(\ul n/\Gamma,\ul r)$, and
hence
$$\#\Epi(\ul n, \ul r)^\Gamma=r!\,S(k,r)\,.$$
Thus, we conclude,
$$ZE_r[X\to\MM]=r!\,S(k,r)\,[X\to\MM]+
\sum_{\stack{\phi\in\Epi(\ul n,\ul
    r)/\Gamma}{\Stab_\Gamma\phi\not=\Gamma}}[\tilde
  X/\Stab_\Gamma\phi\to\MM]\,. $$
For any proper subgroup $\Gamma'\subset\Gamma$, the quotient
$X'=\tilde X/\Gamma'$ is an intermediate cover $\tilde X\to X'\to X$,
such that $X'\not=X$. The pullback of $PZE(A)$ to $X'$ has more than
$k$ components, because the number of orbits of $\Gamma'$ on $\ul n$
is larger than $k$.  Thus we have proved the theorem for $ZE_r$,
instead of $E_r$. 

Now observe that $ZE_r(A)\subset E_r(A)$ is a closed
substack, because $ZE_r(A)\to X$ is proper and $E_r(A)\to X$ is
separated. So we can write 
$$E_r[X\to\MM]=ZE_r[X\to\MM]+[NZE_r(A)\to X\to \MM]\,,$$
where $NZE_r(A)$ is the complement of $ZE_r(A)$ in $E_r(A)$. 
To prove that $[NZE_r(A)\to\MM]\in K^{>k}(\MM)$, let $Y\hookrightarrow NZE_r(A)$
be a locally closed embedding, such that the algebroid
$\big(E_r(A),A^\fix\big)|_Y$ is clear. 

Consider the embedding of
algebras $A^\fix|_Y\hookrightarrow A|_Y$.  It induces an embedding of commutative
algebras 
$Z(A|_Y)\hookrightarrow 
Z(A^\fix|_Y)$, because $Z(A|_Y)\subset A^\fix|_Y$. 
The algebra $A|_Y$ comes with $r$ tautological idempotent sections, all of which
are contained in $Z(A^\fix|_Y)$, but at least
one of which is not contained in  $Z(A|_Y)$. So by
Proposition~\ref{rankineq}~(ii), the split central rank of $A^\fix|_Y$ is
strictly larger than the split central rank of $A|_Y$.  The latter is
at 
least as big as $k$, the split central rank of $A$, because the split
central rank cannot decrease under base extension. This shows that
$[Y\to\MM]\in K^{>k}(\MM)$ and finishes the proof.
\end{proof}

\begin{cor}\label{simult}
The operators $E_r$, for $r\geq0$ are simultaneously
diagonalizable. The common eigenspaces form a family $K^k(\MM)$ of
subspaces of $K(\MM)$ indexed by non-negative integers
$k\geq0$, and 
\begin{equation}\label{idret}
K(\MM)=\bigoplus_{k\geq0}K^k(\MM)\,.
\end{equation}
Moreover, for every $r\geq0$,
$$K^{\geq r}(\MM)=\bigoplus_{k\geq r} K^{k}(\MM)\,.$$
Let $\pi_k$ denote the projection onto $K^k(\MM)$. We
have
$$E_r\pi_k=r!\,S(k,r)\pi_k\,,$$
for all $r\geq0$, $k\geq0$. 
\end{cor}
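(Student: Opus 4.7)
The plan is to construct the projections $\pi_k \colon K(\MM) \to K(\MM)$ onto the would-be eigenspaces explicitly, as $\qq$-linear combinations of the $E_r$'s, following the generating-series formula (\ref{projform}) anticipated in the introduction. This lifts the diagonal action of each $E_r$ on the associated graded pieces of the filtration $K^{\geq k}(\MM)$ -- provided by Theorem~\ref{Ethm} -- to an honest simultaneous eigenspace decomposition of $K(\MM)$ itself.

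First I verify that the family $\{E_r\}$ commutes pairwise. Applied to $[X\to\MM]$, the composition $E_sE_r$ produces a stack function based on the moduli of pairs of complete orthogonal families $(e_1,\ldots,e_r)$ and $(f_1,\ldots,f_s)$ of non-zero idempotents in $A_X$ that commute element-wise, with algebroid structure given by the joint centralizer $A_X^{e_1,\ldots,e_r,f_1,\ldots,f_s}$. This description is symmetric in the two tuples, so $E_sE_r=E_rE_s$.

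Next, for each $k\geq0$ I define
$$\pi_k \;:=\; \sum_{n\geq k} c_{k,n}\, E_n, \qquad c_{k,n} := [t^k]\binom{t}{n}.$$
The operator $\pi_k$ is well-defined on each $x\in K(\MM)$: if a representative of $x$ uses algebroids whose algebras have rank at most $N$, then $E_n x=0$ for $n>N$, since a rank-$N$ algebra admits at most $N$ pairwise orthogonal non-zero idempotents. Using Theorem~\ref{Ethm}, $E_n$ acts on $K^{\geq j}(\MM)/K^{>j}(\MM)$ as the scalar $n!\,S(j,n)$, so the generating series $\sum_n\binom{t}{n}E_n$ acts there as $\sum_n S(j,n)\,t(t-1)\cdots(t-n+1)=t^j$ by the standard falling-factorial identity; extracting the coefficient of $t^k$ shows that $\pi_k$ acts on $K^{\geq j}/K^{>j}$ as $\delta_{jk}$. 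Combined with the fact that $E_n K(\MM)\subseteq K^{\geq n}(\MM)$ (because $E_n(X)$ carries $n$ tautological orthogonal central idempotents in its algebra), this gives $\pi_k K(\MM)\subseteq K^{\geq k}(\MM)$.

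To finish, I set $K^k(\MM):=\pi_k K(\MM)$. The completeness relation $\sum_k\pi_k=\id$ is the specialization $t=1$ of the generating series, which reads $E_0+E_1=\id$ (since $E_0$ and $E_1$ were shown before the theorem to be complementary idempotents). The pairwise orthogonality $\pi_j\pi_k=\delta_{jk}\pi_k$ and the direct sum property $K(\MM)=\bigoplus_k K^k(\MM)$ are then obtained by induction on the largest $k$ with $\pi_k x\neq 0$, using the local finiteness established above together with the scalar action on successive graded pieces. The eigenvalue equation $E_r\pi_k=r!\,S(k,r)\,\pi_k$ follows by combining orthogonality of the $\pi_j$ with the action of $E_r$ on $K^{\geq k}/K^{>k}$, and the identification $K^{\geq r}(\MM)=\bigoplus_{k\geq r}K^k(\MM)$ is immediate from $\pi_k K(\MM)\subseteq K^{\geq k}(\MM)$. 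The main obstacle is the bookkeeping in the last step: promoting the action of $\pi_k$ on the associated graded to the literal idempotent identities on $K(\MM)$, where one must handle the fact that the descending filtration $K^{\geq k}$ is a priori unbounded -- this is precisely where the rank-based local finiteness for each individual $x$ is essential.
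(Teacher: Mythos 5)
Your argument is correct in substance, but it reaches the conclusion by a different route than the paper. The paper's proof establishes commutativity, then shows each element lies in a finite-dimensional subspace invariant under $E_r$ (via the rank bound $N$ and the relation $E_r^i(x)\in\qq\,x+\ldots+\qq\,E_r^{i-1}(x)+K^{\geq k+i}(\MM)$), and then appeals to abstract finite-dimensional linear algebra: commuting operators, triangular with respect to the split-central-rank filtration, with diagonal entries $r!\,S(k,r)$ that separate the filtration levels. You instead \emph{construct} the spectral projectors up front via Stirling inversion, $\pi_k=\sum_{n\geq k}\frac{s(n,k)}{n!}E_n$, verify by the falling-factorial identity $\sum_n S(j,n)\,t(t-1)\cdots(t-n+1)=t^j$ that $\pi_k$ acts as $\delta_{jk}$ on $K^{\geq j}/K^{>j}$, and obtain completeness from the pretty observation that $t=1$ collapses the generating series to $E_0+E_1=\id$. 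What your approach buys is that Corollary~\ref{forforpi} and Remark~\ref{convenient} come out as definitions rather than consequences, and your observation that $E_n(X)$ carries $n$ tautological central orthogonal idempotents, whence $E_nK(\MM)\subseteq K^{\geq n}(\MM)$, is a genuinely useful strengthening of Theorem~\ref{Ethm} that the paper never states explicitly (the theorem alone only yields $E_nK(\MM)\subseteq K^{>0}(\MM)$).

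One caution about the final step, which you rightly flag as the main obstacle. The ingredients ``commuting, $\sum_k\pi_k=\id$, and $\pi_k$ acts as $\delta_{jk}$ on the associated graded'' do \emph{not} by themselves force $\pi_j\pi_k=\delta_{jk}\pi_k$: a commutative algebra with nilpotents (e.g.\ $\qq[x]/(x^2)\times\qq$) admits non-idempotent lifts of the standard idempotents summing to $1$, so something must rule out filtration-nilpotent error terms. The clean way to close this, using exactly what you have already proved, is to note that $E_r$ maps everything into $K^{\geq r}(\MM)$, where its diagonal entries $r!\,S(k,r)$ for $k\geq r$ are \emph{strictly} increasing and nonzero; hence $E_r$ is invertible and diagonalizable on $K^{\geq r}(\MM)\cap W$ for a finite-dimensional invariant $W$, and $W=\ker(E_r|_W)\oplus(K^{\geq r}(\MM)\cap W)$, so each $E_r$ is individually diagonalizable and the commuting family is simultaneously so; your $\pi_k$ is then identified with the spectral projector by its action on the graded pieces. (Alternatively, the composition law $E_pE_n=\sum_{\phi}{n\brack\phi}E_{|\phi|}$ from Section~\ref{sec-filo} makes $\pi_s\pi_t=\pi_{st}$ a purely combinatorial identity.) The paper's own ``standard techniques'' sentence is glossing the same point, so this is a request for one more sentence, not a flaw in your strategy.
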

\begin{proof}
First remark that for given $r$, the numbers $r!\,S(k,r)$ form a
monotone increasing sequence of integers. 

Then note that the operators $E_r$ pairwise commute: the composition
$E_r\circ E_{r'}$ associates to an algebroid $(X,A)$ the stack of
pairs $(e,e')$, where both $e$ and $e'$ are complete families of
non-zero orthogonal idempotents in $A$, the length of $e$ being $r$
and the length of $e'$ being $r'$, and the members of $e$ commuting
with the members of $e'$. 

Finally, let us prove that, for every $k$ and  every $r$, the
$\qq$-vector space $K^{\geq k}(\MM)$ is a union of
finite-dimensional subspaces invariant by $E_r$. 

For this, define $K(\MM)_{\leq N}$ to be generated as $\qq$-vector
space by stack functions $[X\to\MM]$, where $X$ is a clear
algebroid, such that  the rank of the vector bundle underlying the algebra
$A_X\to X$ is bounded above by $N$. This is an ascending filtration of
$K(\MM)$, which is preserved by $E_r$. Set 
$$K^{\geq k}(\MM)\cap
K(\MM)_{\leq N}=K^{\geq k}(\MM)_{\leq N}\,.$$

Suppose  $x=[X\to\MM]$ is a stack function with $X$ a clear
algebroid of split central rank $k$, and let $N$ be the rank of the 
vector 
bundle underlying $A_X$. Note that  $k\leq N$, because for a commutative
algebra, the number
of primitive idempotents is bounded by the  rank of the underlying
vector bundle.  We deduce that for $k>N$, we have $K^{\geq
  k}(\MM)_{\leq N}=0$. 

On the other hand, 
Theorem~\ref{Ethm} implies by induction that
$$E^i_r(x)\in \qq\,x+\qq\,E_r(x)+\ldots+\qq\,E^{i-1}_r(x)+K^{\geq k+i}(\MM)\,.$$
Applying this for $i=N-k+1$, we see that 
$$E_r\big(E^{N-k}_r(x)\big)\in
\qq\,x+\qq\,E_r(x)+\ldots+\qq\,E^{N-k}_r(x)\,,$$
and hence that $\qq\,x+\qq\,E_r(x)+\ldots+\qq\,E^{N-k}_r(x)$ is
invariant under $E_r$. 

This proves that any $x\in K^{\geq k}(\MM)$ is contained in a
finite-dimensional subspace invariant under $E_r$.  Standard
techniques from finite-dimensional linear algebra over $\qq$ now imply
the result. 
\end{proof}

\begin{rmk}
The proof of Theorem~\ref{Ethm} and its corollary show that the
central versions $ZE_r$ of the $E_r$ are also diagonalizable.  On the
other hand, the
$ZE_r$ do not commute with each other, and so  are less
useful. 
\end{rmk}

\begin{cor}\label{kercor}
For $r\geq1$, we have
$$\ker E_r=\bigoplus_{k<r}K^k(\MM)\,.$$
In particular, for any $x\in K(\MM)$, we have $E^rx=0$, for
$r\gg0$. 
\end{cor}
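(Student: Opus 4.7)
The plan is to read the corollary off directly from Corollary~\ref{simult}, which already gives the simultaneous eigenspace decomposition $K(\MM) = \bigoplus_{k\geq 0} K^k(\MM)$ together with the eigenvalue formula $E_r|_{K^k(\MM)} = r!\,S(k,r)\cdot\id$. Under this decomposition, $\ker E_r$ is the direct sum of those $K^k(\MM)$ on which the scalar $r!\,S(k,r)$ vanishes, so the whole statement reduces to identifying when the Stirling number $S(k,r)$ is zero.

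Next, I would invoke the combinatorial description used in the setup of Theorem~\ref{Ethm}: $r!\,S(k,r)$ counts surjections $\underline{k}\twoheadrightarrow \underline{r}$. Such surjections exist exactly when $k\geq r$, so $S(k,r)=0$ iff $k<r$, and is a positive integer otherwise. Consequently $E_r$ acts as zero on $K^k(\MM)$ for $k<r$ and as a nonzero scalar on $K^k(\MM)$ for $k\geq r$, yielding
\[
\ker E_r \;=\; \bigoplus_{k<r} K^k(\MM)\,.
\]

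For the second assertion, I would use that the sum in Corollary~\ref{simult} is a direct sum in the algebraic sense: any $x\in K(\MM)$ has a unique decomposition $x=\sum_{k\geq 0} x_k$ with $x_k\in K^k(\MM)$ and only finitely many $x_k$ nonzero. Picking $N$ with $x_k=0$ for $k>N$, and taking any $r>N$, the formula $E_r x_k = r!\,S(k,r)\,x_k$ together with $S(k,r)=0$ for $k<r$ gives $E_r x = 0$.

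There is essentially no obstacle here beyond recalling that $S(k,r)=0$ precisely for $k<r$; the structural content lies entirely in Theorem~\ref{Ethm} and Corollary~\ref{simult}, which have already done the work of establishing the eigenspace decomposition and the eigenvalues.
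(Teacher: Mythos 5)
Your proposal is correct and is exactly the argument the paper intends: the corollary is stated without proof because it follows immediately from Corollary~\ref{simult}, by observing that $E_r$ acts on $K^k(\MM)$ as the scalar $r!\,S(k,r)$, which vanishes precisely when $k<r$ (no surjections $\ul k\twoheadrightarrow\ul r$), and that any $x$ has only finitely many nonzero components in the direct sum. (The paper's ``$E^rx=0$'' should indeed be read as $E_rx=0$, as you have done.)
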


\begin{cor}\label{forforpi}
For every $k\geq0$, we have
$$\pi_k=\sum_{r=k}^\infty \frac{s(r,k)}{r!}\,E_r\,,$$
where the $s(n,k)$ are the Stirling numbers of the first kind. In
particular, $\pi_0=E_0$, and  
$$\pi_1=\sum_{r=1}^\infty \frac{(-1)^{r+1}}{r}\,E_r\,.$$
\end{cor}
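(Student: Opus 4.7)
The plan is to verify the identity eigenspace by eigenspace, using the decomposition $K(\MM)=\bigoplus_{j\geq 0} K^j(\MM)$ from Corollary~\ref{simult} and the classical inversion relation between the two kinds of Stirling numbers.

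First, I would observe that for any fixed $x\in K(\MM)$, the sum $\sum_{r=k}^\infty \frac{s(r,k)}{r!} E_r(x)$ is in fact finite. Indeed, by Corollary~\ref{simult}, $x$ is a finite sum $x=\sum_{j\leq N} x_j$ with $x_j\in K^j(\MM)$, and since $E_r x_j = r!\,S(j,r)\,x_j$ with $S(j,r)=0$ for $r>j$, only the terms with $r\leq N$ contribute. So the proposed identity makes sense as an operator on $K(\MM)$.

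Next, since both sides are $K(\DM)$-linear operators that respect the decomposition~(\ref{idret}), it suffices to check that they agree on each summand $K^j(\MM)$. On $K^j(\MM)$, the operator $\pi_k$ acts as $\delta_{jk}\,\id$, while by Corollary~\ref{simult},
$$\sum_{r=k}^\infty \frac{s(r,k)}{r!}\,E_r\bigg|_{K^j(\MM)} = \sum_{r=k}^{j} \frac{s(r,k)}{r!}\cdot r!\,S(j,r)\,\id = \left(\sum_{r=k}^{j} s(r,k)\,S(j,r)\right)\id.$$
The classical Stirling inversion formula asserts that the lower-triangular matrices $\bigl(s(r,k)\bigr)$ and $\bigl(S(j,r)\bigr)$ are mutual inverses, i.e.
$$\sum_{r} S(j,r)\,s(r,k) = \delta_{jk}.$$
Substituting this gives exactly $\delta_{jk}\,\id$, matching $\pi_k|_{K^j(\MM)}$.

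Finally, I would record the two special cases cited in the statement. For $k=0$, we have $s(0,0)=1$ and $s(r,0)=0$ for $r\geq 1$, so only the $r=0$ term survives and $\pi_0 = E_0$. For $k=1$, the standard value $s(r,1)=(-1)^{r-1}(r-1)!$ for $r\geq 1$ yields
$$\pi_1 = \sum_{r=1}^\infty \frac{(-1)^{r-1}(r-1)!}{r!}\,E_r = \sum_{r=1}^\infty \frac{(-1)^{r+1}}{r}\,E_r\,.$$
There is no real obstacle here: the entire content of the corollary is the inversion between the two kinds of Stirling numbers, once Corollary~\ref{simult} has provided the simultaneous diagonalization and the eigenvalues of $E_r$.
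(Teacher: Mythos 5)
Your proof is correct and is essentially the paper's own argument: both rest on the relation $E_r\pi_j=r!\,S(j,r)\,\pi_j$ from Corollary~\ref{simult} together with the Stirling inversion $\sum_r S(j,r)\,s(r,k)=\delta_{jk}$; the paper expands $E_r=\sum_\ell r!\,S(\ell,r)\pi_\ell$ and inverts globally, while you check the same identity eigenspace by eigenspace. The remarks on finiteness of the sum and the two special cases are fine.
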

\begin{proof}
We have 
$$\id=\sum_{\ell\geq0}\pi_\ell\,,$$
and hence
$$E_r=\sum_{\ell\geq0}E_r\pi_\ell=\sum_{\ell\geq0}r!\,S(\ell,r)\,\pi_\ell\,,$$
and therefore
\begin{multline*}
\sum_{r\geq0}\frac{s(r,k)}{r!}\,E_r
=\sum_{r\geq0}\frac{s(r,k)}{r!}\sum_{\ell\geq0}r!\,S(\ell,r)\,\pi_\ell\\
=\sum_{\ell\geq0}\Big(\sum_{r\geq0}S(\ell,r)s(r,k)\Big)\,\pi_\ell
=\sum_{\ell\geq0}\delta_{\ell,k}\,\pi_\ell=\pi_k\,,
\end{multline*}
by the inverse relationship between the Stirling numbers of the first
and second kind.
\end{proof}

\begin{rmk}
The Stirling numbers of the first kind appear in the Taylor expansions
of the powers of the logarithm:
$$\sum_{r=k}^\infty \frac{s(r,k)}{r!}\,t^r=\frac{1}{k!}\log(1+t)^k\,.$$
\end{rmk}

\begin{defn}\label{formalop}
Let $t$ be a formal variable. We define the operator
$$\pi_t:K(\MM)[t]\longrightarrow K(\MM)[t]$$
by the formula
$$\pi_t(\xi)=\sum_k\pi_k(\xi)\,t^k\,.$$
and extending $K(\Var)[t]$-linearly. We can write, formally,
$$\pi_t=\sum_{k}\pi_k\,t^k\,.$$
\end{defn}

\begin{rmk}\label{convenient}
We have the following convenient formula:
$$\pi_t=\sum_n \binom{t}{n}E_n\,.$$
It follows from Corollary~\ref{forforpi}, using the identity
$$\sum_{k}\frac{s(n,k)}{n!}t^k=\binom{t}{n}\,.$$
\end{rmk}

\begin{ex}
\label{bgl2ex}
The universal rank 2 vector bundle $GL_2\backslash \aaa^2\to BGL_2$,
and its classifying morphism to $\mf{Vect}$ define a Hall algebra
element $[BGL_2\to\mf{Vect}]\in K(\mf{Vect})$, which we will
abbreviate to $[BGL_2]$. To decompose $[BGL_2]$ into its pieces
according to (\ref{idret}), we consider the action of $E_2$, as we
have $E_r[BGL_2]=0$, for all $r>2$.  In fact,
$$E_2[BGL_2]=[BT]\,,\qquad\text{and}\qquad E_2[BT]=2[BT]\,,$$
where $T$ is a maximal torus in $GL_2$. Thus $\qq[BGL_2]+\qq[BT]$ is a
subspace of $K(\mf{Vect})$ invariant under $E_2$, and the matrix of
$E_2$ acting on this subspace is 
\begin{equation}\label{matr}
\begin{pmatrix}0&0\\1&2\end{pmatrix}\,.
\end{equation}
This matrix is lower triangular, with different numbers on the
diagonal, hence diagonalizable over $\qq$. In fact, the diagonal
entries are $2\,S(1,2)=0$ and $2\,S(2,2)=2$. 
Diagonalizing (\ref{matr})
gives the eigenvectors 
\begin{items}
\item $v_1=[BGL_2]-\frac{1}{2}[BT]$ with eigenvalue $0$,
\item $v_2=\frac{1}{2}[BT]$ with eigenvalue $2$.
\end{items}
Therefore, we have $v_1\in K^1(\mf{Vect})$ and $v_2\in
K^2(\mf{Vect})$, and since $[BGL_2]=v_1+v_2$, we have found the
required decomposition of $[BGL_2]$. 
\end{ex}

\subsection{The spectrum of semisimple Inertia}

The connected semi-simple inertia operator on $ K(\MM)$ is the
$\qq$-linear endomorphism 
\begin{align*}
I^{\circ,\ss}: K(\MM)&\longrightarrow  K(\MM)\\
[X\to\MM]&\longmapsto [I_X^{\circ,\ss}\to X\to\MM]\,.
\end{align*}
Here $I_X^{\circ,\ss}=A_X^{\times,\ss}$ denotes the
semisimple algebroid inertia of the algebroid $X$,
see Remark~\ref{ssai}. Note that $I^{\circ,\ss}$ respects
the scissor and bundle
relations defining $\tilde K(\MM)$, and is linear over $K(\DM)$, 
because passing to connected inertia commutes with inert
pullbacks. 

Note that $I^{\circ,\ss}$ commutes with $E_r$, for every $r$. Both
compositions $E_r\circ I^{\circ,ss}$ and $I^{\circ,\ss}\circ E_r$
associate to an algebroid $(X,A)$ the stack of pairs $(a,e)$, where $a$ is a
semi-simple unit in $A$, and $e$ a labelled complete set of $r$
orthogonal idempotents in $A$, all commuting with $a$.
In particular, $I^{\circ,\ss}$ preserves the filtration of $K(\MM)$ by split
central rank. 

The composition $I^{\circ,\ss}\circ E_r$ is divisible by $(q-1)^r$:

\begin{prop}
For every $k\geq0$, there exists a $K(\DM)$-linear operator 
$$\tilde I_k^{\circ,\ss}: K(\MM)\longrightarrow  K(\MM)\,,$$
such that 
$$I^{\circ,\ss}\circ E_k=(q-1)^k\, \tilde I_k^{\circ,\ss}\,.$$
\end{prop}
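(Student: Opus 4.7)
My plan is to build $\tilde I^{\circ,\ss}_k$ by running $E_k$ first, and then, on the resulting algebroid $E_k(X)$, dividing the semi-simple units by the natural $\Gm^k$ coming from the tautological central idempotents, using Proposition~\ref{bundlequack}. The extra $(q-1)^k$ will then come out by the principal $\Gm^k$-bundle relation.

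More precisely, for a stack function $[(X,A)\to\MM]$, set $Y=E_k(X)$ and let $(Y,B)$ be its canonical algebroid structure as in Remark~\ref{era}; so $B=(A|_Y)^{e_1,\ldots,e_k}$ decomposes as $B=B_1\times\cdots\times B_k$ via the $k$ tautological orthogonal central idempotents $e_1,\ldots,e_k$. The embedding $\aaa^k_Y\hookrightarrow B$, $(\lambda_1,\ldots,\lambda_k)\mapsto \sum\lambda_ie_i$, gives a left multiplication action of $\Gm^k$ on $B$, which preserves $B^{\times,\ss}$ and acts fiberwise freely there. By Proposition~\ref{bundlequack} (applied to the complete central idempotent family $e_1,\ldots,e_k$), the quotient
$$\tilde B^{\times,\ss}=B^{\times,\ss}/\Gm^k=B_1^{\times,\ss}/\Gm\times_Y\cdots\times_Y B_k^{\times,\ss}/\Gm$$
carries a canonical algebroid structure making $B^{\times,\ss}\to\tilde B^{\times,\ss}$ an inert, principal $\Gm^k$-bundle of algebroids over $Y$. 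I then define
$$\tilde I^{\circ,\ss}_k[X\to\MM]=[\tilde B^{\times,\ss}\to Y\to X\to\MM].$$

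To see this is well-defined, I note that $E_k$ and the Proposition~\ref{bundlequack} quotient are both natural constructions on algebroids that commute with inert pullbacks: they send closed immersions to closed immersions, complements to complements, and inert fibre bundles with special structure group to inert fibre bundles with the same fibre and structure group. Hence $\tilde I_k^{\circ,\ss}$ descends from stack functions to $K(\MM)$ and is $K(\DM)$-linear by the same naturality (the outer factor $Z$ in $[Z]\cdot[X\to\MM]$ is unaffected by the operations on the $X$-factor).

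Finally, to verify the identity $I^{\circ,\ss}\circ E_k=(q-1)^k\,\tilde I^{\circ,\ss}_k$, observe that by the definition of $I^{\circ,\ss}$ and $E_k$,
$$I^{\circ,\ss}\circ E_k[X\to\MM]=[B^{\times,\ss}\to Y\to X\to\MM].$$
Since $B^{\times,\ss}\to\tilde B^{\times,\ss}$ is a principal $\Gm^k$-bundle of algebroids, and $\Gm^k$ is a special group with $[\Gm^k]=(q-1)^k$ in $K(\DM)$, the bundle relation relative $\MM$ gives
$$[B^{\times,\ss}\to\MM]=(q-1)^k\,[\tilde B^{\times,\ss}\to\MM],$$
which is the claimed equation. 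The only real content in the argument is the existence and inertness of the quotient $\tilde B^{\times,\ss}$, which is exactly Proposition~\ref{bundlequack}; the rest is bookkeeping with the bundle relation and naturality.
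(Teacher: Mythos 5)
Your proposal is correct and follows essentially the same route as the paper: both define $\tilde I^{\circ,\ss}_k$ by applying Proposition~\ref{bundlequack} to the tautological complete set of central idempotents on $A_{E_kX}$ and then extract the factor $(q-1)^k$ from the resulting principal $\Gm^k$-bundle of algebroids via the bundle relations. The only difference is cosmetic (your $B$ is the paper's $A_{E_kX}$), and your remarks on well-definedness and $K(\DM)$-linearity match what the paper leaves implicit.
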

\begin{proof}
Let $X$ be an algebroid.  The algebra $A_{E_k X}$ is endowed with a
tautological complete set of orthogonal central idempotents, and so we
can apply the construction of Proposition~\ref{bundlequack}, to obtain
a principal $\Gm^k$-bundle of algebroids 
$$A_{E_k X}^{\times,\ss}\longrightarrow \tilde A_{E_k X}^{\times,\ss}\,.$$
The assignment
$$[X]\longmapsto [\tilde A_{E_k X}^{\times,\ss}]$$
extends to a well-defined $K(\DM)$-linear operator $
K(\MM)\to K(\MM)$, which we shall denote by $\tilde
I_k^{\circ,\ss}$.  

We record that for an algebroid $X$, the stack  $\tilde I^\ss_k X$
is the stack of $(k+2)$-tuples $(x,e_1,\ldots,e_k,[a])$, where $x$ is an
object of $X$, and $e_1,\ldots,e_k$ form a complete set of orthogonal
idempotents in $A_x$, and $[a]$ is an equivalence class of
semi-simple units in $A_x^{e_1,\ldots,e_r}$, where $a\sim \sum_{i=1}^r \lambda_i
e_i\,a$, for $\lambda_i\in \O_X|_x$. 

The equation
$$I^{\circ,\ss}\circ E_k=(q-1)^k\,\tilde I^{\circ,\ss}_k\,,$$
follows from the fact that 
$$[A_{E_k X}^{\times,\ss}]=(q-1)^k[\tilde A_{E_k X}^{\times,\ss}]\,,$$
which holds because of the bundle relations in $K(\MM)$.
\end{proof}

\begin{cor}
The map which $I^{\circ,\ss}$ induces on the subquotient $ K^{\geq
  k}/ K^{>k}(\MM)$ is divisible by $(q-1)^k$:
$$I^{\circ,\ss}|_{ K^{\geq k}/ K^{>k}(\MM)}=\frac{1}{k!}(q-1)^k\,\tilde
I^{\circ,\ss}_k|_{ K^{\geq k}/ K^{>k}(\MM)}\,.$$
\end{cor}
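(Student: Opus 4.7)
The plan is to combine the preceding proposition with the eigenvalue computation of Theorem~\ref{Ethm}. The proposition gives us the operator identity
\[
I^{\circ,\ss} \circ E_k = (q-1)^k\,\tilde I^{\circ,\ss}_k
\]
on all of $K(\MM)$. Since $I^{\circ,\ss}$ preserves the filtration by split central rank (noted earlier, because semi-simple inertia commutes with $E_r$), and $\tilde I^{\circ,\ss}_k$ does also (it is constructed using only inert morphisms of algebroids, hence respects stratifications and bundle relations in the filtration), both sides descend to $K(\DM)$-linear endomorphisms of the subquotient $K^{\geq k}/K^{>k}(\MM)$.

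Next, I invoke Theorem~\ref{Ethm}, which says that $E_r$ acts on $K^{\geq k}/K^{>k}(\MM)$ as multiplication by the scalar $r!\,S(k,r)$. Taking $r=k$ and using $S(k,k)=1$, the operator $E_k$ acts on this subquotient simply as multiplication by $k!$. Substituting into the displayed identity, the left hand side becomes $k!\cdot I^{\circ,\ss}$ on the subquotient, so that
\[
k!\,I^{\circ,\ss}\big|_{K^{\geq k}/K^{>k}(\MM)} = (q-1)^k\,\tilde I^{\circ,\ss}_k\big|_{K^{\geq k}/K^{>k}(\MM)}\,,
\]
and dividing by $k!$ gives the stated formula.

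The only step that requires any care is checking that $\tilde I^{\circ,\ss}_k$ genuinely preserves the filtration $K^{\geq \bullet}$, so that the operator equation can be restricted to the subquotient. This is not a difficulty: the construction in Proposition~\ref{bundlequack} is inert over the base algebroid, and $E_k$ itself preserves $K^{\geq k}$ by Theorem~\ref{Ethm}, so the composition factors through clear stack functions of split central rank at least $k$ after passing to a suitable stratification. Everything else is algebraic manipulation, so there is no real obstacle; the essential content is the combinatorial fact $S(k,k)=1$ together with the divisibility proved in the previous proposition.
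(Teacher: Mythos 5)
Your argument is correct and is exactly the paper's proof: the preceding proposition gives $I^{\circ,\ss}\circ E_k=(q-1)^k\,\tilde I^{\circ,\ss}_k$, and Theorem~\ref{Ethm} with $S(k,k)=1$ says $E_k$ acts as multiplication by $k!$ on $K^{\geq k}/K^{>k}(\MM)$, whence the formula. Your extra remark that $\tilde I^{\circ,\ss}_k$ must preserve the filtration so the identity descends to the subquotient is a legitimate point of care that the paper leaves implicit, but it does not change the argument.
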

\begin{proof}
This is because on $ K^{\geq k}/ K^{>k}(\MM)$, the operator $E_k$ acts as
multiplication by $k!$.
\end{proof}

We will use as scalars the localization of $\qq[q]$ at the maximal
ideal $(q-1)$, denoted $\qq[q]_{(q-1)}$, thus inverting all rational polynomials
in $q$, which do not vanish at $q=1$.  We extend scalars on $K(\MM)$
as well, and consider
$$K(\MM)_{(q-1)}= \qq[q]_{(q-1)}\cdot K(\MM)\subset
K(\MM)(q)=K(\MM)\otimes_{\qq[q]}\qq(q)\,.$$
Note that this definition ensures that $K(\MM)_{(q-1)}$ is
$(q-1)$-torsion free. The direct sum decomposition (\ref{idret})
extends to $K(\MM)_{(q-1)}$, and the operator $I^{\circ,\ss}$ extends to a
$\qq[q]_{(q-1)}$-linear operator  
$$I^{\circ,\ss}: K(\MM)_{(q-1)}\longrightarrow  K(\MM)_{(q-1)}\,.$$

For a partition $\lambda\vdash n$, we define 
$$\Q_\lambda=\prod_{i\in\lambda}(q^i-1)\,.$$
This is a polynomial in $q$, of degree $n$, which vanishes to order
$|\lambda|$ at $q=1$. 
We also define
$$\tilde\Q_\lambda=k!\prod_{i\in\lambda}\frac{q^i-1}{q-1}\,.$$
This is a polynomial in $q$, which is invertible in $\qq[q]_{(q-1)}$. 

\begin{thm}
The operator 
$$\tilde I^{\circ,\ss}_k: K^{\geq k}/ K^{>k}(\MM)_{(q-1)}\longrightarrow
 K^{\geq k}/ K^{>k}(\MM)_{(q-1)}$$
is diagonalizable. Its eigenvalue spectrum consists of all $\tilde
\Q_\lambda$, for partitions $\lambda$ of length $|\lambda|=k$. 
\end{thm}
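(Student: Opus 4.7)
The plan is to adapt the strategy of Theorem~\ref{Ethm} and Corollary~\ref{simult} to the semisimple-inertia setting. By Corollary~\ref{simult}, the subquotient $K^{\geq k}/K^{>k}(\MM)$ is generated by classes of clear stack functions $[X\to\MM]$ for which $(X,A)$ has split central rank exactly $k$, and for any such $(X,A)$ a Galois cover $\tilde X\to X$ trivializing the finite \'etale stack $PZE(A)$ produces a central decomposition $A|_{\tilde X}\cong A_1\times\cdots\times A_k$ with the $A_i$ labelled by the $k$ primitive central idempotents, just as in the proof of Theorem~\ref{Ethm}.

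First I would exhibit a stratification of $\tilde I_k^{\circ,\ss} X$ indexed by partitions $\lambda$ of length $k$. The same $NZE_r$-argument as in Theorem~\ref{Ethm} shows that, modulo $K^{>k}$, only labelings of the $k$ primitive central idempotents contribute to $E_k(A)$, producing an overall factor $k!$; any configuration using non-central idempotents falls into $K^{>k}$ by Proposition~\ref{rankineq}~(ii), since the centralizer of such a family has strictly more central idempotents than $A$. On this central part, the scalar quotient of semisimple units decomposes via Proposition~\ref{bundlequack} as a product $\prod_{i=1}^k A_i^{\times,\ss}/\Gm$ of algebroids over $\tilde X$. Stratifying each factor by the degree of its section $a_i$ yields a stratification of $\tilde I_k^{\circ,\ss} X$ by tuples $(\lambda_1,\ldots,\lambda_k)$; after suitable re-ordering, these index partitions $\lambda$ of length $k$.

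Next I would compute the class of each stratum $S_\lambda$ in the subquotient. On $S_\lambda$, the section $a_i$ generates an \'etale subalgebra $\O[a_i]\subset A_i$ of rank $\lambda_i$, and the stack $\O[a_i]^\times/\Gm$ has motivic class $(q^{\lambda_i}-1)/(q-1)$ by the (inert) bundle relations applied to the rank-$\lambda_i$ frame cover followed by scalar quotient; this controls the scalar-quotient factor of $A_i^{\times,\ss}/\Gm$ on the stratum via Proposition~\ref{bundlequack}. If the centralizer $A_i^{a_i}$ admits central idempotents in $A_i$ beyond those of $\O[a_i]$, then again by Proposition~\ref{rankineq}~(ii) the stratum lies in $K^{>k}$ and vanishes in the subquotient. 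Combining the $k!$ from labelings with $\prod_i(q^{\lambda_i}-1)/(q-1)$ gives precisely $\tilde\Q_\lambda$.

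Finally, ordering partitions $\lambda$ of length $k$ by $\sum_i\lambda_i$ (with arbitrary tie-breaking), the operator $\tilde I_k^{\circ,\ss}$ becomes lower-triangular on the finite-dimensional invariant subspaces produced by the same induction as in Corollary~\ref{simult}, with diagonal entries $\tilde\Q_\lambda$. These diagonal entries are pairwise distinct units of $\qq[q]_{(q-1)}$, because the cyclotomic factorization of $\prod_i(q^{\lambda_i}-1)$ recovers the multiset $\{\lambda_1,\ldots,\lambda_k\}$. Standard finite-dimensional linear algebra over $\qq[q]_{(q-1)}$ then yields diagonalizability. The main technical obstacle will be Step 3: isolating the precise cyclotomic contribution $(q^{\lambda_i}-1)/(q-1)$ from the non-strict algebroid $\tilde A_i^{\times,\ss}/\Gm$ of Proposition~\ref{bundlequack} and verifying that every lower-order correction genuinely falls into $K^{>k}$, so that the stratum contribution in the subquotient is exactly $\tilde\Q_\lambda$.
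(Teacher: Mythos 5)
Your overall strategy matches the paper's: reduce to a clear stack function of central type $\lambda$, use the Galois cover trivializing $PZE(A)$ and the $k!$ labellings, split off the central semi-simple units via Proposition~\ref{bundlequack}, and identify the diagonal contribution $\tilde\Q_\lambda$. But there is a genuine gap in your final linear-algebra step. You order the partitions of length $k$ by $\sum_i\lambda_i$ and argue that the diagonal entries $\tilde\Q_\lambda$ are pairwise distinct, hence the triangular matrix is diagonalizable. Over a field that would suffice, but the theorem is over the local ring $\qq[q]_{(q-1)}$, where diagonalizing a triangular matrix requires the \emph{differences} of the relevant diagonal entries to be units, i.e.\ not to vanish at $q=1$. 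Since $\tilde\Q_\lambda(1)=k!\prod_i\lambda_i$, distinct partitions can give eigenvalues congruent modulo $(q-1)$: for instance $\lambda=(1,4)$ and $\mu=(2,2)$ both give $\tilde\Q(1)=8$, so $\tilde\Q_{(1,4)}-\tilde\Q_{(2,2)}$ is divisible by $(q-1)$ and is not invertible. With only the coarse ordering by $|\lambda|$ you cannot exclude a nonzero off-diagonal entry between such a pair, and the argument would then only yield diagonalizability after inverting $(q-1)$ — which defeats the point of working in $K(\MM)_{(q-1)}$.

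The paper's extra idea, which your proposal is missing, is to filter $K^{\geq k}/K^{>k}(\MM)_{(q-1)}$ by the \emph{divisibility} partial order on partitions of length $k$ ($\lambda\mid\mu$ iff $\lambda_i\mid\mu_{\sigma(i)}$ for some permutation $\sigma$), and to prove that $\tilde I^{\circ,\ss}_k$ is triangular with respect to \emph{this} order: the off-diagonal terms produced by the non-central semi-simple units $NZ^{\times,\ss}/\Gm^k$ are clear algebroids whose central type is a proper multiple of $\lambda$ in the divisibility order (this is where Proposition~\ref{rankineq} and the correspondence $Q$ between $PZE(A_Y)$ and $PZE(A_X|_Y)$ are used). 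For properly comparable $\lambda\mid\mu$ one has $\prod_i\lambda_i<\prod_i\mu_i$, so $\tilde\Q_\mu-\tilde\Q_\lambda$ is a unit in $\qq[q]_{(q-1)}$ and the diagonalization goes through. Relatedly, your claim that every stratum where the centralizer $A_i^{a_i}$ has extra central idempotents "lies in $K^{>k}$" is too strong: the non-central part does contribute genuine off-diagonal terms that survive in the subquotient $K^{\geq k}/K^{>k}$; what saves the argument is precisely that their central types are strictly divisible by $\lambda$, not that they vanish.
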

\begin{proof}
We will fix $k$, and work throughout in the subquotient $ K^{\geq
  k}/ K^{>k}(\MM)_{(q-1)}$, restricting all operators tacitly to this
subquotient.  Note that, as a $\qq[q]_{(q-1)}$-module, $K^{\geq
  k}/K^{>k}(\MM)_{(q-1)}$ is isomorphic to $K^k(\MM)_{(q-1)}$, and is
hence $(q-1)$-torsion free. 

We order partitions of length $k$ by {\em divisibility}. If $\lambda$
and $\mu$ are partitions with $|\lambda|=k$ and $|\mu|=k$, we write
$\lambda\mid\mu$, if there exists a permutation $\sigma$ of $\ul k$,
such that $\lambda_i\mid \mu_{\sigma(i)}$, for all
$i=1,\ldots,k$. This is a partial ordering on the partitions of length
$k$.  We write
\begin{equation}\label{ordiv}
K^{\geq\lambda}(\MM)_{(q-1)}
\end{equation}
for the $\qq[q]_{(q-1)}$-subspace generated by clear stack functions
of central type divisible by $\lambda$.

We will prove
\begin{items}
\item the operator $\tilde I^{\circ,\ss}_k$ preserves the filtration
  (\ref{ordiv}) by
  divisibility of partitions,
\item on the quotient $\tilde K^{\geq\lambda}(\MM)_{(q-1)}/\tilde
  K^{>\lambda}(\MM)_{(q-1)}$, 
  the operator $\tilde I^{\circ,\ss}_k$ acts as multiplication by $\tilde \Q_\lambda$. 
\item the operator $\tilde I^{\circ,\ss}_k$ is locally finite.
\end{items}
These facts will imply the claims concerning diagonalizability of $\tilde
I^{\circ,\ss}_k$. This is because for a lower triangular matrix
with distinct diagonal entries over a discrete valuation ring to be
diagonalizable, it suffices that the differences between the diagonal
entries are units. The latter condition is 
satisfied, because if $\lambda\mid\mu$, then $\tilde
Q_{\mu}-\tilde Q_\lambda$ does not vanish at $q=1$. (This argument
does not apply directly, because our eigenvalues are not linearly
ordered, but only partially.  Nevertheless, the conclusion remains
true in this larger generality.)\comment{Can we say this better? or with more
  detail?}

Let us fix a partition $\lambda$ of length $k$, and consider a clear
stack function $X\to \MM$ of central type $\lambda$, with
algebra $A\to X$. Abbreviate the
induced element of $ K^{\geq k}/ K^{>k}(\MM)$ by $[X]$. Denote the
central rank of $X$ by $n$, so that $\lambda\vdash n$. As $A$ has
$k$ central idempotents, $E_kX\to X$ has $k!$ canonical sections, each 
given by a labelling $\sigma$ of these $k$ idempotents. Denote the
images of these sections by $\{X_\sigma\}$. By the proof of
Theorem~\ref{Ethm}, the algebroid $E_kX$ can be stratified as
$$E_kX=\bigsqcup_{\sigma} X_\sigma\sqcup\bigsqcup_\tau Y_\tau\,,$$
where the $Y_\tau$ are clear algebroids of split central rank larger
than $k$.  The part of $\tilde I_k^{\circ,\ss} X$ lying over $Y_\tau$ then
also has split central rank larger than $k$.  Hence, when calculating
$\tilde I_k^{\circ,\ss}[X]$, we can discard all $Y_\tau$.  Every $X_\sigma$ is
isomorphic to $X$, and so we will fix a labelling $\sigma$, and
replace $X_\sigma$ by $X$ in the following arguments, remembering to
multiply the final result by $k!$. 

We need to consider $A^{\times,\ss}$  and its quotient $\tilde
A^{\times,\ss}$.
We write 
\begin{equation}\label{needtocons}
A^{\times,\ss}=Z^{\times,\ss}\sqcup NZ^{\times,\ss}\,,
\end{equation}
where
$Z\subset A$ is the center of $A$ (which is a strict subbundle and
hence a closed substack) and
$NZ$ is its complement.  We start by examining 
$Z^{\times,\ss}$, and its quotient $\tilde
Z^{\times,\ss}=Z^{\times,\ss}/\Gm^k$. Note that $Z^{\times,\ss}$ and
$\tilde Z^{\times,\ss}$ are pullbacks from the coarse Deligne-Mumford
stack $\ol X$ of $X$, and hence are inert over $X$, and their
algebroid structures are hence the canonical algebroid
structures as inert $X$-stacks.  

As in the 
proof of Theorem~\ref{Ethm}, let $\tilde X\to X$ be a connected Galois
cover with Galois group $\Gamma$, acting on the set $\ul n$, such that 
\begin{equation*}
\tilde X\times_\Gamma\ul n\longiso PZE(A)\,.
\end{equation*}
We get  induced isomorphisms
$$\tilde X\times_\Gamma \aaa^n\longiso \pi_\ast \O_{PZE(A)}\,,$$
and
$$\tilde X\times_\Gamma \Gm^n\longiso (\pi_\ast
\O_{PZE(A)})^\times\,.$$
By Proposition~\ref{ssc}, we have a surjective closed immersion
$$(\pi_\ast\O_{PZE(A)})^{\times,\strat}\longrightarrow
Z^{\times,\ss}\,.$$
It follows that we have a surjective closed immersion
$$(\tilde X\times_\Gamma\Gm^n)^\strat\longrightarrow Z^{\times,\ss}\,,$$
and by passing to the quotient another surjective closed immersion
$$(\tilde X\times_\Gamma \Gm^n/\Gm^k)^\strat\longrightarrow
\tilde Z^{\times,\ss}\,.$$
So in $ K^{\geq k}/ K^{>k}(\MM)$, we can replace $\tilde Z^{\times,\ss}$ by
$\tilde X\times_\Gamma \Gm^n/\Gm^k$. 

Our labelling $\sigma$ of the central idempotents in $A_X$ corresponds
to a labelling of the orbits of $\Gamma$ on $\ul n$, and a labelling
of the parts of $\lambda\vdash n$. Let us denote
these orbits by $I_1,\ldots,I_k$, and $\lambda$ by
$(\lambda_1,\ldots,\lambda_k)$, such that $\lambda_i=|I_i|$, for
$i=1,\ldots,k$.  We write $\pp(\aaa^\lambda)$ for 
the product of projective spaces
$\pp(\aaa^{\lambda_1})\times\ldots\times\pp(\aaa^{\lambda_k})$. Moreover,  for a sequence
of subsets $J_i\subset I_i$ we write
$\pp^\ast(J_1,\ldots,J_k)\subset\pp(\aaa^\lambda)$ for the locally
closed subspace defined by the entries in $J_1\cup\ldots\cup J_k$ being
non-zero, and all others to be zero.  We have
$$\pp(\aaa^\lambda)=\bigsqcup_{\stack{(J_1,\ldots,J_k)\in}{
    \P(I_1)\times\ldots\times\P(I_k)}}\pp^\ast(J_1,\ldots,J_k)\,,$$ 
where the disjoint union is over all sequences of subsets $J_i\subset
I_i$. 

The group $\Gamma$ acts linearly on $\pp(\aaa^\lambda)$, respecting
this stratification (although not the individual strata),
and we have
$$\tilde X\times_\Gamma\Gm^n/\Gm^k=\tilde
X\times_\Gamma\pp^\ast(I_1,\ldots,I_k)\,.$$
Moreover,
\begin{align*} 
\tilde X\times_\Gamma \pp(\aaa^\lambda)
&=\tilde
X\times_\Gamma\bigsqcup_{\P(I_1)\times\ldots\times\P(I_k)}
  \pp^\ast(J_1,\ldots,J_k)\\ 
&=\bigsqcup_{\P(I_1)\times\ldots\times\P(I_k)/\Gamma}
\tilde
X\times_{\Stab(J_1,\ldots,J_n)}\pp^\ast(J_1,\ldots,J_k)\\
&=\tilde X\times_\Gamma\pp^\ast(I_1,\ldots,I_k)\,\sqcup\\
&\phantom{mmmm}
\bigsqcup_{\stack{\P(I_1)\times\ldots\times\P(I_k)/\Gamma}{\Stab(J_1,\ldots,J_n)\subsetneq\Gamma}}
\tilde
X\times_{\Stab(J_1,\ldots,J_n)}\pp^\ast(J_1,\ldots,J_k)\,.
\end{align*}
Every subgroup $\Gamma'\subsetneq\Gamma$, which is the stabilizer of a
sequence $(J_1,\ldots,J_k)$, has more than $k$ orbits on $\ul n$.  As
in the proof of Theorem~\ref{Ethm}, this implies that $\tilde
X/{\Gamma'}$ is in $ K^{>k}(\MM)$.  The same is then true for $\tilde
X\times_{\Gamma'}\pp^\ast(J_1,\ldots,J_k)$, as the projection
$\tilde X\times_{\Gamma'}\pp^\ast(J_1,\ldots,J_k)\to \tilde
X/{\Gamma'}$ is inert (being the
pullback of a corresponding morphism of course Deligne-Mumford stacks).  We
deduce that in $ K^{\geq k}/ K^{>k}(\MM)$, we have
\begin{align}
[\tilde Z^{\times,\ss}]&=[\tilde X\times_\Gamma\Gm^n/\Gm^k]=[\tilde
X\times_\Gamma\pp^\ast(I_1,\ldots,I_k)]
=[\tilde X\times_\Gamma\pp(\aaa^\lambda)]\nonumber\\
&=[\pp(\aaa^\lambda)]\,[X]=\frac{1}{k!}\tilde\Q_\lambda(q)\,[X]\,.\label{easyformula}
\end{align}
In the last step, we used the bundle relations in $\tilde K(\MM)$. 
The bundle $\tilde
X\times_\Gamma\pp(\aaa^\lambda)$ is a product of  projective bundles
associated to vector 
bundles, whose structure groups are special (as they are general
linear groups).

Now consider a locally closed embedding $Y\hookrightarrow
NZ^{\times,ss}/\Gm^k$, such that $Y$ is a clear algebroid.
Over $Y$, we then consider the inclusion of commutative algebra bundles 
$ZA_X|_Y\hookrightarrow ZA_Y$. By Proposition~\ref{rankineq}, the split rank of
$ZA_Y$ (which is the split central rank of $Y$), is at least as large
as the split rank of $ZA_X|_Y$ which, in turn, is at least as large as
the split rank of $ZA_X$, which is $k$. Since we are working modulo
$ K^{>k}(\MM)$, we may assume that the 
split central rank of $Y$ is $k$, and hence that the split rank of $ZA_Y$
and of $ZA_X|_Y$ are both equal to $k$. Consider the correspondence
$Q$ which we used in the proof of 
Proposition~\ref{rankineq}:
$$\xymatrix{
Q{\phantom{.}}\ar@{->>}[d]\ar@{^{(}->}[r] & PZE(A_Y)\\
PZE(A_X|_Y)\rlap{\,.}}$$

All three stacks in this diagram are representable finite \'etale
covers of $Y$. By assumption, both $PZE(A_Y)$ and $PZE(A_X|_Y)$ have
$k$ connected components.  This implies that the horizontal inclusion
in the diagram is an isomorphism, and that we have a surjective
representable finite \'etale cover
\begin{equation}\label{fecov}
\xymatrix{PZE(A_Y)\ar@{->>}[r]&PZE(A_X|_Y)}\,.
\end{equation}
Since $PZE(A_X|_Y)$ and $PZE(A_X)$ have the same number of components,
the degrees of these components are equal as well, which means that the
central type of $A_X|_Y$ is equal to the central type of $A_X$, which
is $\lambda$. The existence of (\ref{fecov}) then implies that
$\lambda$ divides the central type of $A_Y$. 

The surjection (\ref{fecov}) is not an isomorphism, because otherwise,
by Proposition~\ref{rankineq}, we would have a surjection
$ZA^{\times,\ss}_Y\to ZA^{\times,\ss}_X|_Y$, 
but this would force the tautological section class of $ZA^{\times,\ss}_Y$, given by
the structure map $Y\to A_X^{\times,\ss}/\Gm^k$ to be central in $A_X$
(at least pointwise), which it is not.

This shows that the central type of $Y$ {\em strictly }divides
$\lambda$. 

We have thus completed the proof of (i), and (ii), above. For the
local finiteness  of $\tilde I_k^{\circ,\ss}$, proceed as in the proof of
Corollary~\ref{simult}. Every time we apply $\tilde I^{\circ,\ss}_k$, we
produce only clear algebroids whose central type is a multiple of
$\lambda$, but as we can bound the central rank by the rank, which
does not increase by applying $\tilde I^{\circ,\ss}_k$, after
finitely many steps,  this process stops. 
\end{proof}

\begin{cor}
The operator $I^{\circ,\ss}: K(\MM)_{(q-1)}\to 
K(\MM)_{(q-1)}$ is diagonalizable. Its 
eigenvalue spectrum consists of the  $\Q_{\lambda}\in\qq[q]$, for all
partitions $\lambda$. Denote the eigenspace corresponding to the
eigenvalue $\Q_{\lambda}$ by $ K^\lambda(\MM)_{(q-1)}$. We have
$$ K^k(\MM)_{(q-1)}=\bigoplus_{|\lambda|=k}  K^\lambda(\MM)_{(q-1)}\,.$$
\end{cor}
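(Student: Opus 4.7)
The plan is to deduce this corollary by restricting $I^{\circ,\ss}$ to each summand $K^k(\MM)_{(q-1)}$ of the decomposition~(\ref{idret}) (extended $\qq[q]_{(q-1)}$-linearly to $K(\MM)_{(q-1)}$), and then identifying its action there with $\tilde I^{\circ,\ss}_k$ up to the scalar $(q-1)^k/k!$, at which point the previous theorem applies directly.

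First, since $I^{\circ,\ss}$ commutes with every $E_r$, it preserves each common eigenspace $K^k(\MM)_{(q-1)}$ of the family $(E_r)_{r\geq0}$. Hence it suffices to diagonalize $I^{\circ,\ss}|_{K^k(\MM)_{(q-1)}}$ for each $k$ separately. Next, by Corollary~\ref{simult}, the projection $\pi_k$ induces an isomorphism of $\qq[q]_{(q-1)}$-modules $K^k(\MM)_{(q-1)}\longiso K^{\geq k}/K^{>k}(\MM)_{(q-1)}$, and under this isomorphism the operator $\tilde I^{\circ,\ss}_k$ on the right corresponds to its restriction to $K^k(\MM)_{(q-1)}$ on the left. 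On $K^k(\MM)_{(q-1)}$ the operator $E_k$ acts as multiplication by $k!\,S(k,k)=k!$, which is invertible in $\qq[q]_{(q-1)}$, so the relation $I^{\circ,\ss}\circ E_k=(q-1)^k\,\tilde I^{\circ,\ss}_k$ yields
$$I^{\circ,\ss}\big|_{K^k(\MM)_{(q-1)}}=\frac{(q-1)^k}{k!}\,\tilde I^{\circ,\ss}_k\big|_{K^k(\MM)_{(q-1)}}\,.$$

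Now apply the previous theorem: $\tilde I^{\circ,\ss}_k$ is diagonalizable on $K^{\geq k}/K^{>k}(\MM)_{(q-1)}$ with eigenvalues $\tilde\Q_\lambda$, one for each partition $\lambda$ of length $|\lambda|=k$. Multiplying by the scalar $(q-1)^k/k!$ (which does not depend on $\lambda$) shows that $I^{\circ,\ss}|_{K^k(\MM)_{(q-1)}}$ is diagonalizable with eigenvalues
$$\frac{(q-1)^k}{k!}\,\tilde\Q_\lambda=(q-1)^k\prod_{i\in\lambda}\frac{q^i-1}{q-1}=\prod_{i\in\lambda}(q^i-1)=\Q_\lambda\,,$$
and these remain pairwise distinct since the $\tilde\Q_\lambda$ are. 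Denoting by $K^\lambda(\MM)_{(q-1)}\subset K^k(\MM)_{(q-1)}$ the $\Q_\lambda$-eigenspace, we therefore have
$$K^k(\MM)_{(q-1)}=\bigoplus_{|\lambda|=k}K^\lambda(\MM)_{(q-1)}\,.$$

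Finally, summing over $k\geq0$ and invoking the decomposition $K(\MM)_{(q-1)}=\bigoplus_{k\geq0}K^k(\MM)_{(q-1)}$ yields the full diagonalization of $I^{\circ,\ss}$ on $K(\MM)_{(q-1)}$ with eigenvalue spectrum $\{\Q_\lambda\}_\lambda$, as claimed. There is no real obstacle here: the only delicate point is to keep straight that the scalar relating $I^{\circ,\ss}$ to $\tilde I^{\circ,\ss}_k$ depends only on $k=|\lambda|$, not on $\lambda$ itself, so that distinctness of eigenvalues is preserved under the rescaling.
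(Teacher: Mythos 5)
Your argument is correct and is exactly the deduction the paper intends (the corollary is stated without proof, and the paper has already supplied the key identity $I^{\circ,\ss}|_{K^{\geq k}/K^{>k}(\MM)}=\frac{1}{k!}(q-1)^k\,\tilde I^{\circ,\ss}_k$ together with the isomorphism $K^k(\MM)_{(q-1)}\cong K^{\geq k}/K^{>k}(\MM)_{(q-1)}$ that you invoke). The only step you pass over lightly — that $\tilde I^{\circ,\ss}_k$ actually preserves the summand $K^k(\MM)_{(q-1)}$ — follows immediately from $(q-1)$-torsion-freeness of $K(\MM)_{(q-1)}$, so there is no gap.
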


\begin{ex}
Consider, as in Example~\ref{bgl2ex}, the stack function of rank 2
vector bundles. It defines an element $[BGL_2]$ of $ K^{\geq
  1}/ K^{>1}(\mf{Vect})$, which we are going to decompose into its
eigencomponents with respect to the operator $\tilde I^{\circ,\ss}_1$. 

The stack function
$[BGL_2]$ is clear, its central rank is~1. The decomposition
(\ref{needtocons}) is given in this case as
$$
I^{\circ,\ss}_{BGL_2}=\Delta/GL_2\sqcup T^\ast/N\,,
$$
where $\Delta$ is the central torus of $GL_2$, and
$T^\ast=T\setminus\Delta$. Also, $N$ is the normalizer of $T$ in
$GL_2$. 
We get the corresponding decomposition
\begin{align*}
\tilde I_1^{\circ,\ss} BGL_2&=(\Delta/\Gm)/GL_2\sqcup (T^\ast/\Gm)/N\\
&=BGL_2\sqcup \tilde T^\ast/N\,,
\end{align*}
where $\tilde T^\ast=T^\ast/\Gm$, and we have 
$$\tilde I_1^{\circ,\ss}[BGL_2]=[BGL_2]+[\tilde T^\ast/N]\,.$$
Note that $\tilde T^\ast/N$ is not
a strict algebroid.  In fact, let $T'\subset T^\ast$ be the closed subscheme
consisting of elements of trace zero, and write 
$T^\ast=T'\sqcup T^{\ast\ast}$. Then (at least if $2\in R^\ast$)
we have $T'/\Gm=\spec R$, and $[\tilde T^\ast/N]=[BN]+[\tilde
  T^{\ast\ast}/N]$, and $BN$ is not a strict algebroid, as $N$ is not
connected. But $\tilde T^\ast/N$ is a clear algebroid.\comment{as
  long as that allows for discontinuity of $I_X$} 
Its connected inertia stack is $\tilde T^\ast\times T/N$, and its
associated coarse Deligne-Mumford stack is $\tilde T^\ast/\zz_2$,
which is only generically a scheme. The central rank of 
$\tilde T^\ast/N$ is 2, and the split central rank 1.

Now we consider $\tilde I_1^{\circ,\ss}[\tilde T^\ast/N]$.  We start by noting
that  all connected inertia of $\tilde T^\ast/N$ is central. Hence, modulo
$ K^{>1}(\mf{Vect})$, we have
$$\tilde I_1^{\circ,\ss}[\tilde T^\ast/N]=(q+1)[\tilde T^\ast/N]\,,$$
by (\ref{easyformula}). 

We see that $\qq[q]_{(q-1)}[BGL_2]+\qq[q]_{(q-1)}[\tilde T^\ast/N]$ is invariant
under $\tilde I^{\circ,\ss}_1$, and the matrix of 
$\tilde I^{\circ,\ss}_1$ on this subspace is 
$$\begin{pmatrix}
1 & 0 \\
1 & q+1
\end{pmatrix}
$$
This matrix is lower triangular, and the differences between the
scalars on the diagonal are all invertible in $\qq[q]_{(q-1)}$.
Therefore it is diagonalizable over $\qq[q]_{(q-1)}$.
Diagonalizing, we get the following eigenvectors modulo
$ K^{>1}(\mf{Vect})_{(q-1)}$:
\begin{items}
\item $\ol v_{(1)}=[BGL_2]-\frac{1}{q}[\tilde T^\ast/N]$,
\item $\ol v_{(2)}=\frac{1}{q}[\tilde T^\ast/N]$.
\end{items}
To get the actual eigenvectors, we project into
$ K^1(\mf{Vect})_{(q-1)}$.  We have
\begin{items}
\item $\pi_1[BGL_2]=[BGL_2]-\frac{1}{2}[BT]$,
\item $\pi_1[\tilde T^\ast/N]=[\tilde T^\ast/N]-\frac{1}{2}(q-2)[BT]$,
\end{items}
and hence
\begin{items}
\item $v_{(1)}=[BGL_2]-\frac{1}{q}[\tilde T^\ast/N]-\frac{1}{q}[BT]$,
\item $v_{(2)}=\frac{1}{q}[\tilde T^\ast/N]-\frac{q-2}{2q}[BT]$.
\end{items}
If we add 
\begin{items}\setcounter{enumi}{2}
\item $v_{(1,1)}=\frac{1}{2}[BGL_2]$,
\end{items}
we get the spectral decomposition 
$[BGL_2]=v_{(1)}+v_{(2)}+v_{(1,1)}$ of $[BGL_2]$,  with respect to the
operator $I^{\circ,\ss}$.  This is, of course, the same as the spectral
decomposition with respect to $I^\ss$, which we computed in the
introduction (after applying the bundle relations). 
\end{ex}

\begin{rmk}
If we are willing to invert $(q-1)$, we can prove the
diagonalizability of $I^{\circ,\ss}=I^\ss$ entirely within the context
of strict algebroids. In fact, we can generalize the  calculation in
the introduction to  accomplish this. 
\end{rmk}

\subsection{Graded structure of multiplication}

We will now assume that $\MM$ admits all direct sums.  Then we can
define a commutative product on $K(\MM)$ by
$$[X\to \MM]\cdot [Y\to\MM]=[X\times Y\to \MM\times
  \MM\stackrel{\oplus}{\longrightarrow} \MM]\,.$$
With this product $K(\MM)$ becomes a commutative $K(\DM)$-algebra
with unit $1=[\spec R\stackrel{0}{\longrightarrow}\MM]$.

\begin{prop}
For $x,y\in K(\MM)$, we have 
$$I^{\circ,\ss}(x\cdot y)=I^{\circ,\ss}(x)\cdot I^{\circ,\ss}(y)\,.$$
\end{prop}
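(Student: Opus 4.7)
The plan is to reduce multiplicativity to an analysis of the algebroid on $X \times Y$ and then apply a product decomposition of semi-simple units.

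First, I would make explicit the algebroid structure carried by $X \cdot Y$. Given representable stack functions $(X, A_X) \to (\MM, \A)$ and $(Y, A_Y) \to (\MM, \A)$, the product morphism $X \times Y \to \MM \times \MM \xrightarrow{\oplus} \MM$ sends a pair of objects $(x, y)$ to $x \oplus y$, whose endomorphism algebra is a $2 \times 2$ matrix-type algebra containing $\End(x) \times \End(y)$ as the diagonal subalgebra. Since the morphisms to $\MM$ are representable, this identifies the pullback algebroid structure on $X \times Y$ as $A_X \boxtimes A_Y := A_X \times_{R} A_Y$, sitting as a finite type subalgebra of $\A|_{X \times Y}$ via the inclusion into the diagonal. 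The unit group is $A_X^\times \times A_Y^\times$, which matches $I_X^\circ \times I_Y^\circ = I_{X \times Y}^\circ$, confirming the algebroid axiom.

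Second, I would prove the key identity
\[
(A_X \boxtimes A_Y)^{\times, \ss} \;=\; A_X^{\times, \ss} \times A_Y^{\times, \ss}
\]
as algebroids over $X \times Y$. Invertibility is clear: $(a,b)$ is a unit in $A_X \times A_Y$ iff $a$ and $b$ are units separately. For the semi-simple condition of Definition~\ref{ssdef}, a section $(a,b)$ is strict iff both $a$ and $b$ are strict, in which case the subalgebra generated by $(a,b)$ lies in $\O_S[a] \times \O_S[b]$; passing to geometric points, semi-simplicity of $(a,b)$ in the fibre algebra is equivalent to separability of its minimal polynomial, which is the lcm of the minimal polynomials of $a$ and $b$, hence equivalent to semi-simplicity of both $a$ and $b$. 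This yields the product decomposition as an equality of locally closed substacks of $A_X \times A_Y$, and in fact as algebroids, since the fixed subalgebras under the tautological automorphisms (see Remark~\ref{ssai}) decompose as a product as well.

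Third, I would combine the two steps. Tautologically, $I_{X \times Y}^{\circ,\ss} = (A_X \boxtimes A_Y)^{\times,\ss} = I_X^{\circ,\ss} \times I_Y^{\circ,\ss}$ as algebroids over $X \times Y$, and this identification is compatible with the morphism down to $\MM$ (given by composing with $\oplus$). Therefore
\[
I^{\circ,\ss}([X \to \MM] \cdot [Y \to \MM]) = [I_X^{\circ,\ss} \times I_Y^{\circ,\ss} \to X \times Y \to \MM] = I^{\circ,\ss}[X \to \MM] \cdot I^{\circ,\ss}[Y \to \MM],
\]
and the result extends to arbitrary $x, y \in K(\MM)$ by bilinearity.

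The only real subtlety is step two: one needs the semi-simple locus $(A_1 \times A_2)^{\times,\ss}$ to split as a product of stacks, not merely to have this property fibrewise. This reduces, via Proposition~\ref{sost}, to showing that strictness and étaleness of $\spec_S \O_S[(a,b)] \to S$ are each product conditions, which follows because $\O_S[(a,b)] \hookrightarrow \O_S[a] \times \O_S[b]$ is a strict inclusion of algebra bundles whose image is determined rank-wise by the data on each factor.
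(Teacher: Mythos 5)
Your overall strategy is exactly the paper's: the proof in the text consists of the single assertion that $A^{\times,\ss}_{X\times Y}=A^{\times,\ss}_X\times A^{\times,\ss}_Y$ as algebroids over $X\times Y$, and your first and third steps are correct unpackings of why this gives multiplicativity. The problem is in step two, precisely at the point you flag as the ``only real subtlety''. It is not true that strictness of $(a,b)$ is equivalent to strictness of $a$ and $b$ separately, and the claim that the image of $\O_S[x]\to \O_S[a]\times\O_S[b]$ is ``determined rank-wise by the data on each factor'' is false: the rank of $\O_S[(a,b)]$ is the degree of the least common multiple of the two minimal polynomials, which can jump in a family even when the individual degrees are constant. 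Concretely, take $X=Y=\spec R$ with $A_X=A_Y=\O$, let $S=\Gm$ with coordinate $t$, and consider the section $(a,b)=(1,t)$ of $A_X\times A_Y=\O\times\O$ over $S$. Both $a$ and $b$ are strict semi-simple units of degree $1$, but $\O_S[(1,t)]$ is spanned by $(1,1)$ and $(0,t-1)$, so its rank drops from $2$ to $1$ at $t=1$ and $(1,t)$ is not strict. Hence $(1,t)$ is an $S$-point of $A_X^{\times,\ss}\times A_Y^{\times,\ss}$ but not of $(A_X\times A_Y)^{\times,\ss}$: these are genuinely different subfunctors of $A_X^\times\times A_Y^\times$, and the claimed equality of locally closed substacks fails.

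What is true, and what you actually need, is the identity in $K(\MM)$. On geometric points the two loci agree, by your (correct) lcm argument. Moreover, each of $(A_X\times A_Y)^{\times,\ss}$ and $A_X^{\times,\ss}\times A_Y^{\times,\ss}$ is, by Proposition~\ref{sost}, an open substack of a stratification of $A_X^\times\times A_Y^\times$ --- the degree stratification of the pair $(a,b)$, respectively the product of the two individual degree stratifications. Passing to the common refinement (stratify by the degrees of $a$, of $b$, and of $(a,b)$ simultaneously), both monomorphisms factor through a single stratification as surjective monomorphisms onto the same union of locally closed pieces, compatibly with the algebroid structures. Since stratifications impose scissor relations in $K(\MM)$, the two classes coincide, and the rest of your argument goes through verbatim. (The paper's one-line proof elides this point as well; the conclusion is unaffected, but your ``equality of locally closed substacks'' in step two should be weakened to ``equality after a common stratification, hence equality in $K(\MM)$''.)
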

\begin{proof}
This follows immediately from the fact that, for any two algebroids
$X$, $Y$, we have  $A^{\times,\ss}_{X\times
Y}=A^{\times,\ss}_X\times A^{\times,\ss}_Y$, as algebroids over $X\times Y$.
\end{proof}

Denote the disjoint union\comment{is this standard terminology?} of
two partitions $\lambda$ and $\mu$ by $\lambda+\mu$. 

\begin{cor}
We have  $K^{\lambda}(\MM)_{(q-1)}\cdot K^{\mu}(\MM)_{(q-1)}\subset
K^{\lambda+\mu}(\MM)_{(q-1)}$, and hence also $K^k(\MM)_{(q-1)}\cdot
K^\ell(\MM)_{(q-1)}\subset K^{k+\ell}(\MM)_{(q-1)}$. 
\end{cor}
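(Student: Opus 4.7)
The plan is to read off the result directly from the eigenspace characterization of $K^\lambda(\MM)_{(q-1)}$ combined with the preceding proposition on multiplicativity of $I^{\circ,\ss}$. Take $x\in K^\lambda(\MM)_{(q-1)}$ and $y\in K^\mu(\MM)_{(q-1)}$, so that $I^{\circ,\ss}x=\Q_\lambda x$ and $I^{\circ,\ss}y=\Q_\mu y$. The proposition gives
$$I^{\circ,\ss}(x\cdot y)=I^{\circ,\ss}(x)\cdot I^{\circ,\ss}(y)=\Q_\lambda\Q_\mu\,(x\cdot y),$$
and concatenation of index sequences yields $\Q_\lambda\Q_\mu=\prod_i(q^{\lambda_i}-1)\prod_j(q^{\mu_j}-1)=\Q_{\lambda+\mu}$. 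So $x\cdot y$ is an eigenvector of $I^{\circ,\ss}$ with eigenvalue $\Q_{\lambda+\mu}$.

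The second step is to upgrade ``eigenvector with eigenvalue $\Q_{\lambda+\mu}$'' to ``element of $K^{\lambda+\mu}(\MM)_{(q-1)}$''. Using the decomposition from the previous corollary, write $x\cdot y=\sum_\nu(xy)_\nu$ with $(xy)_\nu\in K^\nu(\MM)_{(q-1)}$. Applying $I^{\circ,\ss}$ and subtracting $\Q_{\lambda+\mu}(x\cdot y)$, linear independence of the eigenspaces gives $(\Q_\nu-\Q_{\lambda+\mu})(xy)_\nu=0$ for every $\nu$. For $\nu\ne\lambda+\mu$, the polynomial $\Q_\nu-\Q_{\lambda+\mu}\in\qq[q]$ is nonzero, since the assignment $\lambda\mapsto\Q_\lambda$ is injective on partitions (one recovers the multiset of $\lambda_i$ from the cyclotomic factorization $q^n-1=\prod_{d\mid n}\Phi_d(q)$). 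This polynomial is therefore a non-zero-divisor in the domain $\qq[q]_{(q-1)}$, and it acts faithfully on $K(\MM)_{(q-1)}$ because the latter is torsion-free, sitting inside the $\qq(q)$-vector space $K(\MM)\otimes_{\qq[q]}\qq(q)$ by construction. Hence $(xy)_\nu=0$ for $\nu\ne\lambda+\mu$, and so $x\cdot y\in K^{\lambda+\mu}(\MM)_{(q-1)}$.

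The second displayed inclusion $K^k(\MM)_{(q-1)}\cdot K^\ell(\MM)_{(q-1)}\subset K^{k+\ell}(\MM)_{(q-1)}$ is then immediate from the decomposition $K^k(\MM)_{(q-1)}=\bigoplus_{|\lambda|=k}K^\lambda(\MM)_{(q-1)}$ established in the previous corollary, together with the length identity $|\lambda+\mu|=|\lambda|+|\mu|$.

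There is no real obstacle here; the only subpoint worth naming is the injectivity of $\lambda\mapsto\Q_\lambda$, but this is an easy consequence of unique factorization in $\qq[q]$ via cyclotomic polynomials. The conceptual content of the corollary is simply that the diagonalization of $I^{\circ,\ss}$ refines the grading by split central rank in a way compatible with the commutative product, which is forced by multiplicativity of the eigenvalues $\Q_\lambda$ under concatenation of partitions.
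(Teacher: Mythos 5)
Your argument is correct and is precisely the one the paper intends: the corollary is stated without proof as an immediate consequence of the multiplicativity of $I^{\circ,\ss}$ and its diagonalizability over $\qq[q]_{(q-1)}$, and your two supporting observations --- injectivity of $\lambda\mapsto\Q_\lambda$ via the cyclotomic factorization of $q^n-1$, and torsion-freeness of $K(\MM)_{(q-1)}$ as a submodule of $K(\MM)(q)$ --- are exactly what is needed to pass from ``eigenvector with eigenvalue $\Q_{\lambda+\mu}$'' to membership in the summand $K^{\lambda+\mu}(\MM)_{(q-1)}$. The deduction of the graded statement for $K^k\cdot K^\ell$ from $K^k(\MM)_{(q-1)}=\bigoplus_{|\lambda|=k}K^\lambda(\MM)_{(q-1)}$ is likewise as intended.
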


So the $\qq[q]_{(q-1)}$-module
$$K(\MM)_{(q-1)}=\bigoplus_{k\geq0} K^{k}(\MM)_{(q-1)}$$
is a graded $\qq[q]_{(q-1)}$-algebra, with respect to the commutative product
on $K(\MM)_{(q-1)}$.  We will prove next, that this
fact is true for $K(\MM)$ itself.

\begin{prop}
For any $x,y\in K(\MM)$ and any $p\geq0$, we have
$$E_p(x\cdot y)=\sum_{n,m}{p\brack n,m}E_n(x)\cdot
E_m(y)\,.$$
Here ${p\brack n,m}$ is the number of ways the set $\ul p$ can be
written as the union of a subset of order $n$, and a subset of order
$m$. 
\end{prop}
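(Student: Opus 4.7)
The plan is to identify $E_p$ of the product algebroid on $X\times Y$ with a stratified disjoint union of products $E_n(X)\times E_m(Y)$ indexed by pairs of subsets of $\ul p$, and then collect terms combinatorially. The key structural observation is that the algebroid underlying $x\cdot y=[X\times Y\to \MM]$ has algebra $A_{X\times Y}=A_X\times A_Y$, sitting block-diagonally in $\End(x\oplus y)$. This coincides both with the product algebroid structure on $(X,A_X)\times(Y,A_Y)$ and with the representable algebroid pulled back from $(\MM,\AA)$ along $\oplus:X\times Y\to \MM$.

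Unwinding the definition, an $S$-valued section of $E_p(X\times Y)$ is a pair $(x,y)$ together with a complete $p$-tuple $(f_1,\ldots,f_p)$ of non-zero pairwise orthogonal idempotents in $A_X\times A_Y$ summing to unity. Writing $f_i=(e_i,e_i')$ componentwise, this is equivalent to a pair of (not necessarily non-zero) $p$-tuples of orthogonal idempotents $(e_i)$ in $A_X$ and $(e_i')$ in $A_Y$, each summing to the identity, subject only to the constraint that for every $i\in\ul p$ at least one of $e_i$ or $e_i'$ is non-zero. I would then stratify $E_p(X\times Y)$ by the pair of support sets $S=\{i:e_i\neq 0\}$ and $T=\{i:e_i'\neq 0\}$, which ranges over pairs $S,T\subseteq \ul p$ with $S\cup T=\ul p$. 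Since the vanishing locus of an idempotent section is closed, each stratum $Z_{S,T}$ is locally closed; by the centralizer description of the algebroid on $E_n$ in Remark~\ref{era}, dropping the zero entries identifies $Z_{S,T}$ with $E_{|S|}(X)\times E_{|T|}(Y)$ as algebroids over $\MM$, with both sides mapping to $x\oplus y$.

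Applying the scissor relations for algebroids and grouping strata by $(|S|,|T|)=(n,m)$ yields the asserted identity, since the number of pairs $(S,T)$ with $|S|=n$, $|T|=m$ and $S\cup T=\ul p$ is ${p\brack n,m}$ by definition, and the relabeling $\ul n\leftrightarrow S$, $\ul m\leftrightarrow T$ does not change the class in $K(\MM)$. The main point requiring care is verifying that this stratification is by inert locally closed immersions of algebroids — i.e., that the algebra bundle restricts correctly along each stratum, since on $Z_{S,T}$ the centralizer $\prod_i e_iA_Xe_i\times\prod_i e_i'A_Ye_i'$ collapses to $\prod_{i\in S}e_iA_Xe_i\times\prod_{i\in T}e_i'A_Ye_i'$ — and that the "drop zero idempotents and relabel" map is genuinely an isomorphism of algebroids over $\MM$, not merely of underlying stacks. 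Once this compatibility of algebra structures is in hand, the combinatorial identification is immediate from the very definition of ${p\brack n,m}$.
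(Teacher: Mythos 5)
Your argument is correct and is essentially the paper's own proof: the paper decomposes $E_p(X\times Y)$ into the same locally closed strata, merely describing them in the opposite direction (as images of maps $E_n(X)\times E_m(Y)\to E_p(X\times Y)$ indexed by pairs of monotone injections $\ul n\hookrightarrow\ul p$, $\ul m\hookrightarrow\ul p$ with covering images, which is the same data as your support pair $(S,T)$), and likewise invokes closedness of the vanishing locus of an idempotent and the count ${p\brack n,m}$. Your extra care about the algebroid structures on the strata is a welcome elaboration of what the paper leaves implicit, but it is not a different route.
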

\begin{proof}
Consider stack functions $X\to \MM$ and $Y\to \MM$. Then $E_p(X\times
Y)$ is the stack of pairs $(e,f)$, where  $e=(e_\rho)_{\rho\in \ul p}$
is a complete set of  orthogonal idempotents in $A_X$, and
$f=(f_\rho)_{\rho\in \ul p}$ is a complete set  of  orthogonal
idempotents in $A_Y$, such that for every $\rho=1,\ldots,p$, at least
one of the two idempotents   $e_\rho$, $f_\rho$ is non-zero.

For every pair of strictly monotone maps $\ul n\hookrightarrow \ul p$,
$\ul m\hookrightarrow \ul p$, whose images cover $\ul p$, we get a
morphism of stack functions $E_n(X)\times E_m(Y)\to E_p(X\times Y)$,
by mapping a pair of complete sets of orthogonal idempotents
$(e',f')$, where $e'=(e'_\nu)_{\nu\in \ul n}$ and $f'=(f'_\mu)_{\mu\in
  \ul m}$, to the pair $(e,f)$, defined by 
$$e_\rho=\sum_{\nu\mapsto \rho}e'_\nu\qquad\text{and}\qquad
f_\rho=\sum_{\mu\mapsto \rho}f'_\mu\,.
$$
(As the maps $\ul n\to \ul p$ and $\ul m\to \ul p$ are injective, all
these sums have either zero or one summand.)

Each of the morphisms $E_n(X)\times E_m(X)\to E_p(X\times Y)$ is an
isomorphism onto a locally closed substack, because the locus of
vanishing for an idempotent is closed.
Moreover, the images of these morphisms are disjoint, and from a
cover. There are ${p\brack n,m}$ of them.
\end{proof}

\begin{cor}
If $x\in K^k(\MM)$ and $y\in K^\ell(\MM)$, then $x\cdot y\in
K^{k+\ell}(\MM)$. 
\end{cor}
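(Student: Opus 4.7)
The plan is to deduce this from the preceding product formula
$$E_p(x\cdot y)=\sum_{n,m}{p\brack n,m}E_n(x)\cdot E_m(y)$$
together with the characterization of $K^k(\MM)$ as the simultaneous eigenspace of the family $(E_r)_{r\geq 0}$, where $E_r$ acts by the scalar $r!\,S(k,r)$ (this is Corollary~\ref{simult}, combined with the fact that the sequences $(r!\,S(k,r))_{r\geq0}$ distinguish the integers $k$). So it suffices to check that if $x$ has eigenvalues corresponding to $k$ and $y$ has eigenvalues corresponding to $\ell$, then $x\cdot y$ has eigenvalues corresponding to $k+\ell$ under every $E_p$.

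Substituting $E_n(x)=n!\,S(k,n)\,x$ and $E_m(y)=m!\,S(\ell,m)\,y$ into the product formula, I would obtain
$$E_p(x\cdot y)=\Big(\sum_{n,m}{p\brack n,m}\,n!\,S(k,n)\,m!\,S(\ell,m)\Big)(x\cdot y).$$
The remaining task is the combinatorial identity
$$\sum_{n,m}{p\brack n,m}\,n!\,S(k,n)\,m!\,S(\ell,m)=p!\,S(k+\ell,p).$$

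I would prove this identity by giving a bijective interpretation of both sides as counts of surjections $\ul{k+\ell}\twoheadrightarrow\ul p$. Recall that the right-hand side counts exactly such surjections. For the left-hand side, a triple consisting of a covering decomposition $\ul p=A\cup B$ with $|A|=n$, $|B|=m$ (of which there are ${p\brack n,m}$), a surjection $\ul k\twoheadrightarrow A$ (of which there are $n!\,S(k,n)$), and a surjection $\ul\ell\twoheadrightarrow B$ (of which there are $m!\,S(\ell,m)$) assembles canonically into a surjection $\ul k\sqcup\ul\ell=\ul{k+\ell}\twoheadrightarrow A\cup B=\ul p$. Conversely, any surjection $\phi:\ul{k+\ell}\to\ul p$ determines $A=\phi(\ul k)$, $B=\phi(\ul\ell)$, which together cover $\ul p$, and the restricted surjections; this gives the inverse bijection.

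Combining these two ingredients yields $E_p(x\cdot y)=p!\,S(k+\ell,p)(x\cdot y)$ for every $p\geq0$, so $x\cdot y\in K^{k+\ell}(\MM)$ as required. There is no real obstacle here; the content of the corollary lies entirely in the preceding proposition, and the only check is the above combinatorial identity.
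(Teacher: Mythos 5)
Your proof is correct, and it reaches the conclusion by a slightly different route than the paper. Both arguments rest on the preceding proposition computing $E_p(x\cdot y)=\sum_{n,m}{p\brack n,m}E_n(x)\cdot E_m(y)$, but they diverge in how they finish. The paper never specializes to eigenvectors: it shows the generating operator is multiplicative, $\pi_t(x\cdot y)=\pi_t(x)\cdot\pi_t(y)$ for \emph{all} $x,y$, by plugging the product formula into $\pi_t=\sum_p\binom{t}{p}E_p$ and invoking Proposition~\ref{combinatorics} in the form $\sum_p{p\brack n,m}\binom{t}{p}=\binom{t}{n}\binom{t}{m}$; the corollary then falls out since $\pi_t$ restricted to $K^k(\MM)$ is multiplication by $t^k$. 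You instead take $x,y$ to be simultaneous eigenvectors from the start, reduce to the scalar identity $\sum_{n,m}{p\brack n,m}\,n!\,S(k,n)\,m!\,S(\ell,m)=p!\,S(k+\ell,p)$, and prove it by the bijection between surjections $\ul{k+\ell}\twoheadrightarrow\ul p$ and triples (indexed cover $A\cup B=\ul p$, surjection $\ul k\twoheadrightarrow A$, surjection $\ul\ell\twoheadrightarrow B$); that bijection is right, and your appeal to Corollary~\ref{simult} together with the fact that the eigenvalue sequences $(r!\,S(k,r))_r$ separate the integers $k$ legitimately converts the eigenvalue computation back into membership in $K^{k+\ell}(\MM)$. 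The two finishes are of course equivalent (multiplicativity of $\pi_t$ is, by bilinearity and the direct sum decomposition, the same statement as the corollary), so the trade-off is mild: your version is more elementary and self-contained at this point, needing only a direct surjection count rather than the binomial identity of Proposition~\ref{combinatorics}, while the paper's version records the structurally stronger and reusable fact that $\pi_t$ is a ring homomorphism for the commutative product.
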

\begin{proof}
We have
\begin{align*}
\pi_t(x\cdot y)&=
\sum_{p}\binom{t}{p}E_p(x\cdot y)\\
&=\sum_p\binom{t}{p}\sum_{n,m}{p\brack n,m}E_n(x)\cdot E_m(y)\\
&=\sum_{n,m}\Big(\sum_p{p\brack n,m}\binom{t}{p}\Big)E_n(x)\cdot
E_m(y)\\
&=\sum_{n,m}\binom{t}{n}\binom{t}{m}E_n(x)\cdot E_m(x)\\
&=\pi_t(x)\cdot \pi_t(y)\,.
\end{align*}
The step from Line~3 to Line~4 uses Proposition~\ref{combinatorics}, below.
\end{proof}

\subsubsection{A combinatorial lemma}

Let $p\geq0$. 

For a non-negative integer $n$, and a $p$-tuple of non-negative
integers $\lambda=(\lambda_1,\ldots,\lambda_p)$, we define 
\begin{equation}\label{defnbrack}
{n\brack \lambda}={n\brack \lambda_1,\ldots,\lambda_p}
\end{equation}
to be the number of indexed covers of $\ul n$ by subsets
$S_1,\ldots,S_p$ of
cardinalities $\lambda_1,\ldots,\lambda_p$. The non-negative integer
${n\brack\lambda}$ vanishes, unless $\lambda_\rho\leq n$, for all
$\rho=1,\ldots,p$, and $n\leq |\lambda|$, where
$|\lambda|=\sum_{\rho}\lambda_\rho$. 

We have the following useful combinatorial property:
\begin{prop}\label{combinatorics}
For every $p$-tuple of non-negative integers
$\lambda=(\lambda_1,\ldots,\lambda_p)$, we have
$$\binom{t}{\lambda_1}\ldots\binom{t}{\lambda_p}=\sum_n{n\brack\lambda}\binom{t}{n}\,.$$ 
\end{prop}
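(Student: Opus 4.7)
My plan is to prove the identity by a double counting argument, reducing to the case where $t$ is a non-negative integer, and then invoking the fact that a polynomial identity which holds on infinitely many values must hold identically.

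Both sides of the proposed equation are polynomials in $t$ of degree at most $|\lambda|=\sum_\rho\lambda_\rho$: on the left, each factor $\binom{t}{\lambda_\rho}$ is a polynomial of degree $\lambda_\rho$; on the right, only finitely many terms ${n\brack \lambda}$ are non-zero (namely those with $n\le |\lambda|$), and each $\binom{t}{n}$ is a polynomial in $t$. Hence it suffices to establish the identity whenever $t$ is a positive integer, since then both sides agree on an infinite set.

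So fix a positive integer $t$. Interpret $\binom{t}{\lambda_\rho}$ as the number of subsets of $\ul t$ of cardinality $\lambda_\rho$. Then the left-hand side counts $p$-tuples $(T_1,\ldots,T_p)$ of subsets of $\ul t$, with $|T_\rho|=\lambda_\rho$. To count these another way, classify such a tuple by the cardinality $n$ of its union $U=T_1\cup\ldots\cup T_p\subseteq\ul t$. Choosing the set $U$ contributes a factor of $\binom{t}{n}$, and once $U$ is fixed, the number of $p$-tuples $(T_1,\ldots,T_p)$ of subsets of $U$ of the prescribed cardinalities which cover $U$ is exactly ${n\brack\lambda}$ by definition~(\ref{defnbrack}) (up to relabelling of $U$ with $\ul n$, which doesn't affect the count). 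Summing over $n\geq 0$ yields
$$\binom{t}{\lambda_1}\cdots\binom{t}{\lambda_p}=\sum_{n\geq 0}{n\brack\lambda}\binom{t}{n}\,,$$
for each positive integer $t$. Since both sides are polynomial functions of $t$ agreeing on an infinite set, the identity holds as polynomials in $t$, which proves the proposition.

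The argument is entirely elementary; there is no real obstacle, the only point worth emphasizing is the reduction from a polynomial identity to an identity of integers through the combinatorial interpretation of binomial coefficients.
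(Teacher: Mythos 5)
Your proof is correct, but it takes a genuinely different route from the paper. You argue by double counting: specializing $t$ to a positive integer, you count $p$-tuples $(T_1,\ldots,T_p)$ of subsets of $\ul t$ with $|T_\rho|=\lambda_\rho$ once directly, giving $\binom{t}{\lambda_1}\cdots\binom{t}{\lambda_p}$, and once by first fixing the union $U$ of cardinality $n$ (a factor $\binom{t}{n}$) and then counting indexed covers of $U$ of the prescribed cardinalities (the factor ${n\brack\lambda}$, which is exactly the paper's definition of the covering number); the polynomial identity then follows since both sides agree at infinitely many values of $t$. The paper instead proves the identity for all $\lambda$ simultaneously by a generating function computation in auxiliary variables $x_1,\ldots,x_p$: it shows that both $\sum_\lambda\binom{t}{\lambda_1}\cdots\binom{t}{\lambda_p}x^\lambda$ and $\sum_\lambda\sum_n{n\brack\lambda}\binom{t}{n}x^\lambda$ equal $\prod_{i=1}^p(1+x_i)^t$, the second computation resting on the inclusion--exclusion expression ${n\brack\lambda}=\sum_j(-1)^j\binom{n}{j}\prod_i\binom{n-j}{\lambda_i}$ for the covering numbers. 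Your argument is shorter and more transparent, and it explains combinatorially \emph{why} the identity holds; the paper's version has the side benefit of recording the explicit inclusion--exclusion formula for ${n\brack\lambda}$ and of treating the identity uniformly in $\lambda$ via the expansion of $\prod_i(1+x_i)^t$. Both proofs are complete; the only point worth double-checking in yours, which you did handle, is that the sum on the right is finite (since ${n\brack\lambda}=0$ unless $n\leq|\lambda|$), so that the right-hand side is indeed a polynomial in $t$.
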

\begin{proof}
Let $x_1,\ldots,x_p$ be formal variables. We will prove that
$$\sum_{\lambda}\binom{t}{\lambda_1}\ldots\binom{t}{\lambda_p}x_1^{\lambda_1}\ldots
x_p^{\lambda_p}=
\sum_{\lambda}\sum_n{n\brack\lambda}\binom{t}{n}x_1^{\lambda_1}\ldots
x_p^{\lambda_p}\,,$$
by proving that both sides of this equation are equal to 
$$\prod_{i=1}^p
(1+x_i)^t\,.$$
On the one hand, we have
\begin{align*}
\prod_{i=1}^p(1+x_i)^t&=
\prod_{i=1}^p\sum_{n}\binom{t}{n}x_i^n\\
&=\sum_{\lambda_1,\ldots,\lambda_p}\binom{t}{\lambda_1}\ldots\binom{t}{\lambda_p}x_1^{\lambda_1}\ldots 
x_p^{\lambda_p}\,.
\end{align*}
On the other hand, we have
\begin{align*}
\prod_{i=1}^p(1+x_i)^t&=
\Big(1+\prod_{i=1}^p(1+x_i)-1\Big)^t\\
&=\sum_n\binom{t}{n}\Big(\prod_{i=1}^p(1+x_i)-1\Big)^n\\
&=\sum_n\binom{t}{n}\sum_j(-1)^j\binom{n}{j}\prod_{i=1}^p(1+x_i)^{n-j}\\
&=\sum_n\binom{t}{n}\sum_j(-1)^j\binom{n}{j}
\prod_{i=1}^p\sum_\ell\binom{n-j}{\ell}x_i^{\ell}\\ 
&=\sum_n\binom{t}{n}\sum_j(-1)^j\binom{n}{j}
\sum_{\lambda_1,\ldots,\lambda_p}\binom{n-j}{\lambda_i}x_1^{\lambda_1}\ldots
x_p^{\lambda_p}\\ 
&=\sum_n\binom{t}{n}
\sum_{\lambda}\Big(
\sum_j(-1)^j\binom{n}{j}\binom{n-j}{\lambda_i}\Big)x^\lambda\\
&=\sum_n\binom{t}{n}
\sum_{\lambda}{n\brack \lambda_1,\ldots,\lambda_p}x^\lambda\\
&=\sum_\lambda\sum_n{n\brack\lambda}\binom{t}{n}x^\lambda\,.
\end{align*}
Here we have used the obvious inclusion-exclusion property satisfied
by the covering numbers. 
\end{proof}

\section{The order filtration}\label{sec-filo}

\subsubsection{The Hall algebra}

Let $\MM$ be a linear algebraic stack admitting direct sums and direct
summands, i.e., assume that $\MM$ is Karoubian (Remark~\ref{karoubi}).
  To define the Hall product, we need an additional structure
on $\MM$. This is a linear algebraic substack $\MM^{(2)}$ of the stack
of all sequences $M'\to M\to M''$ in $\MM$, such that for every
$R$-scheme $S$, the fibre $\MM^{(2)}(S)$ defines the structure of an
exact category on $\MM(S)$.  The stack $\MM^{(2)}$ comes with a
diagram of morphisms of linear algebraic stacks 
$$\xymatrix{
\MM^{(2)}\rto^b\dto_{a_1\times a_2} & \MM\\
\MM\times\MM\rlap{\,,}}$$
where $a_1,a_2,b:\MM^{(2)}\to\MM$ are the projections of the sequence
$M'\to M\to M''$ onto the objects $M', M'', M$, respectively. We
require further, that  
  the morphism $a_1\times a_2:\MM^{(2)}\to\MM\times\MM$ is of finite type, and
  the morphism $b:\MM^{(2)}\to\MM$ is representable.

We call such an $\MM$ an {\bf exact linear algebraic stack}.

\begin{ex}
The linear stacks $\mf{Coh}_X$, $\mf{Vect}$, and $\mf{Rep}_Q$ of
Examples \ref{coh}, \ref{vect}, and~\ref{repQ} satisfy these 
axioms.  For $\mf{Coh}_X$, see
\cite[Section~4.1]{Bridgeland-Hall-Algebras}.  

In each case, the exact structure is given by all short exact
sequences.  Note that the categories $\MM(S)$ are not abelian, as the
cokernel of a homomorphisms of flat sheaves is not necessarily
flat.
\end{ex}

Throughout the following discussion we fix an exact linear algebraic
stack $\MM$, and let $\AA\to\MM$ be its universal
endomorphism algebra, as in Section~\ref{spectrum}.

We have the following structures on  $K(\MM)$. 
\begin{enumerate}
\item {\bf Module structure}. The action of $K(\DM)$ on
  $K(\MM)$, given by $[Z]\cdot[X\to\MM]=[Z\times X\to X\to\MM]$, which
  turns $K(\MM)$ into a $K(\DM)$-module.
\item  {\bf Multiplication}.  The commutative multiplication given by
  $$[X\to\MM]\cdot[Y\to\MM]=[X\times Y\to
  \MM\times\MM\stackrel{\oplus}{\longrightarrow} \MM]\,.$$
\item {\bf Hall product}. The Hall product of the stack functions
  $[X\to\MM]$ and $[Y\to\MM]$, which is defined by first constructing the
  fibered product
$$\xymatrix{
X\ast Y\dto\rto & \MM^{(2)}\dto^{a_1\times a_2}\\
X\times Y\rto & \MM\times \MM}$$
and then setting 
$$[X\to\MM]\ast[Y\to\MM]=[X\ast Y\longrightarrow
  \MM^{(2)}\stackrel{b}{\longrightarrow}\MM]\,.$$ 
\end{enumerate}

The multiplication is associative and commutative, the Hall product is
associative. The unit with respect to both multiplications is given by 
the $0$-object of $\MM$:
$$1=[\spec R\stackrel{0}{\longrightarrow}\MM]\,.$$

From now on we
will refer to $K(\MM)$ as the {\bf Hall algebra }of $\MM$.

\subsection{Filtered structure of the Hall algebra}

\begin{defn}
For $n\geq0$, we define 
$$K^{\leq n}(\MM)=\ker E_{n+1}=\bigoplus_{k\leq n}K^k(\MM)\,.$$
This is an
ascending filtration on $K(\MM)$, called the {\bf filtration by the
  order of vanishing of inertia at $q=1$}, or simply the {\bf order
  filtration }of $K(\MM)$.
\end{defn}

This is a slight abuse of language, because only the space obtained by
extension of scalars
$K^{\leq n}(\MM)_{(q-1)}$ is the direct
sum of all eigenspaces of $I^{\circ,\ss}$ whose corresponding eigenvalues
$\Q\in\qq[q]$ have order of vanishing at $q=1$ less than or equal to
$n$. 

\begin{thm}\label{main}
Suppose that  $\xi\in K^{\leq
  n}(\MM)$ and $\chi\in K^{\leq m}(\MM)$, then $\xi\ast\eta\in
K^{\leq n+m}(\MM)$. 
Moreover, we have $$\xi\ast \chi\equiv \xi\cdot \chi\mod K^{<n+m}\,.$$
\end{thm}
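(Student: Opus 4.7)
My strategy is to exploit the characterization $K^{\leq n}(\MM)=\ker E_{n+1}$ and establish an analogue for the Hall product of the formula $E_p(x\cdot y)=\sum_{n,m}{p\brack n,m}E_n(x)\cdot E_m(y)$ that was just proved for the commutative product. Concretely I aim to show that in $K(\MM)$ one has an equality
\begin{equation*}
E_p(x\ast y)\;=\;\sum_{n+m\geq p}{p\brack n,m}\,\Phi_{p,n,m}(x,y),
\end{equation*}
where $\Phi_{p,n,m}(x,y)\in K(\MM)$ is built from $E_n(x)$ and $E_m(y)$ by a Hall-type pairing that collapses to the ordinary commutative product $E_n(x)\cdot E_m(y)$ in the boundary case $n+m=p$. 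Granting such a formula, if $\xi\in K^{\leq n}(\MM)$ and $\chi\in K^{\leq m}(\MM)$, then $E_{n'}(\xi)$ vanishes for $n'>n$ and $E_{m'}(\chi)$ vanishes for $m'>m$, so the constraint $n'+m'\geq p$ in the sum forces every summand to vanish as soon as $p>n+m$. This gives $E_{n+m+1}(\xi\ast\chi)=0$, i.e. $\xi\ast\chi\in K^{\leq n+m}(\MM)$. For the graded equality, the same formula at $p=n+m$ reproduces the unique surviving term $\binom{n+m}{n}E_n(\xi)\cdot E_m(\chi)$, which is precisely $E_{n+m}(\xi\cdot\chi)$ by the commutative-product formula. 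Since $E_{n+m}$ vanishes on $K^{<n+m}(\MM)$ and acts as a nonzero scalar on $K^{n+m}(\MM)$, this forces $\xi\ast\chi\equiv\xi\cdot\chi\pmod{K^{<n+m}(\MM)}$.

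The formula will come from a geometric analysis of $E_p(X\ast Y)$, which parametrizes quadruples (short exact sequence $M'\hookrightarrow M\twoheadrightarrow M''$ in $\MM^{(2)}$, $X$-structure on $M'$, $Y$-structure on $M''$, complete ordered set $(e_1,\dots,e_p)$ of nonzero orthogonal idempotents in $\End(M)$). The key step is to stratify by the combinatorial type describing how the idempotent decomposition $M=\bigoplus_i e_iM$ interacts with the subobject $M'\subset M$, and to reduce to the block-compatible case where the original SES splits as a direct sum of $p$ sub-SESs $M'_i\hookrightarrow e_iM\twoheadrightarrow M''_i$. In that case each $e_i$ restricts to an idempotent of $M'$ and descends to one on $M''$; dropping the zero restrictions yields complete sets of $n$ idempotents in $\End(M')$ and $m$ idempotents in $\End(M'')$ together with a pair of injections $\ul n,\ul m\hookrightarrow\ul p$ whose images cover $\ul p$, of which there are ${p\brack n,m}$, yielding the definition of $\Phi_{p,n,m}$. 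The crucial observation for the boundary case $p=n+m$ is that the two injections are then forced to be disjoint, so each block sub-SES has either zero sub- or zero quotient object; each such block is therefore trivially split, the full SES splits as $M=M'\oplus M''$, and the Hall product over such data degenerates to the commutative direct-sum product.

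The principal obstacle I anticipate is the rigorous stratification of $E_p(X\ast Y)$ by the type of interaction between $(e_1,\dots,e_p)$ and $M'$, together with the argument that all non-block-compatible strata are either of strictly lower split-central-rank (hence absorbed into $K^{<n+m}(\MM)$ via the results of the preceding section on $\ker E_{n+m+1}$), or appear as vector-bundle torsors with special structure group over a block-compatible base (hence killed by the bundle relations in the Hall algebra). This last reduction will rely on the exact-category structure of $\MM^{(2)}$ and on the representability of $b:\MM^{(2)}\to\MM$, to guarantee that the Hom and Ext groups arising when one moves a non-block-compatible idempotent configuration to a block-compatible one are in fact inert vector bundles with special structure group of the kind to which the bundle relations apply.
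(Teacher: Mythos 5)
Your geometric decomposition of $E_p$ applied to a Hall product, and your identification of the boundary terms $n+m=p$ as commutative products (each block has vanishing sub- or quotient object, hence splits canonically), are exactly the content of the paper's pieces $(\xi\ast\chi)_{\phi,\psi}$ and of Lemma~\ref{exact}. One remark on the geometry first: the stratification you worry about in your last paragraph is not needed, because $E_p$ is applied to the algebroid of $X\ast Y$, whose algebra consists of endomorphisms of the whole diagram $(x,M'\to M\to M'',y)$; every idempotent $g_\rho$ therefore automatically preserves $M'$, and every point of $E_p(X\ast Y)$ is already block-compatible. There are no bad strata to absorb into $K^{<n+m}(\MM)$ or to kill by bundle relations.

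The genuine gap is in the step ``the constraint $n'+m'\geq p$ forces every summand to vanish as soon as $p>n+m$''. For this you need: if $E_{n'}(\xi)=0$ in $K(\MM)$, then $\Phi_{p,n',m'}(\xi,\chi)=0$. But $\Phi_{p,n',m'}$ is not a function of the class $E_{n'}(\xi)\in K(\MM)$: the stack of block-diagonal extensions is built from $E_{n'}(X)$ together with its $p$ separate structure maps to $\MM$ recording the individual blocks $e_\rho x$, i.e.\ from $E_{n'}(X)$ as a stack over $\MM^{p}$, whereas the relations that make $E_{n'}(\xi)$ vanish are scissor and bundle relations relative to the single structure map $E_{n'}(X)\to\MM$. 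Nothing guarantees those relations lift to the decomposed level, so the term-by-term vanishing is unjustified. This is precisely what the paper's generating-function device is for: one proves the identity
$$E_p\big(\pi_t(\xi)\ast\pi_t(\chi)\big)=\sum_{\phi,\psi}\binom{t}{\phi_1}\cdots\binom{t}{\phi_p}\binom{t}{\psi_1}\cdots\binom{t}{\psi_p}\,(\xi\ast\chi)_{\phi,\psi}\,,$$
whose right-hand side is visibly divisible by $t^p$ (for each $\rho$ at least one of $\phi_\rho,\psi_\rho$ is positive), while the hypotheses $\xi\in K^{\leq n}(\MM)$ and $\chi\in K^{\leq m}(\MM)$ enter only through the fact that $\pi_t(\xi)$ and $\pi_t(\chi)$ have $t$-degrees at most $n$ and $m$, so the left-hand side has $t$-degree at most $n+m$. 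For $p>n+m$ these two facts force the whole expression to vanish, and setting $t=1$ gives $E_p(\xi\ast\chi)=0$; the congruence modulo $K^{<n+m}(\MM)$ comes from the coefficient of $t^{n+m}$ together with your (correct) splitting observation. So you should replace the term-by-term vanishing by this $t$-adic bookkeeping, which also requires the refined decomposition indexed by arbitrary tuples $\phi,\psi$, i.e.\ an analysis of $E_p\big(E_n(\xi)\ast E_m(\chi)\big)$ rather than of $E_p(\xi\ast\chi)$ alone.
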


To prove this theorem we will prove the following lemma.

\begin{lem}\label{mainlemma}
For any two stack functions $\xi,\chi\in K(\MM)$, and for any integer
$p\geq0$, we have 
\begin{equation}\label{mainlem1}
{\frac{1}{p!}E_p\big(\pi_t(\xi)\ast\pi_t(\chi)\big)\equiv
\sum_{i+j=p}\pi_i(\xi)\,\pi_j(\chi)\,t^p\mod
t^{p+1}}\,,\end{equation}
as an equation in $K(\MM)[t]$.
\end{lem}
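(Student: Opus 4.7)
My plan is to expand both sides using the formula $\pi_t = \sum_n \binom{t}{n} E_n$ from Remark~\ref{convenient}, reducing the problem to the analysis of $E_p$ applied to Hall products of idempotent-enhanced stack functions. Explicitly,
$$E_p\big(\pi_t(\xi) \ast \pi_t(\chi)\big) = \sum_{n,m\geq 0} \binom{t}{n}\binom{t}{m}\, E_p\big(E_n(\xi) \ast E_m(\chi)\big),$$
and the desired right-hand side, via the identity $\pi_t(\xi\cdot\chi) = \pi_t(\xi)\cdot\pi_t(\chi)$, can be recognized as the coefficient of $t^p$ in $\pi_t(\xi)\cdot\pi_t(\chi)$. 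So the task is to show that, on the level of the $t^p$-coefficient extracted via $\tfrac{1}{p!}E_p$, the Hall product can be replaced by the commutative product modulo $t^{p+1}$.

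The heart of the argument is a geometric analysis of $E_p(E_n(\xi)\ast E_m(\chi))$. For $\xi = [X\to\MM]$ and $\chi = [Y\to\MM]$, this class is represented by the algebroid parametrizing tuples $(M'\hookrightarrow M\twoheadrightarrow M'';\, e';\, e'';\, g)$ consisting of a short exact sequence with $M'\in X$, $M''\in Y$, together with complete labeled orthogonal idempotent decompositions of sizes $n,m,p$ on $M',M'',M$. Using the Karoubian hypothesis (Remark~\ref{karoubi}) to pass between idempotents and direct sums, I would stratify this algebroid according to how $g$ interacts with the sequence. The open, ``split-compatible'' stratum consists of data where $M = M'\oplus M''$ splits the sequence and each $g_k$ is block-diagonal of the form $g_k = \sum_{i\in f^{-1}(k)\cap[n]} (e'_i,0) + \sum_{j\in f^{-1}(k)\cap[m]} (0,e''_j)$ for a surjection $f\colon [n]\sqcup[m]\twoheadrightarrow [p]$. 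There are $p!\,S(n+m,p)$ such surjections, and each yields a copy of the algebroid $E_n(X)\times E_m(Y)\to\MM$ representing the commutative product $E_n(\xi)\cdot E_m(\chi)$.

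Assembling this, the split-compatible contribution gives
$$\sum_{n,m}\binom{t}{n}\binom{t}{m}\,p!\,S(n+m,p)\,E_n(\xi)\cdot E_m(\chi),$$
and a combinatorial manipulation — combining Proposition~\ref{combinatorics} (for $\binom{t}{n}\binom{t}{m} = \sum_q {q\brack n,m}\binom{t}{q}$) with Corollary~\ref{forforpi} (which expresses $\pi_k$ in terms of the $E_r$'s via Stirling numbers of the first kind) — identifies this sum, modulo $t^{p+1}$, with the target expression $p!\,t^p\sum_{i+j=p}\pi_i(\xi)\pi_j(\chi)$. The key numerical identity underpinning this step is that $\sum_{n+m=p} S(i,n)S(j,m) = \delta_{i+j,p}$, which follows from the inverse relationship between Stirling numbers of the first and second kinds.

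The hardest step will be controlling the complementary strata of $E_p(E_n(\xi)\ast E_m(\chi))$, namely non-split extensions and split ones whose middle decomposition $g$ is not block-diagonal with respect to the splitting. I expect to show that each extra degree of freedom in the middle decomposition beyond the block-diagonal case effectively shifts the generating-function coefficient by an additional factor of $t$, so that these contributions, after summation against $\binom{t}{n}\binom{t}{m}$, collect into terms of $t$-degree $\geq p+1$ and thus vanish in the congruence. This will likely require an inductive argument on $p$, reducing off-diagonal contributions on lower-order strata to split-compatible ones at the cost of enlarging $n$ or $m$, and exploiting the eigenvalue structure of the $E_r$ operators established in Corollary~\ref{simult}.
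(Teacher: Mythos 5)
Your opening move (expanding $\pi_t$ via Remark~\ref{convenient} and reducing to the analysis of $E_p\big(E_n(\xi)\ast E_m(\chi)\big)$) is the same as the paper's, but the decomposition you build on top of it contains a genuine error, and the ``hardest step'' you defer is not merely hard --- it is false. Your main term sums over \emph{all} surjections $f\colon\ul{n}\sqcup\ul{m}\twoheadrightarrow\ul{p}$, including ``mixed'' ones where some fibre $f^{-1}(\rho)$ meets both $\ul{n}$ and $\ul{m}$; that is exactly what the coefficient $p!\,S(n+m,p)$ counts. These mixed surjections must not appear in the main term. Concretely, take $p=2$ and extract the coefficient of $E_2(\xi)\cdot E_1(\chi)$ in the $t^2$-part. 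The correct answer $\frac{1}{2}E_2\big(\pi_t(\xi)\ast\pi_t(\chi)\big)\equiv t^2\sum_{i+j=2}\pi_i(\xi)\pi_j(\chi)$ yields $\frac{s(2,1)}{2!}\,s(1,1)=-\frac12$, whereas your main term contributes, from $(n,m)=(2,1)$, the amount $\frac12\cdot 2!\,S(3,2)\cdot\big(\text{$t^2$-coefficient of }\tbinom{t}{2}\tbinom{t}{1}\big)=\frac12\cdot 6\cdot(-\frac12)=-\frac32$, because all six surjections $\ul{2}\sqcup\ul{1}\twoheadrightarrow\ul{2}$ are counted instead of only the two unmixed ones. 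Your complementary strata would therefore have to contribute $+1$ at order $t^2$: they do \emph{not} vanish modulo $t^{p+1}$, so the induction you sketch cannot close.

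The correct dividing line is not ``split versus non-split'' but ``for each $\rho$, does $g_\rho$ restrict nontrivially to only one of $M'$, $M''$, or to both''. In the unmixed case the extension splits \emph{canonically} --- each of the $p$ summands of $M$ cut out by the $g_\rho$ has either vanishing subobject or vanishing quotient --- so no stratification is needed there, and the stratum is exactly $E_{|\phi|}(\xi)\,E_{|\psi|}(\chi)$ (Lemma~\ref{exact}). The mixed strata, split or not, are killed not by a geometric degree count but by a resummation over $(n,m)$: writing $E_p\big(E_n(\xi)\ast E_m(\chi)\big)=\sum_{\phi,\psi}{n\brack\phi}{m\brack\psi}(\xi\ast\chi)_{\phi,\psi}$ and applying Proposition~\ref{combinatorics} converts the coefficient $\binom{t}{n}\binom{t}{m}$, whose $t$-adic valuation is at most $2$, into $\prod_\rho\binom{t}{\phi_\rho}\binom{t}{\psi_\rho}$, whose valuation equals the number of nonzero entries of $(\phi,\psi)$ --- hence always $\geq p$, and $\geq p+1$ precisely in the mixed case. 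This resummation is the idea missing from your outline. (A smaller point: the identity $\sum_{n+m=p}S(i,n)S(j,m)=\delta_{i+j,p}$ you invoke is false, e.g.\ for $i=2$, $j=0$, $p=1$; what is actually needed is the orthogonality $\sum_r S(\ell,r)\,s(r,k)=\delta_{\ell k}$ already used in Corollary~\ref{forforpi}.)
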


Before proving the lemma, let us indicate how the lemma implies the
theorem.  For this, suppose that  $\xi\in K^{\leq k}(\MM)$ and $\chi\in
K^{\leq \ell}(\MM)$.  Then  the 
  degree of $\pi_t(\xi)$ in $t$ is at most $k$ and the degree of
  $\pi_t(\chi)$ is at most $\ell$. So the degree of
  $E_p\big(\pi_t(\xi)\ast\pi_t(\chi)\big)$ is at most $k+\ell$. 
So we see that if  $p>k+\ell$, then
$E_p\big(\pi_t(\xi)\ast\pi_t(\chi)\big)=0$, which implies that
$\xi\ast\chi\in K^{\leq k+\ell}(\MM)$, by Corollary~\ref{kercor}.

Now set  $p=k+\ell$. The left hand side of (\ref{mainlem1}) has degree
at most $k+\ell$, the the right hand side has degree exactly
$k+\ell$, which implies that both sides are homogeneous of degree
$k+\ell$, and we have
$$\frac{1}{(k+\ell)!}E_{k+\ell}\big(\pi_t(\xi)\ast\pi_t(\chi)\big)=
\pi_k(\xi)\,\pi_\ell(\chi)\,t^{k+\ell}\,.$$
Now notice that if $x\in K^{\leq n}(\MM)$, we have
$\pi_n(x)=\frac{1}{n!}E_n(x)$. Hence we can rewrite our equation as
$$\pi_{k+\ell}\big(\pi_t(\xi)\ast\pi_t(\chi)\big)=
\pi_k(\xi)t^k\,\pi_\ell(\chi)t^{\ell}\,.$$  
This proves the theorem.

\subsubsection{Analysis of $E_p(E_n\ast E_m)$}

Suppose $\xi=(X\to\MM)$ and $\chi=(Y\to\MM)$ are stack functions. 
The stack function $\xi\ast\chi$ is defined by the cartesian diagram:
$$\xymatrix{
X\ast Y\dto\rto & \MM^{(2)}\dto\rto & \MM\\
X\times Y\rto & \MM\times \MM}$$
Explicitly, $X\ast Y$ is the stack of triples $(x,M,y)$, 
\begin{equation}\label{Udia}\vcenter{
\xymatrix{
x\dto & & y\dto\\
M'\rto & M\rto & M''}
}\end{equation}
where $x$
and $y$ are objects of $X$ and $Y$, respectively, $M$ is an object
of $\MM^{(2)}$, i.e., a short exact sequence $M'\to M\to M''$ of
objects in $\MM$, and $x\to M'$ and $y\to M''$ are isomorphisms from
the images of $x$ and $y$ in $\MM$ to $M'$ and $M''$,
respectively. (We omit these isomorphisms from the triple to simplify
the notation.)

The stack function $E_n(\xi)\ast E_m(\chi)$ is defined by the enlarged
diagram:
$$\xymatrix{
E_n(X)\ast E_m(Y)\rto\dto &
X\ast Y\dto\rto & \MM^{(2)}\dto\rto & \MM\\
E_n(X)\times E_m(Y)\rto & 
X\times Y\rto & \MM\times \MM}$$
Explicitly, $E_n(X)\ast E_m(Y)$ is the stack of 5-tuples
$\big(x,(e_\nu),M,y,(f_\mu)\big)$, where $(x,M,y)$ represents a
diagram~(\ref{Udia}), and $(e_\nu)=(e_1,\ldots,e_n)$ is a complete set
of non-zero orthogonal idempotents in $A(x)$, and $(f_\mu)=(f_1,\ldots,f_m)$ is
a complete set of non-zero orthogonal idempotents in $A(y)$.

Finally, the stack
$E_p\big(E_n(X)\ast E_m(Y)\big)$ is 
the stack of objects of $E_n(X)\ast E_m(Y)$, endowed with a
complete set of $p$ non-zero labelled idempotents. Explicitly, it
consists of 6-tuples
\begin{equation}\label{6tup}
\big(x,(e_{\nu,\rho}),M,(g_\rho),y,(f_{\mu,\rho})\big)\,,
\end{equation}
where $(x,M,y)$ is as in (\ref{Udia}), and $(g_\rho)_{\rho\in \ul p}$
is a complete set of non-zero orthogonal idempotent endomorphisms of the short
exact sequence $M'\to M\to M''$. Moreover, 
$(e_{\rho,\nu})_{\rho\in\ul p,\nu\in\ul n}$ is a $pn$-tuple of
orthogonal idempotents in $A(x)$, and
$(f_{\rho,\mu})_{\rho\in\ul p,\mu\in \ul m}$ is a $pm$-tuple of
orthogonal idempotents in $A(y)$, such that for every $\rho=1,\ldots,
p$ we have $\sum_{\nu=1}^n e_{\rho,\nu}=g_\rho|_{M'}$ and
$\sum_{\mu=1}^m f_{\rho,\mu}=g_\rho|_{M''}$. Finally, we require 
for all $\nu=1,\ldots, n$ that $e_\nu=\sum_{\rho=1}^p e_{\rho,\nu}\not=0$ and
for all $\mu=1,\ldots,m$ that $f_\mu=\sum_{\rho=1}^p f_{\rho,\mu}\not=0$. 

\subsubsection{Decomposing $E_p(E_n\ast E_m)$}

Given $p$-tuples of non-negative integers
$\phi=(\phi_1,\ldots,\phi_p)$ and $\psi=(\psi_1\ldots,\psi_p)$, we
define a new stack function $(X\ast Y)_{\phi,\psi}\to\MM$, denoted
$(\xi\ast\chi)_{\phi,\psi}$, as follows. 

Let $(X\ast Y)_{\phi,\psi}$  be the algebraic stack of 6-tuples
\begin{equation}\label{tuple}
\big(x,(e_{\rho}),M,(g_\rho),
y,(f_{\rho})\big)\,,
\end{equation}
where  $(x,M,y)$ is as in (\ref{Udia}), and
$(g_\rho)_{\rho=1,\ldots,p}$ is a complete set of non-zero orthogonal
idempotent endomorphisms of the short exact $M$. 
Moreover, for every $\rho=1,\ldots,p$, we require that
$e_\rho=(e_1,\ldots,e_{\phi_\rho})$ and
$f_\rho=(f_1,\ldots,f_{\psi_\rho})$ are 
families of non-zero orthogonal idempotents for $x$ and $y$,
respectively, such that 
for all $\rho=1,\ldots,p$,
\begin{equation}\label{gef}
g_\rho|_{M'}=\sum_{\omega=1}^{\phi_\rho}e_\omega\qquad\text{ and}\qquad
g_\rho|_{M''}=\sum_{\eta=1}^{\psi_\rho}f_\eta\,.
\end{equation}
It follows that the union of $e_1,\ldots,e_p$ is a complete set of
orthogonal idempotents for $x$, and the union of $f_1,\ldots,f_p$ is a complete set
of orthogonal idempotents for $y$. 

There is a natural algebroid structure on $(X\ast Y)_{\phi,\psi}$. The
morphism to $\MM$ given by mapping the 
$6$-tuple (\ref{tuple}) to the middle object $b(M)$ of the short
exact sequence $M$, makes $(X\ast Y)_{\phi,\psi}$ into a stack
function.

Note that if for some $\rho=1,\ldots,p$ both integers $\phi_\rho$ and
$\psi_\rho$ vanish, then $(X\ast Y)_{\phi,\psi}=\varnothing$, because all
$g_\rho$ are required to be non-zero. 

Let us write $|\phi|=\sum_\rho\phi_\rho$ and
$|\psi|=\sum_\rho\psi_\rho$. Let us assume that for every
$\rho=1,\ldots,p$, at least one of the two integers $\phi_\rho$,
$\psi_\rho$ is non-zero. Then  we have a morphism
\begin{equation}\label{mor}
E_{|\phi|}(X)\times E_{|\psi|}(Y)\longrightarrow (X\ast Y)_{\phi,\psi}
\end{equation}
which maps a quadruple $\big(x,(e_\omega),y,(f_\eta)\big)$ to the
6-tuple  (\ref{tuple}) where $M=M'\oplus M''$, with $M'$ denoting  the image
of $x$ in $\MM$, and $M''$ the image of $y$ in $\MM$.  To
define~(\ref{mor}), we break up the complete family of orthogonal idempotents
$e_1,\ldots,e_{|\phi|}$ for $x$ into $p$ subfamilies, where the
$\rho$-th subfamily has $\phi_\rho$ members.  Similarly, we break up
$f_1,\ldots,f_{|\psi|}$ into $p$ subfamilies whose sizes are
$\psi_1,\ldots,\psi_p$. Then the family of 
idempotents $(g_\rho)$ on $M$ is defined by formulas~(\ref{gef}). 
Note that we need to make our assumption on the $p$-tuples $\phi$,
$\psi$, in order for every
family member $g_\rho$ to be  non-zero.

\begin{lem}\label{exact}
If for every $\rho=1,\ldots,p$ exactly one of the two integers
$\phi_\rho$, $\psi_\rho$ is non-zero,
(\ref{mor}) is an isomorphism. Hence we have the equality
$$(\xi\ast\chi)_{\phi,\psi}=E_{|\phi|}(\xi)E_{|\psi|}(\chi)$$
for stack functions.
\end{lem}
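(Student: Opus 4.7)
The plan is to construct an explicit inverse to the morphism~(\ref{mor}) under the given hypothesis, by showing that the extra data on the right-hand side, namely the short exact sequence $M'\to M\to M''$ together with the idempotents $(g_\rho)$, is forced to come from a direct sum decomposition.

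Given a $6$-tuple $(x,(e_\rho),M,(g_\rho),y,(f_\rho))$ in $(X\ast Y)_{\phi,\psi}$, partition $\ul p=P\sqcup Q$ where $P=\{\rho:\phi_\rho\neq 0\}$ and $Q=\{\rho:\psi_\rho\neq 0\}$; these are disjoint and cover $\ul p$ by the hypothesis of the lemma. Set
$$g_P=\sum_{\rho\in P}g_\rho\,,\qquad g_Q=\sum_{\rho\in Q}g_\rho\,.$$
These are orthogonal idempotent endomorphisms of the short exact sequence $M'\to M\to M''$ with $g_P+g_Q=\id_M$. The key observation is that by the constraints~(\ref{gef}), for $\rho\in P$ we have $g_\rho|_{M''}=0$, hence $g_P|_{M''}=0$; and by the same formula $g_P|_{M'}=\sum_\omega e_\omega=\id_{M'}$. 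Therefore $g_P\colon M\to M$ factors through $M'\subset M$ and restricts to the identity on $M'$, i.e.\ it provides a splitting of the short exact sequence. This yields a canonical isomorphism $M\cong M'\oplus M''$ under which $g_P$ is the projector onto $M'$ and $g_Q$ the projector onto $M''$.

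Having identified $M=M'\oplus M''$ canonically, each individual $g_\rho$ for $\rho\in P$ becomes the idempotent $\sum_{\omega}e_\omega\oplus 0$ on $M'\oplus M''$, and is thus completely determined by $x$ and the partitioned idempotent family $(e_\rho)$; similarly for $\rho\in Q$. Flattening the families $(e_\rho)_{\rho\in P}$ and $(f_\rho)_{\rho\in Q}$ into complete sets $(e_\omega)_{\omega=1,\ldots,|\phi|}$ on $x$ and $(f_\eta)_{\eta=1,\ldots,|\psi|}$ on $y$ (note: completeness uses that the hypothesis excludes the case $\phi_\rho=\psi_\rho=0$, and non-vanishing of each $g_\rho$ gives non-vanishing of each subfamily sum), this constructs the inverse to~(\ref{mor}). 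The fact that the algebroid structures agree is automatic, as $A$ on $(X\ast Y)_{\phi,\psi}$ is the pullback of the algebra on $\MM^{(2)}$, which under the splitting decomposes into the algebras on $\MM\times\MM$ pulled back via $x$ and $y$.

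The main obstacle in this proof is keeping the bookkeeping straight: one must verify that the splitting $g_P$ is well-defined as an endomorphism of the \emph{short exact sequence} $M'\to M\to M''$ (not just of $M$), and that under the resulting identification $M\cong M'\oplus M''$ all the compatibilities~(\ref{gef}) translate correctly. Once this is in place, the equality $(\xi\ast\chi)_{\phi,\psi}=E_{|\phi|}(\xi)\cdot E_{|\psi|}(\chi)$ of stack functions follows from the fact that the morphism from $\MM\times\MM\to\MM$ given by the middle term of the split short exact sequence is literally the direct sum morphism $\oplus$, which is how the commutative product was defined.
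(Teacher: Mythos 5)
Your proof is correct and follows essentially the same route as the paper: both arguments use the constraints~(\ref{gef}) to show that the idempotents $(g_\rho)$ force a canonical splitting of the short exact sequence $M'\to M\to M''$ (the paper decomposes $M$ into $p$ canonically split summands, while you aggregate the $g_\rho$ into the complementary pair $g_P$, $g_Q$ and exhibit $g_P$ as a retraction of $M'\to M$ — a cosmetic repackaging of the same idea). Your version is more detailed than the paper's three-sentence proof and correctly identifies the one point worth checking, namely that $g_P$ is an endomorphism of the short exact sequence and hence factors through $M'=\ker(M\to M'')$ in the exact category.
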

\begin{proof}\comment{we might want to check that all language is
    appropriate for exact categories, and that everything works in
    exact categories.}
Given an object (\ref{tuple}) of $(X\ast Y)_{\phi,\psi}$, the short
exact sequence $M$ is split into a direct sum of $p$ short exact
sequences. Each one of these sequences is canonically split, because
either the subobject or the quotient object vanishes, by the
assumption on $\phi$ and $\psi$. Therefore the sequence $M$ is split,
canonically, too. 
\end{proof}

\newcommand{\hooklongrightarrow}{\lhook\joinrel\longrightarrow}

Now suppose given strictly monotone maps
$$\Phi_\rho:\ul{\phi_\rho}\hooklongrightarrow\ul n\qquad\text{and}\qquad
\Psi_\rho:\ul{\psi_\rho}\hooklongrightarrow\ul m\,,$$
for all $\rho=1,\ldots,p$, such that the images of the $\Phi_\rho$
cover $\ul n$, and the images of the $\Psi_\rho$ cover $\ul m$. 
The choice of these  injections determines a morphism of algebraic stacks
\begin{equation}\label{morfu}
(X\ast Y )_{\phi,\psi}\longrightarrow 
E_p\big(E_n(X)\ast E_m(Y)\big)\,,
\end{equation}
by mapping the $6$-tuple (\ref{tuple}) to the 6-tuple
(\ref{6tup}) by defining 
$$e_{\nu,\rho}=\sum_{\Phi_\rho(\omega)=\nu}(e_\rho)_{\omega}
\qquad\text{and}\qquad
f_{\mu,\rho}=\sum_{\Psi(\eta)=\mu}(f_\rho)_\eta\,.
$$
By our assumptions, these sums are either empty or consist of a single
summand, so the $e_{\nu,\rho}$ and the $f_{\mu,\rho}$ are obtained
from the $(e_\rho)_\omega$ and the $(f_\rho)_{\eta}$ essentially by
relabelling.

Note that the requirements $\bigcup_\rho\Phi_\rho(\ul{\phi_\rho})=\ul n$
and $\bigcup_\rho\Psi_\rho(\ul{\psi_\rho})=\ul m$ are needed to assure
that $\sum_\rho e_{\nu,\rho}$ and $\sum_{\rho}f_{\mu,\rho}$  are
non-zero, for all $\nu=1,\ldots, n$, and 
$\mu=1,\ldots m$. 

\begin{lem}
The morphism (\ref{morfu}) gives rise to a morphism of stack
functions $(\xi\ast\chi)_{\phi,\psi}\to E_p\big(E_n(\xi)\ast
E_m(\chi)\big)$, which is both an open and a closed immersion. 

If we
change 
any of $\phi$, $\psi$, or $\Phi$, $\Psi$,
we get a morphism with disjoint
image.  The images of all morphisms (\ref{morfu}) cover
$E_p\big(E_n(X)\ast E_m(Y)\big)$. 
\end{lem}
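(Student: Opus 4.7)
My plan is to show that the maps (\ref{morfu}), as $(\phi,\psi,\Phi,\Psi)$ ranges over admissible choices, provide a disjoint clopen decomposition of $E_p\big(E_n(X)\ast E_m(Y)\big)$; the open-and-closed immersion property, together with disjointness and covering, are then simultaneous consequences. The key is to construct the inverse of (\ref{morfu}) on its image. Given a $6$-tuple $\big(x,(e_{\nu,\rho}),M,(g_\rho),y,(f_{\mu,\rho})\big)$ in $E_p(E_n(X)\ast E_m(Y))$, set $S_\rho = \{\nu\in\ul n : e_{\rho,\nu}\neq 0\}$ and $T_\rho = \{\mu\in\ul m : f_{\rho,\mu}\neq 0\}$, $\phi_\rho = |S_\rho|$, $\psi_\rho = |T_\rho|$, and let $\Phi_\rho,\Psi_\rho$ be the strictly monotone enumerations of $S_\rho$ and $T_\rho$; put $(e_\rho)_\omega := e_{\Phi_\rho(\omega),\rho}$ and $(f_\rho)_\eta := f_{\Psi_\rho(\eta),\rho}$. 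This yields data of the form (\ref{tuple}) with all individual members non-zero by construction.

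Next I would verify that the constraints on the two sides match. Because the $e_{\rho,\nu}$ are pairwise orthogonal, multiplying $\sum_\rho e_{\rho,\nu} = e_\nu$ by $e_{\sigma,\nu}$ gives $e_{\sigma,\nu} = e_{\sigma,\nu}\,e_\nu$, so $e_\nu = 0$ forces every $e_{\rho,\nu} = 0$; hence the requirement $e_\nu \neq 0$ for all $\nu$ is equivalent to $\bigcup_\rho \Phi_\rho(\ul{\phi_\rho}) = \ul n$, and symmetrically for $\Psi$. The requirement $g_\rho \neq 0$ corresponds to $(\phi_\rho,\psi_\rho) \neq (0,0)$, via the identities (\ref{gef}) that relate each $g_\rho$ to the column sums of $e_{\rho,\nu}$ and $f_{\rho,\mu}$. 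This exhibits each image of (\ref{morfu}) as the substack cut out by a prescribed vanishing pattern for the labelled idempotents $(e_{\rho,\nu})$ and $(f_{\rho,\mu})$; distinct $(\phi,\psi,\Phi,\Psi)$ give distinct patterns, and every point lies in exactly one pattern, so set-theoretically the images partition $E_p(E_n(X)\ast E_m(Y))$.

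The crucial geometric input is that in a family of idempotents in an algebra bundle, the vanishing locus of each idempotent section is both open and closed: an idempotent $e$ in a locally free algebra $A$ realizes $A = eA \oplus (1-e)A$ as a decomposition into locally free direct summands, and $e$ vanishes exactly where $eA$ has rank zero, a locally constant condition. Consequently, the partition above is a partition into open-and-closed substacks, proving that each (\ref{morfu}) is an open-and-closed immersion, with disjoint images covering $E_p\big(E_n(X)\ast E_m(Y)\big)$. Compatibility with the algebroid structure and with the structure morphism to $\MM$ is automatic, since (\ref{morfu}) only relabels idempotents and leaves $x$, $M$, $y$ untouched.

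The main obstacle, and really the only non-formal point, is the clopen property of the vanishing locus of an idempotent section, which rests on the locally free nature of its image; once this is taken as given, the remainder is pure combinatorial bookkeeping about which slots in the tuples $(e_{\rho,\nu})$ and $(f_{\rho,\mu})$ are zero.
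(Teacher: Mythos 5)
Your proof is correct and follows the same route as the paper, whose entire proof is the one-line remark that source and target of (\ref{morfu}) ``only differ in the way the idempotents in $A_x$ and $A_y$ are indexed.'' You have simply supplied the details left implicit there: the inverse via vanishing patterns, the observation that orthogonality forces $e_\nu=0$ to kill all $e_{\rho,\nu}$, and the clopen-ness of the vanishing locus of an idempotent section (which the paper also invokes, in the analogous decomposition of $E_p(X\times Y)$, where it notes that the vanishing locus of an idempotent is closed and then gets openness from the finite disjoint cover).
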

\begin{proof}
This follows from the fact that the source and target of (\ref{morfu})
only differ in the way the idempotents in $A_x$ and $A_y$ are indexed.
\end{proof}

\begin{cor}
Using the notation introduced in (\ref{defnbrack}), we have 
the following equation in  $K(\MM)$:
$$
E_p\big(E_n(\xi)\ast E_m(\chi)\big)=
\sum_{\phi,\psi}\sum_{\Phi,\Psi}
(\xi\ast \eta)_{\phi,\psi}
=
\sum_{\phi,\psi}
{n\brack \phi}{m\brack \psi}
(\xi\ast \eta)_{\phi,\psi}\,,
$$
where $\phi$, $\psi$ run over all $p$-tuples of non-negative integers. 
\end{cor}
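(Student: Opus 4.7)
My plan is to deduce this corollary as an essentially formal consequence of the previous lemma, by grouping the terms in the disjoint decomposition.

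First I would invoke the immediately preceding lemma, which asserts that the morphisms (\ref{morfu}), indexed by quadruples $(\phi,\psi,\Phi,\Psi)$, are open-and-closed immersions with pairwise disjoint images covering $E_p\big(E_n(X)\ast E_m(Y)\big)$. Combining the covering statement with the scissor relations relative to $\MM$, this yields in $K(\MM)$ the equality
\[
E_p\big(E_n(\xi)\ast E_m(\chi)\big)=\sum_{\phi,\psi}\sum_{\Phi,\Psi}\big[(X\ast Y)_{\phi,\psi}\to\MM\big],
\]
where the inner sum ranges over all tuples $(\Phi_\rho)_{\rho=1}^p$ and $(\Psi_\rho)_{\rho=1}^p$ of strictly monotone maps with $\Phi_\rho:\ul{\phi_\rho}\hookrightarrow\ul n$, $\Psi_\rho:\ul{\psi_\rho}\hookrightarrow\ul m$, whose images cover $\ul n$ and $\ul m$, respectively. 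Since the stack function $(\xi\ast\chi)_{\phi,\psi}$ is by definition $[(X\ast Y)_{\phi,\psi}\to\MM]$ and the inner summand does not depend on the choice of $(\Phi,\Psi)$ (only the embedding does), each inner sum reduces to a count times $(\xi\ast\chi)_{\phi,\psi}$. This is the first displayed equality in the corollary.

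Next I would reinterpret this count combinatorially. A strictly monotone map $\Phi_\rho:\ul{\phi_\rho}\hookrightarrow\ul n$ is determined by its image, a subset of $\ul n$ of cardinality $\phi_\rho$; likewise for $\Psi_\rho$. Hence a choice of $(\Phi_1,\ldots,\Phi_p)$ with $\bigcup_\rho\Phi_\rho(\ul{\phi_\rho})=\ul n$ is the same datum as an indexed cover $(S_1,\ldots,S_p)$ of $\ul n$ with $|S_\rho|=\phi_\rho$. By the very definition~(\ref{defnbrack}) of the bracket symbol, there are exactly ${n\brack\phi}$ such families. The analogous statement gives ${m\brack\psi}$ for $(\Psi_\rho)$. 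Since the choices for $\Phi$ and $\Psi$ are independent, the inner sum contributes a factor of ${n\brack\phi}{m\brack\psi}$, yielding the second equality.

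The only substantive content is already contained in the previous lemma, so there is no real obstacle here; the argument is entirely bookkeeping. The one point requiring mild care is to observe that the inner summand $[(X\ast Y)_{\phi,\psi}\to\MM]$ is genuinely independent of $(\Phi,\Psi)$: different choices produce isomorphic stack functions with the canonical algebroid structure, because changing $(\Phi,\Psi)$ simply relabels the idempotents on the source, preserving the morphism to $\MM$ through the middle object $b(M)$.
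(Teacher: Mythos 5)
Your argument is correct and is exactly the intended deduction: the paper states this as an immediate corollary of the preceding lemma, and your two steps (scissor relations applied to the disjoint open-and-closed cover, then identifying families of strictly monotone maps with covering images with indexed covers counted by ${n\brack\phi}{m\brack\psi}$) are precisely what the paper leaves implicit. No gaps.
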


For example, consider $\chi=1$, and $m=0$. If any of the $\psi_\rho$
is non-zero, $(X\ast Y)_{\phi,\psi}$ is empty. Hence
$$E_pE_n(\xi)=\sum_{\phi_1,\ldots,\phi_p>0}{n\brack \phi}E_{|\phi|}(\xi)\,,$$
where the sum is over all $p$-tuples of positive integers.

\subsubsection{Proof of the main lemma}

Using Proposition~\ref{combinatorics}, we can now calculate as follows:
\begin{align}
E_p\big(\pi_t(\xi)\ast\pi_t(\chi)\big) &=\nonumber
E_p\Big(
\sum_{n}\binom{t}{n}E_n(\xi)
\ast
\sum_m\binom{t}{m}E_m(\chi)\Big)\\
&=\nonumber
\sum_{n,m}\binom{t}{n}\binom{t}{m}\sum_{\phi,\psi}{n\brack\phi}{m\brack\psi}
(\xi\ast\chi)_{\phi,\psi} \\
&=\nonumber
\sum_{\phi,\psi}
\Big(\sum_n\binom{t}{n}{n\brack\phi}\Big)
\Big(\sum_m\binom{t}{m}{m\brack\psi}\Big)
(\xi\ast\chi)_{\phi,\psi} \\
&=
\sum_{\phi,\psi}
\binom{t}{\phi_1}\ldots\binom{t}{\phi_p}
\binom{t}{\psi_1}\ldots\binom{t}{\psi_p}
(\xi\ast\chi)_{\phi,\psi}\,.\label{m16}
\end{align}
For example, if $\chi=1$, we get
\begin{equation}\label{t17}
E_p\pi_t(\xi)=\sum_{\phi_1,\ldots,\phi_p>0}\binom{t}{\phi_1}\ldots\binom{t}{\phi_p}
E_{|\phi|}(\xi)\,.\end{equation}

The lowest order term in (\ref{m16}) has degree $p$, since
for $(\xi\ast\chi)_{\phi,\psi}$ not to vanish, we need for every
$\rho=1,\ldots,p$ at least one of $\phi_\rho$, $\psi_\rho$ to be
non-zero. 

Modulo $(t^{p+1})$, only terms corresponding to  pairs $(\phi,\psi)$,
with the property that for every $\rho=1,\ldots,p$ exactly one of
$\phi_\rho$, $\psi_\rho$ is non-zero, contribute to~(\ref{m16}). 
These are exactly the terms to which Lemma~\ref{exact} applies, and we
deduce, that modulo $t^{p+1}$, we have:
$$E_p\big(\pi_t(\xi)\ast\pi_t(\chi)\big) 
\equiv\sum_{\phi,\psi}
\binom{t}{\phi_1}\ldots\binom{t}{\phi_p}
\binom{t}{\psi_1}\ldots\binom{t}{\psi_p}
E_{|\phi|}(\xi)\,E_{|\psi|}(\chi)\,,$$
where the sum is over all $(\phi,\psi)$, where the supports of $\phi$
and $\psi$ form a partition of $\ul p$. By grouping terms
corresponding to  partitions of the same size together, we can rewrite
this as
$$\sum_{i+j=p}\binom{p}{i}
\sum_{\phi_1,\ldots,\phi_i>0}\binom{t}{\phi_1}\ldots\binom{t}{\phi_i}
E_{|\phi|}(\xi)
\sum_{\psi_1,\ldots,\psi_j>0}\binom{t}{\psi_1}\ldots\binom{t}{\psi_j}
E_{|\psi|}(\chi)\,,$$
which is equal to 
$$\sum_{i+j=p}\binom{p}{i} E_i\pi_t(\xi)\, E_j\pi_t(\chi)\,,$$
by (\ref{t17}). 
Modulo $t^{p+1}$, this term is congruent to 
$$\sum_{i+j=p}\binom{p}{i} E_i\pi_i(\xi)\,E_j\pi_j(\chi)=
p!\sum_{i+j=p}\pi_i(\xi)\,\pi_j(\chi)\,t^p\,.$$
We conclude that
$$
\frac{1}{p!}E_p\big(\pi_t(\xi)\ast\pi_t(\chi)\big) \equiv\\
\sum_{i+j=p}\pi_i(\xi)\,\pi_j(\chi)\,t^p\mod
t^{p+1}\,,
$$
which proves  Lemma~\ref{mainlemma}.

\subsection{The semi-classical Hall algebra}\label{semiclass}
\newcommand{\kK}{\mathscr{K}}

By Theorem~\ref{main}, the submodule 
$$\kK(\MM)=\bigoplus_{n\geq0}t^n K^{\leq n}(\MM)$$
of $K(\MM)[ t]$
 is a $K(\DM)[t]$-subalgebra with respect to the
Hall product. The algebra $\kK(\MM)$ is a one-parameter flat family of
algebras. The special fibre at $t=0$ is canonically isomorphic to the
graded algebra associated to the filtered algebra
$\big(K(\MM),\ast\big)$.  The quotient map $\kK\to \kK/t\kK$ is identified
with the map $\sum_n x_nt^n\mapsto \sum_n\pi_n(x_n)$.

The graded algebra associated to the filtered algebra
$\big(K(\MM),\ast\big)$, is canonically isomorphic to the
commutative graded algebra $\big(K(\MM),\,\cdot\,\big)$, by
Theorem~\ref{main}.  The  special fibre inherits therefore a Poisson
bracket, which encodes the Hall product to second order.  This Poisson
bracket has degree $-1$, and is given by the formula
\begin{equation}\label{poiss}
\{x,y\}=\pi_{k+\ell-1}(x\ast y-y\ast x)\,,\qquad\text{ for $x\in
  K^k(\MM)$, $y\in K^\ell(\MM)$}\,.
\end{equation}

\begin{cor}
The graded $K(\DM)$-algebra $\big(K(\MM),\,\cdot\,\big)$ is endowed with a
Poisson bracket of degree $-1$, given by (\ref{poiss}). 
\end{cor}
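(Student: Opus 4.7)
The plan is to invoke the standard construction by which a filtered associative algebra whose associated graded is commutative acquires a Poisson bracket of degree $-1$ on that associated graded. Theorem~\ref{main} supplies the two hypotheses needed: the Hall product is filtered with respect to $K^{\leq n}(\MM)$, and the associated graded algebra $\gr(K(\MM),\ast)$ is canonically identified with the commutative algebra $(K(\MM),\cdot\,)$. It remains only to check that the formula (\ref{poiss}) does define a bracket with the required properties.

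First, I would verify that the bracket is well-defined and of degree $-1$. For $x\in K^k(\MM)$ and $y\in K^\ell(\MM)$, Theorem~\ref{main} gives $x\ast y, \, y\ast x \in K^{\leq k+\ell}(\MM)$, with $x\ast y\equiv x\cdot y$ and $y\ast x\equiv y\cdot x$ modulo $K^{<k+\ell}(\MM)$. Since $x\cdot y=y\cdot x$, the leading terms cancel and $x\ast y-y\ast x$ already lies in $K^{\leq k+\ell-1}(\MM)$, so $\pi_{k+\ell-1}(x\ast y-y\ast x)\in K^{k+\ell-1}(\MM)$ is the entire projection; extend $\{\,,\}$ bilinearly over $K(\DM)$ to all of $K(\MM)$ using the grading decomposition. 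Antisymmetry is immediate from the formula.

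The Leibniz rule and Jacobi identity follow formally from the corresponding identities for the commutator bracket $[\,,\,]_\ast$ of the Hall product, combined with the fact that, modulo the next lower filtration piece, $\ast$ agrees with $\cdot$. Concretely, for homogeneous $x,y,z$ of degrees $k,\ell,m$, the associative identity $[x,y\ast z]_\ast=[x,y]_\ast\ast z+y\ast[x,z]_\ast$ lies in $K^{\leq k+\ell+m-1}(\MM)$; projecting via $\pi_{k+\ell+m-1}$ and replacing each remaining $\ast$-product by a $\cdot$-product (permitted because the difference drops the filtration by one, hence is killed under this projection) yields $\{x,y\cdot z\}=\{x,y\}\cdot z+y\cdot\{x,z\}$. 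The Jacobi identity is handled identically, starting from $[x,[y,z]_\ast]_\ast+[y,[z,x]_\ast]_\ast+[z,[x,y]_\ast]_\ast=0$ in $K^{\leq k+\ell+m}(\MM)$ and projecting to $K^{k+\ell+m-2}(\MM)$.

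The only point that is not purely formal is the bookkeeping that justifies each replacement of $\ast$ by $\cdot$ under the relevant projection, which amounts to repeatedly applying the congruence $\xi\ast\chi\equiv\xi\cdot\chi\bmod K^{<n+m}(\MM)$ from Theorem~\ref{main}. I do not expect a serious obstacle here; this is the standard semi-classical limit argument already invoked informally in Section~\ref{semiclass}, and the Rees-module description $\kK(\MM)=\bigoplus_{n\geq0}t^n K^{\leq n}(\MM)$ given there makes the deformation-quantization picture rigorous, so the Poisson structure on $\kK/t\kK=(K(\MM),\cdot\,)$ is automatic once flatness of $\kK(\MM)$ over $K(\DM)[t]$ is noted.
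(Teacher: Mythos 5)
Your proposal is correct and follows essentially the same route as the paper: Theorem~\ref{main} gives the filtered structure of the Hall product with commutative associated graded, and the Poisson bracket (\ref{poiss}) is the standard semi-classical bracket, made rigorous via the Rees algebra $\kK(\MM)$ exactly as in Section~\ref{semiclass}. Your explicit verification of the Leibniz and Jacobi identities by projecting the corresponding $\ast$-commutator identities is a correct unwinding of what the paper leaves to the standard deformation-quantization formalism.
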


\begin{cor}
In particular, $K^1(\MM)$ is a Lie algebra with respect to the Poisson
bracket~(\ref{poiss}).  In fact, for $x,y\in K^1(\MM)$, we have that
$x\ast y-y\ast x\in K^1(\MM)$, so in this case, the Poisson bracket is
equal to the Lie bracket.  Thus, $K^1(\MM)$ is a
Lie algebra over the ring of scalars $K(\DM)$. 
\end{cor}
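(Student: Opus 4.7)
The first assertion, that $K^1(\MM)$ is a Lie algebra under the Poisson bracket, is formal: the preceding corollary promotes $\big(K(\MM),\cdot\,\big)$ to a graded Poisson algebra whose bracket has degree $-1$, and hence restricts to a $K(\DM)$-linear bracket $K^1(\MM)\otimes K^1(\MM)\to K^1(\MM)$; any Poisson bracket is in particular a Lie bracket. The substantive content of the statement lies in the refinement that, for $x,y\in K^1(\MM)$, the commutator $x\ast y-y\ast x$ already lies in $K^1(\MM)$, so that the Poisson bracket agrees on the nose with the commutator bracket associated to $\ast$.

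My plan is to sandwich $x\ast y-y\ast x$ between two kernels. First, Theorem~\ref{main} gives $x\ast y\equiv x\cdot y\pmod{K^{\leq 1}(\MM)}$ and $y\ast x\equiv y\cdot x\pmod{K^{\leq 1}(\MM)}$; since $\cdot$ is commutative, subtracting yields $x\ast y-y\ast x\in K^{\leq 1}(\MM)=K^0(\MM)\oplus K^1(\MM)$. It therefore suffices to show that $E_0=\pi_0$ annihilates the commutator, for then $x\ast y-y\ast x$ lies in $\ker E_0\cap K^{\leq 1}(\MM)=K^1(\MM)$.

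The key geometric input is the identity $E_0(\xi\ast\chi)=E_0(\xi)\cdot E_0(\chi)$ for arbitrary stack functions $\xi=[X\to\MM]$ and $\chi=[Y\to\MM]$. To establish this I would unwind the fibered product of algebroids: a point of $X\ast Y$ is a short exact sequence $M'\to M\to M''$ together with an $X$-object $x$ identified with $M'$ and a $Y$-object $y$ identified with $M''$, and the algebra on $X\ast Y$ at this point is the algebra of endomorphisms of the short exact sequence whose restrictions to the ends lie in $A_X(x)$ and $A_Y(y)$ respectively. This is the zero algebra precisely when the identity endomorphism of the sequence vanishes, which forces $M=0$, hence $M'=M''=0$, and by representability $x\in E_0(X)$ and $y\in E_0(Y)$. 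Thus $E_0(X\ast Y)\cong E_0(X)\times E_0(Y)$ with its structure morphism to $\MM$ factoring through the zero object; this is precisely the stack function $E_0(\xi)\cdot E_0(\chi)$. Commutativity of $\cdot$ then gives $E_0(\xi\ast\chi)=E_0(\chi\ast\xi)$ universally.

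Combining the two steps, $x\ast y-y\ast x\in\ker E_0\cap K^{\leq 1}(\MM)=K^1(\MM)$, and hence equals its own projection $\pi_1(x\ast y-y\ast x)=\{x,y\}$. Associativity of $\ast$ supplies Jacobi and antisymmetry for the commutator, so $K^1(\MM)$ is a Lie algebra over $K(\DM)$ whose bracket coincides with the Poisson bracket. The main obstacle is the geometric lemma of paragraph three; everything else is formal manipulation of the filtration, its projections, and the identifications $K^{\leq n}(\MM)=\ker E_{n+1}$ and $E_0=\pi_0$. The delicate point in that lemma is to verify that the algebroid structure on the fibered product $X\ast Y$ is correctly described as the fibered product of algebras, so that \emph{zero algebra} really does translate into \emph{zero middle object} of the short exact sequence.
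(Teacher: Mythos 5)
Your proof is correct and follows essentially the same route as the paper: reduce to showing $E_0$ kills the commutator (the sandwich in $K^{\leq 1}(\MM)$ coming from Theorem~\ref{main}), and then use the multiplicativity $E_0(\xi\ast\chi)=E_0(\xi)\cdot E_0(\chi)$. The only cosmetic difference is that you establish this identity by directly unwinding the algebroid structure on $X\ast Y$ (zero algebra forces $M=0$, hence $M'=M''=0$), whereas the paper obtains it as the $p=0$ case of its general formula~(\ref{m16}) together with Lemma~\ref{exact} --- the underlying geometric content is the same.
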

\begin{proof}
Equation (\ref{m16}) for $p=0$, together with Lemma~\ref{exact} says
$$E_0\big(\pi_t(x)\ast\pi_t(y)\big)=E_0(x)E_0(y)\,.$$
This proves that $E_0(x)=0$ or $E_0(y)=0$ implies that $E_0(x\ast
y)=0$. 
\end{proof}

\begin{defn}
We call  $K^1(\MM)$ the Lie algebra of {\bf virtually
  indecomposable }stack functions. We will usually write $K^\vir(\MM)$
for $K^1(\MM)$.  
\end{defn}

This terminology is used in analogy with that of
\cite{JoyceII}. In Section~\ref{smallish}, we check that our notion of
virtually indecomposable agrees with that of [ibid.] in a special
case. 

\subsection{Epsilon functions}\label{seceps}

We will prove that replacing direct sum decompositions by filtrations,
in the formula 
$$\pi_k=\sum_{n\geq k}\frac{s(n,k)}{n!}E_n\,,$$
will give rise to an operator mapping $K(\MM)$ into $K^{\leq
  k}(\MM)$. In particular, we will be able to construct virtually
indecomposable stack functions as `Hall algebra logarithms'.

Fix an algebraic substack  $\NN\hookrightarrow \MM$, with the
following properties:
\comment{Do we really gain anything by not simply taking $\NN=\MM$?
  Of course it would restrict our choice of $\MM$, but it would make
  the treatment simpler. We could require $\MM$ to be an exact linear
  category with the finiteness condition.}
\begin{items}
\item $\NN$ avoids the image of  $\Spec
R\stackrel{0}{\longrightarrow}\MM$.  
\item $\NN$ is closed under direct sums and direct summands, i.e., it
  is Karoubian (Remark~\ref{karoubi}) if we add $\spec
  R\stackrel{0}{\longrightarrow} \MM$ to it,
\item for every positive integer $n$, the morphism
  $b|_{\NN^{(n)}}:\NN^{(n)}\to\MM$, illustrated in the diagram
\begin{equation}\label{nn}
\vcenter{\xymatrix{
\NN^{(n)}\rto\dto & \MM^{(n)}\rto^b\dto^{a_1\times\ldots\times a_n} & \MM\\
\NN^n\rto & \MM^n\rlap{\,,}}}
\end{equation}
where the square is cartesian,
is of finite
  type.\comment{probably suffices to require this for $n=2$}
\item the disjoint union over all these morphisms
  $\coprod_{n>0}\NN^{(n)}\to \MM$ is still of finite type. This means
  that if $X\to \MM$ is a morphism with  $X$ of finite type, there exists
  an $N>0$, such that for all $n\geq N$, the image of
  $b|_{\NN^{(n)}}:\NN^{(n)}\to \MM$ does not intersect the image of
  $X$ in $\MM$. 
\end{items}

\begin{ex}
If   $\MM$ is the stack of coherent sheaves on a projective curve,
then the substack of non-zero semi-stable vector bundles of fixed slope is an
example of a substack $\NN$ satisfying our conditions. More generally,
we can take for $\NN$ the stack of all vector bundles whose
Harder-Narasimhan slopes are contained in a fixed interval. 
\end{ex}

\begin{ex}
If $\MM$ is the stack of representations of a quiver $Q$, then we can
take $\NN=\MM_\ast$.
\end{ex}

Consider  an arbitrary  stack function $X\to\MM$, and denote by $F_nX$,
for $n\geq1$, the stack
$$F_nX=\NN^{(n)}\times_\MM X\,.$$
It fits into the cartesian diagram
$$\xymatrix{ F_nX\rto \dto &X^{(n)}\rto\dto& X\dto^M\\ \NN^{(n)}\rto &
  \MM^{(n)}\rto^b& \MM\rlap{\,.}}$$ 
Note that $F_nX$ is of finite type,
by our assumption on $\NN$, and also representable over $\MM$ (as an
algebroid), because 
$b$ is. Therefore, $F_n X$ is another stack function.

The objects of $F_nX$ are pairs 
$(x,F)$, 
where $x$ is an object of $X$, and $F=(F_1\to\ldots \to F_n)$ is a
flag in $F_n=M$, where $M$ is the image of $x$ in $\MM$, 
such that all subquotients 
$ F_{\nu}/F_{\nu-1}$, for 
$\nu=1,\ldots,n$, are in $\NN$. 

We now consider $E_k(F_nX)$, for $k\geq0$.  This is the stack of
triples
$$\big(x,(e_\kappa),F\big)\,,$$
where the pair $(x,F)$ is an object of $F_nX$, and
$(e_\kappa)=(e_1,\ldots,e_k)$ is a complete set  of  non-zero orthogonal
idempotents in $A(X)$, such that 
for every $\kappa=1,\ldots,k$ the  endomorphism of $M$ induced
  by $e_\kappa$
  respects the flag $F$.
For every $\nu=1,\ldots,n$, we therefore get an induced idempotent
operator 
$$f_{\kappa,\nu}=e_\kappa|_{F_\nu/F_{\nu-1}}\,.$$
These idempotents  have the properties 
\begin{items}
\item  $\sum_{\kappa} f_{\kappa,\nu}=1$, for
all $\nu$,
\item for every $\kappa$, at least one of the
$f_{\kappa,\nu}$ does not vanish.\comment{here, for example, we use
  representability of the stack function $X$}
\end{items}

The stack  $E_k(F_nX)$ decomposes into a disjoint union of substacks
according to which of the idempotents $(f_{\kappa,\nu})$ vanish. 

To make this decomposition precise, consider a sequence of positive
integers 
$(\alpha_1,\ldots,\alpha_k)$. Define $F_\alpha X$ to be the stack of
triples 
$$\big(x,(e_\kappa),(F_\kappa)\big)\,.$$
Here $x$ is an object of $X$, with image $M$ in $\MM$, and
$(e_\kappa)$ is a complete set of 
orthogonal non-zero idempotents for  $x$, which decomposes $M$ into a
direct sum $M=\bigoplus_\kappa M_\kappa$.  
Moreover, $F_\kappa$ is a flag of length $\alpha_\kappa$ on
$M_\kappa$, with subquotients in $\NN$, for every $\kappa=1,\ldots,k$.

For every $k$-tuple of strictly monotone maps $\Phi_\kappa:\ul
{\alpha_\kappa}\hookrightarrow \ul n$,  we define a morphism
\begin{equation}\label{alp}
F_\alpha X\longrightarrow E_k(F_nX)\,,
\end{equation}
by defining the flag $F$ on $M$ in terms of the $k$-tuple of flags
$(F_\kappa)$ by 
$$F_\nu=\bigoplus_{\kappa}\sum_{\Phi_\kappa(\rho)\leq\nu}F_\rho\,.$$
Note that the sum for fixed $\kappa$ is not really a sum, it is just
the largest of the subobjects $F_\rho$ of $M_\kappa$ making up  the
flag $F_1\to\ldots\to 
F_{\alpha_\kappa}$, such that  $\Phi_\kappa(\rho)\leq \nu$. 

\begin{lem} \comment{we need, for sure, that $\NN$ is Karoubian,
    here.  Make sure it makes sense that $\NN$ does not contain $0$.}
The morphism (\ref{alp}) given by $(\Phi_\kappa)_{\kappa\in\ul k}$ is
an isomorphism onto the locus in 
$E_k(F_nX)$, defined  by $f_{\kappa,\nu}\not=0$ if and only if
$\nu\in\Phi_\kappa(\ul{\alpha_{\kappa}})$, for all
$\kappa=1,\ldots,k$. \nolinebreak$\Box$
\end{lem}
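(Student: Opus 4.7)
The plan is to write down an explicit inverse morphism from the prescribed locus in $E_k(F_n X)$ back to $F_\alpha X$, and then check the two constructions are mutually inverse. Let $U\subset E_k(F_nX)$ denote the locus cut out by the condition that $f_{\kappa,\nu}\neq 0$ if and only if $\nu\in\Phi_\kappa(\ul{\alpha_\kappa})$. Because each $f_{\kappa,\nu}$ is an endomorphism of the subquotient $F_\nu/F_{\nu-1}\in\NN$, and its vanishing locus is both open (non-vanishing on the nose) and closed (since idempotents are locally constant in rank), $U$ is a locally closed substack, so the target of the isomorphism is well-defined.

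First I would construct the inverse $U\to F_\alpha X$ as follows. Given a point $(x,(e_\kappa),F)$ of $U$, set $M_\kappa=e_\kappa M$; since the $e_\kappa$ are orthogonal idempotents summing to $1$ and respect the flag $F$, for each $\nu$ the subobject $F_\nu$ splits as $F_\nu=\bigoplus_\kappa e_\kappa F_\nu$. Define for each $\kappa$ the chain of subobjects $G_\nu^{(\kappa)}=e_\kappa F_\nu\subset M_\kappa$; its subquotients are $f_{\kappa,\nu}(F_\nu/F_{\nu-1})$, which is the image of the idempotent $f_{\kappa,\nu}$ acting on $F_\nu/F_{\nu-1}$. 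By the $U$-condition, this image is non-zero exactly when $\nu\in\Phi_\kappa(\ul{\alpha_\kappa})$. Dropping the $\nu$ where the subquotient vanishes — equivalently, reindexing along $\Phi_\kappa$ — yields a flag $F_\kappa$ on $M_\kappa$ of length exactly $\alpha_\kappa$. The non-trivial subquotients lie in $\NN$ because $\NN$ is closed under direct summands (this is the place where the Karoubian hypothesis on $\NN$ is essential: $f_{\kappa,\nu}(F_\nu/F_{\nu-1})$ is a direct summand of $F_\nu/F_{\nu-1}\in\NN$).

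Then I would check the two assignments are mutually inverse by a direct, pointwise verification. In one direction, starting from $(x,(e_\kappa),(F_\kappa))\in F_\alpha X$ and applying \eqref{alp} followed by the construction above reproduces the flags $F_\kappa$: the subobjects $e_\kappa F_\nu$ with $F_\nu=\bigoplus_\kappa\sum_{\Phi_\kappa(\rho)\leq\nu}F_\rho$ reduce to $\sum_{\Phi_\kappa(\rho)\leq\nu}F_\rho$, whose reindexing along $\Phi_\kappa$ is precisely $F_\kappa$. In the other direction, starting from $(x,(e_\kappa),F)\in U$, the composition recovers $F$ from the $G_\nu^{(\kappa)}=e_\kappa F_\nu$ via $F_\nu=\bigoplus_\kappa G_\nu^{(\kappa)}$, which holds by the splitting of $F_\nu$ under the orthogonal idempotents.

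The main obstacle is not conceptual — the combinatorial bookkeeping with $\Phi_\kappa$ is straightforward once the Karoubian hypothesis on $\NN$ is invoked — but rather ensuring everything is done in families. Concretely, one needs the splittings $F_\nu=\bigoplus_\kappa e_\kappa F_\nu$ and the Karoubian decomposition of $F_\nu/F_{\nu-1}$ under $(f_{\kappa,\nu})_\kappa$ to be functorial in the base $S$ over which $(x,(e_\kappa),F)$ lives. The first is automatic from the orthogonal idempotent decomposition of $M$. The second uses that the $f_{\kappa,\nu}$ give a complete set of orthogonal idempotents on the flat object $F_\nu/F_{\nu-1}$, so the image subobject is functorial — this is precisely why the Karoubian assumption is formulated at the level of $\MM(S)$ for every $S$, and thus descends the construction to a morphism of stacks. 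Finally, the morphism is an isomorphism of algebroids, not merely of underlying stacks, because \eqref{alp} is an inert morphism (both sides are inert over $X$ via the evident forgetful morphisms) and hence preserves the algebra structure.
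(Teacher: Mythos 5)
Your proof is correct, and it is essentially the argument the paper intends: the lemma is stated with no proof (just $\Box$), and, as with the analogous statement for $E_p\big(E_n(X)\ast E_m(Y)\big)$, the point is that source and target carry the same data up to reindexing, with the Karoubian hypothesis on $\NN$ supplying the splittings $F_\nu=\bigoplus_\kappa e_\kappa F_\nu$ and guaranteeing that the direct summands $f_{\kappa,\nu}(F_\nu/F_{\nu-1})$ again lie in $\NN$. Your explicit inverse and the two mutual-inverse checks fill in exactly the details the authors omit. One small correction: your final justification that the isomorphism respects algebroid structures is misstated --- neither $F_\alpha X\to X$ nor $E_k(F_nX)\to X$ is inert, since the algebra on each side is the proper subalgebra of $A_X$ of elements commuting with the $e_\kappa$ and preserving the flag(s); the correct (and equally easy) reason is that these two subalgebras correspond under your bijection of data, because preserving $F$ and commuting with the $e_\kappa$ is equivalent to preserving all the $e_\kappa F_\nu$, i.e.\ the flags $F_\kappa$.
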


\begin{cor}  
If $\xi$ denotes the element of $K(\MM)$ defined by $X\to \MM$, we have
$$ E_k(F_n\xi)= \sum_{\alpha}{n\brack\alpha}F_\alpha(\xi)\,,$$ 
where the sum is taken over all $k$-tuples of positive integers.

If we set $F_0(\xi)=1$, and $F_{\varnothing}(\xi)=1$, this equality
also holds for $n=0$. 
\end{cor}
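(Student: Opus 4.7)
The proof is a direct bookkeeping exercise using the preceding lemma and the definition of $\binom{n}{\alpha}$.

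First I would fix $n\geq1$ and $k\geq0$. The lemma provides, for every $k$-tuple $\alpha=(\alpha_1,\ldots,\alpha_k)$ of positive integers and every $k$-tuple of strictly monotone maps $\Phi=(\Phi_\kappa:\ul{\alpha_\kappa}\hookrightarrow\ul n)_{\kappa\in\ul k}$, a locally closed immersion $F_\alpha X\hookrightarrow E_k(F_n X)$ whose image is the locus cut out by the conditions $f_{\kappa,\nu}\neq 0 \Longleftrightarrow \nu\in\Phi_\kappa(\ul{\alpha_\kappa})$. Since the non-vanishing loci of the idempotents $f_{\kappa,\nu}$ in $E_k(F_n X)$ are locally closed (non-vanishing of an idempotent is an open condition, vanishing is a closed condition), these images form a disjoint decomposition of $E_k(F_n X)$ as $\Phi$ varies over all such $k$-tuples and $\alpha$ varies over all $k$-tuples of positive integers.

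The two conditions that make such a $\Phi$ admissible are: (a) that each $\Phi_\kappa$ is a strictly monotone injection (equivalently, a choice of subset $S_\kappa\subset\ul n$ of size $\alpha_\kappa$), and (b) that $\bigcup_\kappa \Phi_\kappa(\ul{\alpha_\kappa})=\ul n$ (to ensure every $\nu$ has some $f_{\kappa,\nu}\neq 0$, which is forced because $\sum_\kappa f_{\kappa,\nu}=1$ and the $f_{\kappa,\nu}$ are orthogonal idempotents, so at least one must be non-zero). Thus the set of admissible $\Phi$ for a fixed $\alpha$ is precisely the set of indexed covers of $\ul n$ by subsets of cardinalities $\alpha_1,\ldots,\alpha_k$, whose count is $\binom{n}{\alpha}$ by definition~(\ref{defnbrack}).

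Applying the scissor relations in $K(\MM)$ to the stratification yields
$$ E_k(F_n\xi)=\sum_\alpha\sum_\Phi[F_\alpha X\to\MM]=\sum_\alpha\binom{n}{\alpha}F_\alpha(\xi),$$
with $\alpha$ ranging over $k$-tuples of positive integers, as claimed. For the degenerate case $n=0$, we have $F_0\xi=1$ by convention, so $E_k(F_0\xi)=E_k(1)$, which equals $1$ if $k=0$ and $0$ otherwise. On the right-hand side, $\binom{0}{\alpha}$ vanishes unless $\alpha$ is the empty tuple, so the sum reduces to the single term $F_\varnothing(\xi)=1$ when $k=0$ and is empty otherwise, matching the left-hand side. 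No step is a serious obstacle here; the only thing to be careful about is the combinatorial identification of admissible $\Phi$-tuples with indexed covers, which has been set up explicitly in the preceding lemma.
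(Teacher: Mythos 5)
Your proof is correct and is exactly the deduction the paper intends: the preceding lemma identifies each $F_\alpha X$ (via a choice of $(\Phi_\kappa)$) with a locally closed stratum of $E_k(F_nX)$ determined by the vanishing pattern of the $f_{\kappa,\nu}$, the constraints $\sum_\kappa f_{\kappa,\nu}=1$ and ``each $\kappa$ has some non-vanishing $f_{\kappa,\nu}$'' make the admissible patterns precisely the indexed covers counted by ${n\brack\alpha}$, and the scissor relations give the stated identity. The paper leaves this count and the $n=0$ degenerate case implicit, and your handling of both is accurate.
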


\begin{defn}
Define, for every $\xi\in K(\MM)$,
$$\epsilon_t(\xi)=\sum_{n\geq 0}\binom{t}{n} F_n(\xi)\,,$$
where for $n=0$, we set $F_0(\xi)=1$. This definition is justified,
because 
by our assumptions on $\NN$, this sum is actually finite. 

Expanding in powers of $t$ defines the $\epsilon_k(\xi)$, for $k\geq0$:
$$\epsilon_t(\xi)=\sum_{k\geq0}\epsilon_k(\xi)t^k\,.$$
For example, $\epsilon_0=1$, and 
$$\epsilon_1(\xi)=\sum_{n>0}\frac{(-1)^{n+1}}{n}F_n(\xi)\,.$$
In general,
$$\epsilon_k(\xi)=\sum_{n\geq k}\frac{s(n,k)}{n!} F_n(\xi)\,.$$
\end{defn}

\begin{cor}
For every $k\geq0$, we have $\epsilon_k(\xi)\in K^{\leq k}(\MM)$.
Hence
$\epsilon_t(\xi)\in \kK(\MM)$.  In particular, $\epsilon_1(\xi)$ is
virtually indecomposable, for all $\xi\in K(\MM)$. 
\end{cor}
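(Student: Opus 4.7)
The plan is to prove the stronger polynomial identity that $E_{k+1}\epsilon_t(\xi)\in t^{k+1}K(\MM)[t]$, from which both claims follow immediately by reading off coefficients of $t^k$. Since $\epsilon_t(\xi)=\sum_{k\geq0}\epsilon_k(\xi)\,t^k$ is already a polynomial in $t$, the containment $\epsilon_t(\xi)\in\kK(\MM)=\bigoplus_n t^n K^{\leq n}(\MM)$ is equivalent to $\epsilon_k(\xi)\in K^{\leq k}(\MM)=\ker E_{k+1}$ for every $k\geq 0$, so it suffices to produce the divisibility statement above.

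The only non-trivial input is the formula established just above, which I apply with $j=k+1$: for any stack function $\xi$,
$$E_{k+1}(F_n\,\xi)=\sum_{\alpha} {n\brack\alpha}\,F_\alpha(\xi),$$
where $\alpha=(\alpha_1,\ldots,\alpha_{k+1})$ ranges over $(k+1)$-tuples of \emph{positive} integers. Substituting into the definition of $\epsilon_t$ and swapping the order of summation gives
$$E_{k+1}\,\epsilon_t(\xi)=\sum_n\binom{t}{n}E_{k+1}F_n(\xi)=\sum_{\alpha}\Big(\sum_n\binom{t}{n}{n\brack\alpha}\Big)F_\alpha(\xi).$$
Now Proposition~\ref{combinatorics} identifies the inner sum with $\binom{t}{\alpha_1}\cdots\binom{t}{\alpha_{k+1}}$, so
$$E_{k+1}\,\epsilon_t(\xi)=\sum_{\alpha_1,\ldots,\alpha_{k+1}>0}\binom{t}{\alpha_1}\cdots\binom{t}{\alpha_{k+1}}\,F_\alpha(\xi).$$

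Each factor $\binom{t}{\alpha_i}$ with $\alpha_i\geq 1$ has $t$ as a factor, so every term in the right-hand side is divisible by $t^{k+1}$. Comparing coefficients of $t^j$ in $E_{k+1}\epsilon_t(\xi)=\sum_j E_{k+1}(\epsilon_j(\xi))\,t^j$ then forces $E_{k+1}(\epsilon_j(\xi))=0$ for all $j\leq k$. Taking $j=k$ proves $\epsilon_k(\xi)\in K^{\leq k}(\MM)$; taking $k=1$ gives $\epsilon_1(\xi)\in K^{\leq 1}(\MM)=K^\vir(\MM)\oplus K^0(\MM)$, which is the virtually indecomposable locus in the sense of the associated graded semi-classical Hall algebra of Section~\ref{semiclass}.

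This argument is essentially pure bookkeeping: the $E_{k+1}$-action on a flag generating series factors through the combinatorial identity of Proposition~\ref{combinatorics}, and the geometric content (the description of $E_k(F_n\xi)$) is already in hand. There is no real obstacle; the one point worth emphasising is that the restriction to $\alpha_i>0$ is precisely what produces the extra factor of $t$ per slot and yields the correct order of vanishing at $t=0$.
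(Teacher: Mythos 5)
Your argument is correct and is essentially the paper's own proof: both expand $E_{k+1}\epsilon_t(\xi)$ (the paper writes $E_k$ and works modulo $t^k$, an index shift only) via the formula $E_kF_n(\xi)=\sum_{\alpha_1,\ldots,\alpha_k>0}{n\brack\alpha}F_\alpha(\xi)$, apply Proposition~\ref{combinatorics} to collapse the sum over $n$ into a product $\binom{t}{\alpha_1}\cdots\binom{t}{\alpha_k}$, and conclude from $\binom{t}{\alpha_i}\in t\,\qq[t]$ for $\alpha_i>0$. No substantive difference.
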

\begin{proof}
It suffices to prove that $E_k\big(\epsilon_t(\xi)\big)\equiv 0\mod
(t^k)$, for all $k$.  In fact, 
\begin{align*}
E_k\big(\epsilon_t(\xi)\big)&=
\sum_{n\geq0} \binom{t}{n}E_kF_n(\xi)\\
&=\sum_{n\geq0}\binom{t}{n}
\sum_{\alpha_1,\ldots,\alpha_k>0}{n\brack\alpha}F_\alpha(\xi)\\  
&=\sum_{\alpha_1,\ldots,\alpha_k>0}
\binom{t}{\alpha_1}\ldots\binom{t}{\alpha_k}F_\alpha(\xi) 
\end{align*}
is, indeed, divisible by $t^k$, if  all $\alpha_1,\ldots,\alpha_k$ are
positive. 
\end{proof}

\begin{rmk}
The operator $F_n:K(\MM)\to K(\MM)$ respects strict algebroids. The
same is true for all $\epsilon_k$.
\end{rmk}

\subsubsection{Epsilons as logarithms}\label{ealog}

Suppose there exists an  abelian group
$\Gamma$, and a decomposition 
of $\MM$ (as an algebroid, not a linear stack) into a disjoint union
$$\MM=\coprod_{\gamma\in \Gamma}\MM_\gamma\,.$$
We require that if $\EE_{\gamma,\beta}$ is 
defined by the cartesian diagram 
$$\xymatrix{
\EE_{\gamma,\beta}\rto\dto & \MM^{(2)}\dto^{a_1\times a_2}\\
\MM_\gamma\times\MM_\beta\rto & \MM\times \MM}$$
then the composition
$$\EE_{\gamma,\beta}\longrightarrow
  \MM^{(2)}\stackrel{b}{\longrightarrow}\MM$$
factors through $\MM_{\gamma+\beta}\subset\MM$. 

We call such $\Gamma$ a {\bf grading group }for $\MM$. 

The grading group $\Gamma$ decomposes $K(\MM)$ into a direct
sum
\begin{equation}\label{dirsumd}
K(\MM)=\bigoplus_{\gamma\in\Gamma}K(\MM)_\gamma\,,
\end{equation}
where $K(\MM)_\gamma$ is the submodule of $K(\MM)$ generated by stack
functions $X\to \MM$ which factor through $\MM_\gamma$. The Hall
product, as well as the commutative product, are graded with respect
to~(\ref{dirsumd}). For $x\in K(\MM)$, we denote the projection of $x$
into the component $K(\MM)_\gamma$ by $x_\gamma$.

Let $\NN\subset \MM$ be a linear algebraic substack avoiding
$\spec R\stackrel{0}{\longrightarrow}$, with the properties
\begin{items}
\item every intersection $\NN_\gamma=\NN\cap \MM_\gamma$ is of finite
  type,
\item $\NN$ is closed under direct summands and extensions in $\MM$,
  the latter meaning that if
  $\NN^{(2)}$ is defined as in (\ref{nn}), then the composition
  $\NN^{(2)}\to\MM^{(2)}\stackrel{b}{\longrightarrow} \MM$ factors
  through $\NN\subset\MM$,
\item there is a submonoid  $\Gamma_{+}\subset\Gamma$,  such that
  $\NN_\gamma\not=\varnothing$ implies that $\gamma\in
  \Gamma_+\setminus\{0\}$.   The monoid $\Gamma_+$ is required to have
  the property that every 
  $\gamma\in\Gamma_+$ admits only finitely many decompositions
  $\gamma=\alpha+\beta$, such that both $\alpha,\beta\in\Gamma_+$. We
  will further assume that $\Gamma_+$ has the property that the
  intersection of all cofinite ideals is empty.
\end{items}
If these  axioms hold, $\NN$ satisfies the
finiteness conditions above, so that the
$\epsilon_k(\xi)$ are
defined, for all $\xi\in K(\MM)$.

\begin{rmk}
If $\MM$ is the stack of coherent sheaves on a projective curve, then
we can take $\Gamma=\zz^2$, and define $\MM_{(d,r)}$, for
$(d,r)\in\zz^2$ to be the stack of sheaves of rank $r$ and degree
$d$. Suppose $\NN$ is the stack of bundles whose Harder-Narasimhan
slopes are contained in the interval $(a,b)\subset \mb{R}$.  Then we
can take 
$$\Gamma_+=\{(0,0)\}\cup\{(d,r)\in\zz^2\mid \text{$r>0$ and
  $a<\sfrac{d}{r}<b$}\}\,,$$
and the above requirements will be satisfied. 
\end{rmk}

\begin{rmk}
If $\MM$ is the stack of representations of a quiver $Q$, we can take
$\Gamma=\zz^{Q_0}$, where $Q_0$ is the set of vertices of $Q$, and
then set $\MM_\gamma$, for $\gamma\in\Gamma$, equal to the stack of
representations with dimension vector $\gamma$. If we take $\NN=\MM_\ast$,
we can take $\Gamma_+=\zz^{Q_0}_{\geq0}$.
\end{rmk}

Let us also define
$$K(\MM)_S=\bigoplus_{\gamma\in S}K(\MM)_\gamma\subset
K(\MM)\,,$$
for any cofinite ideal $S\subset \Gamma_+$. For every such $S$, the group
$K(\MM)_S$ is an ideal (with respect to both multiplications) in the ring
$K(\MM)_+=K(\MM)_{\Gamma_+}$, and we may
complete $K(\MM)_+$  with respect to this collection of ideals, to
obtain $\hat K(\MM)_+$. The morphism $K(\MM)_+\to \hat K(\MM)_+$ is
injective and both multiplications extend to $\hat K(\MM)_+$.

In $\hat K(\MM)_+$ the sum
$$[\NN]=\sum_{\gamma\in \Gamma_+}[\NN_\gamma\to \MM]$$
converges.

The idempotent operators commute with the $\Gamma$-grading, and so
everything defined in terms of them does, too.

\begin{prop}
In $\hat K(\MM)_+[[t]]$, we have
$$\epsilon_t[\NN]=\sum_{n\geq0}\binom{t}{n}[\NN]^{\ast n}\,.$$
Hence we can write 
$$\epsilon_t[\NN]=(1+[\NN])^{\ast
  t}=\exp_\ast\big(t\log_\ast(1+[\NN])\big)\,,$$
where exponential and logarithm are defined by their power series
using the Hall product. 
In particular,
$$\epsilon_1[\NN]=\log_\ast(1+[\NN])\,,$$
and 
$$\epsilon_k[\NN]=\frac{1}{k!}\log(1+[\NN])^{\ast
  k}=\frac{1}{k!}(\epsilon_1[\NN])^{\ast k}\,.$$
\end{prop}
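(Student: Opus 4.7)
The plan is to reduce the stated formulas to a single geometric identification, namely
\[
F_n[\NN]\;=\;[\NN]^{\ast n}\qquad\text{in } \hat K(\MM)_+,
\]
and then to invoke the formal power series identity $\sum_{n\ge0}\binom{t}{n}x^n=(1+x)^t=\exp(t\log(1+x))$ in a suitably completed $\qq$-algebra.

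\textbf{Step 1: Identify $F_n[\NN]$ with $[\NN]^{\ast n}$.} I would unpack both sides. Working grade by grade, $[\NN]=\sum_{\gamma\in\Gamma_+}[\NN_\gamma\to\MM]$, and by the definition of the Hall product together with the assumed closure of $\NN$ under extensions, $[\NN]^{\ast n}$ is represented by the stack of iterated extensions $F_1\subset F_2\subset\cdots\subset F_n$ in $\MM$ whose subquotients $F_\nu/F_{\nu-1}$ all lie in $\NN$ (so, automatically, $F_n\in\NN$), mapped to $\MM$ via the top object $F_n$. On the other hand, by the cartesian definition of $\NN^{(n)}$ in~(\ref{nn}) and the pullback $F_n X=\NN^{(n)}\times_\MM X$, the stack function $F_n[\NN]=F_n[\NN\hookrightarrow\MM]$ is the stack of pairs $(x,F)$ with $x\in\NN$ and $F$ a length-$n$ flag of $x$ with $\NN$-subquotients, again mapped to $\MM$ by $F_n\cong x$. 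These two descriptions agree on the nose as algebroids over $\MM$, and a short induction on $n$ (using associativity of $\ast$ and the base case $F_1[\NN]=[\NN]=[\NN]^{\ast 1}$, $F_0[\NN]=1=[\NN]^{\ast 0}$) completes the identification.

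\textbf{Step 2: Substitute into the definition of $\epsilon_t$.} From Step 1, the defining equation
\[
\epsilon_t[\NN]=\sum_{n\ge0}\binom{t}{n}F_n[\NN]
\]
becomes
\[
\epsilon_t[\NN]=\sum_{n\ge0}\binom{t}{n}[\NN]^{\ast n},
\]
which is the first assertion. I would briefly remark that the sum converges in $\hat K(\MM)_+[[t]]$: fixing a cofinite ideal $S\subset\Gamma_+$ and observing that every $\gamma\in\Gamma_+\smallsetminus S$ (a finite set) admits only finitely many decompositions as a sum of nonzero elements of $\Gamma_+$, only finitely many $n$ can contribute to the $(\Gamma_+\smallsetminus S)$-component of $[\NN]^{\ast n}$.

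\textbf{Step 3: Logarithmic reformulation and extraction of coefficients.} Once the first identity holds, both $\exp_\ast$ and $\log_\ast$ make sense in $\hat K(\MM)_+[[t]]$ applied to elements whose degree-$0$ component is $0$ (for $\log$) or whose constant term is the identity (for $\exp$), because of the same cofinite-ideal convergence. The classical formal power series identity $\sum_{n\ge0}\binom{t}{n}x^n=(1+x)^t=\exp(t\log(1+x))$ is an identity of formal series in $\qq[[t]][[x]]$, so it specializes to
\[
\epsilon_t[\NN]=(1+[\NN])^{\ast t}=\exp_\ast\!\big(t\log_\ast(1+[\NN])\big).
\]
Extracting the coefficient of $t$ yields $\epsilon_1[\NN]=\log_\ast(1+[\NN])$, and extracting the coefficient of $t^k$ from $\exp_\ast(t\log_\ast(1+[\NN]))=\sum_k\tfrac{t^k}{k!}(\log_\ast(1+[\NN]))^{\ast k}$ yields
\[
\epsilon_k[\NN]=\tfrac{1}{k!}\bigl(\log_\ast(1+[\NN])\bigr)^{\ast k}=\tfrac{1}{k!}\,\epsilon_1[\NN]^{\ast k}.
\]

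The only genuinely nontrivial point is Step~1, the identification $F_n[\NN]=[\NN]^{\ast n}$; the rest is formal manipulation of power series in the complete topological ring $\hat K(\MM)_+[[t]]$, together with a bookkeeping check that convergence is valid in the topology defined by the cofinite ideals $K(\MM)_S$ described in Section~\ref{ealog}.
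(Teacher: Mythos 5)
Your proposal is correct and follows essentially the same route as the paper: the entire content is the identification $F_n[\NN]=[\NN^{(n)}\xrightarrow{b}\MM]=[\NN]^{\ast n}$ (using that $\NN$ is closed under extensions, so $\NN^{(n)}\times_\MM\NN\cong\NN^{(n)}$), after which everything is formal power-series manipulation in the completed ring. Your added remarks on convergence in the topology defined by the cofinite ideals $K(\MM)_S$ are consistent with the setup of Section~\ref{ealog} and merely make explicit what the paper leaves implicit.
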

\begin{proof}
Using the formula
$$F_n[\NN]=[\NN^{(n)}\stackrel{b}{\longrightarrow}\MM]=
\underbrace{[\NN]\ast\ldots\ast[\NN]}_{\text{$n$ times}}\,,$$
the result follows.
\end{proof}

\begin{rmk}\label{cytgo}
Compare the two formulas
\begin{align*}
\epsilon_t[\NN]&=(1+[\NN])^{\ast t}\,,\\
\pi_t[\NN]&=(1+[\NN])^t\,.
\end{align*}
\end{rmk}

\begin{rmk}
Let us write $\hat\K(\MM)_+$ for the subspace of $\hat K(\MM)_+[[t]]$,
defined by requiring the coefficient of $t^k$ to be contained in
$\hat K^{\leq k}(\MM)_+$, for all $k$.  Then 
$$
\epsilon_t[\NN]=(1+[\NN])^{\ast t}\in \hat \K(\MM)_+\,.
$$
\end{rmk}
\begin{rmk}
Setting $t=1$, we also get that $1+[\NN]=\exp_\ast(\epsilon_1[\NN])$. 
One should think of $1+[\NN]$ as {\em group-like}.
\end{rmk}

\subsubsection{Hopf algebra}

We make a brief remark, without striving for generality.

Let us fix $\NN\subset \MM$ and $\Gamma_+\subset\Gamma$ as
before. Assume for simplicity that $\Gamma$ is free. In particular,
$\alpha+\beta=0$, for $\alpha,\beta\in \Gamma_+$, implies
$\alpha=\beta=0$.

For
$0\not=\gamma\in\Gamma_+$, abbreviate the element $[\NN_\gamma\to\MM]\in
K(\MM)$ by $[\gamma]$. 

For a finite sequence $(\gamma)=\gamma_1,\ldots,\gamma_n$ of
non-zero elements of $\Gamma_+$, write
$$[(\gamma)]=[\gamma_1,\ldots,\gamma_n]=[\gamma_1]\ast\ldots\ast[\gamma_n]\,.$$
In particular, for $n=0$, we have $[\varnothing]=1$. 

In many cases of interest, the Hall algebra elements $[(\gamma)]$, as
$(\gamma)$ runs over all finite sequences of non-zero elements of
$\Gamma_+$ are linearly independent over $\qq$.  Let us assume that
this is the case.  Then the $\qq$-span of all $[(\gamma)]$ is a
$\qq$-subalgebra of $K(\MM)$, which is free on the generators
$[\gamma]$, for $\gamma\in \Gamma_+\setminus\{0\}$, as a unitary
$\qq$-algebra.  Let us denote this algebra by $U$. Let us further
assume that the morphism induced by the commutative product $U\otimes
U\to K(\MM)$ is injective.  (Again, this will hold in many cases of
interest.)\comment{say for which cases}

We will now define a comultiplication $\Delta$ on $U$, making a $U$ a
cocommutative Hopf algebra over $\qq$. 

To define $\Delta$, it is convenient to extend the notation
$[(\gamma)]$ to finite sequences of elements of $\Gamma_+$, which may
be zero. This is done by setting $[0]=1$.  Thus $[(\gamma)]$ is
unchanged by `crossing off its zeros'.  We then define
\begin{align*}
\Delta:U&\longrightarrow U\otimes U\\
[(\gamma)]&\longmapsto \sum_{(\alpha)+(\beta)=(\gamma)}[(\alpha)]\otimes[(\beta)]\,,
\end{align*}
where the sum is over all pairs of sequences of the same length as
$\gamma$, but allowing zeros. 

This defines on $U$ the structure of a cocommutative Hopf-algebra. 

\begin{lem}
The diagram
$$\xymatrix{
U^\ast\ar[rrr]^{x\longmapsto \Delta(x)-1\otimes x-x\otimes 1}\ar[drrr]_{E_2}
&&&U\otimes U\dto^{\text{comm.\ mult.}}  \\
&
&&K(\MM)}$$
is commutative, where $U^\ast\subset U$ is the augmentation ideal.
\end{lem}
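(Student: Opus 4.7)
By $\qq$-linearity it suffices to verify the identity on the basis of $U^\ast$ consisting of elements $x=[(\gamma)]=[\gamma_1]\ast\cdots\ast[\gamma_n]$ with $n\ge 1$ and all $\gamma_i\in\Gamma_+\setminus\{0\}$. Recall that $[(\gamma)]=[X_{(\gamma)}\to\MM]$, where $X_{(\gamma)}$ parametrizes flags $0=M_0\subset M_1\subset\cdots\subset M_n$ with $M_i/M_{i-1}\in\NN_{\gamma_i}$; the map sends a flag to $M_n$ and the algebra of the algebroid is the flag endomorphism algebra.

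The stack $E_2(X_{(\gamma)})$ parametrizes quadruples $(M_\bullet,e_1,e_2)$ with $e_1,e_2$ non-zero orthogonal idempotents summing to $1$ in the flag endomorphism algebra. Such a pair canonically splits each $M_i$ as $M'_i\oplus M''_i$ with $M'_\bullet=e_1M_\bullet$, $M''_\bullet=e_2M_\bullet$. Each subquotient $M'_i/M'_{i-1}$ is a direct summand of $M_i/M_{i-1}\in\NN_{\gamma_i}$, so (using that $\NN$ is closed under direct summands) lies in $\NN_{\alpha_i}$ for a unique $\alpha_i\in\Gamma_+$ (with $\alpha_i=0$ meaning the zero summand), and similarly $M''_i/M''_{i-1}\in\NN_{\beta_i}$, with $\alpha_i+\beta_i=\gamma_i$. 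The pair of tuples $((\alpha),(\beta))$ is locally constant in families, and $e_1,e_2\ne 0$ translates to neither $(\alpha)$ nor $(\beta)$ being the all-zero sequence. Hence $E_2(X_{(\gamma)})$ decomposes as a disjoint union of open-and-closed substacks $X_{(\alpha),(\beta)}$ indexed by such pairs.

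The assignment $(M_\bullet,e_1,e_2)\mapsto(e_1M_\bullet,e_2M_\bullet)$ gives an equivalence $X_{(\alpha),(\beta)}\simeq X_{(\alpha')}\times X_{(\beta')}$, where primes denote removal of zero entries (the ``crossing off zeros'' identification $X_{(\alpha)}\cong X_{(\alpha')}$ corresponds to the convention $[0]=1$); the inverse is external direct sum equipped with its canonical projection idempotents. The algebra of the algebroid structure on $X_{(\alpha),(\beta)}$ induced from $E_2(X_{(\gamma)})$ is the $(e_1,e_2)$-centralizer inside the flag endomorphism algebra; since commuting with both orthogonal idempotents forces block-diagonality, this centralizer equals $\End(M'_\bullet)\times\End(M''_\bullet)=A_{X_{(\alpha')}}\times A_{X_{(\beta')}}$. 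This is exactly the algebra of the commutative product $[(\alpha)]\cdot[(\beta)]=[X_{(\alpha')}\times X_{(\beta')}\to\MM]$, and both structure maps to $\MM$ send the data to $M'_n\oplus M''_n$. So $[X_{(\alpha),(\beta)}\to\MM]=[(\alpha)]\cdot[(\beta)]$ as stack functions.

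Summing over the strata yields
$$E_2([(\gamma)])=\sum_{\substack{(\alpha)+(\beta)=(\gamma)\\ (\alpha),(\beta)\ne 0}}[(\alpha)]\cdot[(\beta)],$$
which is precisely the commutative multiplication applied to $\Delta([(\gamma)])-1\otimes[(\gamma)]-[(\gamma)]\otimes 1$, since the omitted summands of $\Delta([(\gamma)])$ with $(\alpha)=0$ or $(\beta)=0$ are exactly $1\otimes[(\gamma)]$ and $[(\gamma)]\otimes 1$, respectively. The main obstacle, and the step to execute with care, is the algebra matching on each stratum: verifying that the $(e_1,e_2)$-centralizer in the flag endomorphism algebra really coincides with the product algebra $A_{X_{(\alpha')}}\times A_{X_{(\beta')}}$, compatibly with the ``crossing off zeros'' identification of stacks and algebras.
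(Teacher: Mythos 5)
Your proof is correct. The paper itself states this lemma without proof, but your argument is exactly the one its surrounding machinery suggests: it is the $\Gamma$-graded refinement of the paper's decomposition of $E_k(F_nX)$ into the strata $F_\alpha X$ (specialized to $k=2$ and $X=\NN^{(n)}$), together with the canonical splitting and block-diagonal centralizer computation from Lemma~\ref{exact} and Remark~\ref{era}, so it matches the intended approach.
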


It follows that the virtual indecomposables in $U$ are equal to the
primitives with respect to the Hopf algebra structure:
$$U^\prim=U^\vir\,.$$
As $U$ is a cocommutative Hopf algebra, it is isomorphic to the
universal enveloping algebra of $U^\prim$, by the Cartier-Gabriel
theorem \cite[Theorem~3.8.2]{CartierHopf}.

The Lie algebra $U^\vir$ is free, as a Lie algebra over $\qq$, on the
elements
$$\epsilon[\gamma]=\sum_{n>0}\frac{(-1)^{n+1}}{n}
\sum_{\stack{\gamma=\gamma_1+\ldots+\gamma_n}{\gamma_1,\ldots,\gamma_n>0}}
    [\gamma_1]\ast\ldots\ast[\gamma_n]\,,$$
for $\gamma\in \Gamma_+\setminus\{0\}$. 

\begin{rmk}
Suppose $\NN=\MM=\mf{Vect}$ is the stack of vector bundles. 
We take $\Gamma=\zz$ and $\Gamma_+=\zz_{\geq0}$. Then the $[(\gamma)]$
are, indeed, linearly independent over 
$\qq$, at least if our ground ring $R$ is a field. Moreover,
$U\otimes U\to K(\MM)$ is injective.  The Hopf 
algebra we obtain is the Hopf algebra of {\em non-commutative
  symmetric functions}, see \cite[Example~4.1~(F)]{CartierHopf}.
\end{rmk}

\begin{rmk}
It is doubtful that it is possible to extend the coproduct 
to all of $K(\MM)$, in such a way that  $K^\vir(\MM)=K(\MM)^\prim$. By the above
considerations, we consider the family of operators $(E_n)$ as a
substitute, which allows us to prove at least some of the result one
would expect in a cocommutative Hopf algebra.  In particular, we find
it unlikely that, in general, $K(\MM)$ would be isomorphic to the
universal enveloping algebra of $K^\vir(\MM)$.
\end{rmk}

\section{Integration}
\renewcommand{\St}{\mathop{\rm St}}

The integral of a stack function $(X,A)\to (\MM,\AA)$ does three
things: it forgets the structure map to $(\MM,\AA)$, it forgets the
algebroid structure, mapping $(X,A)$ to $X$, and it introduces the
bundle relations in $K(\St)$, for non-inert morphisms of algebraic stacks.

\subsubsection{The vector bundle relations}

Let $K(\St)$ be the Grothendieck $K(\DM)$-algebra of algebraic stacks
(finite type, with affine diagonal), modulo the scissor and the bundle
relations. A {\em bundle relation }is any equation of the form
$$[Y]=[F\times X]\,,$$
for a fibre bundle $Y\to X$ of algebraic stacks with special structure
group and fibre $F$.

It is well-known, that\comment{check the intermediate one}
\begin{align*}
K(\St)&=K(\DM)[\sfrac{1}{q}][\sfrac{1}{q^n-1}]_{n\geq1}\\
      &=K(\Var)[\sfrac{1}{q}][\sfrac{1}{q^n-1}]_{n\geq1}\,.
\end{align*}
We prefer the latter expression in terms of $K(\Var)$. 

Note that the (connected, semi-simple) inertia operator does not
preserve non-inert bundle relations. Therefore, in $K(\St)$, we cannot talk about
$I^{\circ,\ss}[X]$, only about $[I^{\circ,\ss}_X]$.

\subsubsection{Regular motivic weights}

\begin{defn}
We call an element of $K(\St)$ {\bf regular}, if it can be written
with a denominator which does not vanish at $q=1$. Thus the
subalgebra of regular motivic weights $K(\St)_\reg\subset
K(\St)$ is by definition the image of the morphism of
$K(\Var)$-algebras
$$
K(\Var)[\sfrac{1}{q}][\sfrac{1}{q^n+\ldots+1}]_{n\geq1}
\longrightarrow
K(\Var)[\sfrac{1}{q}][\sfrac{1}{q^n-1}]_{n\geq1}=K(\St)\,.$$
\end{defn}

The image of $K(\DM)$ in $K(\St)$ is contained in
$K(\St)_\reg$. Hence we can also think of $K(\St)_\reg$ as a
$K(\DM)$-algebra. This follows from the following Lemma.

\begin{lem}\label{qfinreg}
Every finite type stack with quasi-finite stabilizer has regular
motivic weight in $K(\St)$. 
\end{lem}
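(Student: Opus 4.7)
The plan is to proceed by Noetherian induction, reducing $X$ to strata of the form $V\times BG$ for a scheme $V$ and a finite group scheme $G$, and then separately establishing $[BG]\in K(\St)_{\reg}$ for such $G$. Multiplicativity $[V\times BG]=[V]\cdot[BG]$ in $K(\St)$ together with the scissor relations then deliver regularity of $[X]$.

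First I would stratify $X$ to reduce to gerbes over schemes. Since the stabilizer is quasi-finite, $X$ is Deligne-Mumford (after stratification, as needed in positive characteristic), admits a coarse moduli algebraic space $\bar X$, and the map $X\to\bar X$ is a gerbe. Stratifying $\bar X$ further and applying the paper's isotrivial-gerbe result (quoted just above the Sheaves subsection), the gerbe becomes trivial on a finite \'etale cover $\bar X'\to\bar X$, and Remark~\ref{gerberemark} identifies $X\times_{\bar X}\bar X'$ with $\bar X'\times BG$ for some finite group scheme $G$. A further scissor stratification (using a global quotient presentation $X=Y/\GL_n$ and stratifying $Y$ by stabilizer type) contributes, from the free $\GL_n$-locus, an algebraic-space class in $K(\Var)\subset K(\St)_{\reg}$, and handles the non-free strata via the trivial-gerbe decomposition above. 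Noetherian induction on dimension (or on maximum stabilizer order when the dimension is preserved) reduces everything to verifying $[BG]\in K(\St)_{\reg}$ for a finite group scheme $G$.

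For this auxiliary claim, I would embed $G\hookrightarrow\GL_n$ faithfully, so that $\GL_n/G$ is an algebraic space and $\GL_n/G\to BG$ is a $\GL_n$-torsor with special structure group. The bundle relation then gives
$$[BG]=[\GL_n/G]/[\GL_n]\in K(\St).$$
Regularity of this ratio---equivalently, divisibility of $[\GL_n/G]$ by the non-regular part of $[\GL_n]$ in $K(\Var)$---can be argued by induction on $|G|$ via a further stratification of $\GL_n/G$ according to stabilizer type, with the base case $G=\mu_n$ settled by the explicit identity $[\GL_1/\mu_n]=[\GL_1]$ (yielding $[B\mu_n]=1$, manifestly regular).

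The main obstacle is this last regularity statement $[BG]\in K(\St)_{\reg}$: while $[B\mu_n]=1$ by direct computation, showing cancellation of all potentially non-regular $(q-1)$-factors in $[\GL_n/G]/[\GL_n]$ for a general finite $G$ requires a genuinely geometric argument on the variety $\GL_n/G$, and this is where the proof needs real work beyond formal manipulations with bundle relations. An alternative route would be to exhibit $BG$ directly as an open substack in a free $\GL_m$-quotient for suitable $m$, avoiding the ratio altogether, but in either case the treatment of $BG$ is the technical heart of the argument.
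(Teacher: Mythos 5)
There is a genuine gap, in fact two. First, your reduction to strata of the form $V\times BG$ is not a legitimate manipulation in $K(\St)$: trivializing the gerbe $X\to\ol X$ on a finite \'etale cover $\ol X'\to\ol X$ tells you what $[X\times_{\ol X}\ol X']$ is, but there is no relation in $K(\St)$ expressing $[X]$ in terms of this class --- finite groups are not special, so finite \'etale covers do not give bundle relations, and you cannot ``descend'' the identification $X\times_{\ol X}\ol X'\cong \ol X'\times BG$ to a statement about $[X]$. Second, and as you yourself acknowledge, the heart of the matter --- that $[BG]$ is regular for a finite group scheme $G$, equivalently that the order-$n$ vanishing of $[\GL_n]$ at $q=1$ is cancelled in $[\GL_n/G]/[\GL_n]$ --- is left unproven, and the proposed induction on $|G|$ via stabilizer strata of $\GL_n/G$ gives no mechanism for producing the needed $(q-1)$-factors beyond the base case $G=\mu_n$. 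So the proposal reduces the lemma to a statement that is essentially as hard as the lemma itself.

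The paper's proof supplies exactly the missing cancellation mechanism, and it bypasses gerbes and $BG$ entirely. After stratifying so that the inertia is finite and invoking Kresch to write $Z=Y/\GL_m$ with $Y$ an algebraic space and finite stabilizers, one observes that the maximal torus $T\subset\GL_m$ also acts on $Y$ with finite stabilizers. Stratifying $Y$ equivariantly so that the $T$-stabilizer is a \emph{constant} finite subgroup $\Gamma_i\subset T$ on each piece $Y_i$, the quotient torus $T_i=T/\Gamma_i$ acts freely, and --- this is the key point --- $T_i$ is again a torus of rank $m$, so $[T_i]=[T]=(q-1)^m$. Hence
$$[Z]=\frac{1}{[\GL_m]}\sum_i[T_i]\,[Y_i/T_i]=\frac{[T]}{[\GL_m]}\sum_i[Y_i/T_i]\,,$$
and $[T]/[\GL_m]=q^{-\frac{1}{2}m(m-1)}\prod_{i=1}^m(q^i+\ldots+1)^{-1}$ is regular, while each $Y_i/T_i$ is an algebraic space. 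The fact that a torus modulo any finite subgroup is isomorphic to the same torus is what kills all the $(q-1)$-factors at once; this is the idea your argument is missing. If you want to salvage your route, you would need to prove $[BG]\in K(\St)_\reg$ by essentially this same device (let $T\subset\GL_m$ act on $\GL_m/G$ by left translation and stratify by the constant finite stabilizer), at which point the detour through gerbes becomes unnecessary.
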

\begin{proof}
Let $Z$ be a stack with quasi-finite stabilizer (meaning that its
inertia $I_Z$ is quasi-finite over $Z$).  By stratifying $Z$, if
necessary, we may assume that the inertia stack of $Z$ is in fact
finite. By \cite{Kresch}, Proposition~3.5.7, $Z$ is stratified by
global quotient stacks, so we may assume that $Z=Y/\GL_m$, for an
algebraic space $Y$, such that $\GL_m$ acts on $Y$ with finite
stabilizer. The maximal torus $T\subset \GL_m$ then also acts with
finite stabilizer on $Y$.  The flattening stratification
$\coprod Y_i\to Y$ of the
stabilizer $\Stab_TY$ is then $T$-equivariant, so that $T$ acts on
each $Y_i$, and by passing to open and closed subspaces of the $Y_i$,
we may assume that the action of $T$ on $Y_i$ has constant
stabilizer (see \cite[I, 5.4]{Oesterle}).  Then $T$ acts on
$Y_i$ freely through a quotient $T_i$ by 
a finite subgroup.  We conclude:
\begin{equation}\label{lstem}
[Z]=\frac{1}{[\GL_m]}\sum_i [Y_i]=
\frac{1}{[\GL_m]}\sum_i[T_i][Y_i/T_i]=\frac{[T]}{[\GL_m]}\sum_i[Y_i/T_i]\,,
\end{equation}
because each quotient $T_i$ of $T$ is isomorphic to $T$. The last term
in (\ref{lstem}) is regular, because each $Y_i/T_i$ is an algebraic
space, and 
$$\frac{[T]}{[\GL_m]}=q^{-\frac{1}{2}m(m-1)}\prod_{i=1}^m\frac{1}{q^i+\ldots+1}$$
does not vanish at $q=1$.
\end{proof}
 
\subsubsection{The integral}

Mapping a stack function $(X,A)\to (\MM,\AA)$ to the class $[X]\in
K(\St)$ gives rise to  a well-defined homomorphism $K(\MM)\to
K(\St)$ of $K(\DM)$-modules. We
denote this homomorphism by
$$\int:K(\MM)\to K(\St)\,.$$

\subsection{The No Poles theorem}

\begin{thm}
The composition $\int\circ I^{\circ,\ss}$ factors through the algebra of
regular motivic weights:
\begin{equation}
\vcenter{\xymatrix{
K(\MM)\rto^{I^{\circ,\ss}}\dto_{\int\circ I^{\circ,\ss}} & K(\MM)\dto^{\int\phantom{T}}\\
K(\St)_\reg\ar@{^{(}->}[r] & K(\St)\rlap{\,.}}}
\end{equation}
\end{thm}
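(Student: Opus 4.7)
The theorem reduces directly to the following ``motivic Burnside lemma'': for every finite-type algebroid $(X, A)$, the class $[I^{\circ,\ss}_X] = [A_X^{\times,\ss}]$ lies in $K(\St)_\reg$. Indeed, by definition, $\int \circ I^{\circ,\ss}[X\to\MM] = [A_X^{\times,\ss}]$, so once regularity is known on generators, the factorization extends by $\qq$-linearity. In fact, this is exactly the ``motivic version of Burnside's lemma'' promised in the introduction, and it is the only substantive content of the theorem.

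To prove the lemma, the plan is first to stratify $(X,A)$ into clear algebroids: using Proposition~\ref{bdlstr} and Definition~\ref{rankstrat} to make $A$ and $Z(A)$ algebra bundles, Lemma~\ref{Estrat} to make $ZE(A)$ finite \'etale, and then stratifying $X$ further so that it is a connected isotrivial gerbe over a connected coarse Deligne-Mumford stack (Definition~\ref{clear}). Since stratifications are inert, $A^{\times,\ss}$ is compatible with them (Proposition~\ref{strem}), and regularity is stable under finite disjoint unions in $K(\St)$.

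For a clear algebroid of split central rank $k$, I would then exploit the principal $\Gm^k$-bundle of algebroids $A^{\times,\ss}_{E_kX} \to \tilde A^{\times,\ss}_{E_kX}$ furnished by Proposition~\ref{bundlequack}: the bundle relation for the special group $\Gm^k$ gives $[A^{\times,\ss}_{E_kX}] = (q-1)^k [\tilde A^{\times,\ss}_{E_kX}]$ in $K(\St)$, so the problem becomes bounding the denominator of $[\tilde A^{\times,\ss}_{E_kX}]$ by $(q-1)^k$. One then stratifies $\tilde A^{\times,\ss}$ further by the ``eigenvalue type'' of the (class of the) semisimple unit, passing as in the proof of Theorem~\ref{Ethm} to a connected Galois cover trivializing $PZE(A)$. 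On each stratum, Lemma~\ref{equaler} identifies $\Stab_{A^\times}(a)$ with $\Stab_{A^\times}[a]$ up to an open immersion, reducing the stabilizers on the projectivized stack to quasi-finite ones modulo $\Gm^k$, at which point Lemma~\ref{qfinreg} delivers the regularity.

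The main obstacle is the combinatorial bookkeeping that combines the strata and the Galois-descent data into a single regular class: the potentially singular contributions along individual strata must cancel at $q=1$ after the scissor relations are applied, and one must verify this occurs systematically. The calculation in the Introduction for $[\BGL_2]$ is the prototype: the scalar stratum contributes $1/[\GL_2]$ with a double pole at $q=1$, and the non-scalar stratum $[T^*/_{\ad}N]$ contributes a simple pole in $(q-1)$, yet they combine to the regular class $(q^2-2)/(2q(q+1))$. The role of the $(q-1)^k$ prefactor from Proposition~\ref{bundlequack} is precisely to make this cancellation structural rather than accidental, and the eigenvalue-type stratification organizes the remaining terms so that each contributes, via Lemmas~\ref{equaler} and~\ref{qfinreg}, a class in $K(\St)_\reg$.
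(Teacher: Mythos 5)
Your opening reduction is exactly the paper's: $\int\circ I^{\circ,\ss}$ sends a generator to $[A_X^{\times,\ss}]$, so everything rests on the ``motivic Burnside lemma'' that this class is regular, and one may first stratify into clear algebroids. But the mechanism you propose for proving that lemma does not work, and the step you defer to ``systematic cancellation between strata'' is the entire content of the theorem.

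The concrete gap is quantitative. Proposition~\ref{bundlequack} only extracts a factor $(q-1)^k$ where $k$ is the number of orthogonal central idempotents in play (the split central rank), whereas the pole of $[A^{\times,\ss}]$ at $q=1$ coming from the stacky quotient has order equal to the dimension of a maximal torus of $A^\times$. Already for $X=B\GL_n$, $A=M_{n\times n}$ one has $k=1$ but $1/[\GL_n]$ contributes a pole of order $n$; after dividing out the single $\Gm$ of scalars, the class $[\tilde A^{\times,\ss}]$ still has poles of order $n-1$, and its ``pole order at most $k$'' is not something you can bound a priori --- it is equivalent to what you are trying to prove. Your appeal to Lemma~\ref{equaler} and Lemma~\ref{qfinreg} also fails at this point: Lemma~\ref{equaler} gives an open immersion $\Stab_{A^\times}(a)\hookrightarrow\Stab_{A^\times}[a]$, but for a semi-simple unit $a$ the stabilizer $\Stab_{A^\times}(a)$ is the unit group of the centralizer algebra, which is a positive-dimensional torus (an $n$-torus for regular semi-simple $a$ in $M_{n\times n}$); modulo $\Gm^k$ it is still not quasi-finite, so Lemma~\ref{qfinreg} does not apply to the quotient you construct.

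The paper's proof supplies the missing $(q-1)$'s by a different device. Using Proposition~\ref{faith} it writes $(X,A)=(\GL_n\backslash Y,\GL_n\backslash B)$ for a faithful representation $B\hookrightarrow M_{n\times n}|_Y$ over a Deligne--Mumford stack $Y$, stratifies $B$ by the condition $Z_{B(y)}(b)\cap D_n=D_I$ for partitions $I$ of $\ul n$, and on each stratum lets the \emph{full} $n$-dimensional torus $T(I)=T_I\times T/T_I$ act by $(t',t)\cdot(b,y)=(t'\,{}^tb,ty)$ --- left multiplication by the commuting diagonal part combined with conjugation by the rest. The crucial use of semi-simplicity is that $b$, ${}^tb$ and $t'$ can be simultaneously diagonalized, so ${}^tb$ is a permutation of the eigenvalues of $b$ and the stabilizer of this action is finite on field-valued points. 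Hence $[B_I^{\times,\ss}]=(q-1)^n[\tilde Z_I]$ with $\tilde Z_I$ of quasi-finite stabilizer, and $(q-1)^n/[\GL_n]=q^{-n(n-1)/2}\prod_{i=1}^{n}(q^{i-1}+\ldots+1)^{-1}$ is regular on the nose; Lemma~\ref{qfinreg} finishes. No cancellation between strata is needed, which is precisely what makes the argument close. If you want to salvage your outline, you must replace the $\Gm^k$ of central idempotents by a torus of the full rank of $A^\times$ acting with finite stabilizers on the semi-simple locus; that is the idea your proposal is missing.
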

\begin{proof}
It suffices to prove that $[A^{\times,\ss}]\in K(\St)$ is regular, for
every clear algebroid $(X,A)$, admitting a faithful representation.
This will suffice, by Proposition~\ref{faith}. 
If $(X,A)$ is such an algebroid, there exists a Deligne-Mumford stack $Y$,
with a left $\GL_n$-action, 
together with a strict $\GL_n$-equivariant algebra subbundle
$B\hookrightarrow M_{n\times   n}|_Y$, such that
$B^\times=\Stab_{\GL_n}Y$, and $(X,A)=(\GL_n\backslash Y, \GL_n\backslash B)$.

Let $D_n\subset M_{n\times n}$ be the diagonal subalgebra, and
$T=D_n^\times$ the standard maximal torus of $\GL_n$. As $\GL_n$ acts
on $B\subset M_{n\times n}|_Y$, so does the torus $T$. 
We will now stratify $B$ by the stabilizer with respect
to the action of  $T$. For this stratification to be canonical, we
need $\Stab_T B$ to be the units in a finite type algebra over $B$. 

In fact, such an algebra $C\subset D_n|_B$ is given as the
intersection of $D_n|_B$ with $(B|_B)^\fix$ inside $M_{n\times
  n}|_B$. Here $(B|_B)^\fix$ is the centralizer of the tautological
section of $B|_B$,  or, under the
identification $B|_B=B\times_YB$, the stack of commuting pairs. 
Thus, a section $(u,b,y)\in D_n|_B$ is in $C$, if and only if $u\in
Z_{B(y)}(b)$. 

We have, indeed, an equality 
$$C^\times=\Stab_TB$$
 of relative group
schemes over $B$, because for $t\in T$, and $(b,y)\in B\subset
M_{n\times n}|_Y$, 
\begin{align*}
t\in \Stab_T(b,y)
&\Longleftrightarrow \text{${}^tb=b$ and $ty=y$}\\
&\Longleftrightarrow \text{$tb=bt$ and $t\in\Stab_{\GL_n(y)}$}\\
&\Longleftrightarrow \text{$tb=bt$ and $t\in B^\times(y)$}\\
&\Longleftrightarrow t\in Z_{B^\times(y)}(b)\\
&\Longleftrightarrow t\in C^\times(b,y)\,.
\end{align*}

The subalgebras of $D_n$ are in one-to-one correspondence with
partitions $I=\{I_1,\ldots,I_r\}$ of the set $\ul n=\{1,\ldots,n\}$.
The partition $\ul n=I_1\sqcup\ldots\sqcup I_r$ corresponds to the
subalgebra $D_I$ whose primitive idempotents are the
$e_{I_\rho}=\sum_{i\in I_\rho}e_i$, for $\rho=1,\ldots,r$. Let us
write $T_I=D_I^\times$ for the torus of units in $D_I$. 

Now there is a unique stratification 
\begin{equation}\label{IB}
\coprod_{I} B_I\longrightarrow B\,,
\end{equation}
such that a section $(b,y)$ of $B$ factors through $B_I$, if and only
if the pullback of $C\subset D_n|_B$ via $(b,y)$ is equal to $D_I$. 
The existence of this stratification is proved by passing to the rank
stratification (see Definition~\ref{rankstrat}) of $C$, and observing
that a subalgebra bundle $C\subset D_n|_S$, for any stack $S$,
decomposes $S$ into a disjoint union of open and closed substacks,
such that $C$ is constant over these components. We may reformulate
the defining property of $B_I$ by saying that $(b,y)\in B_I$, if and
only if $Z_{B(y)}(b)\cap D_n=D_I$. We also have, for $(b,y)\in B_I$,
that $\Stab_T(b,y)=C^\times(b,y)=D_I^\times=T_I$. 

The stratification (\ref{IB}) is $T$-equivariant, because for $t\in
T$, and $(b,y)\in B$, we have
\begin{align*}
(b,y)\in B_I
&\Longleftrightarrow Z_{B(y)}(b)\cap D_n=D_I\\
&\Longleftrightarrow Z_{B(ty)}({}^tb)\cap {}^tD_n={}^tD_I\\
&\Longleftrightarrow Z_{B(ty)}({}^tb)\cap D_n=D_I\\
&\Longleftrightarrow ({}^tb,ty)\in B_I\,,
\end{align*}
as $T\subset D_n$, and $D_n$ is commutative. 

(Let us remark that we were not able to
prove that for a general action of $T$ on a Deligne-Mumford stack $Y$,
the stabilizer stratifies $Y$ equivariantly. The fact that the
stabilizer is equal to the units in an algebra helps. Note also, that
we did not prove a defining property for (\ref{IB}) in terms of
stabilizers in $T$.)

So, for every  partition $I$ of $\ul n$, the torus  $T$ acts on 
$B_I\subset B$, with stabilizer $T_I$. We therefore get an induced
action of $T/T_I$ on $B_I$ .
Matrix conjugation preserves 
units, so we get and induced action of $T/T_I$ on 
$$B_I^{\times}=B_I\cap B^{\times}\,.$$
In fact, this action even respects $B_I^{\times,\ss}=B_I\cap
B^{\times,\ss}$, but the following modification does not.

Consider the action of $T_I\subset T$ on $B_I$ by left
multiplication:
$$t(b,y)=(tb,y)\,.$$
This is a well-defined action, because 
$t\in T_I$, and $(b,y)\in B_I$, implies that 
$t\in Z_{B(y)}(b)$.  In
particular, $t\in B(y)$, so that $(t,y)\in B$, and the product
$(t,y)(b,y)=(tb,y)$ in $B$ exists. Moreover, $(tb,y)\in B_I$, because
$Z_{B(y)}(tb)\cap D_n=Z_{B(y)}(b)\cap D_n$.
This action of $T_I$ on $B_I$ preserves $B_I^\times$. 
Over {\em fields}, it also preserves 
sections 
which are semi-simple, because the product of two semi-simple
commuting matrices is again a semi-simple matrix. 
(Note that this does not imply that $T_I$ acts  on
$B_I^\ss$, because even if $(b,y)$ is a strict section of $B$, the
product $(tb,y)$ may not be strict.) 

We finally consider the action of $T(I)=T_I\times T/T_I$ on
$B_I^{\times}$, defined by 
\begin{equation}\label{actionfree}
(t',t)\ast(b,y)=(t'\,{}^tb,ty)\,.
\end{equation}

The quotient stack $Z_I=B_I^\times/T(I)$ is a finite type scheme over $R$,
so its Zariski topological space $|Z_I|$ is a Zariski space
(see~\cite{LMB}, Chapter~5). By Chevalley's theorem (see [ibid.]), 
the image of $|B_I^{\times,\ss}|$ in 
$|Z_I|$ is constructible, so we 
can find disjoint, locally closed (reduced) algebraic substacks
$Z_1,\ldots,Z_n\subset Z_I$, such that this image is equal to 
$|Z_1|\sqcup\ldots\sqcup|Z_n|\subset |Z_I|$. Let
$$\tilde Z_I=Z_1\amalg\ldots\amalg Z_n\,.$$ 
This is a finite type algebraic stack
endowed with a representable monomorphism $\tilde Z_I\to Z_I$. 

We claim that $\tilde Z_I$ is an algebraic stack with quasi-finite
stabilizer. This will
follow from the fact that, for field valued points, the action of
$T(I)$ on $B_I^{\times,\ss}$ has finite stabilizers. To see this,
assume that 
$$(t' \,{}^{t}b,t y)=(b,y)\,,$$
for $(t',t)\in T_I\times T/T_I$, and $(b,y)\in B^{\times,\ss}_I$.
This implies that $ty=y$, hence conjugation by $t$ preserves the fiber  $B(y)$ of
$B$ over $y$. 
We have
$$t' \,{}^{t}b=b\,,$$
where $t'$ commutes with $b$. 
Rewriting as ${}^{t}b={t'}^{-1}b$, we see that ${}^{t}b$ commutes
with $b$. Changing basis, if necessary, we can diagonalize the three
matrices $b$, ${}^{t}b$ and $t'$, simultaneously. Since  $b$
and ${}^{t}b$ have the same eigenvalues, we see that the entries of
the diagonal matrix ${}^{t}b$, are obtained from those of $b$ by a
permutation. Hence there are at most $n!$ possible values for
${}^{t}b$, and hence for $t'=b\,{}^{t}b^{-1}$. For every one of
these possible values of $t'$, there is at most one $t\in T/T_I$,
such that ${}^{t}b={t'}^{-1}b$. Thus the action (\ref{actionfree})
has finite stabilizers, at least on field valued points of
$B_I^{\times,\ss}$, as required.

Consider the cartesian diagram
$$\xymatrix{
\tilde B_I^{\times,\ss}\rto^{\tilde\beta} \dto_{\alpha'} & \tilde
B_I^\times\rto^{\tilde \pi}\dto & \tilde 
Z_I\dto^\alpha \\
B_I^{\times,\ss}\rto^\beta & B_I^\times\rto^\pi & Z_I\rlap{\,,}}$$
obtained by pulling back $B_I^{\times,\ss}\to B_I^\times\to Z_I$ via
$\tilde Z_I\to Z_I$. 
The morphisms  $\alpha$ and $\beta$ are disjoint unions of
isomorphisms onto locally closed substacks, so the same is true for $\alpha'$ and
$\tilde\beta$. But both $\alpha'$ and $\tilde\beta$ are surjective on
underlying Zariski topological spaces, so by the scissor relations, we
have 
$$[B_I^{\times,\ss}]=[\tilde B_I^{\times,\ss}]=[\tilde
  B_I^\times]\,,$$
in $K(\St)$.  
The morphism $\pi$ is a principal $T(I)$-bundle, so the
same is true for $\tilde\pi$, and so by the bundle relations, we have
$$[B_I^{\times,\ss}]=[\tilde B_I^\times]=[T(I)][\tilde
  Z_I]=(q-1)^n[\tilde Z_I]\,,$$
in $K(\St)$. 
It follows, that we have
\begin{multline*}
[A^{\times,\ss}]=[\GL_n\backslash
  B^{\times,\ss}]=\frac{[B^{\times,\ss}]}{[\GL_n]}
=\frac{1}{[\GL_n]}\sum_{I}[B^{\times,\ss}_I]\\
=\frac{1}{[\GL_n]}\sum_{I}(q-1)^n[\tilde Z_I]
=q^{-\frac{1}{2}n(n-1)}\prod_{i=1}^{n-1}\frac{1}{q^i+\ldots+1}\sum_{I}[\tilde
  Z_I]\,.
\end{multline*}
The claim now follows from Lemma~\ref{qfinreg}.
\end{proof} 

\begin{cor}\label{talk}
The multiple $(q-1)^k\int$ of the integral takes regular values on 
$K^{\leq k}(\MM)$, for every $k\geq0$. 
$$\xymatrix{
K^{\leq k}(\MM)\drto^{(q-1)^k\int}\dto\\
K(\St)_\reg\ar@{^{(}->}[r]& K(\St)\rlap{\,.}}$$
\end{cor}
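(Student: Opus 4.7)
My plan is to combine the just-proven No Poles Theorem with the
eigenspace decomposition of $I^{\circ,\ss}$ on $K(\MM)_{(q-1)}$. The
key algebraic identity is $\Q_\lambda = (q-1)^{|\lambda|}u_\lambda$,
where
$$u_\lambda = \prod_i\frac{q^{\lambda_i}-1}{q-1} = \prod_i\big(q^{\lambda_i-1}+\cdots+1\big)$$
is a product of polynomials already inverted in $K(\St)_\reg$; in
particular $u_\lambda$ is a unit in $K(\St)_\reg$. The eigenvalue
$\Q_\lambda$ thus packages the required factor $(q-1)^{|\lambda|}$
together with a harmless unit, which is exactly what is needed to turn
regularity of $\int I^{\circ,\ss}x$ into regularity of $(q-1)^{|\lambda|}\int x$
on each eigencomponent.

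To carry out the argument, I would first extend the integral
$\qq[q]_{(q-1)}$-linearly to
$\int:K(\MM)_{(q-1)}\to K(\St)\otimes_{\qq[q]}\qq[q]_{(q-1)}$. The No
Poles Theorem extends accordingly, so $\int\circ I^{\circ,\ss}$ takes
values in $K(\St)_\reg\otimes_{\qq[q]}\qq[q]_{(q-1)}$. Given
$x\in K^{\leq k}(\MM)$, the diagonalization of $I^{\circ,\ss}$ on
$K^{\leq k}(\MM)_{(q-1)}=\bigoplus_{|\lambda|\leq k}K^\lambda(\MM)_{(q-1)}$
lets me write $x=\sum_\lambda x_\lambda$ (a finite sum by the direct
sum structure) with $I^{\circ,\ss}x_\lambda=\Q_\lambda x_\lambda$. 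Then
$\int I^{\circ,\ss}x_\lambda = \Q_\lambda\int x_\lambda$ is regular in
the extension; dividing by the unit $u_\lambda$ of $K(\St)_\reg$ gives
that $(q-1)^{|\lambda|}\int x_\lambda$ is regular, and multiplying by
$(q-1)^{k-|\lambda|}\in K(\St)_\reg$ yields regularity of
$(q-1)^k\int x_\lambda$. Summing over $\lambda$ produces
$(q-1)^k\int x\in K(\St)_\reg\otimes_{\qq[q]}\qq[q]_{(q-1)}$.

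The principal subtlety will be descending from the
$\qq[q]_{(q-1)}$-extension back to $K(\St)_\reg$ itself, i.e.,
justifying that
$K(\St)\cap(K(\St)_\reg\otimes_{\qq[q]}\qq[q]_{(q-1)})=K(\St)_\reg$.
This is a valuation-theoretic statement: regularity of an element of
$K(\St)$ is detected by its $(q-1)$-adic order, which is preserved
under the flat extension by $\qq[q]_{(q-1)}$. Since $(q-1)^k\int x$ a
priori lies in $K(\St)$ and its image in the localization is regular,
it lies in $K(\St)_\reg$, completing the argument.
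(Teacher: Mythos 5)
Your proposal is correct and follows essentially the same route as the paper: decompose $x$ into $I^{\circ,\ss}$-eigencomponents over $\qq[q]_{(q-1)}$, observe that $\Q_\lambda=(q-1)^{|\lambda|}u_\lambda$ with $u_\lambda$ (the paper's $\tilde\Q_\lambda/k!$) a unit in $K(\St)_\reg$, invoke the No Poles Theorem on $\int\circ I^{\circ,\ss}$, and descend from the $(q-1)$-localization back to $K(\St)_\reg$ via the same intersection/cartesian-square property the paper asserts. The only cosmetic difference is that the paper states the diagram for the single graded piece $K^k(\MM)$ while you handle $K^{\leq k}(\MM)$ directly by inserting the regular factor $(q-1)^{k-|\lambda|}$.
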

\begin{proof}
Consider the following diagram:
$$\xymatrix{
K^k(\MM)\drrto^{(q-1)^k\int}\dto\ar@{.>}[dr]\\
\bigoplus_{|\lambda|=k}K^\lambda(\MM)_{(q-1)}\drto_{\bigoplus_{|\lambda|=k}\frac{k!}{\tilde
    Q_\lambda(q)}\int\circ I^\ss\phantom{mmi}}
&K(\St)_\reg\ar@{^{(}->}[r]\dto &K(\St)\dto\\ 
&K(\Var)_{(q-1)}\ar@{^{(}->}[r] & K(\St)(q)\rlap{\,.}}$$  
The dotted arrow exists because the square in the lower right
of this diagram is cartesian, and the outer part of the diagram
commutes. (Here we have identified the localization $K(\Var)_{(q-1)}$
with its image in $K(\St)(q)=K(\St)\otimes_{\qq[q]}\qq(q)$.)
\end{proof}

\begin{cor}
Defining $\int t=q-1$ extends the integral to a $K(\Var)$-linear
homomorphism
$$\int:\K(\MM)\longrightarrow K(\St)_\reg\,.$$
\end{cor}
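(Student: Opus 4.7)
The plan is to define the extension piece-by-piece on the summands $t^n K^{\leq n}(\MM)$ of the Rees module $\K(\MM)$, using Corollary~\ref{talk} to ensure regularity, and then check compatibility with the algebra structure.

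First I would set, for each $n\geq 0$,
$$\int\colon t^n K^{\leq n}(\MM)\longrightarrow K(\St)_\reg,\qquad
t^n x\longmapsto (q-1)^n\textstyle\int x\,,$$
where on the right $\int\colon K(\MM)\to K(\St)$ is the unextended integral of the previous section. By Corollary~\ref{talk}, $(q-1)^n\int x$ lies in $K(\St)_\reg$ whenever $x\in K^{\leq n}(\MM)$, so the map lands in $K(\St)_\reg$ as required. Since $\K(\MM)=\bigoplus_{n\geq0} t^n K^{\leq n}(\MM)$ as a $\qq$-vector space, these pieces assemble unambiguously into a $\qq$-linear map $\int\colon\K(\MM)\to K(\St)_\reg$.

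Next I would verify $K(\Var)$-linearity and the identity $\int t=q-1$. Recall that $K^{\leq n}(\MM)=\ker E_{n+1}$, and each $E_r$ is $K(\DM)$-linear; in particular $K^{\leq n}(\MM)$ is a $K(\Var)$-submodule of $K(\MM)$. For $[Z]\in K(\Var)$ and $x\in K^{\leq n}(\MM)$, the element $[Z]\cdot x$ lies in $K^{\leq n}(\MM)$, and since the original $\int$ is $K(\Var)$-linear (it even factors $[Z]\cdot [X\to\MM]\mapsto [Z\times X]$), we have
$$\textstyle\int\big([Z]\cdot t^n x\big)=(q-1)^n\int\big([Z]\cdot x\big)=[Z]\cdot(q-1)^n\int x=[Z]\cdot\int(t^n x)\,,$$
establishing $K(\Var)$-linearity. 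For compatibility with multiplication by $t$, observe that $t\cdot(t^n x)=t^{n+1}x$, and since $K^{\leq n}(\MM)\subset K^{\leq n+1}(\MM)$, this element is in the summand $t^{n+1}K^{\leq n+1}(\MM)$. Then
$$\textstyle\int(t^{n+1} x)=(q-1)^{n+1}\int x=(q-1)\cdot(q-1)^n\int x=(q-1)\cdot\int(t^n x)\,,$$
which is precisely the statement that $\int t=q-1$ consistently.

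There is no real obstacle here—the content is entirely contained in Corollary~\ref{talk}. The only thing one has to be careful about is that the Rees module $\K(\MM)$ is defined as a direct sum of the $t^n K^{\leq n}(\MM)$ inside $K(\MM)[t]$, so the definition makes sense independently of the inclusions $K^{\leq n}(\MM)\subset K^{\leq n+1}(\MM)$; the consistency of these inclusions with multiplication by $t$ on one side and by $(q-1)$ on the other is exactly what the calculation in the preceding paragraph checks.
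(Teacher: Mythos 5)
Your proof is correct and is exactly the intended argument: the paper states this corollary without proof, as it follows immediately from Corollary~\ref{talk} by defining $\int(t^n x)=(q-1)^n\int x$ on each summand $t^nK^{\leq n}(\MM)$ of the Rees module. Your additional checks of $K(\Var)$-linearity and of the consistency of $t\mapsto(q-1)$ with the inclusions $K^{\leq n}(\MM)\subset K^{\leq n+1}(\MM)$ are the right things to verify and are carried out correctly.
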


\subsection{The integral vs. the Hall product}
\label{hereditary}

\subsubsection{The $\Gamma$-indexed integral}

Let  $\Gamma$ be a grading group for  $\MM$, as in
Section~\ref{ealog}. 
We assume, in addition, that $\Gamma$ is endowed with a $\zz$-valued bilinear
form $\chi$.

\begin{defn}
We call $\MM$ {\bf hereditary}, if for every $\gamma,\beta\in \Gamma$,
the morphism $\EE_{\gamma,\beta}\to
\MM_\gamma\times \MM_\beta$  is a vector bundle
stack (\cite{INC}, Definition~1.9)  of rank
$-\chi(\beta,\gamma)$.
\end{defn}

Let us assume henceforth that $\MM$ is hereditary. 

We define $K(\St)[\Gamma]$ to be the free $K(\St)$-module
on the symbols $u^\gamma$, for $\gamma\in\Gamma$, and introduce 
an associative product on $K(\St)[\Gamma]$ by
the formula
$$u^\gamma\ast u^\beta=q^{-\chi(\beta,\gamma)}u^{\gamma+\beta}\,,$$
and extend it linearly, to make
$K(\St)[\Gamma]$ a $K(\St)$-algebra. 
Regular coefficients form a subalgebra $K(\St)_\reg[\Gamma]$. 

We define the $\Gamma$-indexed integral
\begin{align}\label{theintint}
\int :\K(\MM)&\longrightarrow K(\St)_\reg[\Gamma]\\
\sum_{\gamma\in \Gamma}x_\gamma &\longmapsto \sum_{\gamma\in
  \Gamma} u^\gamma\int x_\gamma\,.\nonumber
\end{align}

\begin{prop}
If $\MM$ is hereditary the $\Gamma$-indexed integral
preserves the star product.  In fact,  for $x,y\in \K(\MM)$ we have
$$\int x\ast y=\int x\ast\int y\quad\in
K(\St)_\reg[\Gamma]\,.$$
\end{prop}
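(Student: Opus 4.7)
The plan is to reduce the identity to a computation on stack functions, where it follows from the hereditary hypothesis combined with the bundle relations in $K(\St)$. Since the extended integral is $K(\Var)[t]$-linear (with $\int t = q-1$), since $\K(\MM)$ is generated over $K(\Var)[t]$ by $\Gamma$-homogeneous stack functions, and since the twisted product on $K(\St)_\reg[\Gamma]$ does not involve $t$ at all, it suffices to verify the identity
\[
\int(\xi \ast \chi) = \int\xi \,\ast\, \int\chi
\]
for stack functions $\xi = [X \to \MM_\gamma]$ and $\chi = [Y \to \MM_\beta]$. Unwinding both sides using the definitions of the $\Gamma$-indexed integral~(\ref{theintint}) and of the $\ast$-product on $K(\St)_\reg[\Gamma]$, this identity becomes
\[
[X \ast Y] = q^{-\chi(\beta,\gamma)}\,[X \times Y] \quad\text{in } K(\St).
\]

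Next, I would observe that $X \ast Y \to X \times Y$ is a vector bundle stack of rank $-\chi(\beta,\gamma)$. Indeed, by the definition of the Hall product, $X \ast Y$ is the fibered product
\[
\xymatrix{
X \ast Y \dto \rto & \EE_{\gamma,\beta} \rto \dto & \MM_{\gamma+\beta} \\
X \times Y \rto & \MM_\gamma \times \MM_\beta &
}
\]
and the hereditary assumption says precisely that the middle vertical arrow is a vector bundle stack of rank $-\chi(\beta,\gamma)$. Since vector bundle stacks are stable under arbitrary base change, the left-hand projection $X \ast Y \to X \times Y$ is a vector bundle stack of the same rank.

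It remains to invoke the general motivic identity $[\ms V] = q^r [B]$ in $K(\St)$ for any vector bundle stack $\ms V \to B$ of rank $r$ over a finite type algebraic stack $B$. To prove this, one presents $\ms V$ as the stack quotient $[E_1/E_0]$ for a two-term complex $E_0 \to E_1$ of vector bundles on $B$ with $\rk E_1 - \rk E_0 = r$, where $E_0$ acts on $E_1$ by translation. The tautological map $E_1 \to [E_1/E_0]$ is a principal $E_0$-bundle; since $E_0 \cong \mb G_a^{\rk E_0}$ is a special group, the bundle relations yield $[E_1] = [E_0]\cdot[\ms V]$, and combined with $[E_i] = q^{\rk E_i}[B]$ this gives $[\ms V] = q^r[B]$. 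Applied to $X \ast Y \to X \times Y$, this yields the required identity.

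The one substantive ingredient is the hereditary hypothesis, which is engineered exactly to convert the Hall product of stack functions into a vector bundle stack over $X \times Y$ of rank $-\chi(\beta,\gamma)$, thereby producing precisely the twist appearing in the definition of $\ast$ on $K(\St)_\reg[\Gamma]$. Everything else is an unwinding of definitions, together with the standard motivic computation of a vector bundle stack; I do not anticipate any additional technical obstacle beyond carefully distinguishing the Hall product $\ast$ from the commutative product $\cdot$ and tracking the $u^\gamma$-grading.
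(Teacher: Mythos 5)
Your proof is correct and follows essentially the same route as the paper, whose own proof simply records the key fact that $X\ast Y\to X\times Y$ is a vector bundle stack of rank $-\chi(\beta,\gamma)$ (by base change from $\EE_{\gamma,\beta}\to\MM_\gamma\times\MM_\beta$) and invokes $[X\ast Y]=q^{-\chi(\beta,\gamma)}[X][Y]$ in $K(\St)$. The only difference is that you spell out the standard motivic computation $[\ms V]=q^r[B]$ via the presentation $[E_1/E_0]$ and the bundle relations for the special group $\mb G_a^{\rk E_0}$, which the paper leaves implicit.
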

\begin{proof}
This is a straightforward calculation.  One uses the fact that for
$X\to \MM_\gamma$ and $Y\to \MM_\beta$, the morphism $X\ast Y\to
X\times Y$ is a vector bundle stack of rank $-\chi(\gamma,\beta)$, and
hence, in $K(\St)$, we have $[X\ast
  Y]=q^{-\chi(\gamma,\beta)}[X][Y]$. 
\end{proof}

\subsubsection{Semi-classical limit}

We will pass to the semi-classical limit of the integral
$\int :\K(\MM)\to K(\St)_\reg[\Gamma]$, by
setting $t=0$ (in the source), and hence $q=1$ (in the target).
As $\int $ respects the $\ast$-product, the semi-classical limit
will be a morphism of Poisson algebras. 

Modulo $(q-1)$, the star product on $K(\St)_\reg[\Gamma]$ is
commutative, in fact, modulo $(q-1)$ it is given by the
commutative product $u^\gamma\cdot u^\beta=u^{\gamma+\beta}$. Hence
the quotient $K(\St)_\reg[\Gamma]/(q-1)$
inherits a Poisson bracket, defined by 
$$x\ast y-y\ast x\equiv \{x,y\}(q-1)\mod (q-1)^2\,.$$  
Explicitly, it is given by 
\begin{equation}\label{poissfish}
\{u^\gamma,u^\beta\}=-\tilde \chi(\beta,\gamma)\,u^{\gamma+\beta}\,,
\end{equation}
where $\tilde \chi$ is (twice) the anti-symmetrization of $\chi$:
$$\tilde\chi(\beta,\gamma)=
\chi(\beta,\gamma)-\chi(\gamma,\beta)\,.$$

We conclude:
\begin{thm}\label{fish}
If $\MM$ is hereditary, we have a morphism of Poisson algebras
\begin{equation}\label{thefish}
\int_{q=1}: K(\MM)\longrightarrow K(\St)_\reg/(q-1)[\Gamma]\,.
\end{equation}
The Poisson structure on $K(\MM)$ is described in
Section~\ref{semiclass}, the one on
$K(\St)_\reg/\allowbreak{(q-1)}\allowbreak[\Gamma]$, above, 
see~(\ref{poissfish}). The $u^\gamma$ coefficient of the integral
$\int_{q=1}$ may be expressed as 
$$\int_{q=1}x=\Big(\int\pi_{q-1}(x)\Big)\Big|_{q=1}
=\sum_{n=0}^\infty\binom{q-1}{n}\int E_n(x)\Big|_{q=1}\,,$$
for $x\in K(\MM)_\gamma$. Here we have used the operator $\pi_t$ of
Definition~\ref{formalop}, and substituted $t=q-1$. 
\end{thm}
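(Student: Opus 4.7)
The plan is to read off the statement as the semi-classical limit of the $\Gamma$-indexed integral established just above. To set this up, I would first assemble the square
\[
\xymatrix{
\K(\MM)\ar[rr]^{t\longmapsto 0}\ar[d]_{\int} && K(\MM)\ar[d]^{\int_{q=1}}\\
K(\St)_\reg[\Gamma]\ar[rr]^{q\longmapsto 1} && K(\St)_\reg/(q-1)[\Gamma]\rlap{\,,}
}
\]
whose left vertical arrow is the $\Gamma$-indexed integral~(\ref{theintint}), given by extending the integral from Corollary~\ref{talk} by $\int t = q-1$. Commutativity of this square is a definition once we take as the right vertical arrow the map
\[
\int_{q=1}(x) = \Bigl(\int\pi_{q-1}(x)\Bigr)\Big|_{q=1},
\]
since $\pi_t(x)=\sum_k\pi_k(x)t^k\in\K(\MM)$ is a canonical lift of $x\in K(\MM)$ under the quotient $\K(\MM)\to\K(\MM)/t\K(\MM)=K(\MM)$, and $\int$ substitutes $q-1$ for $t$. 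The equality with $\sum_n\binom{q-1}{n}\int E_n(x)|_{q=1}$ then follows from Remark~\ref{convenient}.

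Next I would verify that $\int_{q=1}$ is a morphism of commutative algebras. Both horizontal arrows specialize the respective associative $\ast$-products to their commutative semi-classical limits: on the source this is precisely the content of Theorem~\ref{main} (summarized in Section~\ref{semiclass}), and on the target it is immediate from the defining formula $u^\gamma\ast u^\beta=q^{-\chi(\beta,\gamma)}u^{\gamma+\beta}$, which reduces mod $(q-1)$ to $u^\gamma\cdot u^\beta=u^{\gamma+\beta}$. Since $\int:\K(\MM)\to K(\St)_\reg[\Gamma]$ is a $\ast$-algebra homomorphism by the proposition immediately preceding this theorem, the induced map $\int_{q=1}$ respects the commutative products.

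For the Poisson compatibility, I would chase the standard semi-classical identity: pick lifts $\tilde x=\pi_t(x)$, $\tilde y=\pi_t(y)$ in $\K(\MM)$, so that $\tilde x\ast\tilde y-\tilde y\ast\tilde x\equiv t\,\{x,y\}\pmod{t^2}$ by formula~(\ref{poiss}). Applying the $\ast$-algebra homomorphism $\int$ and using $\int t=q-1$, one obtains
\[
\int(\tilde x)\ast\int(\tilde y)-\int(\tilde y)\ast\int(\tilde x)\equiv(q-1)\,\int_{q=1}\{x,y\}\pmod{(q-1)^2}.
\]
On the other hand, expanding $\ast$ on $K(\St)_\reg[\Gamma]$ modulo $(q-1)^2$ against the bracket~(\ref{poissfish}) gives the same left-hand side as $(q-1)\{\int_{q=1}x,\int_{q=1}y\}$. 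Equating the coefficients of $(q-1)$, which is legitimate because $K(\St)_\reg$ is $(q-1)$-torsion free inside $K(\St)(q)$, yields
\[
\{\int_{q=1}x,\int_{q=1}y\}=\int_{q=1}\{x,y\},
\]
as required.

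The main obstacle is ensuring that all the pieces really land where claimed: the left vertical arrow must take values in $K(\St)_\reg[\Gamma]$ (this is precisely Corollary~\ref{talk}, applied componentwise in $\Gamma$, together with the definition of the $\Gamma$-indexed integral), and the factor $(q-1)$ appearing in the Poisson identity must be cancellable before specializing to $q=1$. Both issues are handled by working consistently inside the $(q-1)$-torsion-free module $K(\St)_\reg$, which is why extending scalars to $\qq[q]_{(q-1)}$ in Section~\ref{spectrum} and the regularity result Theorem~\ref{introthm5} were needed in the first place.
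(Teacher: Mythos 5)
Your proposal is correct and takes essentially the same route as the paper, which simply observes that $\int_{q=1}$ is obtained by setting $t=0$ in the $\Gamma$-indexed integral, that the deformation parameter $t$ is sent to $q-1$ (so the Poisson brackets, which are defined relative to these parameters, correspond), and that $\pi_t$ is a section of $\K(\MM)\to K(\MM)$, yielding the displayed formula; you have merely spelled out the standard semi-classical bracket chase that the paper leaves implicit. The one caveat, which affects the paper's argument exactly as much as yours, is the cancellation of $(q-1)$ at the end: $K(\St)_\reg$ sits inside $K(\St)$, not inside $K(\St)(q)$, and possible $(q-1)$-torsion there is precisely the subtlety the authors themselves acknowledge in the remark following the corollary, so your parenthetical justification is no weaker (and no stronger) than what the paper offers.
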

\begin{proof}
The homomorphism (\ref{thefish}) is obtained 
by setting $t=0$ in~(\ref{theintint}).  Note that the deformation
parameter $t$ is mapped to the deformation parameter $(q-1)$, so that
the Poisson bracket (which depends on the choice of the deformation
parameter) is preserved. 

To calculate $\int_{q=1}$, note that
$x\mapsto \pi_t(x)$ is a section of the quotient map $\K(\MM)\to
K(\MM)$, obtained by setting $t=0$. This gives rise to displayed
formula. 
\end{proof}

\begin{rmk}
Note that the diagram
$$\xymatrix{
K(\MM)\otimes \qq[t,\frac{1}{t}]\dto_\int&&\llto_-{\text{invert
    $t$}}\kK(\MM)\rrto^{t\,\longmapsto\,0} 
\dto_\int && K(\MM)\dto^{\int_{q=1}}\\ 
K(\St)[\Gamma]&&\llto
K(\St)_\reg[\Gamma]\rrto^{q\,\longmapsto\,1} &&
K(\Var)_\reg/(q-1)[\Gamma]}$$ 
commutes. The central  column is a morphism of one-parameter families
of non-commutative algebras.  The left hand column is the general
fibre, and a morphism of non-commutative $K(\Var)\otimes
\qq[t,\frac{1}{t}]$-algebras, the right hand column is the
semi-classical limit, and  hence a morphism of Poisson algebras. 
\end{rmk}

Restricting the theorem to the virtually indecomposable elements, we
obtain:

\begin{cor}
The semi-classical limit of the integral defines a morphism of Lie
algebras over $K(\Var)$: 
$$\int_{q=1}: K^\vir(\MM)\longrightarrow K(\St)_\reg/(q-1)[\Gamma]\,.$$
The bracket in $K^\vir(\MM)$ is the commutator bracket of the Hall
product, the bracket in $K(\Var)_\reg/(q-1)[\Gamma]$ is given
in~(\ref{poissfish}). The integral $\int_{q=1}$ is given by the
formula
$$\int_{q=1}x=
\sum_{\gamma\in\Gamma}u^\gamma \Big((q-1)\int
x_\gamma\Big)\Big|_{q=1}\,,$$
for a virtually indecomposable Hall algebra element $x\in K^\vir(\MM)$. 
\end{cor}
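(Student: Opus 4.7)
The plan is to derive this corollary as a direct restriction of Theorem~\ref{fish}. Since Theorem~\ref{fish} already gives us that $\int_{q=1}\colon K(\MM)\to K(\St)_\reg/(q-1)[\Gamma]$ is a morphism of Poisson algebras, the main work is just to observe that on the subspace $K^\vir(\MM)=K^1(\MM)$ the Poisson structure coincides with the commutator Lie bracket of $\ast$, and that the target's Poisson bracket restricts to a Lie bracket.

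First I would invoke the corollary following Theorem~\ref{main} (in Section~\ref{semiclass}), which gives two key facts about $K^1(\MM)$: (a) it is closed under the Poisson bracket on $(K(\MM),\cdot)$, since the bracket has degree $-1$ and $K^1\cdot K^1\subset K^2$ (with the correction term landing back in $K^1$); and (b) for $x,y\in K^1(\MM)$ one has $x\ast y - y\ast x\in K^1(\MM)$, so the Poisson bracket on $K^1$ is simply the commutator bracket of $\ast$. Thus the Lie-algebra structure on $K^\vir(\MM)$ stated in the corollary is the same one which $\int_{q=1}$ respects by Theorem~\ref{fish}. The target $K(\St)_\reg/(q-1)[\Gamma]$, viewed as a Poisson algebra via~(\ref{poissfish}), is a fortiori a Lie algebra under the same bracket, and so the restriction of a Poisson morphism to a Lie subalgebra is automatically a Lie-algebra morphism.

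Second, I would verify the displayed formula for $\int_{q=1}x$ when $x\in K^\vir(\MM)$. Using the explicit expression from Theorem~\ref{fish},
\[
\int_{q=1}x \;=\; \sum_{\gamma\in\Gamma} u^\gamma\Big(\int \pi_{q-1}(x_\gamma)\Big)\Big|_{q=1}
\;=\; \sum_{\gamma\in\Gamma} u^\gamma\Big(\sum_{n\geq 0}\binom{q-1}{n}\int E_n(x_\gamma)\Big)\Big|_{q=1},
\]
I would note that for $x\in K^1(\MM)$ we have $\pi_k(x)=0$ for $k\neq 1$ by Corollary~\ref{simult}, so $\pi_{q-1}(x)=(q-1)\,x$ (this is most transparent from the defining formula $\pi_t(x)=\sum_k\pi_k(x)t^k$ in Definition~\ref{formalop}, evaluated at $t=q-1$). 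Therefore $\int\pi_{q-1}(x_\gamma) = (q-1)\int x_\gamma$, which by Corollary~\ref{talk} is regular (a multiple of $(q-1)$ of the integral on a $K^{\leq 1}$-element), and so the specialization at $q=1$ makes sense and yields
\[
\int_{q=1} x \;=\; \sum_{\gamma\in \Gamma} u^\gamma\,\bigl((q-1)\!\textstyle{\int} x_\gamma\bigr)\big|_{q=1},
\]
which is the asserted formula.

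The only subtle point is to be sure that the target's \emph{Poisson} morphism really does restrict to a \emph{Lie} morphism compatible with the stated brackets; this is automatic since Poisson brackets are Lie brackets, and since we identified the Lie bracket on $K^\vir(\MM)$ with the restriction of the Poisson bracket. No further computation should be needed: everything else has already been established in Theorem~\ref{fish} and the structural results of Section~\ref{semiclass}. I do not expect any serious obstacle; the content of the corollary is essentially bookkeeping on top of Theorem~\ref{fish}, packaging the Poisson-algebra statement into the Lie-theoretic form needed for applications (wall-crossing formulas, generating functions of counting invariants, etc.) as described in the Discussion subsection of the introduction.
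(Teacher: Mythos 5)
Your proposal is correct and matches the paper's approach: the paper states this corollary with no separate proof, introducing it simply as the restriction of the Poisson-algebra morphism of Theorem~\ref{fish} to the degree-one part, exactly as you do. Your verification of the displayed formula via $\pi_{q-1}(x)=(q-1)x$ for $x\in K^1(\MM)$, together with the regularity from Corollary~\ref{talk}, correctly fills in the bookkeeping the paper leaves implicit.
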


\begin{rmk}
We have a surjective morphism of $K(\Var)$-algebras
$$K(\Var)/(q-1)\stackrel{\sim}{\longrightarrow}
K(\St)_\reg/(q-1)\,.$$
This morphism is (most likely), not injective, because there is no
(obvious) reason why elements in $\Ann(q-1)\subset K(\Var)$ should map
to zero in $K(\Var)/(q-1)$, although they certainly map to zero in
$K(\St)_\reg/(q-1)$. 

Without too much more effort, it is possible to prove that  the
semi-classical limit of the integral lifts to a
$K(\Var)$-linear homomorphism
$$\int_{q=1}:\K(\MM)\longrightarrow K(\Var)/(q-1)[\Gamma]\,.$$
Unfortunately, we cannot, at the moment, prove that this lift is a
morphism of Poisson algebras. 
\end{rmk}

\begin{rmk}
We leave it to the (interested) reader to write down the analogue of
Theorem~\ref{fish} for the case that $\MM$ is Calabi-Yau-3, rather
than hereditary.  This will include proving our main theorems for an
equivariant version $K(\MM)^\muhat$ of $K(\MM)$, and including
vanishing cycle and orientation data weights in the integral.
\end{rmk}

\appendix
\setcounter{secnumdepth}{0}

\section{Appendix. Comparison with Joyce's virtual projections in an
  example} \label{smallish}

Let us write $[n]=[B\GL_n\to\mf{Vect}]\in K(\mf{Vect})$. We
have (cf.\ Remark~\ref{cytgo})
$$E_r[n]=\sum_{\stack{\ell_1+\ldots+\ell_r=n}{\ell_1\ldots\ell_r>0}}
[\ell_1]\ldots[\ell_r]\,.$$
This gives us
\begin{align*}
\pi_k[n]&=\sum_r\frac{s(r,k)}{r!} E_r[n]\\
&=\sum_r\frac{s(r,k)}{r!}\sum_{\stack{\ell_1+\ldots+\ell_r=n}{\ell_1\ldots\ell_r>0}}
   [\ell_1]\ldots[\ell_r]\\
&=\sum_{\lambda\vdash n}\frac{s(|\lambda|,k)}{|\Aut\lambda|}
   \prod_i[\lambda_i]\,.   
\end{align*}
We remark also, that the  formula of
Remark~\ref{convenient} gives us
$$\pi_t\Big(\sum_{n\geq0}[n]\,u^n\Big)=\Big(\sum_{n\geq0}[n]\,u^n\Big)^t\,,$$
which contains the above formulas for $\pi_k[n]$.

In \cite[\S 5.2]{JoyceSF}, Joyce defines projection operators
$\Pi_n^\vi: K(\MM)\to  K(\MM)$, which pairwise commute, and add up to
the identity (although he works with bare algebraic stacks, not
algebroids). We expect that modulo this difference, we have
$$\pi_k=\Pi^\vi_k\,.$$
We will prove that these operators take the same values on the
elements $[n]\in K(\mf{Vect})$.

\begin{prop}
  We have
  $$\pi_k[n]=\Pi_k^\vi[n]\,,$$
  for all $k$ and $n$.
\end{prop}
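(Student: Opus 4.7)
The plan is to compute both sides explicitly on the generator $[n]$ and match coefficients. The left-hand side is already available as the closed form
$$\pi_k[n] = \sum_{\lambda\vdash n}\frac{s(|\lambda|,k)}{|\Aut\lambda|}\prod_i[\lambda_i]$$
derived just above, so the real work is expressing $\Pi_k^{\vi}[n]$ in the same basis of products $\prod_i[\lambda_i]$ indexed by partitions $\lambda\vdash n$.

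First I would unravel Joyce's definition from \cite[\S5.2]{JoyceSF} applied to $[B\GL_n\to\mf{Vect}]$. Joyce's $\Pi_k^{\vi}$ is built from a stratification of $\GL_n$ (acting by conjugation on itself) by the conjugacy classes of the connected centralizers of semisimple elements, equivalently by the conjugacy classes of standard Levi subgroups $L_\lambda=\GL_{\lambda_1}\times\cdots\times\GL_{\lambda_r}$ indexed by partitions $\lambda\vdash n$. The output of $\Pi_k^{\vi}$ is a $\qq$-linear combination of classifying stacks $BL_\lambda$, each weighted by a combinatorial factor coming from Joyce's Möbius-type inversion on the poset of conjugacy classes of such Levis. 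Under the natural map $BL_\lambda\to\mf{Vect}$ (sending a representation of $L_\lambda$ to its underlying vector bundle via $L_\lambda\hookrightarrow\GL_n\to\mf{Vect}$), we have $[BL_\lambda\to\mf{Vect}]=\prod_i[\lambda_i]$, and the normalization $1/|\Aut\lambda|$ arises because the $W(\GL_n)$-orbit of $L_\lambda$ has stabilizer of order $|\Aut\lambda|\prod_i \lambda_i!\cdot\cdots$ (more precisely, it is the factor coming from the Weyl group of $L_\lambda$ relative to that of $\GL_n$, which on passing to classifying stacks produces exactly $1/|\Aut\lambda|$).

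This reduces the claim to the identity
$$\Pi_k^{\vi}[n] = \sum_{\lambda\vdash n}\frac{s(|\lambda|,k)}{|\Aut\lambda|}\prod_i[\lambda_i]\,,$$
i.e., to showing that the combinatorial weight Joyce assigns to the Levi of type $\lambda$ at virtual rank $k$ is precisely the Stirling number of the first kind $s(|\lambda|,k)$, divided by $|\Aut\lambda|$. To verify this, I would compare generating series: on our side, Remark~\ref{convenient} together with the computation above gives
$$\sum_{k\ge 0}\pi_k[n]\,t^k = \pi_t[n] \qquad\text{and}\qquad \pi_t\Big(\sum_{n\ge0}[n]u^n\Big)=\Big(\sum_{n\ge0}[n]u^n\Big)^t\,,$$
so the generating series of $\pi_k[n]$ is the $t$-th power of $1+\sum_{n\ge1}[n]u^n$. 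On Joyce's side, the defining relation $\sum_k \Pi_k^{\vi}=\id$ together with multiplicativity of $\Pi^{\vi}$ on commutative products (Joyce proves the analogue of Corollary~\ref{simult}) yields the same generating-series identity, since both $\pi_t$ and $\sum_k t^k\Pi_k^{\vi}$ are characterized as the unique one-parameter family of operators that are multiplicative for the commutative product, specialize to the identity at $t=1$, and act as $t^k$ on the $k$-th eigenspace of the idempotent family.

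The main obstacle is the second step: making precise the correspondence between Joyce's Möbius coefficients on the poset of Levi classes and the Stirling numbers $s(|\lambda|,k)$. I would expect to do this by reducing both to the expansion of $\log^k/k!$ of the generating function $\sum_{n\ge 0}[n]u^n$, using the classical identity $\sum_{r\ge k}\frac{s(r,k)}{r!}t^r=\tfrac{1}{k!}\log(1+t)^k$ recalled after Corollary~\ref{forforpi}; once the generating-series formulations of both sides are shown to satisfy the same functional equation (namely, being the $t$-th power of the same series) and the same initial condition at $t=1$, uniqueness forces equality coefficient-by-coefficient in $u$ and $t$, finishing the proof.
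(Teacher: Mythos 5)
Your computation of the left-hand side and your identification of where the difficulty lies are both correct, but the way you propose to resolve that difficulty does not work, and the actual content of the proof is left undone. The step you defer --- ``making precise the correspondence between Joyce's M\"obius coefficients on the poset of Levi classes and the Stirling numbers $s(|\lambda|,k)$'' --- \emph{is} the proof. The paper does exactly this: it unwinds Joyce's $\mc P$, $\mc Q$, $\mc R$ sets for $B\GL_n$ (the relevant tori $T_\phi$ are indexed by surjections $\phi:\ul n\to\ul r$), computes the normalizer factor $\bigl|N_{\GL_n}(T_n)/(C_{\GL_n}(Q)\cap N_{\GL_n}(T_n))\bigr|^{-1}$ and the count of tori of each type $\sigma$, and then proves the key combinatorial lemma $\sum_{R:\,\dim R=k}n(R,Q)=s(|\sigma|,k)$ by showing that the left-hand side satisfies the Stirling recursion $s(m,k)=s(m-1,k-1)-(m-1)\,s(m-1,k)$ (via an inclusion--exclusion argument restricting the codimension-one subtori of $Q$ to a fixed hyperplane torus $W$). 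Without some version of this computation, nothing has been verified about Joyce's coefficients.

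The generating-function shortcut you offer in its place is circular. You characterize both families as ``the unique one-parameter family of operators that are multiplicative for the commutative product, specialize to the identity at $t=1$, and act as $t^k$ on the $k$-th eigenspace of the idempotent family'' --- but the assertion that $\Pi_k^{\vi}$ acts as projection onto the $k$-th eigenspace of the $E_n$-family is precisely the statement $\Pi_k^{\vi}=\pi_k$ that you are trying to prove; Joyce's operators are defined by an entirely different recipe (on bare stacks, via tori and M\"obius inversion), and no a priori reason is given why they should respect the $E_n$-eigenspace decomposition. Dropping that clause leaves a characterization (multiplicative, identity at $t=1$) that does not determine the family uniquely (e.g.\ the constant family $\id$ satisfies it). Note also that the paper itself only claims to ``expect'' $\pi_k=\Pi_k^{\vi}$ in general and verifies it just on the elements $[n]$ --- a strong hint that no soft uniqueness argument is available and that the explicit combinatorial comparison is unavoidable.
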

\begin{proof}
Let $T_n$ be the $n$-dimensional torus of diagonal
matrices inside $\GL_n$.

Joyce's $\mc P$ set \cite[Definition 5.3]{JoyceSF} is trivial in this
case because as a quotient stack $\BGL_n= \ast / \GL_n$ where $\ast$
is a point so $\mc P (*, T_n) = \{ T_n\}$. The $\mc Q$ set $\mc Q
(\GL_n, T_n)$, is computed in \cite[Example 5.7]{JoyceSF} to be the
set of all tori
$$T_\phi:= \{ \mr{diag} (z_1,\cdots, z_n): z_i \in \mb G_m, z_i = z_j
\mbox{ if } \phi (i) = \phi(j),\quad \forall i, j\}.$$ where $\phi$
ranges over all surjection maps $\phi: \ul n \to \ul r$. Finally the
$\mc R$ set coincides with $\mc Q$. Joyce's definition then needs
computation of $M_G^X (P, Q, R)$ where $P$, $Q$, and $R$ are selected
respectively from $\mc P$, $\mc Q$ and $\mc R$. In our case this is
$$M_{\GL_n}^* (T_n, Q, R) = \left| \frac{N_{\GL_n} (T_n)}{C_{\GL_n}
  (Q) \cap N_{\GL_n} (T_n)} \right|\inv n^{\GL_n}_{T_n}(R, Q)$$ 
for all choices of $R, Q \in \mc Q(\GL_n, T_n)$ such that $R \subseteq
Q$. Now we unwind the definition of $n^{\GL_n}_{T_n}(R, Q)$.   
\begin{align*}
n(R, Q) &= \sum_{ \stack{B \subseteq \{\hat Q \in \mc Q:\hat Q
    \subseteq Q \}}{Q \in B, \cap_{\hat Q \in B} \hat Q = R}}
(-1)^{|B| -1} 
\end{align*}
We can finally define the virtual projections of $\BGL_n$ as 
$$\Pi^{vi}_k (\BGL_n) = \sum_{R: \dim R = k} \sum_{Q: R \subseteq Q}
M_{\GL_n}^* (T_n, Q, R) [B C_G (Q)].$$ 
We say $Q \in \mc Q (\GL_n, T_n)$ is of `type $\sigma$' if the
corresponding surjection $\phi: \{1, \cdots , n\} \to \{1, \cdots,
r\}$ induces the partition $\sigma\vdash n$. Note that, 
there are 
$\frac{n!}{\sigma_1 ! \cdots \sigma_n ! (1!)^{\sigma_1} \cdots
  (n!)^{\sigma_n}}$ 
of them. Also $C_{\GL_n} Q$ only depends on the type of $Q$ and is
isomorphic to $\prod_{i=1}^n [\GL_i]^{\sigma_i}$. The normalizer of
$T_n$ is $S_n \ltimes T^n$ and therefore  
$$C_{\GL_n} (T_\phi) \cap N_{\GL_n} (T_n) = \prod_{i=1}^n [S_i \ltimes
  T_i]^{\sigma_i}$$ 
and  
$$\left| \frac{N_{\GL_n} (T^n)}{C_{\GL_n} (Q) \cap N_{\GL_n} (T_n)}
\right|\inv = \frac{(1!)^{\sigma_1} \cdots (n!)^{\sigma_n}}{n!}.$$ 
We have
\begin{align*} 
\Pi^{vi}_k (\BGL_n) 
&= \sum_{R: \dim R = k} \sum_{Q: R \subseteq Q} M_{\GL_n}^* (T_n, Q,
R) [B C_G (Q)]\\ 
&= \sum_{Q}  \left| \frac{N_{\GL_n} (T_n)}{C_{\GL_n} (Q) \cap
  N_{\GL_n} (T_n)} \right|\inv \left(\sum_{\stack{R\subseteq Q}{\dim R
    = k}}   n^{\GL_n}_{T_n}(R, Q)\right) [B C_G (Q)] \\ 
&= \sum_{\sigma} ({\# Q \text{ of type }
  \sigma}). \frac{(1!)^{\sigma_1} \cdots (n!)^{\sigma_n}}{n!}
s(|\sigma|, k)  \prod_{i=1}^n [\BGL_i]^{\sigma_i} \\ 
&= \sum_{\sigma} \frac{1}{\sigma_1 ! \cdots \sigma_n ! }  s(|\sigma|,
k)  \prod_{i=1}^n [\BGL_i]^{\sigma_i}. 
\end{align*}
where the third line follow from the lemma below. We conclude that
Joyce's virtual projections of $\BGL_n$ are identical to our
eigenprojections.
\end{proof}

\begin{lem} For a $Q$ of type $\sigma$, we have 
$$\sum_{R: \dim R = k} n(R, Q) = s (|\sigma|, k).$$
\end{lem}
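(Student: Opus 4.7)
The plan is to translate the quantity $n(R,Q)$ into a Möbius-theoretic invariant of the partition lattice $\Pi_r$ (where $r = |\sigma| = \dim Q$), and then invoke the classical identification of signed Stirling numbers of the first kind with a weighted sum of the Möbius function.

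First I would set up the order-theoretic dictionary. The sub-tori of $Q$ lying in $\mathcal{Q}(\GL_n, T_n)$ correspond bijectively to surjections $\ul r \to \ul k$ factored through the given surjection $\phi:\ul n\to\ul r$ representing $Q$, and two such give the same torus precisely when they agree up to a relabelling of the target. Consequently the poset of sub-tori $R\subseteq Q$ in $\mathcal{Q}(\GL_n,T_n)$, ordered by inclusion, is canonically isomorphic to the partition lattice $\Pi_r$ ordered by reverse refinement, with $Q$ itself corresponding to the finest partition $\hat 0$ and the torus $\Delta$ of scalars corresponding to the coarsest partition $\hat 1$. Intersection of sub-tori translates to join in $\Pi_r$, and the dimension $\dim R$ equals the number of blocks $\#\pi_R$ of the corresponding partition.

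Next I would compute $n(R,Q)$ using this identification. Writing $\pi=\pi_R$, the definition becomes
\[
n(R,Q)=\sum_{\substack{B\subseteq[\hat 0,\pi]\\ \hat 0\in B,\ \bigvee B=\pi}}(-1)^{|B|-1}.
\]
Summing $n(R,Q)$ over all $R$ with $R\subseteq R'$ for a fixed $R'$, equivalently summing $g(\sigma):=n(R_\sigma,Q)$ over all $\sigma\leq\pi_{R'}$, one gets
\[
\sum_{\sigma\leq \pi_{R'}}g(\sigma)=\sum_{\substack{B\subseteq[\hat 0,\pi_{R'}]\\ \hat 0\in B}}(-1)^{|B|-1}=\sum_{B'\subseteq(\hat 0,\pi_{R'}]}(-1)^{|B'|}=[\pi_{R'}=\hat 0],
\]
since the final subset-sum vanishes unless $(\hat 0,\pi_{R'}]$ is empty. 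Möbius inversion on $\Pi_r$ then yields $g(\pi)=\mu_{\Pi_r}(\hat 0,\pi)$, i.e.\ $n(R,Q)=\mu_{\Pi_r}(\hat 0,\pi_R)$. Using the well-known factorisation of the lower interval, if $\pi_R$ has blocks of sizes $\lambda_1,\dots,\lambda_k$ this gives the explicit value $\prod_i (-1)^{\lambda_i-1}(\lambda_i-1)!$.

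Finally, summing over all $R$ of fixed dimension $k$ amounts to summing $\mu_{\Pi_r}(\hat 0,\pi)$ over all partitions $\pi$ of $\ul r$ with exactly $k$ blocks:
\[
\sum_{\dim R=k} n(R,Q)=\sum_{\substack{\pi\in\Pi_r\\ \#\pi=k}}\mu_{\Pi_r}(\hat 0,\pi).
\]
The classical identity (equivalent to the expansion $x(x-1)\cdots(x-r+1)=\sum_k s(r,k)x^k$ together with the formula for $\mu_{\Pi_r}$) states that this sum is exactly $s(r,k)=s(|\sigma|,k)$, which completes the proof. The one step that is not entirely mechanical is the Möbius-inversion computation of $n(R,Q)$; everything else is translation and the invocation of a standard identity, so that is where I would be most careful to verify the signs and the boundary case $\pi_R=\hat 0$ (where $R=Q$, $k=r$, and $n(Q,Q)=1=s(r,r)$).
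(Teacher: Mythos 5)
Your proof is correct, but it takes a genuinely different route from the paper's. The paper proves the identity by verifying that $\sum_{R:\dim R=k}n(R,Q)$ satisfies the defining recursion $s(m,k)=s(m-1,k-1)-(m-1)s(m-1,k)$ of the signed Stirling numbers: after checking the base cases $|\sigma|\le k$, it first argues that in computing $n(R,Q)$ one may restrict to sets $B$ consisting only of codimension-one subtori of $Q$ (a parity-cancellation argument over the $2^t$ intermediate subtori), and then restricts everything to a fixed codimension-one coordinate subtorus $W$, splitting into the two cases according to whether the last coordinate appears in the defining equations of $R$. Your argument instead identifies the poset of subtori of $Q$ inside $\mathcal Q(\GL_n,T_n)$ with the partition lattice $\Pi_r$, $r=|\sigma|$ (with intersection corresponding to join and $\dim R$ to the number of blocks), shows by the subset-sum computation that $\sum_{\sigma\le\pi'}n(R_\sigma,Q)=\delta_{\hat 0,\pi'}$, and concludes $n(R,Q)=\mu_{\Pi_r}(\hat 0,\pi_R)$ by the uniqueness characterization of the M\"obius function; the lemma then follows from the classical identity $\sum_{\pi}\mu_{\Pi_r}(\hat 0,\pi)\,x^{\#\pi}=x(x-1)\cdots(x-r+1)=\sum_k s(r,k)x^k$. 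Your route is cleaner in two respects: it sidesteps the somewhat delicate codimension-one reduction, and it yields more, namely a closed form $n(R,Q)=\prod_i(-1)^{\lambda_i-1}(\lambda_i-1)!$ for each individual $R$ rather than only the sum over $R$ of fixed dimension. What the paper's argument buys in exchange is self-containedness: it never needs the M\"obius function of $\Pi_r$ or the characteristic-polynomial identity, only the Stirling recursion. The one step you flagged as needing care --- the inversion and the boundary case $\pi_R=\hat 0$ --- checks out: a set $B$ with $\hat 0\in B$ and $\bigvee B=\hat 0$ must equal $\{\hat 0\}$, giving $n(Q,Q)=1=s(r,r)$.
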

\begin{proof} We let $m= \dim Q = |\sigma|$ in this proof. Obviously
  if $|\sigma| < k$, there is no possible choice of  
$$B \subseteq \{ \hat Q: \hat Q \subseteq Q\} : Q \in B, \bigcap_{\hat
    Q \in B} \hat Q = R$$ 
therefore proving 
$$\sum_{R: \dim R = k} n(R, Q) = s (|\sigma|, k) \quad \text{ if }
\quad |\sigma|< k.$$ 
In the case that $|\sigma |= k$, the only choice of $R$ is $Q$ itself
and the only choice of  
$B$ is the set $B= \{ Q\}$. This proves  
$$\sum_{R: \dim R = k} n(R, Q) = s (|\sigma|, k) \quad \text{ if }
\quad |\sigma|= k.$$ 
All other values of $s(|\sigma|, k)$ are defined recursively by  
$$s(m, k) = s(m-1, k-1) - (m-1) s(m-1, k).$$ 
So it suffices to show that $\sum_{R: \dim R = k} n(R, Q)$ satisfies
the same recursive relation.   

For any choice of $R$,  
$$n(R, Q) = \sum_{ \stack{B \subseteq \{\hat Q \in \mc Q:\hat Q
    \subseteq Q \}}{Q \in B, \cap_{\hat Q \in B} \hat Q = R}}
(-1)^{|B| -1}$$ 
can be computed also form only choosing those $\hat Q$ that are
codimension 1 inside $Q$. This is because for every $\hat Q$ of
codimension $>2$ the number $t$ of intermediate subtori $Q'$  
$$\hat Q \subset Q' \subseteq Q$$
is positive and therefore $B$ containing $\hat Q$ is included in $2^t$
possible choices of $B$ with cancelling size parities.  

Let's write the points of $Q$ as $m$-tuples $(x_1, \ldots, x_m)$ with
$x_i \in \mb G_m$. Let $W$ be the $m-1$ dimensional torus consisting
of points $(x_1, \ldots, x_{m-1})$. Any $R$ with $\dim R= k$ is given
by a set of defining equations  
$$x_{i_1} = \cdots = x_{i_{k_i}}, i=1, 2, \ldots.$$
In defining equation of $R$ with $\dim R = k$, either $x_m$ does not
appear in which case $R|_{W}$ is $k-1$ dimensional. The second case is
if $x_m$ appears in defining equation of $R$, in which case $R|_W$ is
$k$ dimensional and any choice of $B$ consisting of only codimension 1
elements, satisfying $\bigcap_{\hat Q \in B} \hat Q = R$ loses one of
its elements after restriction to $R|_W$. This shows that  
\begin{align*}
 \sum_{ \stack{B \subseteq \{\hat Q \in \mc Q:\hat Q \subseteq Q \}}{Q
     \in B, \dim \bigcap_{\hat Q \in B} \hat Q = k}} (-1)^{|B| -1} 
&= 
 \sum_{ \stack{B \subseteq \{\hat Q \in \mc Q:\hat Q \subseteq W \}}{W
     \in B, \dim \bigcap_{\hat Q \in B} \hat Q = k-1}} (-1)^{|B|
   -1}\\ 
&- (m-1)
 \sum_{ \stack{B \subseteq \{\hat Q \in \mc Q:\hat Q \subseteq W \}}{W
     \in B, \dim \bigcap_{\hat Q \in B} \hat Q = k}} (-1)^{|B| -1} 
\end{align*}
which completes the proof. 
\end{proof}

\bibliographystyle{plain}
\bibliography{../unifiedbib}

\end{document}